\theoremstyle{plain}                    
\newtheorem{theorem}{Theorem}[section]     
\newtheorem{lem}[theorem]{Lemma}            
\theoremstyle{definition}
\newtheorem{remark}[theorem]{Remark}
\newtheorem{prop}[theorem]{Proposition}
\newtheorem{cor}[theorem]{Corollary}
\newcommand{\de}{\mathrm{d}}
\newcommand{\virg}[1]{``#1''}
\newcommand{\N}{\mathbb{N}}
\newcommand{\Z}{\mathbb{Z}}
\newcommand{\Q}{\mathbb{Q}}
\newcommand{\R}{\mathbb{R}}
\newcommand{\C}{\mathbb{C}}
\newcommand{\bianco}{\textcolor{white}{.}}
\newcommand{\les}{\left[\left[}
\newcommand{\res}{\right]\right]}
\newcommand{\X}{(0,1]\smallsetminus\mathbb{Q}}
\newcommand{\su}[2]{\mathcal{S}_{#1}(#2)}
\newcommand{\s}[3]{\mathcal{S}_{#1}^{(#2)}(#3)}
\newcommand{\ha}{\frac{1}{2}}
\newcommand{\cu}{{(\mbox{\tiny{curl}})}}
\newcommand{\fl}{{(\mbox{\tiny{flat}})}}
\newcommand{\pifi}{\frac{\pi}{4}i}
\begin{document}
\clearpage{\pagestyle{empty}\cleardoublepage}
\title{Limiting Curlicue Measures for Theta Sums}% generated\\ by Quadratic Trigonometric Sums}
\author{Francesco Cellarosi\footnote{Mathematics Department, Princeton University, Princeton, NJ, U.S.A. \texttt{fcellaro@math.princeton.edu}}}
%\date{} %{Februafry 28, 2008}
\maketitle
\begin{abstract}
We consider the ensemble of curves $\{\gamma_{\alpha,N}:\:\alpha\in(0,1],\:N\in\N\}$ obtained by linearly interpolating the values of the %quadratic trigonometric 
normalized theta sum $N^{-\ha}\sum_{n=0}^{N'-1}\exp(\pi i n^2\alpha)$, $0\leq N'<N$. We prove the existence of limiting finite-dimensional distributions for such curves as $N\rightarrow\infty$, with respect to an absolutely continuous probability measure $\mu_R$ %, absolutely continuous with respect to the Lebesgue measure 
on $(0,1]$. Our Main Theorem generalizes a result by Marklof \cite{Marklof1999b} and Jurkat and van Horne \cite{Jurkat-vanHorne1981-proofCLT,Jurkat-vanHorne1982-CLT}.
Our proof relies on the analysis of the geometric structure of such curves, which exhibit spiral-like patterns (\emph{curlicues}) at different scales. We exploit a renormalization procedure constructed by means of the continued fraction expansion of $\alpha$ with even partial quotients and a renewal-type limit theorem for the denominators of such continued fraction expansions.
\end{abstract}
\section{Introduction}
Given $a\in(-1,1]\smallsetminus\{0\}$ and $N\in\N$ consider the theta %trigonometric 
sum
\begin{equation}\label{sum}
\su{a}{N}:=\sum_{n=0}^{N-1}\exp(\pi in^2a)\in\C.
\end{equation}
For arbitrary $L\geq0$ let us define it as
\begin{equation}\nonumber
\su{a}{L}:=\sum_{n=0}^{\lfloor L\rfloor-1}\exp(\pi i n^2a)+\{L\}\exp(\pi i\lfloor L\rfloor^2a)\in\C,
\end{equation}
where $\lfloor\cdot\rfloor$ denotes the floor function and $\{\cdot\}$ %=\cdot-\lfloor\cdot\rfloor$ 
the fractional part. One has $\su{a+2}{N}=\su{a}{N}$, $\su{-a}{N}=\overline{\su{a}{N}}$ and $\int_{-1}^1\left|\su{a}{N}\right|^2\de\,a=N$. It is convenient to consider $\alpha=|a|\in(0,1]$ and to study $\su{\alpha}{L}$, %this assumption will be discussed in 
see Section \ref{AERFs}.

Our goal is to study the curves generated by theta %trigonometric
 sums: i.e. $$\gamma=\gamma_{\alpha,N}:[0,1]\rightarrow\C\simeq\R^2,\hspace{.5cm}t\mapsto\frac{\su{\alpha}{tN}}{\sqrt{N}}$$ as $N\rightarrow\infty$. Such curves are piecewise linear, of length $\sqrt{N}$ (being made of $N$ segments of length $N^{-\ha}$). In particular we are interested in the ensemble of curves $\{\gamma_{\alpha,N}\}_{\alpha\in(0,1]}$ as $N\rightarrow\infty$ when $\alpha$ is distributed according to some probability measure on $(0,1]$.

\begin{figure}[h!]
\begin{center}
\vspace{-1.cm}
\includegraphics[width=12.5cm, angle=0]{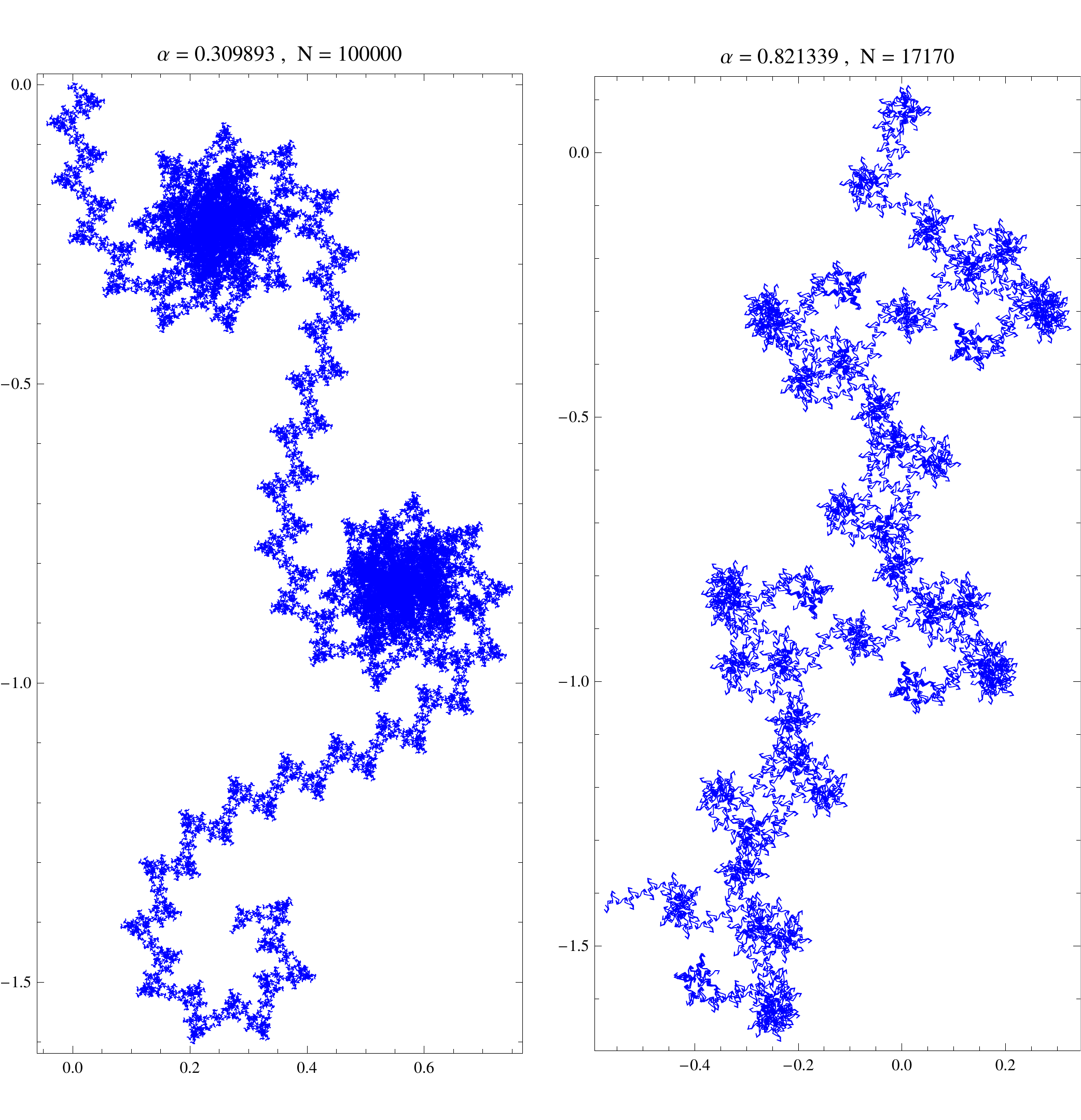}
\hspace{.8cm}
\includegraphics[width=12cm, angle=0]{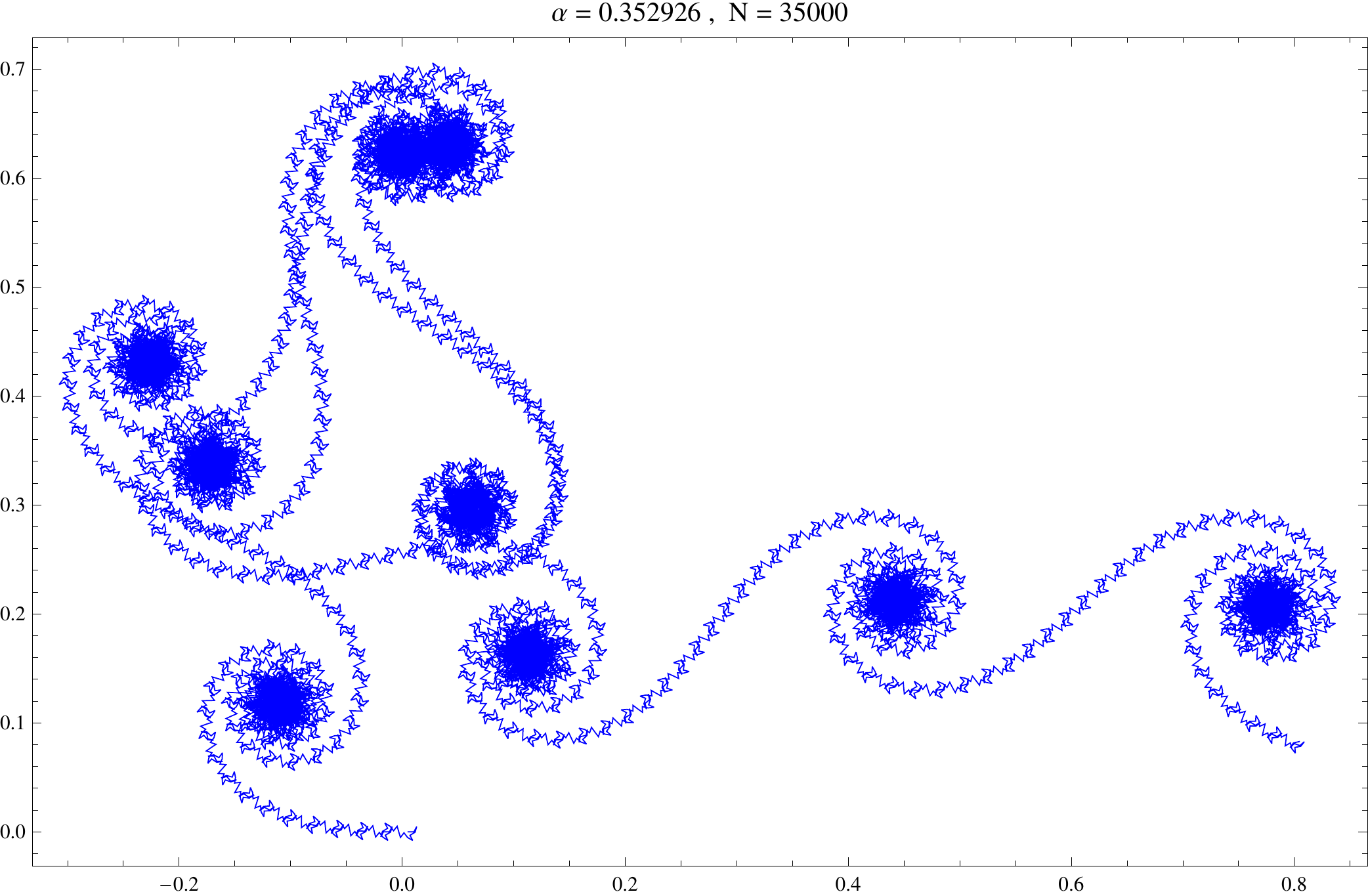}
\vspace{-.3cm}
\caption{\small{Three curves of the form $t\mapsto\gamma_{\alpha,N}(t)$.}}\label{fig1}
\end{center}
\end{figure}
As illustrated in Figure \ref{fig1}, these curves exhibit a geometric multi-scale structure, including spiral-like fragments (\emph{curlicues}). For a discussion on the geometry of $t\mapsto\su{\alpha}{tN}$ (and more general curves defined using exponential sums) in connection with uniform distribution modulo 1, see Dekking and Mend\`{e}s France \cite{Dekking-MendesFrance}. For the study of other geometric and thermodynamical properties of such curves, see Mend\`{e}s France \cite{MendesFrance-entropy1981,MendesFrance-entropie1981-2} and Moore and van der Poorten \cite{Moore-vanderPoorten}.

Denote by $\mathcal B^k$ the Borel $\sigma$-algebra on $\C^k$ and let $\mu_R$ be the probability measure %(absolutely continuous w.r.t. the Lebesgue measure) 
on $(0,1]$ whose density is $\frac{1}{\log 3}\left(\frac{1}{3-x}+\frac{1}{1+x}\right)$. In Section \ref{section: jump transformation} we shall see that the measure $\mu_R$ naturally appears in %this context.
our situation.
\begin{theorem}[Main Theorem]\label{main-theorem} For every $k\in\N$, for every $t_1,\ldots,t_k\in[0,1]$, $0\leq t_1<t_2<\ldots<t_k\leq 1$, there exists a probability measure $%\mathrm P^{(k)}=
\mathrm P^{(k)}_{t_1,\ldots,t_k}$ on $\C^k$ such that for every open, nice $A\in\mathcal B^k$,
\begin{eqnarray}
\lim_{N\rightarrow\infty}\mu_R\left(\left\{\alpha\in(0,1]:\:\left(\gamma_{\alpha,N}(t_j)\right)_{j=1}^k\in A\right\}\right)=\mathrm P^{(k)}_{t_1,\ldots,t_k}(A).\label{eq: main-thm-A}
\end{eqnarray}
The measure $\mathrm P^{(k)}_{t_1,\ldots,t_k}$ is called \emph{curlicue measure} associated with the moments of time $t_1,\ldots,t_k$.
\end{theorem}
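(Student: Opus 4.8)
The plan is to transport the problem from the curves $\gamma_{\alpha,N}$ to a renormalization dynamical system attached to the even continued fraction expansion, and then to deduce \eqref{eq: main-thm-A} from (i) a multi-time renewal limit theorem for the convergent denominators $q_m$ and (ii) the continuity of a universal ``limit curlicue'' map. The starting point is the \emph{renormalization formula} for theta sums developed in Section~\ref{AERFs}: writing $\alpha=[0;a_1,a_2,\ldots]$ in the algorithm with even partial quotients, with convergents $p_m/q_m$, the approximate functional equation relating $\su{\alpha}{L}$ to $\overline{\su{G(\alpha)}{\alpha L}}$ can be iterated. Choosing $m=m(\alpha,L)$ by $q_m\le L<q_{m+1}$ and $L=tN$, one obtains a representation
\[
\gamma_{\alpha,N}(t)=\Phi\bigl(\omega_m(\alpha),\,\xi_m(\alpha,tN)\bigr)+o(1)\qquad(N\to\infty),
\]
where $\omega_m(\alpha)$ records the renormalization state at depth $m$ (the forward expansion $G^m(\alpha)=[0;a_{m+1},a_{m+2},\ldots]$, the backward ratio $q_{m-1}/q_m$, and the accumulated phase/sign data), $\xi_m(\alpha,L)$ is a position coordinate locating $L$ inside the block $[q_m,q_{m+1})$, $\Phi$ is a fixed continuous map assembled from the universal spiral profiles, and the error is uniform in $t$ and in $\alpha$ outside an exceptional set. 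Geometrically $\Phi$ describes the partially traced top-scale curlicue: its coarse arms are dictated by $a_{m+1}$ and the immediate forward quotients, its fine structure by $G^m(\alpha)$, and its endpoint by $\xi_m$.

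Next I would analyze the law of the renormalization data when $\alpha\sim\mu_R$. Here the relevant object is the \emph{jump transformation} of Section~\ref{section: jump transformation}, a transformation built from the even continued fraction map for which $\mu_R$ is the (exponentially mixing) invariant measure; together with its natural extension this makes $m\mapsto\omega_m(\alpha)$ a well-behaved stationary process and turns $\log q_m$ into an additive functional with good ergodic and large-deviation properties. The depth $m(\alpha,t_jN)$ is the first passage of $\log q_m$ above the level $\log(t_jN)=\log N+\log t_j$, and the key probabilistic step is the \emph{joint} renewal theorem for the $k$ thresholds $\log N+\log t_j$, $j=1,\ldots,k$: because these differ by the fixed amounts $\log t_j$, the passage epochs $m(\alpha,t_jN)$ are strongly correlated (frequently equal), and I must show that
\[
\bigl(\omega_{m(\alpha,t_jN)}(\alpha),\,\xi_{m(\alpha,t_jN)}(\alpha,t_jN)\bigr)_{j=1}^k
\]
converges in distribution as $N\to\infty$ to an explicit law $\nu^{(k)}_{t_1,\ldots,t_k}$ on the appropriate product of sequence and interval spaces. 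This would follow by combining the renewal-type limit theorem for $\{\log q_m\}$ (which controls the overshoot and the forward state at each threshold) with mixing of the jump transformation (to decouple what happens before a slowly growing window from the renewal region), in the spirit of the one-threshold case but with bookkeeping for the finitely many coupled levels.

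I would then set $\mathrm P^{(k)}_{t_1,\ldots,t_k}:=\Psi_*\nu^{(k)}_{t_1,\ldots,t_k}$, where $\Psi\bigl((\omega_j,\xi_j)_{j=1}^k\bigr):=\bigl(\Phi(\omega_j,\xi_j)\bigr)_{j=1}^k\in\C^k$. Since $\Phi$ is continuous on a $\nu^{(k)}$-full set, $\Psi$ is continuous $\nu^{(k)}$-a.e.; together with the convergence in distribution of the renormalization data this yields weak-$*$ convergence of the image measures $\mu_R\circ\bigl((\gamma_{\cdot,N}(t_j))_j\bigr)^{-1}$ to $\mathrm P^{(k)}_{t_1,\ldots,t_k}$. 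Finally, for $A$ open and nice, a portmanteau argument upgrades this to the pointwise equality \eqref{eq: main-thm-A}: openness of $A$ lets one sandwich $\{\gamma_{\alpha,N}(t_j)\in A\}$, via the uniform $o(1)$, between the events that $\Phi$ of the data lies in the inner, resp.\ outer, $\varepsilon$-neighborhood of $A$, and the hypothesis that $A$ is nice (its boundary $\mathrm P^{(k)}_{t_1,\ldots,t_k}$-null) removes the remaining slack as $\varepsilon\downarrow0$, after also discarding the exceptional set of $\alpha$, which the renewal theorem shows has $\mu_R$-measure $o(1)$.

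The principal obstacle is the uniformity in the renormalization formula: the iterated approximate functional equation must carry an error that is genuinely $o(1)$ \emph{simultaneously} for all $t\in\{t_1,\ldots,t_k\}$ and for $\mu_R$-almost every $\alpha$. This forces one to excise those $\alpha$ for which some partial quotient among $a_1,\ldots,a_{m(\alpha,t_jN)+1}$ is atypically large, or for which some $t_jN$ lies too close to a denominator $q_m$ (so that $\xi_m$ is near a jump and a spiral arm is nearly but not quite completed), and then to prove, via the renewal theorem, that this exceptional set has vanishing $\mu_R$-measure. A closely related difficulty is to identify precisely the state space on which $\Phi$ lives and to verify its continuity there, so that the pushforward of the limiting renewal law is well defined and the portmanteau step on nice sets is legitimate.
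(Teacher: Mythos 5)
Your overall architecture --- iterate the approximate renormalization formula down to a renewal depth, apply a renewal-type limit theorem to the resulting data, push the limit law forward through a continuous assembly map, and finish with a portmanteau argument on nice sets --- is the same as the paper's. But two of your choices create real problems. First, you stop the renormalization at $k$ separate depths $m(\alpha,t_jN)$ with $q_m\le t_jN<q_{m+1}$, which forces you to prove a joint renewal theorem at $k$ coupled thresholds. The paper avoids this entirely: it stops at the single depth $r(\alpha,N)=\nu_{\hat n_N}-1$ determined by $N$ alone (where $\hat n_N=\min\{n:\hat q_n>N\}$), after which every $\gamma_{\alpha,N}(t_j)$ is a function of the \emph{same} renewal variables, with $t_j$ entering only as a parameter (through $t_j\Theta_\alpha(N)$ in the argument of the top-scale theta sum and of the correction terms). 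The one-threshold renewal theorem then suffices for all $k$ and all $t_1,\ldots,t_k$ simultaneously. Your multi-threshold statement is plausible, since the thresholds differ by the fixed amounts $\log t_j$, but it is unproven and an unnecessary complication.

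Second, and this is the genuine gap: you fold ``the accumulated phase/sign data'' into the renormalization state $\omega_m$ and then assert that $\omega_m$ converges in distribution by combining the renewal theorem with mixing. The accumulated phase $K_\alpha^8(N)$ and the conjugation sign $E_\alpha(N)$ depend on \emph{every} digit of the expansion up to the renewal time, with equal weight on each (they are sums mod $8$ and products of signs); they are not continuous functions of the itinerary in the product topology, and their limiting law does not follow from joint convergence of finitely many $\Sigma$-entries near the renewal time. The paper devotes Section \ref{section:lim-distr-K8andE} to exactly this point --- constructing conditional measures via a Doeblin condition, approximating by a finite Markov chain with memory of order $\sqrt n$, and using a positive stochastic matrix on $\Xi=\{\pm1\}\times\{0,\ldots,7\}$ to show that $(x_n,y_n)$ has a limit law --- and then extends the renewal theorem (Theorem \ref{theorem:joint-limit-distr}) so that $(x_{\hat n_N-J},y_{\hat n_N-J})$ is included in the joint limit. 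Relatedly, your ``uniform $o(1)$'' error hides the fact that the corrections from intermediate scales are only controllably small outside an exceptional set whose measure tends to $0$ as the number $J$ of retained scales grows, not as $N\to\infty$; one therefore needs the truncated curves $\gamma^J_{\alpha,N}$ and a justified interchange of the limits $N\to\infty$ and $J\to\infty$ (Lemmas \ref{localizinglemma1} and \ref{lemma limPJ} and the final step of the paper's proof), which your plan does not supply.
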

%The construction of the limiting measures $\mathrm P^{(k)}_{t_1,\ldots,t_k}$ is mainly based on a renormalization procedure described in Section \ref{section: renormalization-of-curlicues}.
We shall define later what we mean by \virg{nice} and prove that many interesting sets are indeed nice. For example, if $B_{z}(\rho):=\{w\in\C:\:|z-w|<\rho\}$, then for every $(z_1,\ldots,z_k)\in\C^k$, the set $A=B_{z_1}(\rho_1)\times\ldots\times B_{z_k}(\rho_k)\subseteq\C^k$ is nice for all $(\rho_1,\ldots,\rho_k)\in\R_{>0}^k$, except possibly for a countable set.

Our main theorem generalizes a result by Marklof \cite{Marklof1999b} (corresponding to $k=1$ and $t_1=1$), which in particular implies the following theorem by Jurkat and van Horne \cite{Jurkat-vanHorne1981-proofCLT,Jurkat-vanHorne1982-CLT}. 
\begin{theorem}[Jurkat and van Horne]
There exists a function $\Psi(a,b)$ such that for all (except for countably many) $a,b\in\mathbb{R}$,
$$\lim_{N\rightarrow\infty}\left|\left\{\alpha:\:a< N^{-\ha}|\su{\alpha}{N}|< b\right\}\right|=\Psi(a,b).$$
\end{theorem}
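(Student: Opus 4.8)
The plan is to deduce the theorem from the case $k=1$, $t_1=1$ of Theorem~\ref{main-theorem} (equivalently, from Marklof's theorem), the point being that $\gamma_{\alpha,N}(1)=N^{-\ha}\su{\alpha}{N}$ by the very definition of $\gamma_{\alpha,N}$. Thus, setting $E_N(a,b):=\{\alpha\in(0,1]:\ a<|\gamma_{\alpha,N}(1)|<b\}$, the set in the statement is exactly $E_N(a,b)$, and it suffices to produce a function $\Psi$ such that $\lim_{N\to\infty}|E_N(a,b)|=\Psi(a,b)$ for all $a,b$ outside a countable set, $|\cdot|$ being Lebesgue measure on $(0,1]$.

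The next step is to pass to the limiting distribution. The open annulus $A_{a,b}:=\{z\in\C:\ a<|z|<b\}$ is an open Borel subset of $\C$, and $\{a<|\gamma_{\alpha,N}(1)|<b\}=\{\gamma_{\alpha,N}(1)\in A_{a,b}\}$ is precisely the type of event governed by \eqref{eq: main-thm-A}. Since the pushforward of the curlicue measure $\mathrm P^{(1)}_{1}$ under $z\mapsto|z|$ is a Borel probability measure on $[0,\infty)$, it has at most countably many atoms; hence, for all $a,b$ outside a fixed countable set of radii, the two boundary circles $\{|z|=a\}$ and $\{|z|=b\}$ are $\mathrm P^{(1)}_{1}$-null, and $A_{a,b}$ is an open \virg{nice} set by the same reasoning that makes balls nice after Theorem~\ref{main-theorem}. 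For such $a,b$, \eqref{eq: main-thm-A} with $k=1$, $t_1=1$, $A=A_{a,b}$ gives
$$\lim_{N\to\infty}\mu_R\bigl(E_N(a,b)\bigr)\ =\ \mathrm P^{(1)}_{1}(A_{a,b})\ =\ F(b)-F(a),$$
where $F(r):=\mathrm P^{(1)}_{1}\bigl(B_{0}(r)\bigr)$ is the radial distribution function of $\mathrm P^{(1)}_1$, which is continuous off that countable set.

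The remaining, and only genuinely delicate, step is to replace $\mu_R$ by Lebesgue measure, as required by the stated normalization. Although $\mu_R$ and Lebesgue measure on $(0,1]$ are mutually absolutely continuous --- the density of $\mu_R$ being continuous and bounded above and below --- this equivalence by itself does not transport the limit \eqref{eq: main-thm-A}, because a weighted integral $\int\mathbf 1_{A_{a,b}}(\gamma_{\alpha,N}(1))\,g(\alpha)\,\de\mu_R(\alpha)$ requires a \emph{local} equidistribution statement rather than the global one. I would supply this in one of two ways: (i) by revisiting the renormalization behind Theorem~\ref{main-theorem}, whose equidistribution (mixing) input is insensitive to the choice of absolutely continuous reference measure, so that \eqref{eq: main-thm-A} in fact holds with any absolutely continuous probability measure on $(0,1]$ in place of $\mu_R$, in particular with normalized Lebesgue measure; or (ii) by invoking Marklof \cite{Marklof1999b} directly, which is already formulated for Lebesgue-distributed $\alpha$. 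Either way one sets $\Psi(a,b):=F(b)-F(a)$ (extended by the same formula to the exceptional pairs), obtains $|E_N(a,b)|\to\Psi(a,b)$ for all $a,b$ off a countable set, and the theorem follows. I expect this last transfer to be the main obstacle: it is not a formal corollary of the \emph{statement} of Theorem~\ref{main-theorem}, and needs either the machinery of its proof or an appeal to Marklof's result; by contrast, verifying that annuli are \virg{nice} is routine once the radial distribution of $\mathrm P^{(1)}_1$ is known to be non-atomic off a countable set.
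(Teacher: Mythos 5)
The paper itself offers no proof of this statement: it is quoted as a classical theorem of Jurkat and van Horne, and the only "argument" given is the remark that Marklof's theorem \cite{Marklof1999b} (the $k=1$, $t_1=1$ case that Theorem~\ref{main-theorem} generalizes, and which is formulated for Lebesgue-distributed $\alpha$) implies it. So there is no internal proof to compare yours against; what you have written is an honest attempt at the derivation the paper merely gestures at, and its outline is sound. The reduction to $\gamma_{\alpha,N}(1)$, the observation that the annulus $A_{a,b}$ is open and nice for all $a,b$ outside a countable set (the circles $\{|z|=\rho\}$ have disjoint preimages under the continuous maps $f^{(J)}_{1;(x,y),\underline\sigma}$, so only countably many can carry positive measure --- note the paper's niceness lemma as stated covers convex sets, so you do need this separate disjointness argument for the annulus, but it is the same one), and the identification $\mathrm P^{(1)}_1(A_{a,b})=F(b)-F(a)$ off the atoms of the radial distribution are all correct.

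You are also right that the only genuinely nontrivial step is replacing $\mu_R$ by Lebesgue measure, and right that this is not a formal corollary of the statement of Theorem~\ref{main-theorem}. Of your two fixes, (ii) is exactly what the paper does --- it simply cites Marklof --- so it proves the theorem but not "from this paper." Fix (i) is the substantive one, and it does go through, but only by opening up Appendix~\ref{AppendixA}: the key estimate \eqref{proof fraction} there is a \emph{conditional} statement, namely that $\mu_{\hat R}(A_{\mathcal C\smallsetminus U})/\mu_{\hat R}(\mathcal C\smallsetminus U)$ converges to a limit independent of the (sufficiently small) cylinder $\mathcal C$. Approximating the density of an arbitrary absolutely continuous probability measure by functions constant on cylinders then transports Theorem~\ref{theorem:joint-limit-distr}, and hence the whole chain of Proposition~\ref{cor: limiting finite dim distr gammaJ} and Lemmas~\ref{localizinglemma}--\ref{lemma limPJ} (whose measure estimates transfer trivially since $\varphi_R$ is bounded above and below), to normalized Lebesgue measure. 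This is exactly the "local equidistribution" you said was needed; it is available in the paper's machinery but is nowhere stated as a theorem, so to make your route (i) rigorous you would have to formulate and prove that cylinder-conditional version explicitly rather than invoke it.
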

Let us remark that Marklof's approach uses the equidistribution of long, closed horocycles in the unit tangent bundle of a suitably constructed non-compact hyperbolic manifold of finite volume. Moreover, % recall that
the explicit asymptotics for the moments of $N^{-\ha}|\su{a}{N}|$ (along with central limit theorems \cite{Jurkat-vanHorne1981-proofCLT,Jurkat-vanHorne1982-CLT,Jurkat-VanHorne1983}) were %provided 
found by Jurkat and van Horne and generalized by Marklof \cite{Marklof1999b} in the case of more general theta sums using Eisenstein series. In particular it is known that the above distribution function $\Psi$ is not Gaussian.
Thus far, our approach only shows existence of the limiting measures $\mathrm P_{t_1,\ldots,t_k}^{(k)}$. It is in principle possible to derive % without providing any 
quantitative informations on the decay of their moments from our method too, but we shall not dwell on this. For a preliminary discussion of the present work, see Sinai \cite{Sinai-curlicues}.

\begin{remark}\label{first remark}
Consider the probability space $\left((0,1],\mathcal B,\mu_R\right)$, where $\mathcal B$ is the Borel $\sigma$-algebra on $(0,1]$ and $\mu_R$ is as above. % and consider the space of complex valued continuous functions defined on the unit interval, $\mathcal C([0,1],\C)$. 
We look at $\gamma_{\alpha,N}$ as a random function, i.e as a measurable map $$\gamma_{\cdot,N}:\left((0,1],\mathcal B,\mu_R\right)\rightarrow\left(\mathcal C([0,1],\C),\mathcal B_{\mathcal C}\right),$$
where $\mathcal B_{\mathcal C}$ is the Borel $\sigma$-algebra on $\mathcal C([0,1],\C)$ coming from the topology %given by 
of uniform convergence.
Let $\mathrm P_N$ be the corresponding induced probability measure on $\mathcal C([0,1],\C)$, $\mathrm P_N(A):=\mu_R\!\left(\gamma_{\cdot,N}^{-1}(A)\right)$, where $A\in\mathcal B_{\mathcal{C}}$. 
For  $0\leq t_1<t_2<\cdots<t_k\leq1$, let $\pi_{t_1,\ldots,t_k}:\mathcal{C}([0,1],\C)\rightarrow\C^k$ be the natural projection defined as $\pi_{t_1\,\ldots,t_k}(\gamma):=(\gamma(t_1),\ldots,\gamma(t_k))$.

Theorem \ref{main-theorem} can be rephrased as follows: 
for every $k\in\N$ and for every $0\leq t_1<\ldots<t_k\leq1$
\begin{equation}\nonumber
\mathrm P_N\pi^{-1}_{t_1,\ldots,t_k}\Longrightarrow\mathrm P^{(k)}_{t_1,\ldots,t_k}\hspace{.4cm}\mbox{as $N\rightarrow\infty$},
\end{equation}
where \virg{$\Rightarrow$} denotes weak convergence of probability measures. In other words, %particular 
we prove weak convergence of finite-dimensional distributions of $\mathrm P_N$ as $N\rightarrow\infty$.
\end{remark}
\begin{remark}
By construction, the measures $\mathrm P_{t_1,\ldots,t_k}^{(k)}$ automatically satisfy Kolmogorov's consistency conditions and hence there exists %a probability space structure $(\mathcal C([0,1]))$ 
a probability measure $\tilde{\mathrm P}$ on the $\sigma$-algebra generated by finite dimensional cylinders $\mathcal B_{\tiny{\mbox{fdc}}}\subset\mathcal B_{\mathcal C}$ so that $\tilde{\mathrm P}\pi_{t_1,\ldots,t_k}^{-1}=\mathrm P_{t_1,\ldots,t_k}^{(k)}$.
\end{remark}
\begin{remark}[Scaling property of the limiting measures]
Notice that $$\gamma_{\alpha,N}(\lambda t)=N^{-\ha}\su{\alpha}{\lambda t N}=\lambda^{\ha}\gamma_{\alpha,\lambda N}(t).$$ Thus, 
the limiting probability measures $\mathrm P^{(k)}_{t_1,\ldots,t_k}$ satisfy the following scaling property: for every $\lambda\in(0,1]$
\begin{equation}\label{scaling property}\nonumber
\mathrm P^{(k)}_{\lambda t_1,\ldots,\lambda t_k}(A)=\mathrm P^{(k)}_{t_1,\ldots,t_k}(\lambda^{-\ha}A)
\end{equation}
In particular, for example, $\mathrm P^{(1)}_{t}(A)=\mathrm P^{(1)}_1(t^{-\ha}A)$.
\end{remark}
\begin{remark}
Our results are of \emph{probabilistic} nature, since we look at the measure of $\alpha$'s for which some event happens. Let us stress the fact that the growth of $|\su{\alpha}{N}|$ for \emph{specific} or \emph{generic} $\alpha$ has also been thoroughly studied. For instance, Hardy and Littlewood \cite{Hardy-Littlewood1914} proved that if $\alpha$ is of bounded-type, then $|\su{\alpha}{N}|\leq C\sqrt N$ for some constant $C$. To the best of our knowledge, the most refined result in this direction is due to Flaminio and Forni \cite{Flaminio-Forni2006}. A particular case of their results on equidistribution of nilflows reads as follows. For every increasing function $b:(1,\infty)\rightarrow(0,\infty)$ such that $\int_1^\infty t^{-1}b^{-4}(t)\de t<\infty$, there exists a full measure set $\mathcal G_b$ such that for every $\alpha\in\mathcal{G}_b$, every $\beta\in\R$ the following holds: for every $s>\frac{5}{2}$, there exists a constant $C=C(s,\alpha)$ such that for every $f\in W^s$, $2$-periodic, $$\left|\sum_{n=0}^{N-1}f(\alpha n^2+\beta)-N\int_{-1}^1f(x)\de x\right|\leq C\sqrt N\, b(N)\|f\|_s,$$
where $W_s$ denotes the Sobolev space and $\|\cdot\|_s$ is the corresponding Sobolev norm. This generalizes the work of Fiedler, Jurkat and K\"{o}rner \cite{Fiedler-Jurkat-Korner77} where $f(x)=e^{\pi i x}$ and $\beta=0$.
\end{remark}

The paper is organized as follows. In Section \ref{section: renormalization-of-curlicues} we discuss the geometric multi-scale structure of the curve $t\mapsto\gamma_{\alpha,N}(t)$ and we deal with the first step of the renormalization procedure which allows us to move from a scale to the next one. Moreover, we describe the connection of the renormalization map $T$ with the continued fraction expansion of $\alpha$ with even partial quotients and we consider an \virg{accelerated} version of it, i.e. the associated jump transformation $R$. For the corresponding accelerated continued fraction expansions we prove some estimates on the growth of the entries.
In Section \ref{iterated-renormalization-of-S} we iterate the renormalization procedure and we approximate the curve $\gamma_{\alpha,N}$ by a curve $\gamma_{\alpha,N}^J$ in which only the $J$ largest scales are present. Furthermore, we write $(\gamma_{\alpha,N}(t_j))_{j=1}^k\in\C^k$ as a function of certain random variables defined in terms of the renewal time $\hat n_N:=\min\{n\in\N:\:\hat q_n>N\}$, where $\{\hat q_n\}_{n\in\N}$ is the subsequence of denominators of the convergents of $\alpha$ corresponding to the map $R$. 
In Section \ref{section-limiting-fin-dim-distr} we use a renewal-type limit theorem (proven in Appendix \ref{AppendixA}) to show the existence of the limit for finite-dimensional distributions for the approximating curve $\gamma_{\alpha,N}^J$ as $N\rightarrow\infty$. Estimates from Section \ref{iterated-renormalization-of-S} allow us to take the limit as $J\rightarrow\infty$ and prove the existence of finite-dimensional distributions for $\gamma_{\alpha,N}$ as $N\rightarrow\infty$. We also discuss the notion of \emph{nice} sets and give a sufficient condition for a set to be nice.

\section{Renormalization of Curlicues}\label{section: renormalization-of-curlicues}
In this section we recall some known facts concerning the geometry of the curves $\gamma_{\alpha, N}$. In particular we discuss the presence/absence of spiral-like fragments and at different scales using a renormalization procedure. The renormalization map $T$ is connected with a particular class of continued fraction expansions. From a metrical point of view, this classical renormalization is very ineffective, because of the intermittent behavior of the map $T$ (which preserves an infinite, ergodic measure). It is therefore very natural to study an ``accelerated version'' of $T$ (preserving the ergodic probability measure $\mu_R$ mentioned before) and the corresponding continued fraction expansion.

\subsection{Geometric structure at level zero}
In order to investigate the presence/absence of spiraling geometric structures at the smallest scale we introduce the \emph{local discrete radius of curvature}, following Coutsias and Kazarinoff \cite{Coutsias-Kazarinoff1987,Coutsias-Kazarinoff-1998}. Set $\mathcal{T}_N:=\left\{\frac{m}{N},\,0\leq m\leq N\right\}$ and let %. For $n\in\N$, $n<N$, let 
$\tau_n:=\frac{n}{N}\in\mathcal{T}_N\smallsetminus\{0,1\}$, so that %We have % so that 
$\gamma(\tau_n)=\gamma_{\alpha,N}(\tau_n)=N^{-\ha}\su{\alpha}{n}$. % and d
Define $%\rho_a(n)=
\rho_{\alpha,N}(\tau_n)$ as the radius of the circle passing through the three points $\gamma(\tau_{n-1})$, $\gamma(\tau_{n})$ and $\gamma(\tau_{n+1})$. % $\su{a}{t_{n-1}N}$, $\su{a}{t_{n}N}$ and $\su{a}{t_{n+1}N}$ 
A simple computation shows that
%\begin{equation}\label{local-discrete-radius-of-curvature}
$\rho_{\alpha,N}(\tau_n)=\frac{1}{2 \sqrt{N}}\left|\csc\left(\frac{\pi\,\alpha\,(2n-1)}{2}\right)\right|$ and for arbitrary $t\in[0,1]$ we set $$\rho(t)=\rho_{\alpha,N}(t):=\frac{1}{2 \sqrt{N}}\left|\csc\left(\frac{\pi\,\alpha\,(2\,tN-1)}{2}\right)\right|\in\overline\R.$$
%\end{equation}
The function $t\mapsto\rho_{\alpha,N}(t)$ is $\frac{1}{\alpha\, N}$-periodic; it has vertical asymptotes at $\tau_k^\fl=\tau_{k}^\fl(\alpha,N):=\frac{k}{\alpha\,N}+\frac{1}{2N}$ and local minima at $\tau_{k}^\cu=\tau_{k}^\cu(\alpha,N):=\frac{2k+1}{2\,\alpha\,N}+\frac{1}{2N}$, $k\in\Z$, where $\rho_{\alpha,N}(\tau_{k}^\cu)=\frac{1}{2\sqrt{N}}$.
We partition the interval $[0,1]$ into subintervals as follows: $$[0,1]=\bigsqcup_{k=0}^{k^*+1} I_{k}^{(0)},$$ where $k^*=k^*_{\alpha,N}:=\left\lfloor \alpha\,N-\frac{\alpha+1}{2}\right\rfloor$ and
\begin{equation}\nonumber
I_{k}^{(0)}=I_{k;\alpha,N}^{(0)}:=\begin{cases}
\left[0,\tau_{0}^\cu\right)  & \text{if $k=0$}, \\&\\
\left[\tau_{k-1}^\cu,\tau_{k}^\cu\right)      & \text{if $1\leq k\leq k^*$},\\&\\
\left[\tau_{k^*}^\cu,1\right] & \text{if $k=k^*+1$}.
\end{cases}\end{equation}
By construction, the lengths of the above intervals are $|I_k^{(0)}|=\frac{1}{\alpha\,N}$ for $1\leq k\leq k^*$% (these intervals are called \emph{complete})
, $|I_0^{(0)}|=\frac{1}{2N}$ and $0\leq|I_{k^*+1}^{(0)}|=1-\frac{1}{2N}-\frac{k^*}{\alpha\,N}<\frac{1}{\alpha N}$.
The number of $\mathcal{T}_N$-rationals inside each subinterval is of order $\frac{1}{\alpha}$ and explicitly given by 
\begin{equation}\nonumber
%\mathcal{N}_k:=
\#(I_k^{(0)}\cap\mathcal{T}_N)=\begin{cases}
\left\lceil \frac{1}{2\alpha}+\ha\right\rceil%-1 (+1 (\text{including 0 !!!!!})) 
& \text{if $k=0$},\\&\\
\left\lceil\frac{2k+1}{2 \alpha}+\ha\right\rceil-\left\lceil\frac{2k-1}{2 \alpha}+\ha\right\rceil & \text{if $1\leq k\leq k^*$},\\&\\
N+1-\left\lceil\frac{2k^*+1}{2\alpha}+\ha\right\rceil&\text{if $k=k^*+1$}.
\end{cases}
\end{equation}
%. For the first and the last subintervals we have 
%
% Each complete interval $I_k^{(0)}$ contains exactly $\mathcal{N}_k:=\left\lfloor\frac{2k+1}{2 a}+\ha\right\rfloor-\left\lfloor\frac{2k-1}{2 a}+\ha\right\rfloor$ rationals $\in\{\left\lfloor\frac{1}{a}\right\rfloor\}$
The whole curve $\gamma_{\alpha,N}([0,1])$ can be recovered by means of the values of the function $\rho$ %_{a,N}$ 
at the rationals in $\mathcal{T}_N$. %of the form $\frac{n}{N}$, $0\leq n\leq N$. 
%\textbf{Let us assume $a>0$} and s
Suppose we know the values of $\gamma(\tau_0),\gamma(\tau_1),\ldots,\gamma(\tau_{n-1}),\gamma(\tau_n)$ and the radius $\rho%_{a,N}
(\frac{n}{N})$. Then the point $\gamma(\tau_{n+1})$ should be placed at the intersection of the circle of radius $N^{-\ha}$ centered at $\gamma(\tau_n)$ and one of the two circles of radius $\rho%_{a,N}
(\frac{n}{N})$ passing through $\gamma(\tau_{n-1})$ and $\gamma(\tau_n)$ %  After drawing the two circles of that given radius passing through $\gamma_{n-1}$ and $\gamma{n}$, we have two possible choices for $\gamma(t_{n+1})$ at distance $N^{-\ha}$ from $\gamma(t_n)$. on circle according to the following rule: 
in order to get a counterclockwise oriented triple $(\gamma(\tau_{n-1}),\gamma(\tau_n),\gamma(\tau_{n+1}))$ when $\frac{n}{N}\in[\tau_{k-1}^\cu,\tau_k^\fl)$ (resp. clockwise when $\frac{n}{N}\in[\tau_k^\fl,\tau_{k}^\cu)$). For arbitrary $t\in[0,1]$ the curve $\gamma(t)$ is defined by linear interpolation.
%\notin\frac{1}{N}\N$ the curve is defined by linear interpolation. 
%Sampling it at rational points of the form $\frac{n}{N}$, $0\leq n\leq N$.
% y and linearly interpolating the complex points obtained in this way. 
%The geometric structure at the smallest 
%Each 
\begin{figure}[h!]
\begin{center}
\vspace{-4cm}
\includegraphics[width=12.5cm, angle=0]{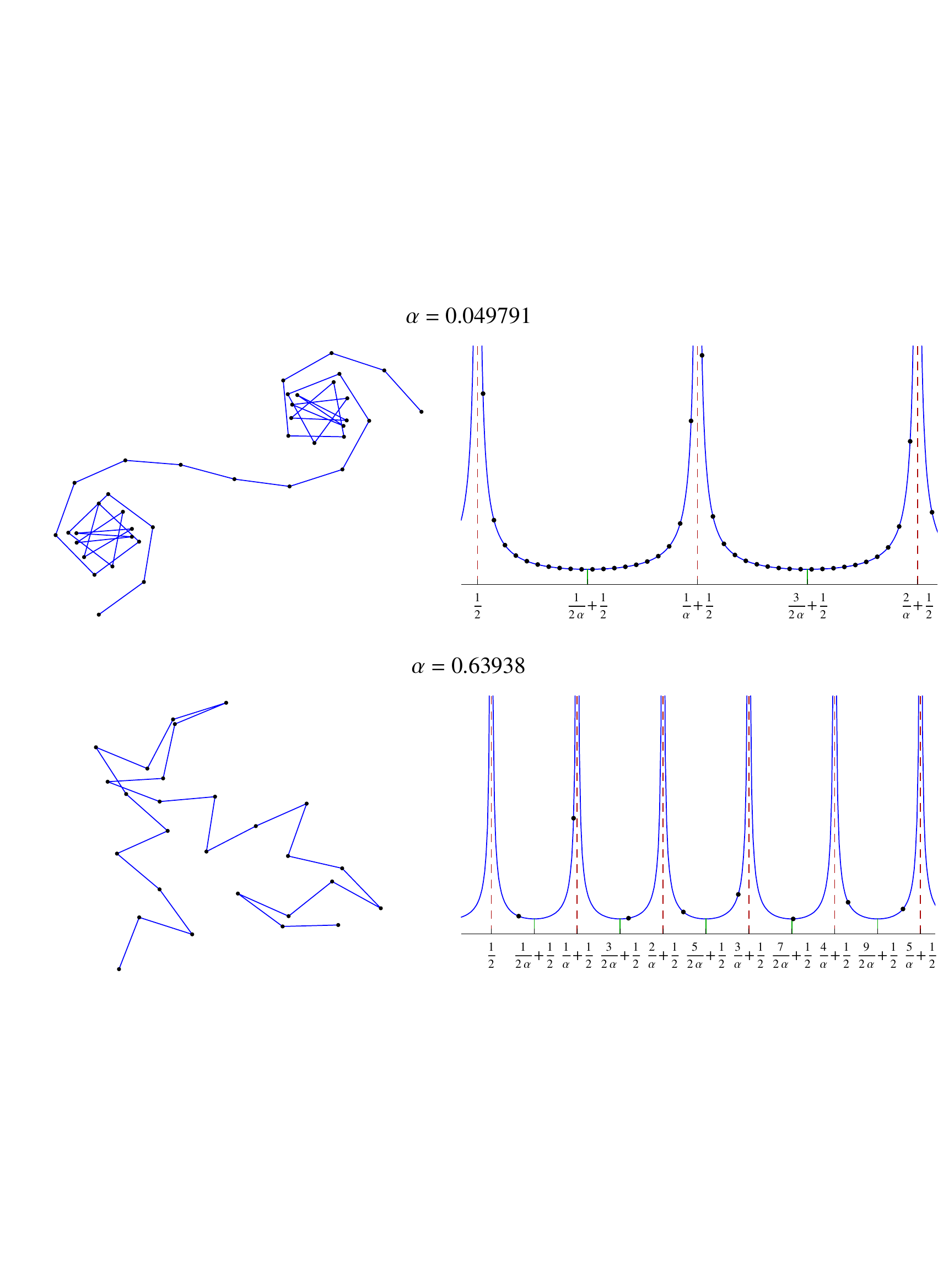}
\vspace{-4.3cm}
\caption{\small{Geometric patterns at level zero (left) and the function $\rho_{\alpha,N}$ (right).}}\label{fig2}
\end{center}
\end{figure}
For small values of $\alpha$, each subinterval $I^{(0)}_k$, $1\leq k\leq k^*$, %period 
contains approximately $\frac{1}{\alpha}$ integer multiples of $\frac{1}{N}$ and the curlicue structure is easily understood: those $n$'s for which $\rho(\tau_n)$ is large correspond to straight-like parts of $\gamma([0,1])$, % \textcolor{black}{$\{\su{a}{n}\}_{n=1}^{N}$, 
while the points close to the minima of $\rho$ give the spiraling fragments (\emph{curlicues}). For $\alpha\sim1$ the curlicues disappear. See Figure \ref{fig2}. We shall see in Section \ref{AERFs} how these curlicues appear at different scales though.

%$I_{0;a,N}^{(0)}=\left[0,\frac{1}{2N}\right)$, $I_{k;a,N}^{(0)}=\left[t_{k-1;a,N}^{(\mbox{\tiny{curl}})},t_{k;a,N}^{(\mbox{\tiny{curl}})}\right)$
\subsection{Approximate and Exact Renormalization Formul\ae}\label{AERFs}
Let us introduce the map $U:(-1,1]\smallsetminus\{0\}\rightarrow (-1,1]\smallsetminus\{0\}$ where $U(t):=-\frac{1}{t}$ (mod 2). The graph of $U$ %is presented in Fig.\ref{fig2}. It
has countably many smooth branches. Each interval $\left(\frac{1}{2k+1},\frac{1}{2k-1}\right]$ is mapped in a one-to-one way onto $(-1,1]$ via $t\mapsto-\frac{1}{t}+2k$.

%$a_1\equiv-\frac{1}{a}$ (mod 2) and $N_1=\lfloor|a|\,N\rfloor$. More precisely,
For $a\in(-1,1]\smallsetminus\{0\}$ and $N\in\N$ one has the Approximate Renormalization Formula (ARF)
\begin{equation}\label{arf1}
\left|\su{a}{N}-e^{\frac{\pi}{4}i}\,|a|^{-\ha}\,\su{a_1}{\lfloor N_1\rfloor}\right|\leq C_1|a|^{-\ha}+C_2,
\end{equation}
where $a_1=U(a)$, $N_1=%\lfloor
|a|\,N%\rfloor
$ and $C_1, C_2>0$ are absolute constants which do not depend on $N$. This result was established by Hardy and Littlewood \cite{Hardy-Littlewood1914}, Mordell \cite{Mordell1926}, Wilton \cite{Wilton1926} and Coutsias and Kazarinoff \cite{Coutsias-Kazarinoff-1998}, %with increasing accuracy for 
the constants $C_1,C_2$ being always improved. 

Let us explain the ARF (\ref{arf1}) geometrically. Recall that the curve $t\mapsto\gamma_{a,N}(t)$ contains $k^*_{|a|,N}\simeq N_1$ intervals of the form $[\tau_{k-1}^\cu,\tau_{k}^\cu)$ at level zero. By (\ref{arf1}), the curve $t\mapsto\sqrt N\gamma_{a,N}(t)$ can be approximated (up to scaling by $|a|^{-\ha}$ and rotating by $\frac{\pi}{4}$) by $t\mapsto\sqrt{N_1}\gamma_{a_1,N_1}(t)$. In other words, replace each interval of the form $I^{(0)}_k$, $1\leq k\leq k^*$, for $\gamma_{a,N}(t)$ by a $\mathcal T_{N_1}$-rational point in $\gamma_{a_1,N_1}(t)$. The renormalization map can be seen as a \virg{coarsening} transformation, which deletes of the geometric structure at level zero.
Beside the above-mentioned references, we also want to mention the work by Berry and Goldberg \cite{Berry-Goldberg-1988}, in which typical and untypical behaviors of $\{\su{\alpha}{N'}\}_{N'=1}^{N}$ are studied with the help of a renormalization procedure. 

Coutsias and Kazarinoff \cite{Coutsias-Kazarinoff-1998} also proved a stronger version of (\ref{arf1}):
\begin{equation}\nonumber%\label{arfCK}
\left|\su{a}{N}-e^{\pifi}\,|a|^{-\ha}\,\su{a_1}{n}\right|\leq C_3\left|\frac{|a| N-n}{a}\right|\leq C_4,
\end{equation}
for some $C_3,C_4>0$, where $n\in\N$ is arbitrary and $N=\left\langle n/|a|\right\rangle$ is a function of $n$, $\langle\cdot\rangle$ denoting the nearest-integer function.

In our analysis we shall focus on (\ref{arf1}), which can be extended to %and (\ref{arf2})
%The ARF (\ref{arf1}) can be extended to 
$\su{a}{L}$ for arbitrary $L\geq0$:
\begin{equation}\label{arf2}
\left|\su{a}{L}-e^{\frac{\pi}{4}i}\,|a|^{-\ha}\,\su{a_1}{L_1}\right|\leq C_5|a|^{-\ha}+C_6,
\end{equation}
where $a_1=U(a)$, $L_1=|a|\,L$, $C_5=C_1+2$ and $C_6=C_2+1$. % are also absolute constants not depending on $N$.

Since the function $U$ is odd w.r.t the origin and $\su{-a}{N}=\overline{\su{a}{N}}$, it is natural to consider $\alpha=|a|\in(0,1]$ and keep track of $|U(\alpha)|$ and $\mathrm{sgn}(U(\alpha))$ separately. Define $\eta(\alpha):=\mathrm{sgn}\left(U(\alpha)\right)$, $\xi(\alpha):=-\eta(\alpha)$ and introduce
a new map $T:(0,1]\rightarrow(0,1]$, $T:=\left|U|_{(0,1]}\right|$. More explicitly, let us partition the interval $(0,1]$ into subintervals $B(k,\xi)$, $k\in\N$, $\xi=\pm1$, where %$=\bigsqcup_{(k,\xi)\in\N\times\{\pm1\}}B(k,\xi),
%$\hspace{1cm}
$B(k,-1):=\left(\frac{1}{2k},\frac{1}{2k-1}\right]$ and $B(k,+1):=\left(\frac{1}{2k+1},\frac{1}{2k}\right]$. The map $T$ can be represented accordingly as%,\hspace{1cm}B$$
$$%\hspace{+2cm}
T(\alpha)=\xi\cdot\left(\frac{1}{\alpha}-2k\right),\hspace{.8cm}\alpha\in B(k,\xi),\:\: k\in\N,\: \:\xi\in\{\pm1\}.$$
We shall deal with this map, first introduced by Schweiger \cite{Schweiger82,Schweiger84}, in Section \ref{ECF} in connection with the even continued fraction expansion of $\alpha$. 
Moreover, for every complex-valued function $F$ %$F:(0,1]\times\R_{\geq0}\rightarrow\C$, 
set $$F^{(\eta)}:=\begin{cases}
      F& \text{if $\eta=+1$}, \\
      \overline{F}& \text{if $\eta=-1$}.
\end{cases}$$
With this notations we can define the remainder terms of (\ref{arf1}) and (\ref{arf2}) for $\alpha\in(0,1]$ as follows:
\begin{eqnarray}
&&\Lambda(\alpha,N):=\su{\alpha}{N}-e^{\pifi}\,\alpha^{-\ha}\s{\alpha_1}{\eta_1}{\lfloor N_1\rfloor},\hspace{1cm}N\in\N%\:\:N_1=\lfloor\alpha\,N\rfloor
%\hspace{1cm}\mbox{and}
\label{Lambda}\\
&&\Gamma(\alpha,L):=\su{\alpha}{L}-e^{\pifi}\,\alpha^{-\ha}\s{\alpha_1}{\eta_1}{L_1},\hspace{1cm}L\in\R\label{Gamma}
\end{eqnarray}
where
%$$\Lambda(\alpha,L):=\su{\alpha}{L}-e^{\frac{\pi}{4}i}\,\alpha^{-\ha}\,\s{\alpha_1}{\eta_1}{\alpha\,L},$$ where 
$\alpha_1=T(\alpha)$, $\eta_1=\eta(\alpha)$, $N_1=\alpha\,N$ and $L_1=\alpha\,L$.

Later, we shall use the fact that $\Gamma(\alpha,L)$ is a continuous function of $(\alpha,L)\in(0,1]\times\R_{\geq0}$ (one can actually prove that it has piecewise $\mathcal C^\infty$ partial derivatives).
%Define the remainder term $\Lambda(\alpha,L):=\su{a}{L}-e^{\pifi}|a|^{-\ha}\,\su{a_1}{|a|\,L}$. 
An explicit formula for $\Lambda(\alpha,N)$, $N\in\N$, has been provided by Fedotov and Klopp \cite{Fedotov-Klopp} in terms of a special function $\mathcal{F}_\alpha:\C\rightarrow\C$ as follows. For $\alpha\in(0,1]$ and $w\in\C$ set 
\begin{equation}\label{special F Fedotov-Klopp}\mathcal{F}_\alpha(w):=\int_{\Gamma_w}\frac{\exp\left(\pi\,i\,z^2/\alpha\right)}{\exp\left(2\pi\, i\,(z-w)\right)-1}\,\de z,
\end{equation}
where $\Gamma_w$ is the contour given by $$\R\ni t\mapsto\Gamma_w(t)=\begin{cases}w+t+i\,t&\text{if $|t|\geq \varepsilon$,}\\w+\varepsilon \,\exp\left(\pi\,i\left(\frac{t}{2\varepsilon}-\frac{1}{4}\right)\right)&\text{if $|t|<\varepsilon$,}\end{cases}$$
and $\varepsilon=\varepsilon(\alpha,w)$ is %chosen sufficiently small in order to be so that 
smaller than the distance between $w$ and the other poles of the integrand in (\ref{special F Fedotov-Klopp}).
%$\varepsilon<\ha\mathrm{dist}\left(w,\{\text{poles of }...\}\smallsetminus\{w\}\right)$
We have the following
\begin{theorem}[Exact Renormalization Formula, Fedotov-Klopp, \cite{Fedotov-Klopp}] For every $0<\alpha\leq1$ and every $N\in\N$ we have
\begin{equation}\label{ERF-Fedotov-Klopp}
\overline{\Lambda(\alpha,N)}=e^{-\pifi}\,\alpha^{-\ha}\left[e^{%\left(
-\pi\,i\,%\left(
\alpha\,N^2 %+\frac{1}{4}
%\right)
%\right)
}\,\mathcal{F}_\alpha(\{N_1\})-\mathcal{F}_\alpha(0)\right],
\end{equation}
where $N_1=\alpha\,N$.
\end{theorem}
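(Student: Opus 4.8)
The plan is to establish this identity as a quantitative form of the Jacobi--Mordell theta transformation, via the classical contour-integral method for exponential sums (Mordell, Wilton, Coutsias--Kazarinoff), carrying the error terms exactly instead of estimating them as in (\ref{arf1})--(\ref{arf2}).

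First I would record the residue representation: since $N\in\N$,
$$\su{\alpha}{N}=\sum_{n=0}^{N-1}e^{\pi i\alpha n^2}=\oint_{\mathcal C}\frac{e^{\pi i\alpha z^2}}{e^{2\pi iz}-1}\,\de z,$$
where $\mathcal C$ is a positively oriented loop around exactly $\{0,1,\dots,N-1\}$, the integrand being meromorphic with simple poles at each $m\in\Z$ and residue $(2\pi i)^{-1}e^{\pi i\alpha m^2}$. One then deforms $\mathcal C$. For $z=c+t(1+i)$ one has $|e^{\pi i\alpha z^2}|=O(e^{-2\pi\alpha t^2})$, so the admissible directions of deformation are the slope-$1$ steepest--descent lines of the Gaussian, while horizontal displacements of $\mathcal C$ are not. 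I would push $\mathcal C$ onto the difference of two such lines, one near $z=0$ and one near $z=N$, the closing arcs at infinity contributing nothing and the small detours forced at the endpoints (where the relevant lines meet poles of $(e^{2\pi iz}-1)^{-1}$) producing half-residue contributions.

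The heart of the argument is the transformation of each line integral. Expanding $(e^{2\pi iz}-1)^{-1}$ geometrically and applying, term by term, the Fresnel evaluation $\int_{\R}e^{\pi i\alpha x^2+2\pi i\beta x}\,\de x=e^{\pifi}\alpha^{-\ha}e^{-\pi i\beta^2/\alpha}$ (each term being entire and rapidly decaying, hence rotatable back onto $\R$ by Cauchy) replaces the Gaussian parameter $\alpha$ by $-1/\alpha$ and produces the universal prefactor $e^{\pifi}\alpha^{-\ha}$; this is the Mordell-integral functional equation. Reducing $-1/\alpha$ modulo $2$ turns it into $U(\alpha)=\eta_1\alpha_1$, so that the poles dragged across during the deformation contribute exactly the partial sum $\s{\alpha_1}{\eta_1}{\lfloor N_1\rfloor}$ (with $N_1=\alpha N$), the superscript $(\eta_1)$ — i.e.\ a possible complex conjugation — being precisely $\mathrm{sgn}\,U(\alpha)$. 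What remains of each line is a steepest--descent integral for $e^{\pi iz^2/\alpha}$, which after translating the $z=N$ line back to the fundamental domain (this is where the phase $e^{\pm\pi i\alpha N^2}$, essentially the boundary term $e^{\pi i\alpha N^2}$ absent from $\su{\alpha}{N}=\sum_{n=0}^{N-1}$, and the offset of the base point by the fractional part $\{N_1\}$ appear) is precisely $\mathcal{F}_\alpha(0)$, respectively $\mathcal{F}_\alpha(\{N_1\})$ — the $\varepsilon$-semicircle in the contour $\Gamma_w$ being the regularization that assigns the correct half-residue at $z=w$. Assembling these pieces, subtracting $e^{\pifi}\alpha^{-\ha}\s{\alpha_1}{\eta_1}{\lfloor N_1\rfloor}$ as in the definition (\ref{Lambda}) of $\Lambda$, and taking complex conjugates yields (\ref{ERF-Fedotov-Klopp}).

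The main obstacle is the exact bookkeeping of the contour manipulation: one must verify that the arcs at infinity vanish, that every intermediate contour avoids the pole lattice $\Z$, and — the truly delicate point — pin down \emph{exactly} which residues (and which half-residues at $z=0$ and near $z=N$) are collected, together with the precise constants $e^{\pm\pifi}$, $\alpha^{-\ha}$, $e^{\mp\pi i\alpha N^2}$ produced by the Fresnel integral, by completing the square, and by the endpoint detours, so that they really combine into the two $\mathcal{F}_\alpha$-values at the arguments $0$ and $\{N_1\}$. The sign and conjugation combinatorics driven by $\eta_1=\mathrm{sgn}\,U(\alpha)$ and $\xi=-\eta$, though intricate, is mechanical once this analytic skeleton is fixed.
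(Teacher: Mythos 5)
First, note that the paper does not prove this statement: it is imported verbatim from Fedotov--Klopp \cite{Fedotov-Klopp} as a known Exact Renormalization Formula, so there is no internal proof to compare yours against. Judged on its own terms, your proposal correctly identifies the classical route (residue representation of $\su{\alpha}{N}$, deformation onto slope-$1$ steepest-descent lines through $0$ and $N$, a Mordell-integral functional equation trading $\alpha$ for $1/\alpha$, and the poles swept up in between producing $\s{\alpha_1}{\eta_1}{\lfloor N_1\rfloor}$), and this is indeed the circle of ideas behind the Fedotov--Klopp argument.

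As written, however, it is a plan rather than a proof, and the gap is exactly where you locate it yourself. Two points are more than bookkeeping. (1) The step ``expand $(e^{2\pi iz}-1)^{-1}$ geometrically and apply the Fresnel evaluation term by term'' does not work on a slope-$1$ line: the geometric series converges only on the half of the line with $\mathrm{Im}\,z>0$ (and a different series is needed on the other half), so termwise integration produces incomplete Gaussian integrals; summing those and estimating the tails is precisely how one obtains the \emph{approximate} formul\ae\ (\ref{arf1})--(\ref{arf2}) with unspecified constants, not an exact identity. The exact functional equation for the Mordell-type integral must be established by a different mechanism (e.g.\ difference equations plus Liouville, or a direct comparison of the contours $\Gamma_0$ and $\Gamma_{\{N_1\}}$), and that mechanism is what forces the specific regularization built into $\mathcal F_\alpha$ (the $\varepsilon$-semicircle and the attendant half-residues at the base points). (2) The identification of the two residual line integrals with $\mathcal F_\alpha(0)$ and $e^{-\pi i\alpha N^2}\mathcal F_\alpha(\{N_1\})$, with the precise prefactor $e^{-\pifi}\alpha^{-\ha}$ and the conjugation pattern governed by $\eta_1$, is the entire content of the theorem; your text states this as the desired outcome rather than deriving it. Until these computations are actually carried out, the argument is incomplete --- or one should simply cite \cite{Fedotov-Klopp}, as the paper does.
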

In order to write $\Gamma(\alpha,L)$ in terms of $\Lambda(\alpha,\lfloor L\rfloor)$, we notice that $\alpha\,L=\lfloor\alpha\lfloor L\rfloor\rfloor+H(\alpha,L)$, where $H(\alpha,L):=\alpha\{L\}+\{\alpha\lfloor L\rfloor\}\in[0,2)$. Moreover, if $%A
H(\alpha,L)\in[0,1)$ then $\lfloor\alpha\,L\rfloor=\lfloor\alpha\lfloor L\rfloor\rfloor$, while if $%A
H(\alpha,L)\in[1,2)$ then $\lfloor\alpha\,L\rfloor=\lfloor\alpha\lfloor L\rfloor\rfloor+1$.
Now,
a simple computation shows that for every $\alpha\in(0,1]$ and every $L\geq0$
\begin{eqnarray}
\Gamma(\alpha,L)=\Lambda(\alpha,\lfloor L\rfloor)+G_1(\alpha,L)-e^{\pifi}\,\alpha^{-\ha}\,G_2(\alpha,L),\label{Gamma=Lambda+err}
\end{eqnarray}
where $G_1(\alpha,L):=\{L\}\,e^{%\left(
\pi\,i\,\lfloor L\rfloor^2\alpha%\right)
}%\nonumber\\
$ and
\begin{eqnarray}
%&&F(\alpha,L):=\{L\}\,e^{%\left(
%\pi\,i\,\lfloor L\rfloor^2\alpha%\right)
%}\nonumber\\
%&&
%%$ and $
G_2(\alpha,L):=\begin{cases}
H(\alpha,L)\,e^{\pi\,i\,\lfloor\alpha L\rfloor^2\alpha_1}&\text{if $H(\alpha,L)\in[0,1)$},\\
e^{\pi\,i\,\left(\lfloor\alpha L\rfloor-1\right)^2\alpha_1}+(H(\alpha,L)-1)\,e^{\pi\,i\,\lfloor\alpha L\rfloor^2\alpha_1}&\text{if $H(\alpha,L)\in[1,2)$}.
%a&a\\b&b
\end{cases}\nonumber
%$
\end{eqnarray}

\begin{remark}
Applying the stationary phase method to the integrals in (\ref{ERF-Fedotov-Klopp}) and (\ref{Gamma=Lambda+err}) as in \cite{Fedotov-Klopp} one can obtain the approximate renormalization estimates (\ref{arf1}) and (\ref{arf2}) (possibly with different constants $C_1$, $C_2$, $C_5$, $C_6$).
% and, considering (\ref{Gamma=Lambda+err}), also the estimate (\ref{arf2}) (possibly with different constants $C_5$, $C_6$). %can be achieved.
\end{remark}
We want to describe $\su{\alpha}{t\,N}$ for $N\in\N$ and $t\in[0,1]$. In this case (\ref{arf2}) and (\ref{Gamma=Lambda+err}) can be rewritten as  % In order to do this we need to iterate the  
\begin{eqnarray}
&\su{\alpha}{t\,N}=e^{\frac{\pi}{4}i}\,\alpha^{-\ha}\,\s{\alpha_1}{\eta_1}{t\,\alpha\,N}+\Gamma(\alpha,t\,N),\label{erf1}\\
&%\left|
\Gamma(\alpha,t\,N)=\Lambda(\alpha,\lfloor t\,N\rfloor)+G_1(\alpha,t\,N)+e^{\pifi}\,\alpha^{-\ha}\,G_2(\alpha,t\,N).\label{erf2}%\right|\leq C_3\,\alpha^{-\ha}+C_4\label{arf3}.
\end{eqnarray}

\subsection{Continued Fractions With Even Partial Quotients}\label{ECF}
In this section we discuss the relation between the map $T$ and expansions in continued fractions with even partial quotients. 
Consider the following \emph{ECF-expansion} for $\alpha\in (0,1]$:
\begin{equation}
\alpha=\frac{1}{2k_1+\frac{\xi_1}{2k_2+\frac{\xi_2}{2k_3+\cdots}}}=:\les(k_1,\xi_1),(k_2,\xi_2),(k_3,\xi_3),\ldots\res,
\end{equation}
where $k_j\in\N$ and $\xi_j\in\{\pm1\}$, $j\in\N$. ECF-expansions have been introduced by Schweiger \cite{Schweiger82, Schweiger84} and studied by Kraaikamp-Lopes \cite{Kraaikamp-Lopes96}. Since $1=\les(1,-1),(1,-1),\ldots\res$, it is easy to see that every $\alpha\in\X$ has an infinite expansion with no $(1,-1)$-tail.
 
Using the notations introduced in Section \ref{AERFs} we notice that if $\alpha\in B(k,\xi)$, then $\alpha=\frac{1}{2k+\xi\,T(\alpha)}$. Therefore, %if
\begin{eqnarray}
&&\mbox{for}\hspace{.5cm}\alpha=\les(k_1,\xi_1),(k_2,\xi_2),(k_3,\xi_3),\ldots\res\in B(k_1,\xi_1), \nonumber\\
&&%\mbox{}\hspace{.5cm} 
T^n(\alpha)=\les(k_{n+1},\xi_{n+1}),(k_{n+2},\xi_{n+2}),\ldots\res\in B(k_{n+1},\xi_{n+1}),
\end{eqnarray}
i.e. $T$ acts as a shift on the space $\Omega^\N$, where $\Omega:=\N\times\{\pm1\}$. Despite its similarities with the Gauss map in the context of Euclidean continued fractions, the map $T$ has an indifferent fixed point at $\alpha=1$ and we have the following
%Schweiger \cite{Schweiger82} proved the following
\begin{theorem}[Schweiger, \cite{Schweiger82}]
The map $T:(0,1]\rightarrow(0,1]$ has a $\sigma$-finite, infinite, ergodic invariant measure $\mu_T$ which is absolutely continuous w.r.t. the Lebesgue measure on $(0,1]$. Its density is $\varphi_T(\alpha):=\frac{\de \mu_T(\alpha)}{\de \alpha}=\frac{1}{\alpha+1}-\frac{1}{\alpha-1}$.
%\begin{equation}
%
%\end{equation}
\end{theorem}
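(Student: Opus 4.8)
The plan is to verify directly that the claimed density $\varphi_T(\alpha) = \frac{1}{\alpha+1} - \frac{1}{\alpha-1} = \frac{1}{1+\alpha}+\frac{1}{1-\alpha}$ is invariant under the transfer operator associated with $T$, and then to invoke the standard machinery (existence of an a.c. $\sigma$-finite invariant measure for piecewise monotone expanding-type maps, together with an ergodicity argument) for the remaining assertions. First I would write down the inverse branches of $T$ explicitly: on $B(k,\xi) = \left(\tfrac{1}{2k+\xi},\tfrac{1}{2k}\right]$ (for the two sign choices) the map is $\alpha\mapsto \xi\left(\tfrac1\alpha - 2k\right)$, so the branch landing in $B(k,\xi)$ is $\psi_{k,\xi}(y) = \tfrac{1}{2k+\xi y}$ for $y\in(0,1]$, with $|\psi_{k,\xi}'(y)| = \tfrac{1}{(2k+\xi y)^2}$. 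The transfer operator identity to check is
\begin{equation}\nonumber
\sum_{k\in\N}\sum_{\xi=\pm1}\varphi_T\!\left(\tfrac{1}{2k+\xi y}\right)\frac{1}{(2k+\xi y)^2} = \varphi_T(y),\qquad y\in(0,1].
\end{equation}

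The key computational step is to evaluate the left-hand side. Plugging in $\varphi_T(\alpha)=\tfrac{1}{1+\alpha}+\tfrac{1}{1-\alpha}$ and simplifying $\varphi_T\!\left(\tfrac{1}{2k+\xi y}\right)\tfrac{1}{(2k+\xi y)^2}$, each summand becomes a difference of two simple rational functions of the form $\tfrac{1}{(2k+\xi y)(2k+\xi y\pm 1)}$, which telescopes when one writes it as $\tfrac{1}{2k+\xi y - 1}-\tfrac{1}{2k+\xi y+1}$ (up to signs depending on $\xi$). Summing over $k\in\N$ and over $\xi=\pm1$ and tracking the telescoping carefully, the interior terms cancel and only boundary contributions at $k=1$ survive, producing exactly $\tfrac{1}{1-y}+\tfrac{1}{1+y}=\varphi_T(y)$. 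This is a finite, elementary manipulation but it is the crux of the argument and the place where sign bookkeeping (the two families $B(k,+1)$ and $B(k,-1)$ interleave the partition of $(0,1]$) must be done with care; I expect this to be the main obstacle, mostly in organizing the telescoping across both the $k$-sum and the $\xi$-sum simultaneously.

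Having established transfer-operator invariance, it follows that $\mu_T$ with $\de\mu_T=\varphi_T\,\de\alpha$ is $T$-invariant. For $\sigma$-finiteness and infiniteness: $\varphi_T$ is locally integrable on $(0,1]$ but has a non-integrable singularity $\sim \tfrac{1}{1-\alpha}$ at the indifferent fixed point $\alpha=1$, so $\mu_T((0,1])=\infty$ while $\mu_T((0,1-\epsilon])<\infty$ for every $\epsilon>0$; this is immediate from the explicit formula. For ergodicity, I would appeal to the general theory for interval maps with an indifferent fixed point and finitely many full branches on a neighborhood of each point away from the fixed point — a Rényi-type condition or a Rokhlin/Adler folklore argument — or simply cite that $T$ is a factor of (equivalently conjugate, via the ECF shift, to) a mixing Markov shift; since the excerpt already attributes this theorem to Schweiger, for the write-up it suffices to verify the density computation in full and reference Schweiger's construction for the remaining (standard) properties. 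Alternatively, ergodicity of $T$ will also follow a posteriori from ergodicity of the jump transformation $R$ discussed in the later sections, so one may defer it.
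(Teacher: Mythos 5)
The paper does not prove this statement at all: it is quoted as a known result of Schweiger and used as a black box, so any actual argument you give is "a different route" by default. Your route is sound. The inverse branches are correctly identified as $\psi_{k,\xi}(y)=\tfrac{1}{2k+\xi y}$ with $|\psi_{k,\xi}'(y)|=(2k+\xi y)^{-2}$, and the key identity does check out: writing $u=2k+\xi y$ one gets
\begin{equation}\nonumber
\varphi_T\!\left(\tfrac1u\right)\tfrac{1}{u^2}=\tfrac{1}{u(u+1)}+\tfrac{1}{u(u-1)}=\tfrac{1}{u-1}-\tfrac{1}{u+1},
\end{equation}
and since all these terms are positive the two sums over $k$ may be telescoped separately (each $\xi$-family telescopes on its own along the odd integers $2k\pm1$; there is no interleaving between the two families to worry about), leaving $\tfrac{1}{1+y}$ from $\xi=+1$ and $\tfrac{1}{1-y}$ from $\xi=-1$, i.e.\ $\varphi_T(y)$. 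Two small points: your description of the branch domain as $\left(\tfrac{1}{2k+\xi},\tfrac{1}{2k}\right]$ is wrong for $\xi=-1$ (the paper's $B(k,-1)=\left(\tfrac{1}{2k},\tfrac{1}{2k-1}\right]$ has its endpoints the other way around), though this does not affect the inverse-branch formula you actually use; and the telescoping is easier than you anticipate, since the $\xi=\pm1$ sums never interact. The infiniteness and $\sigma$-finiteness claims are immediate from the non-integrable singularity of $\varphi_T$ at the indifferent fixed point $\alpha=1$, exactly as you say. Ergodicity is the one piece you do not actually prove but only cite or defer; that is consistent with the paper's own treatment (which cites Schweiger for the whole theorem), but if you want a self-contained write-up you would still need to supply a Rényi/bounded-distortion argument for $T$ or deduce ergodicity of $T$ from that of the jump transformation $R$.
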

One of the consequences of this fact is the anomalous growth of Birkhoff sums for integrable functions. %  This fact will not be used in the rest of the paper but we include it in order to show 
%\subsubsection{Birkhoff sums f}
Given $f\in L^1\left((0,1],\mu_T\right)$, $f\geq0$ $\mu_T$-almost everywhere, let $\mu_T(f)=\int_0^1f(\alpha)\,\de\mu_T(\alpha)$ and denote by $\mathrm{S}_n^T(f)$ the ergodic sum $\sum_{j=0}^{n-1}f\circ T^j$. 
Since $\mu_T((0,1])=\infty$, the Birkhoff Ergodic Theorem implies that $\frac{1}{n}\mathrm{S}_n^T(f)\rightarrow 0$ almost everywhere as $n\rightarrow\infty$. According to the Hopf's Ergodic Theorem there exists a sequence of measurable functions $\left\{a_n(\alpha)\right\}_{n\in\N}$ such that $\frac{1}{a_n(\alpha)}\mathrm{S}_n^T(f)(\alpha)\rightarrow\mu_T(f)$ for almost every $\alpha\in(0,1]$ as $n\rightarrow\infty$. The question \emph{\virg{Can the sequence $a_n(\alpha)$ be chosen independently of $\alpha$?}} is answered negatively by Aaronson's Theorem (\cite{AaronsonBook}, Thm. 2.4.2), according to which for almost every $\alpha\in(0,1]$ and for every sequence of constants $\{a_n\}_{n\in\N}$ either $\liminf_{n\rightarrow\infty}\frac{1}{a_n}\mathrm{S}_n^T(f)(\alpha)=0$ or $%\limsup_{n\rightarrow\infty}
\frac{1}{a_{n_k}}\mathrm{S}_{n_k}^T(f)(\alpha)\rightarrow\infty$ along some subsequence $\{a_{n_k}\}_{k\in\N}$ as $k\rightarrow\infty$. However, for weaker types of convergence such a sequence of constants can indeed be found. The following Theorem establishes $a_n=\frac{n}{\log n}$ and provides convergence in probability:
%In the cExploiting the techniques by 
%
% we have the following \emph{weak law of large numbers}:
\begin{theorem}[Weak Law of Large Numbers for $T$]\label{WLLNforT}
For every probability measure $\mathrm{P}$ on $(0,1]$, absolutely continuous w.r.t. $\mu_T$, for every $f\in L^1(\mu_T)$ and for every $\varepsilon>0$,
$$\mathrm{P}\!\left(\left|\frac{\mathrm{S}_n^T(f)}{\frac{n}{\log n}}-\mu_T(f)\right|\geq\varepsilon\right)\longrightarrow 0\hspace{1cm}\mbox{as $n\rightarrow\infty$.}$$
%In particular, considering $f(\alpha)=-\log \alpha$, we get $\frac{-\log (\alpha_0\cdots\alpha_{n-1})}{\frac{n}{\log n}}\longrightarrow\frac{\pi^2}{4}$ as $n\rightarrow\infty$ in any probability $\mathrm{P}\ll \mu$ .
\end{theorem}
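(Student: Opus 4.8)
The plan is to deduce the weak law of large numbers for $T$ from a corresponding statement for the jump transformation $R$, for which $\mu_R$ is an ergodic \emph{probability} measure, via a renewal-type argument relating the Birkhoff sums of $T$ and of $R$. First I would reduce to the case $\mathrm{P}=\mu_R$ (or $\mathrm{P}$ absolutely continuous w.r.t. $\mu_R$): since any $\mathrm{P}\ll\mu_T$ which we care about can be compared to $\mu_R$, and since a sequence of events of $\mu_R$-measure tending to $0$ also has $\mathrm{P}$-measure tending to $0$ whenever $\mathrm{P}\ll\mu_R$ (dominated convergence applied to the densities), it suffices to prove the convergence in probability under $\mu_R$.

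Next I would set up the renewal correspondence. Writing $\alpha=\les(k_1,\xi_1),(k_2,\xi_2),\ldots\res$, the map $T$ has an indifferent fixed point at $\alpha=1$, corresponding to long runs of $(1,-1)$ symbols; the jump transformation $R$ skips over these runs, so that one step of $R$ corresponds to a random number of steps of $T$, say $\varphi(\alpha)$ (the first return-type time, with $\mu_R$-integrable tails decaying like $1/m^2$, i.e. $\varphi\in L^{1,\infty}$ but $\varphi\notin L^1$, which is exactly the source of the $\log n$). Then $\mathrm{S}_n^T(f)$ is, up to boundary terms, an ergodic sum for $R$ of the induced function $\bar f(\alpha):=\sum_{j=0}^{\varphi(\alpha)-1} f(T^j\alpha)$ over a random number $\nu_n(\alpha):=\max\{m:\ \sum_{i=0}^{m-1}\varphi(R^i\alpha)\le n\}$ of $R$-steps. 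By the Birkhoff theorem for the ergodic probability measure $\mu_R$, $\tfrac1m\mathrm{S}_m^R(\bar f)\to\mu_R(\bar f)=\mu_T(f)$ (Kac-type identity) a.e., and similarly $\tfrac1m\sum_{i<m}\varphi(R^i\alpha)/\log m\to c$ in probability for the appropriate constant $c$, by the classical stable-law/renewal analysis for sums of i.i.d.-like heavy-tailed terms with slowly varying truncated mean (this is where I expect to invoke the renewal-type limit theorem proved in Appendix A, or at least its scalar version). Inverting the relation between $n$ and $\nu_n$ then gives $\nu_n\sim n\log n / (c\,\text{stuff})$ in probability, and substituting back yields $\mathrm{S}_n^T(f)\sim \nu_n\,\mu_T(f)\sim \tfrac{n}{\log n}\mu_T(f)$ in probability, after fixing the constant by testing on $f=\mathbf 1_{(0,1]}\in L^1(\mu_T)$? — no: one instead calibrates via a convenient $f\in L^1(\mu_T)$ whose $T$-Birkhoff sum is explicitly controllable, or simply tracks the normalizing constants through the renewal theorem so they cancel.

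The main obstacle is the second-moment/concentration control needed to turn the a.e. statements and the renewal asymptotics into \emph{convergence in probability with the deterministic normalization} $n/\log n$ — i.e. handling the randomness of $\nu_n$ and the heavy tail of $\varphi$ simultaneously. Concretely, one must show $\nu_n\log\nu_n /(n) \to$ const in probability (a weak renewal theorem for a heavy-tailed renewal process with infinite mean and regularly varying tail of index $1$), and then show that the fluctuations of $\mathrm{S}_{\nu_n}^R(\bar f)$ around $\nu_n\mu_R(\bar f)$ are of lower order — the latter is the easy part since $\bar f\in L^1(\mu_R)$ and $R$ is a Gibbs–Markov (hence exponentially mixing, spectral-gap) map, so a maximal inequality or $L\log L$ argument gives $\tfrac1m\mathrm{S}_m^R(\bar f)\to\mu_R(\bar f)$ in $L^1$ and with good deviation bounds, which together with a stopping-time/monotonicity sandwich (since $\mathrm{S}^R_m(\bar f)$ need not be monotone in $m$ unless $f\ge 0$, which we have by hypothesis $f\ge0$ $\mu_T$-a.e. — this is why positivity of $f$ is assumed) transfers to the random index $\nu_n$. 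I would carry the argument out in this order: (1) reduce to $\mathrm{P}\ll\mu_R$; (2) define $\varphi$, $\bar f$, $\nu_n$ and establish the Kac identity $\mu_R(\bar f)=\mu_T(f)$ and the tail estimate $\mu_R(\varphi>m)\asymp 1/m$; (3) prove the weak renewal asymptotic $\nu_n\asymp n/\log n$ in probability via Appendix A; (4) use $f\ge0$ together with Gibbs–Markov ergodicity of $R$ to sandwich $\mathrm{S}_n^T(f)$ between $\mathrm{S}^R_{\nu_n}(\bar f)$ and $\mathrm{S}^R_{\nu_n+1}(\bar f)$ up to an $O(\bar f(R^{\nu_n}\alpha))=o(n/\log n)$ error, and conclude.
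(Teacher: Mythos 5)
Your outline is a reasonable concretization of the standard infinite-ergodic-theory argument, but note that the paper itself offers no proof of Theorem~\ref{WLLNforT}: the remark following it simply cites Aaronson (\cite{AaronsonBook}, \S4, the Darling--Kac machinery). Your strategy --- induce via the jump transformation $R$, use the Kac identity to relate $\mu_R(\bar f)$ to $\mu_T(f)$, invert a weak renewal asymptotic for the number $\nu_n$ of $R$-blocks among $n$ $T$-steps, and sandwich using $f\ge 0$ --- is essentially what that citation hides. The reduction $\mathrm P\ll\mu_T\Leftrightarrow\mathrm P\ll\mu_R$ is sound, since both $\mu_T$ and $\mu_R$ are equivalent to Lebesgue measure on $(0,1]$.

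There is, however, a genuine gap in your step (3): you cannot obtain the weak renewal asymptotic for $\nu_n$ from Appendix~\ref{AppendixA}. Theorem~\ref{theorem:joint-limit-distr} is a renewal-type theorem for the process $\{\log\hat q_n\}_n$, whose increments are controlled by Birkhoff sums $\mathrm S_n^{\hat R}(\psi)$ of an $L^1(\mu_R)$ roof function $\psi$; it describes the overshoot and undershoot at the renewal time $\hat n_N=\min\{n:\hat q_n>N\}$ and is proved via mixing of a special flow with \emph{integrable} roof. What you actually need is a renewal theorem for the additive process $\sum_{j<m}\varphi(R^j\alpha)$, where $\varphi=\tau+1$ is the jump time --- and $\varphi$ is precisely \emph{not} in $L^1(\mu_R)$. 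Indeed the invariant density $\varphi_T(\alpha)=\tfrac{1}{\alpha+1}-\tfrac{1}{\alpha-1}=\tfrac{2}{1-\alpha^2}$ has a simple pole at the indifferent fixed point $\alpha=1$, so $\mu_T\big((1-1/m,1]\big)\asymp\log m$ and hence, by Kac, $\mu_R(\varphi>m)\asymp 1/m$ (regular variation of index $1$, not the ``$1/m^2$'' you write at first; only your later statement is correct). The required input is a Feller-type weak law for sums of index-$1$ regularly varying, suitably mixing variables, giving $\sum_{j<m}\varphi(R^j\alpha)\sim c\,m\log m$ in probability and therefore $\nu_n\log n/n\to\mathrm{const}$ in probability. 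That is exactly what the cited Aaronson theory supplies (via the wandering rate attached to the Darling--Kac set $(0,\tfrac12]$) and it is not in this paper's Appendix. Once you have that input and the correct tail estimate, the rest of your sandwich argument --- a.e.\ convergence of $\mathrm S_m^R(\bar f)/m$, the random-index substitution using $\nu_n\to\infty$ a.e., the boundary bound $\bar f(R^{\nu_n}\alpha)=o(\nu_n)$ from $\bar f\in L^1(\mu_R)$, and the reduction to $f\ge 0$ --- does go through.
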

\begin{remark}
The proof of Theorem \ref{WLLNforT} follows from standard techniques in infinite ergodic theory. See Aaronson  \cite{Aaronson-f-expansions} and \cite{AaronsonBook}, \S 4.
The same rate $\frac{n}{\log n}$ rate for the growth of Birkhoff sums for integrable observables over ergodic transformations preserving an infinite measure appears in several examples, e.g. the Farey map. A recent interesting example comes from the study of linear flows over regular $n$-gons, see Smillie and Ulcigrai \cite{Ulcigrai-Smillie}. %See also the work by Campanino and Isola \cite{CampaninoIsola1996}. % in the  Farey, other Theorem by Aaronson, Campanino\&Isola, \
%Corinna's octagon map...
\end{remark}

%F. Cellarosi \cite{Cellarosi} considered similar questions for $T$. The situation here is slightly different from the situation with the usual continued fractions because $T$ has only a $\sigma$-finite ergodic invariant measure $\mu$ due to the presence of the neutral fixed point $\alpha=1$. It resembles the case of Farey fractions and the corresponding Farey map. Namely, if $\alpha\in\left(\frac{1}{k+1},\frac{1}{k}\right]$, $k>1$, then $\frac{1}{\alpha}=k+\theta$, $0\leq\theta<1$, and we put $F\alpha=\frac{1}{\frac{1}{\alpha}-1}$. If $k=1$ then $F(\alpha)=\frac{1}{\alpha}-1$. The Farey map also has only $\sigma$-finite invariant measure and the logarithms of the denominators of the related fractions grow as \textcolor{black}{$\frac{n}{\log n}$}.

Let us come back to ECF-expansions. For $\alpha=\les(k_1,\xi_1),(k_2,\xi_2),\ldots\res$ the convergents have the form 
$$\frac{p_n}{q_n}=\frac{1}{2k_1+\frac{\xi_1}{2k_2+\frac{\xi_2}{2k_3+\cdots+\frac{\xi_{n-2}}{2k_{n-1}+\frac{\xi_{n-1}}{2k_n}}}}}=\les(k_1,\xi_1),(k_2,\xi_2),\ldots,(k_n,*)\res,\hspace{.5cm}(p_n,q_n)=1,$$
where \virg{$*$} denotes any $\xi_n=\pm1$.
They satisfy the following recurrent relations:
\begin{eqnarray}
p_n=2k_n\,p_{n-1}+\xi_{n-1}\,p_{n-2},\hspace{1cm}q_n=2k_n\,q_{n-1}+\xi_{n-1}\,q_{n-2}\label{recurrent-relations-p-q},
\end{eqnarray}
with $q_{-1}=p_0=0$, $p_{-1}=q_0=\xi_0=1$. 
Moreover, we have
\begin{equation}\label{formula pnqnp1-pnp1qn}
p_{n+1}q_{n}-p_{n}q_{n+1}=(-1)^n\prod_{j=0}^n\xi_j.
\end{equation}
The proof of (\ref{formula pnqnp1-pnp1qn}) follows from (\ref{recurrent-relations-p-q}) and
can be recovered \emph{mutatis mutandis} from the proof of the analogous result for Euclidean continued fractions. See, e.g., \cite{Rockett-Szusz}.

Set $\alpha_0:=\alpha$ and $\alpha_n:=T^n(\alpha)$. In Section \ref{iterated-renormalization-of-S}, we shall deal with the product $\alpha_0\alpha_1\cdots\alpha_{n-1}$. As in the case of Euclidean continued fractions, this product can be written in terms of the denominators of the convergents; however the formula involves the $\xi_n$ as well: for $n\in\N$,
\begin{equation}\label{prodalpha}
%Among the properties of even continued fractions let us mention $$
(\alpha_0\cdots\alpha_{n-1})^{-1}=q_n\left(1+\xi_n\,\alpha_n\frac{q_{n-1}}{q_n}\right).
\end{equation}%$$
Notice that, considering $f(\alpha)=-\log{\alpha}$, Theorem \ref{WLLNforT} reads as follows: for every $\varepsilon>0$ and every probability measure $\mathrm P$ on $(0,1]$, absolutely continuous w.r.t. $\mu_T$, $$\mathrm P\!\left(\left|\frac{-\log(\alpha_0\cdots\alpha_{n-1})}{\frac{n}{\log n}}-\frac{\pi^2}{4}\right|\geq\varepsilon\right)\rightarrow0\hspace{.5cm}\mbox{as $n\rightarrow\infty$}.$$
In other words, the product along the $T$-orbit of $\alpha$ decays subexponentially in probability.
%The following renewal-type limit theorem was proven in \cite{Cellarosi}:
%\begin{theorem}
%Let $L>0$ and $n_L=n_L(\alpha)=\min\{n\in\N:\:q_n>L\}$.  The ratio $\frac{q_{n_L}}{L}$ and any finite number of entries $(k_l,\xi_l)$, $n_L-N_1< l\leq n_L+N_2$ preceding and/or following the renewal time  has a limiting distribution as $L\rightarrow\infty$. 
%\end{theorem}
 
\subsection{The Jump Transformation $R$}\label{section: jump transformation}
In order to overcome the issues connected with the infinite invariant measure for $T$, it is convenient to introduce an \virg{accelerated} version of $T$, namely its associated \emph{jump transformation} (see \cite{Schweiger_ET_Fibred_Systems}) $R:(0,1]\rightarrow(0,1]$. Define the \emph{first passage time} to the interval $\left(0,\frac{1}{2}\right]$ as $\tau:(0,1]\rightarrow\N_0=\N\cup\{0\}$ as $\tau(\alpha):=\min\left\{j\geq0:\:T^j(\alpha)\in B(1,-1)^c=\left(0,\frac{1}{2}\right]\right\}$ and the jump transformation w.r.t. $\left(0,\frac{1}{2}\right]$ as $R(\alpha):=T^{\tau(\alpha)+1}(\alpha)$. Let us remark that this construction is very natural. For instance, if we consider the jump transformation associated to the Farey map w.r.t. the interval $(\frac{1}{2},1]$ we get precisely the celebrated Gauss map. Another example is given by the Zorich map, obtained by accelerating the Rauzy map, in the context of interval exchange transformations. 

The map $R$ was extensively studied %discussed 
in \cite{Cellarosi}. It is a Markov, uniformly expanding map with bounded distortion and has an invariant probability measure $\mu_R$ which is absolutely continuous w.r.t. the Lebesgue measure on $[0,1]$. The density of $\mu_R$ is given by $\varphi_R(\alpha):=\frac{\de\mu_R(\alpha)}{\de \alpha}=\frac{1}{\log 3}\left(\frac{1}{3-\alpha}+\frac{1}{1+\alpha}\right)$. For a different acceleration of $T$ in connection with the geometry of theta sums, see Berry and Goldberg \cite{Berry-Goldberg-1988}.

We want to describe a symbolic coding for $R$. Let us restrict ourselves to $\alpha\in\X$ and identify $\X$ with the subset $\dot{\Omega}^N\subset\Omega^\N$ of infinite sequences with no $(1,-1)$-tail. Let $\bar\omega=(1,-1)$. Given $\alpha=\les\omega_1,\omega_2,\omega_3,\ldots\res\in\dot{\Omega}^\N$ we have $\tau=\tau(\alpha)=\min\{j\geq0:\:\omega_{j+1}\neq\bar\omega\}$ and $R(\alpha)=\les\omega_{\tau+2},\omega_{\tau+3},\omega_{\tau+4},\ldots\res\in\dot\Omega^\N$. Setting $\Omega^*:=\Omega\smallsetminus\{\bar\omega\}$, $\Sigma:=\N_0\times\Omega^*$ and denoting by $\sigma=(h,\omega)\in\Sigma$ the $\Omega$-word $(\bar\omega,\ldots,\bar\omega,\omega)$ of length $h+1$ for which $\omega\in\Omega^*$, we can identify $\dot{\Omega}^\N$ and $\Sigma^\N$ and the map $R$ acts naturally as a shift over this space.
%
%%%%%%%%%%%%%%%%%%%%%%%%%% NATURAL EXTENSION %%%%%%
%It is also useful to introduce the natural extension of $R$, i.e. the map $\hat R:\Sigma^\Z\rightarrow\Sigma^\Z$, 
%$$(\ldots,\sigma_{-2},\sigma_{-1},\sigma_0;\sigma_1,\sigma_2,\sigma_3\ldots)\stackrel{\hat R}{\longmapsto}(\ldots,\sigma'_{-2},\sigma'_{-1},\sigma'_0;\sigma'_1,\sigma'_2,\sigma'_3\ldots),\hspace{.5cm}\sigma_j'=\sigma_{j+1}.$$
%The map $\hat R$ is invertible and naturally inherits an invariant probability measure $\mu_{\hat R}$.
%Let us write $\hat \sigma=(\hat\sigma^-,\hat\sigma^+)\in\Sigma^\Z$ where $\hat\sigma^-=(\sigma_0,\sigma_{-1},\sigma_{-2},\ldots)$ and $\hat\sigma^+=(\sigma_1,\sigma_2,\sigma_3,\ldots)$.
%Identifying $\hat\sigma=(\ldots,\sigma_{-1},\sigma_0;\sigma_1,\sigma_2,\ldots)$ with $\hat\omega=(\ldots,\omega_{-1},\omega_0;\omega_1,\omega_2,\ldots)\in\dot\Omega^\Z$ we set $\alpha=\hat\omega^+=\les\omega_1,\omega_2,\ldots\res\in\X$ and $\hat\omega^-=(\omega_0,\omega_{-1},\ldots)=\les(0,\xi_0);(k_0,\xi_{-1}),(k_1,\xi_{-1}),\ldots\res\in\left[-\frac{1}{3},1\right]\smallsetminus\Q$ where $\omega_j=(k_j,\xi_j)$, $j\in\Z$. %\frac{\xi_0}{2k_0+\frac{\xi_{-1}}{2k_{-1}+\frac{\xi_{-2}}{2k_{-2}+\ldots}}}$$
%The domain of $\hat R$, denoted by $\mathcal{D}(\hat R):=\Sigma^\Z$, is thus identified with the \virg{rectangle} $\left[-\frac{1}{3},1\right]\times(0,1]\smallsetminus\Q^2$. % For every $\hat\omega\in$
%%%%%%%%%%%%%%%%%%%%%%%%%%%%% NATURAL EXTENSION %%%%%%%%
 
For brevity, we denote $m^\pm=0\cdot m^\pm=(0,(m,\pm1))\in\Sigma$ and $h\cdot m^\pm=(h,(m,\pm1))\in\Sigma$. For $\alpha=(h_1\cdot m_1^\pm,h_2\cdot m_2^\pm,\ldots)\in\Sigma^\N$ define $\nu_0:=1$, $\nu_n=\nu_n(\alpha)=h_1+\ldots+h_n+n+1$ % $\upsilon v_n w_n \iota_n y_n \theta_n \epsilon_n, u_n$ 
and let $\hat q_n=\hat q_n(\alpha):=q_{\nu_n(\alpha)}(\alpha)$ be the denominator of the $n$-th $R$-convergent of $\alpha$. We shall refer to $\{\hat q_n\}_{n\in\N}$ as \emph{$R$-denominators} and to $(h_j\cdot m_j^\pm)$ as \emph{$\Sigma$-entries}.

In \cite{Cellarosi} the following estimates were proven:
\begin{lem}\label{growth of R-denominators}
\begin{itemize}
\item[(i)] For every $\alpha\in (0,1]$, $\hat q_n\geq 3^{n/3}$.
\item[(ii)] For Lebesgue-almost every $\alpha\in(0,1]$ and sufficiently large $n$, $\hat q_n\leq e^{C_{7} n}$, where $C_7>0$ is some constant.
%\begin{eqnarray}%\label{growth of R-denominators, lower bound}
%&&\mbox{for every $\alpha$,}\hspace{.5cm}\hat q_n\geq 3^{n/3};\label{growth of R-denominators, lower bound}\\
%\end{eqnarray}
%%\begin{eqnarray}%\label{growth of R-denominators, lower bound}
%%&&\mbox{for every $\alpha$,}\hspace{.5cm}\hat q_n\geq 3^{n/3};\label{growth of R-denominators, lower bound}\\
%\item a
%\begin{eqnarray}\mbox{for almost every $\alpha$,}\hspace{.5cm}\hat q_n\leq e^{C_{7\tiny{new}} n}\hspace{.5cm}\mbox{for sufficiently large $n$}.\label{growth of R-denominators, upper bound}
%\end{eqnarray}
\end{itemize}
\end{lem}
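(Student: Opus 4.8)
The plan is to exploit the recurrent relations (\ref{recurrent-relations-p-q}) together with the definition of the $\Sigma$-entries and the renewal times $\nu_n$. For part (i), the key observation is that each block of the $R$-expansion corresponding to a $\Sigma$-entry $h_j\cdot m_j^\pm$ contributes $h_j+1$ ECF-steps, and over such a block the denominator can only grow. More precisely, from (\ref{recurrent-relations-p-q}) with $\xi_{n-1}=\pm1$ and $k_n\geq1$ we always have $q_n\geq 2q_{n-1}-q_{n-2}\geq q_{n-1}$ (using $q_{n-1}\geq q_{n-2}$, which follows by induction from $q_1=2k_1\geq 2$, $q_0=1$). Hence the sequence $\{q_n\}$ is nondecreasing. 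To get the exponential lower bound I would show that over any three consecutive ECF-steps the denominator at least triples, i.e. $q_{n+3}\geq 3 q_n$; this comes from iterating $q_{n+1}\geq 2q_n-q_{n-1}$ and checking the worst case, which is a run of entries $(1,-1)$. A run of $(1,-1)$'s gives the linear recursion $q_{n+1}=2q_n-q_{n-1}$, so $q_n$ grows linearly ($q_n = q_{n-1}+(q_{n-1}-q_{n-2})$, constant increments) — but even here three steps from a base where $q_{n-1}\geq q_n/2$... actually the cleanest route: a $(1,-1)$-block of length $h$ followed by a non-$\bar\omega$ entry $(m,\pm1)$ with $m\geq1$ multiplies by at least $2$ at that last step while the linear part over the block contributes a factor growing like $h+1$; combining, one $\Sigma$-entry advances $q$ by a factor $\geq \frac{3}{?}$... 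I would instead directly prove $\hat q_n\geq 3^{n/3}$ by noting $\nu_n-\nu_{n-1}=h_n+1\geq 1$ and showing $q_{\nu_n}\geq 3\, q_{\nu_{n-1}}$ whenever $\nu_n-\nu_{n-1}\geq 3$, and handling $\nu_n-\nu_{n-1}\in\{1,2\}$ (which forces the entry to be non-$\bar\omega$, hence $m_n\geq 1$, giving a factor $\geq 2$ at that step, and combining two such consecutive small blocks to recover a factor $\geq 3$ over at most $3$ steps). Bookkeeping the indices $\nu_n$ against the ECF-index $q_n$ yields $\hat q_n = q_{\nu_n}\geq 3^{\nu_n/3}\geq 3^{n/3}$ since $\nu_n\geq n$.

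For part (ii), the natural approach is the standard one for continued-fraction denominators: write $\log \hat q_n$ as a Birkhoff-type sum and apply a pointwise ergodic theorem for the map $R$. Since $R$ is a Markov, uniformly expanding map with bounded distortion preserving the equivalent probability measure $\mu_R$ (as recalled from \cite{Cellarosi}), the function $\alpha\mapsto \log\frac{\hat q_{n+1}(\alpha)}{\hat q_n(\alpha)}$ is (up to a bounded cocycle error, controlled by bounded distortion) cohomologous to a fixed $L^1(\mu_R)$ observable — essentially $-\log|({R})'|$ restricted appropriately, or more concretely the observable $\sigma\mapsto \log(\text{leading coefficient of the M\"obius map for the first }\Sigma\text{-entry})$. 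One must check this observable is $\mu_R$-integrable: the $\Sigma$-entry $h\cdot m^\pm$ has $q$-growth factor of order $(h+1)\cdot m$ (a product over the $(1,-1)$-block and the final step), so the observable is $\asymp \log(h+1)+\log m$, and integrability amounts to $\sum_{h,m} \mu_R(\{h_1=h, m_1=m\})(\log(h+1)+\log m) < \infty$, which holds because these cylinder measures decay polynomially (like $h^{-2}$ in $h$ and $m^{-2}$ in $m$) — this is where I would invoke the tail estimates for $\Sigma$-entries established in \cite{Cellarosi} (or re-derive them from the explicit density $\varphi_R$). Birkhoff's theorem then gives $\frac{1}{n}\log\hat q_n \to \int \log(\cdots)\,\de\mu_R =: C_7 <\infty$ for $\mu_R$-a.e.\ $\alpha$, hence Lebesgue-a.e.\ $\alpha$ since $\mu_R\sim\mathrm{Leb}$; in particular $\hat q_n\leq e^{C_7 n}$ for all large $n$.

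The main obstacle is part (ii), specifically verifying the $L^1(\mu_R)$-integrability of the additive observable and handling the cocycle/distortion error cleanly so that the Birkhoff average of $\frac1n\log\hat q_n$ genuinely converges rather than merely being bounded. Part (i) is elementary but requires careful case analysis of short renewal blocks; the risk there is an off-by-a-constant in the exponent $n/3$, which I would pin down by tracking exactly how many ECF-steps a single $\Sigma$-entry can consume with only minimal $q$-growth (the extremal configuration being isolated non-$\bar\omega$ entries $(1,+1)$, each advancing the index by $1$ and $q$ by a factor $\geq \frac32$ in pairs). A secondary point to be careful about: for part (ii) one only needs an upper bound, so even without the exact value of $C_7$ it suffices to bound $\log\frac{\hat q_{n+1}}{\hat q_n}$ above by an integrable function and apply the ergodic theorem (or even just Borel–Cantelli with the tail estimates), which is a safer fallback if the cohomology bookkeeping proves delicate.
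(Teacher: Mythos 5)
First, a remark on the comparison itself: the paper does not prove this lemma — it quotes it from \cite{Cellarosi} — so your proposal can only be judged against the standard argument. For part (ii) your plan is sound and is essentially that argument (it is also what underlies (\ref{logqn-Birkhoff-g}) in Appendix A): $\log\hat q_n$ differs from a Birkhoff sum $\mathrm S_n^{R}(\psi)$ of a fixed observable by a bounded error, the observable grows like $\log((h+1)m)$ on the cylinder $\{h_1\cdot m_1^{\zeta_1}=h\cdot m^{\pm}\}$ whose measure is $O((h+1)^{-2}m^{-2})$ by Lemma \ref{lemma lemmaconstants}, so it is in $L^1(\mu_R)$, and Birkhoff's theorem together with $\mu_R\sim\mathrm{Leb}$ gives the a.e.\ exponential upper bound; your Borel--Cantelli fallback also suffices since only an upper bound is claimed.

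Part (i), however, contains a genuine error in the step that actually produces the exponent. Your concluding chain ``$\hat q_n=q_{\nu_n}\geq 3^{\nu_n/3}\geq 3^{n/3}$'' presupposes the per-ECF-step bound $q_{j+3}\geq 3q_j$ (equivalently $q_j\geq 3^{j/3}$ for all ECF indices $j$), and this is false: along a run of entries $\bar\omega=(1,-1)$ the recursion (\ref{recurrent-relations-p-q}) degenerates to $q_{j+1}=2q_j-q_{j-1}$, the increments are constant, and $q_j$ grows only \emph{linearly} (for $\alpha$ close to $1$ one has $q_j=j+1$ for arbitrarily many $j$, which violates $q_j\geq 3^{j/3}$ already for $j=4$). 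You observe this obstruction yourself mid-paragraph, but the final bookkeeping reverts to the false bound. The exponent $n/3$ in the lemma counts $R$-steps ($\Sigma$-blocks), not ECF-steps; the renewal structure is exactly what absorbs the long $(1,-1)$-runs into single blocks. What must be proved is a uniform multiplicative gain per block, and your sketch never settles on one (the ``factor $\geq 3/?$'' and ``$\geq\frac32$ in pairs'' remarks are left hanging). A clean repair: the single ECF-step from $q_{\nu_n-1}$ to $q_{\nu_n}$ already gives a factor $\geq\frac53$. Indeed, if $\zeta_n=+1$ then $q_{\nu_n}=2k_{\nu_n}q_{\nu_n-1}+q_{\nu_n-2}\geq 2q_{\nu_n-1}$, while if $\zeta_n=-1$ then $m_n\geq2$ forces $q_{\nu_n-1}\geq 3q_{\nu_n-2}$ and hence $q_{\nu_n}=2k_{\nu_n}q_{\nu_n-1}-q_{\nu_n-2}\geq\frac53 q_{\nu_n-1}$; all the intermediate steps between $q_{\nu_{n-1}}$ and $q_{\nu_n-1}$ are nondecreasing by your monotonicity observation. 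Since $(5/3)^3>3$, this yields $\hat q_n\geq(5/3)^n\hat q_0\geq 3^{n/3}$. With that substitution part (i) goes through; as written, it does not.
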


In Section \ref{iterated-renormalization-of-S}, we will need the following renewal-type limit theorem.
\begin{theorem}\label{renewal-type-limit-theorem-for-R}
Let $L>0$ and $\hat n_L=\hat n_L(\alpha)=\min\{n\in\N:\:\hat q_n>L\}$. Fix $N_1,N_2\in\N$. The ratios $\frac{\hat q_{\hat n_L-1}}{L}$ and $\frac{\hat q_{\hat n_L}}{L}$ and the entries $\sigma_{\hat n_L+j}$, $-N_1<j\leq N_2$ have a joint limiting probability distribution w.r.t. the measure $\mu_R$ as $L\rightarrow\infty$. 

In other words, there exists a probability measure $\mathrm Q^{(0)}=\mathrm Q^{(0)}_{N_1,N_2}$ on the space $(0,1]\times(1,\infty)\times\Sigma^{N_1+N_2}$ such that for every $0\leq a<b\leq1\leq c<d$ and every $(N_1+N_2)$-tuple $\underline\vartheta=\{\vartheta_j\}_{j=-N_1+1}^{N_2}\in\Sigma^{N_1+N_2}$ we have 
\begin{eqnarray}
&&\lim_{L\rightarrow\infty}\mu_R\left(\left\{\alpha:\:\:a<\frac{\hat q_{\hat n_L-1}}{L}< b,\:\:c<\frac{\hat q_{\hat n_L}}{L}< d,\:\:\sigma_{\hat n_L+j}=\vartheta_j,\:\:N_1<j\leq N_2\right\}\right)=\nonumber\\%\longrightarrow\nonumber\\
&&=%\longrightarrow
\mathrm Q^{(0)}\big(\,(a,b)\times(c,d)\times\{\underline\vartheta\}\,\big)\label{statement-rtltR}
\end{eqnarray}
\end{theorem}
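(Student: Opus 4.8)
The plan is to reduce Theorem \ref{renewal-type-limit-theorem-for-R} to a renewal-type theorem for the (aperiodic) renewal process generated by the increments $\log(\hat q_{n}/\hat q_{n-1})$ along the $R$-orbit. First I would recast the event in \eqref{statement-rtltR} in purely symbolic terms: since $\hat q_n = q_{\nu_n}$ depends only on the $\Sigma$-entries $\sigma_1,\ldots,\sigma_n$ (via the recurrence \eqref{recurrent-relations-p-q}), and since $R$ acts as the shift on $\Sigma^\N$, the conditions $a < \hat q_{\hat n_L-1}/L < b$, $c < \hat q_{\hat n_L}/L < d$ and $\sigma_{\hat n_L+j}=\vartheta_j$ together specify: (1) a constraint on the pair $(\hat q_{\hat n_L-1},\hat q_{\hat n_L})$ relative to the threshold $L$, which by the bounded-distortion/Markov structure of $R$ is essentially a constraint on the last $\Sigma$-entry $\sigma_{\hat n_L}$ together with the ``overshoot'' $\log(\hat q_{\hat n_L}) - \log L$; and (2) the prescription of finitely many entries before and after the renewal index. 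The key point is that $\log \hat q_n = \sum_{i=1}^n \big(\log \hat q_i - \log \hat q_{i-1}\big)$ is a Birkhoff sum of the function $\psi(\alpha) := -\log\big(\alpha_0\alpha_1\cdots\alpha_{\nu_1-1}\big)$ over the probability-preserving system $((0,1],\mu_R,R)$ — this is visible from \eqref{prodalpha}, which gives $\log q_{\nu_n}$ as (essentially) $-\log(\alpha_0\cdots\alpha_{\nu_n-1})$ up to a bounded, explicitly controlled error coming from the factor $(1+\xi_{\nu_n}\alpha_{\nu_n}q_{\nu_n-1}/q_{\nu_n})$, which lies in a fixed compact subinterval of $(0,\infty)$.

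The core analytic step is then a renewal theorem for $\hat n_L = \min\{n:\ \hat q_n > L\}$, i.e.\ for the first passage of the Birkhoff sums $\log\hat q_n$ above the level $\log L$. I would invoke the operator-renewal / spectral machinery for Gibbs--Markov maps: $R$ is (as stated in the excerpt, citing \cite{Cellarosi}) Markov, uniformly expanding with bounded distortion, hence a Gibbs--Markov map with respect to the partition into $\Sigma$-cylinders, and $\psi$ is locally constant on that partition (or at least Lipschitz on cylinders with exponential tails for its level sets by Lemma \ref{growth of R-denominators}), so the transfer operator twisted by $\psi$, $\mathcal{L}_s f = \mathcal{L}(e^{-s\psi} f)$, has a spectral gap and an analytic leading eigenvalue near $s=0$. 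Because $\psi$ is non-lattice — here I would need to check that $\{\log(\hat q_1/\hat q_0)\}$ does not lie in a coset of a discrete subgroup of $\R$, which follows since the values $\log q_{\nu_1}$ over the infinitely many cylinders $h\cdot m^\pm$ are not contained in any arithmetic progression — the Garsia--Lamperti / Feller renewal theorem, in its operator form (Sarig, Gou\"ezel, Melbourne--T\"or\"ok), gives that the renewal measure $\sum_n \mathcal{L}^n e^{-s\cdot(\text{level crossing})}$ has a limit; translating back, the pair (rescaled overshoot $\log\hat q_{\hat n_L} - \log L$, rescaled undershoot $\log L - \log \hat q_{\hat n_L-1}$) together with the finitely many surrounding entries $\sigma_{\hat n_L+j}$ converges jointly in distribution. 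Exponentiating the overshoot/undershoot gives exactly the limiting law of $(\hat q_{\hat n_L-1}/L, \hat q_{\hat n_L}/L)$, and since $\mu_R$ is equivalent to Lebesgue with bounded density, the conclusion for $\mu_R$ (as opposed to, say, the conditional measure on a cylinder) is immediate. The limiting measure $\mathrm Q^{(0)}$ is then described explicitly in terms of $\mu_R$, the conditional measures on $\Sigma$-cylinders, and the stationary-overshoot density $\propto$ (Lebesgue on the relevant range, weighted by $e^{\,\text{level}}$), but I would not grind through that bookkeeping.

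A cleaner alternative, which I would actually prefer to present, is to deduce Theorem \ref{renewal-type-limit-theorem-for-R} as a consequence of a single master renewal limit theorem proven in Appendix \ref{AppendixA} (the excerpt announces such an appendix), and here only do the reduction: express the event in \eqref{statement-rtltR} as $\{\alpha : \hat n_L(\alpha) = n,\ \ldots\}$ summed over $n$, use the shift-invariance to write $\sigma_{\hat n_L+j}=\vartheta_j$ as a cylinder condition on $R^{\hat n_L}\alpha$, and apply the appendix's theorem with the observable $\psi$ and the cylinder test functions. The main obstacle, and the place where care is genuinely required, is the non-lattice (aperiodicity) verification for $\psi = -\log(\alpha_0\cdots\alpha_{\nu_1-1})$ together with the regularity needed to run the spectral argument: one must rule out that all the $R$-return values of $\log\hat q_1$ lie in $c + \delta\Z$, handle the unbounded number of branches of $R$ (so that $\psi$ is unbounded, with tails governed by Lemma \ref{growth of R-denominators}(i), $\hat q_n\ge 3^{n/3}$), and confirm integrability $\mu_R(\psi)<\infty$, which ties back to the fact that $\mu_R$ is a genuine probability measure while $\mu_T$ is infinite. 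Once aperiodicity and these tail bounds are in place, the renewal theorem applies essentially off the shelf, and the joint convergence with the finitely many extra entries is a routine strengthening (a finite-dimensional ``marker'' attached to the renewal, handled by working with the twisted operator on the product of the symbolic space with the marker alphabet).
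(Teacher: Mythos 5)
Your second, ``cleaner'' route is exactly what the paper does: Theorem \ref{renewal-type-limit-theorem-for-R} is stated to be a special case of the master renewal-type result, Theorem \ref{theorem:joint-limit-distr}, whose proof is sketched in Appendix \ref{AppendixA}; the reduction you describe (rewrite the event as a first-passage event for the Birkhoff sums of the roof function approximating $\log\hat q_n$, and turn the conditions on $\sigma_{\hat n_L+j}$ into a cylinder condition) is precisely the paper's reduction. Where you genuinely diverge is in the analytic engine for the master theorem itself. The paper does \emph{not} use twisted transfer operators or spectral-gap operator renewal theory; it builds a special flow over the \emph{natural extension} $\hat R$ on $\Sigma^\Z$ with roof $\psi$ satisfying $\log\hat q_n=\mathrm S_n^{\hat R}(\psi)+g+\varepsilon_n$, localizes to small cylinders $\mathcal C$ so that the bounded correction $g$ is nearly constant (absorbing it into the flow time $\mathrm T=\log N-g_{\mathcal C}$), translates the undershoot/overshoot conditions into regions $\bar D_\Phi$ under the flow, and concludes from the \emph{mixing} of the special flow together with R\'enyi's lemma, following Sinai--Ulcigrai. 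Your operator-renewal alternative is a plausible and arguably more quantitative route (it would give rates, as in Ustinov's work, which the mixing argument does not), and you correctly identify the genuine technical obligations: aperiodicity of $\psi$, integrability $\mu_R(\psi)<\infty$, and the tails coming from the unbounded branch structure. Two caveats if you pursue it: (i) the conditions involve $\Sigma$-entries \emph{preceding} the renewal index, so some form of two-sided conditioning (effectively the natural extension, or Lemma \ref{lem: two-ratios}-type control of conditional measures given the past) is unavoidable --- calling this ``a routine marker'' understates the reason the paper works on $\Sigma^\Z$; and (ii) the bounded discrepancy between $\log\hat q_n$ and the Birkhoff sum means the first-passage time for $\hat q_n>L$ is not literally the first-passage time of the Birkhoff sum past $\log L$, so you would still need the cylinder-localization step (or an equivalent device) before the renewal theorem applies ``off the shelf.''
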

Theorem \ref{renewal-type-limit-theorem-for-R} is more general than the one given in \cite{Cellarosi} (Theorem 1.6 therein) because it also includes the $R$-denominator $\hat q_{\hat n_L-1}$ preceding the renewal time $\hat n_L$. However, it is a special case of Theorem \ref{theorem:joint-limit-distr} (whose proof is sketched in Appendix \ref{AppendixA}). %its proof is not substantially different, and will not be provided here. 
Let us just mention that it relies on the mixing property of a suitably defined special flow over the natural extension $\hat R$ of $R$. The same strategy was used before by Sinai and Ulcigrai \cite{Sinai-Ulcigrai08} in the proof of the analogous statement for Euclidean continued fractions. Another remarkable result in this direction is due to Ustinov \cite{Ustinov-On-statistical} who provides an explicit expression and an approximation, with an error term of order $\mathcal{O}\!\left(\frac{\log L}{L}\right)$, for their limiting distribution function.

\subsection{Estimates of the growth of $\Sigma$-entries}
In this section we prove a number of estimates for the growth of $\Sigma$-entries. The analogous results for Euclidean continued fraction expansions are well known, but in our case the proofs are more involved.

Recall that $\alpha=( h_1\cdot m_1^{\zeta_1},h_2\cdot m_2^{\zeta_2},\ldots)\in\Sigma^\N$. Let us fix a sequence $\underline{\sigma}=\{\sigma_j\}_{j\in\N}\in\Sigma^{\N}$. For every $n$ and every % $\sigma_1,\ldots,\sigma_n,
$s\cdot t^\zeta\in\Sigma$, set %denote by
\begin{eqnarray}
&&J_{n}=J_n(\underline{\sigma}):=\{\alpha:\:h_j\cdot m_j^{\zeta_j}=\sigma_j,\:j=1,\ldots,n\},\hspace{.3cm}\mbox{and}\nonumber\\
&&J_{n+1}[s\cdot t^\zeta]=J_{n+1}(\underline{\sigma})[s\cdot t^\zeta]:=\{\alpha\in J_n:\:h_{n+1}\cdot m_{n+1}^{\zeta_{n+1}}=s\cdot t^\zeta\}\subset J_n.\nonumber
\end{eqnarray}
\begin{lem}\label{lemma lemmaconstants}
Let $J_{n}$ and $J_{n+1}[s\cdot t^\zeta]$ be as above. Then 
\begin{equation}\label{lemmaconstants}
\frac{1}{30\,(s+1)^2t^2}\leq\frac{|J_{n+1}[s\cdot t^\zeta]|}{|J_{n}|}\leq\frac{6}{(s+1)^2t^2}%\hspace{.5cm}\mbox{and}\hspace{.5cm}C_?\frac{1}{t^2}\leq\frac{|J_{2n+1}^{(s,t)}|}{|J_{2n}^{(s)}|}\leq\frac{1}{t^2}C_?
\end{equation}
\end{lem}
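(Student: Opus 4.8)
The plan is to compute the ratio $|J_{n+1}[s\cdot t^\zeta]|/|J_n|$ via the change of variables given by the inverse branches of the map $R$, and then reduce everything to estimating the length of a single cylinder $J_1[s\cdot t^\zeta]$ of $\Sigma$-depth one, using the bounded-distortion property of $R$ recalled from \cite{Cellarosi}. First I would recall that $J_n = J_n(\underline\sigma)$ is exactly the cylinder on which the first $n$ $\Sigma$-entries of $\alpha$ are prescribed, hence the image $R^n(J_n) = (0,1]$ and $R^n$ restricted to $J_n$ is a diffeomorphism onto $(0,1]$ with derivative of controlled oscillation; similarly $J_{n+1}[s\cdot t^\zeta]$ is the preimage under $R^n|_{J_n}$ of the depth-one cylinder $J_1[s\cdot t^\zeta]\subset(0,1]$. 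By bounded distortion there is an absolute constant $D>0$ with $|(R^n)'(\alpha)|/|(R^n)'(\beta)| \le D$ for all $\alpha,\beta\in J_n$, so that
\begin{equation}\nonumber
\frac{1}{D}\,\frac{|J_1[s\cdot t^\zeta]|}{|(0,1]|} \;\le\; \frac{|J_{n+1}[s\cdot t^\zeta]|}{|J_n|} \;\le\; D\,\frac{|J_1[s\cdot t^\zeta]|}{|(0,1]|},
\end{equation}
which reduces the problem to two-sided bounds on $|J_1[s\cdot t^\zeta]|$ of the form $c_1\,(s+1)^{-2}t^{-2} \le |J_1[s\cdot t^\zeta]| \le c_2\,(s+1)^{-2}t^{-2}$, with the constants $30$ and $6$ then obtained by absorbing $D$.

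Next I would make the depth-one cylinder $J_1[s\cdot t^\zeta]$ explicit. By definition, $\alpha\in J_1[s\cdot t^\zeta]$ means the ECF-expansion of $\alpha$ begins with $s$ copies of $\bar\omega=(1,-1)$ followed by $(t,\zeta)$; equivalently $T^j(\alpha)\in B(1,-1)$ for $0\le j< s$ and $T^s(\alpha)\in B(t,\zeta)$. Since on $B(1,-1)=(\tfrac12,1]$ the map $T$ is the Möbius map $\alpha\mapsto \tfrac1\alpha - 1$ with $\xi=-1$, the composition of the $s$ inverse branches is a single Möbius transformation $\psi_s$ (with integer coefficients built from the recurrence (\ref{recurrent-relations-p-q}) for the word $\bar\omega^s$), and $J_1[s\cdot t^\zeta] = \psi_s(B(t,\zeta))$. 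One then computes $|\psi_s(B(t,\zeta))| = |\psi_s'(\theta)|\cdot|B(t,\zeta)|$ for some intermediate point $\theta$, where $|B(t,\zeta)| = \tfrac{1}{(2t)(2t-1)}$ or $\tfrac{1}{(2t)(2t+1)}$ according to $\zeta=\mp1$ — in both cases comparable to $t^{-2}$ up to absolute constants — and $|\psi_s'| \asymp (\text{denominator of }\bar\omega^s)^{-2}$. The denominators attached to the all-$\bar\omega$ word grow linearly: from (\ref{recurrent-relations-p-q}) with $k_j=1$, $\xi_j=-1$ one gets $q_j = q_{j-1}\cdot 2 - q_{j-2}$ hence $q_j = j+1$ (so $q_s\asymp s+1$), giving $|\psi_s'| \asymp (s+1)^{-2}$ uniformly over $B(t,\zeta)$. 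Multiplying the two comparabilities yields $|J_1[s\cdot t^\zeta]| \asymp (s+1)^{-2}t^{-2}$, and I would track the absolute constants carefully enough to land inside $[\tfrac{1}{30},6]$ after combining with $D$.

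The main obstacle I anticipate is bookkeeping the absolute constants so that the final bounds are genuinely $\tfrac{1}{30}$ and $6$ rather than merely $\asymp$: one must control simultaneously the distortion constant $D$ of $R^n$ (which in turn requires the uniform bounded-distortion statement for $R$ from \cite{Cellarosi}, applied to arbitrarily long words), the comparability of $|B(t,\zeta)|$ with $t^{-2}$ across both signs of $\zeta$ and small values of $t$, and the comparability of $|\psi_s'(\theta)|$ with $(s+1)^{-2}$ uniformly over $\theta\in B(t,\zeta)$ and over all $s$, including $s=0$ (where $\psi_0=\mathrm{id}$ and the estimate is immediate). A secondary technical point is the case $t=1$, $\zeta=-1$, which is excluded from $\Omega^*$ precisely because it would correspond to continuing the $\bar\omega$-tail; so one should note that $(t,\zeta)\in\Omega^* = \Omega\smallsetminus\{\bar\omega\}$ guarantees $|B(t,\zeta)|$ stays bounded away from the degenerate configuration and all the above comparabilities are uniform. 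Everything else is routine Möbius-map calculus.
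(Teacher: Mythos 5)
Your route is genuinely different from the paper's. The paper follows Khinchin's classical argument: it writes the endpoints of $J_n$ and $J_{n+1}[s\cdot t^\zeta]$ explicitly as ratios of ECF-convergents, uses (\ref{formula pnqnp1-pnp1qn}) to get the \emph{exact} identity
\begin{equation}\nonumber
\frac{|J_{n+1}[s\cdot t^\zeta]|}{|J_{n}|}=\frac{q_{\nu_n-1}\left(q_{\nu_n-1}+\zeta_n\,q_{\nu_n-2}\right)}{q_{\nu_{n+1}-1}\left(q_{\nu_{n+1}-1}+\zeta\,q_{\nu_{n+1}-2}\right)},
\end{equation}
and then, after expressing $q_{\nu_{n+1}-1},q_{\nu_{n+1}-2}$ through $q_{\nu_n-1},q_{\nu_n-2}$ via the recurrence (\ref{recurrent-relations-p-q}), factors out $\frac{1}{(s+1)^2t^2}$ exactly and bounds the remaining explicit function of $x=q_{\nu_n-2}/q_{\nu_n-1}$ by an elementary case analysis ($x\in[0,1]$ if $\zeta_n=+1$, $x\in[0,\tfrac13]$ if $\zeta_n=-1$), landing precisely on $\tfrac1{30}$ and $6$. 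Your approach instead transports everything to depth one by the inverse branch of $R^n$ and invokes bounded distortion. That is a legitimate and arguably more conceptual strategy, and your depth-one computation is sound (indeed $|J_1(s\cdot t^+)|=\big((2t(s+1)-s)(2t(s+1)+1)\big)^{-1}\asymp (s+1)^{-2}t^{-2}$, consistent with the explicit cylinders listed in the proof of Lemma \ref{lem: two-ratios}). What your approach buys is a cleaner separation between the combinatorics of a single $\Sigma$-entry and the dynamics of $R^n$; what it loses is the explicit constants.

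Two caveats. First, the quantitative content you defer to the distortion constant $D$ is exactly what this lemma is normally used to \emph{establish}: uniform bounded distortion of $R^n$ over all $n$ and all cylinders is usually derived from a Khinchin-type cylinder-ratio estimate (or from a R\'enyi condition proved by the same convergent computation), so leaning on a black-box $D$ from \cite{Cellarosi} risks being circular, and in any case you cannot recover the stated constants $\tfrac1{30}$ and $6$ without actually computing $D$ — your argument proves the lemma only with unspecified absolute constants. (For the paper's downstream uses in Lemmas \ref{lem:lemmaY(H)}, \ref{lem: two-ratios} and \ref{lem: creation conditional probabilities for Sigma} this would in fact suffice, but it is not the statement as written.) Second, a small slip: on $B(1,-1)=(\tfrac12,1]$ the branch is $T(\alpha)=2-\tfrac1\alpha$, not $\tfrac1\alpha-1$; this does not affect your conclusion $q_j=j+1$ for the all-$\bar\omega$ word, which is correct.
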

\begin{proof}
This proof follows closely the one given by Khinchin concerning Euclidean continued fraction (see \cite{Khinchin35}, \S 12).
Let us introduce the convergents $p_j/q_j$, $j=1,\ldots,\nu_n-1$ associated to $(\sigma_1,\ldots,\sigma_n)$. %Now t
The endpoints of the interval $J_n$ can be written as $$\frac{p_{\nu_n-1}}{q_{\nu_n-1}}\hspace{.5cm}\mbox{and}\hspace{.5cm}\frac{p_{\nu_n-1}-\zeta_n\, p_{\nu_n-2}}{q_{\nu_n-1}-\zeta_n\, q_{\nu_n-2}}.$$
Applying the recurrent relations (\ref{recurrent-relations-p-q}) $s+1$ times we define the convergents $p_j/q_j$, $j=1,\ldots,\nu_n+s=\nu_{n+1}-1$ corresponding to $(\sigma_1,\ldots,\sigma_n,s\cdot t^\zeta)$. The endpoints of the interval $J_{n+1}[s\cdot t^\zeta]$ are $$\frac{p_{\nu_{n+1}-1}}{q_{\nu_{n+1}-1}}\hspace{.5cm}\mbox{and}\hspace{.5cm}\frac{p_{\nu_{n+1}-1}-\zeta\, p_{\nu_{n+1}-2}}{q_{\nu_{n+1}-1}-\zeta\, q_{\nu_{n+1}-2}},$$
where $q_{\nu_{n+1}-2}=(s+1)q_{\nu_n-1}+s\,\zeta_n\,q_{\nu_n-2}$ and $q_{\nu_{n+1}-1}=(2t(s+1)-s)q_{\nu_n-1}+(2ts-s+1)\zeta_n\,q_{\nu_n-2}$ (the values of the corresponding numerators are unimportant).
Using the formula (\ref{formula pnqnp1-pnp1qn}) and setting $x=\frac{q_{\nu_n-2}}{q_{\nu_n-1}}$ we obtain
\begin{eqnarray}
&&\hspace{-1.cm}\frac{|J_{n+1}[s\cdot t^\zeta]|}{|J_{n}|}=\frac{q_{\nu_n-1}\left(q_{\nu_n-1}+\zeta_n\,q_{\nu_n-2}\right)}{q_{\nu_{n+1}-1}\left(q_{\nu_{n+1}-1}+\zeta\,q_{\nu_{n+1}-2}\right)}=\nonumber\\
&&\hspace{-1.cm}=\frac{1}{(s+1)^2t^2}\frac{(1+\zeta_n\,x)}{\left(2-\frac{s}{(s+1)t}+\zeta_n\,x\frac{2st-s+1}{(s+1)t}\right)\hspace{-.12cm}\left(2-\frac{s}{(s+1)t}+\zeta_n\,x\frac{2st-s+1+\zeta s}{(s+1)t}+\frac{\zeta}{t}\right)}=\nonumber\\
&&\hspace{-1.cm}=\frac{1}{(s+1)^2t^2}\frac{\mathrm{A}}{\mathrm{B}\,\mathrm{C}},\label{lemmaconstants1}
%\frac{2\,q_{\nu_n-2}^2\left(1-\frac{q_{\nu_n-3}}{q_{\nu_n-2}}\right)}{(4s^2-1)q_{\nu_n-2}^2-4s\,q_{\nu_n-2}\,q_{\nu_n-3}+q_{\nu_n-3}^2}=\frac{1}{s}\,\frac{2(1-x)}{\left(2s-\frac{1}{1}-\frac{x}{1}\right)\left(2+\frac{1}{s}-\frac{x}{s}\right)}\nonumber
\end{eqnarray}
where $\mathrm{A}$, $\mathrm{B}$ and $\mathrm{C}$ correspond to the terms in parentheses. We distinguish two main cases: (i) $\zeta_n=+1$ and (ii) $\zeta_n=-1$.
\begin{itemize}
\item[(i)] If $\zeta_n=+1$, then $0\leq x\leq 1$ and we get 
\begin{eqnarray}
1\leq \mathrm{A}\leq 2,\hspace{1cm}1\leq \mathrm{B}\leq 4,\hspace{1cm}1\leq \mathrm{C}\leq 5.\label{lemmaconstants-case-i}
\end{eqnarray}
The above estimates for $\mathrm{A}$ and $\mathrm{B}$ are elementary; the one for $\mathrm{C}$ is obtained discussing the cases $\zeta=+1$ ($\Rightarrow t\geq1$) and $\zeta=-1$ ($\Rightarrow t\geq2$) separately and is also elementary.
\item[(ii)] If $\zeta_n=-1$, then $m_n\geq2$ and by (\ref{recurrent-relations-p-q}) $0\leq x\leq\frac{1}{3}$. We get
\begin{eqnarray}
\frac{2}{3}\leq \mathrm{A}\leq 1,\hspace{1cm}\frac{2}{3}\leq \mathrm{B}\leq 2,\hspace{1cm}\frac{1}{2}\leq \mathrm{C}\leq 3.\label{lemmaconstants-case-ii}
\end{eqnarray}
\end{itemize}
Now, (\ref{lemmaconstants1}), (\ref{lemmaconstants-case-i}) and (\ref{lemmaconstants-case-ii}) give
\begin{equation}\nonumber
\frac{1}{30\,(s+1)^2t^2}=\frac{1}{(s+1)^2t^2}\frac{\frac{2}{3}}{4\cdot
5}\leq\frac{|J_{n+1}[s\cdot t^\zeta]|}{|J_{n}|}\leq\frac{1}{(s+1)^2t^2}\frac{2}{\frac{2}{3}\cdot
\frac{1}{2}}=\frac{6}{(s+1)^2t^2}.
\end{equation}
\end{proof}
%
%Let $J_n=J_n(h_1\cdot m_1^{\zeta_1},h_2\cdot m_2^{\zeta_2},\ldots,h_n\cdot m_n^{\zeta_n})$. %A simple estimate gives 
%\begin{equation}\nonumber
%|J_n|=|J_1|\prod_{j=1}^{n-1}\frac{|J_{j+1}[h_{j+1}\cdot m_{j+1}^{\zeta_{j+1}}]|}{|J_j|}
%\end{equation}
%and $$\frac{1}{6\,(h_1+1)^2m_1^2}\leq|J_1|\leq\frac{3}{2\,(h_1+1)^2m_1^2}$$
%Therefore, by (\ref{lemmaconstants}),
%$$\frac{5}{30^{n}\,\prod_{j=1}^n(h_j+1)^2\prod_{j=1}^nm_j^2}\leq|J_n|\leq\frac{6^{n}}{4\,\prod_{j=1}^n(h_j+1)^2\prod_{j=1}^nm_j^2}.$$
The next Lemma estimates the Lebesgue measure of the set of $\alpha$ for which the $\Sigma$-entries $h_j\cdot m_j^{\zeta_j}$ satisfy the inequalities $h_j\leq H_j-1$, $j=1,\ldots,n$, where $\{H_j\}_{j=1}^n$ is an arbitrary sequence.
\begin{lem}\label{lem:lemmaY(H)}
Let $\underline H=(H_1,\ldots,H_n)\in\N^n$ and set $%Y_n:=
Y(\underline H)%=Y_n(H_1,\ldots,H_n)
:=\{\alpha:\:h_1+1<H_1,\ldots,h_n+1<H_n\}$.
Then 
\begin{equation}\label{lemmaY(H)}
\left|Y(\underline H)\right|\geq%\frac{1}{4\pi^2}
\left(1-\frac{1}{H_1}\right)\prod_{j=2}^n\lambda_{H_j},
\end{equation}
where $\lambda_H=:1-\frac{4\pi^2}{H}$.
\end{lem}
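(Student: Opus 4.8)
The plan is to induct on $n$, peeling off one $\Sigma$-entry at a time and using the two-sided estimate from Lemma \ref{lemma lemmaconstants} to control the cost of imposing each new constraint $h_{j}+1<H_{j}$. The base case $n=1$ reduces to estimating $|\{\alpha:\:h_1+1<H_1\}|$ from below; since the $\Sigma$-entry $h_1\cdot m_1^{\zeta_1}$ records the number $h_1$ of leading $\bar\omega=(1,-1)$ blocks, the complementary event $\{h_1+1\geq H_1\}=\{h_1\geq H_1-1\}$ means $\alpha$ starts with at least $H_1-1$ copies of $(1,-1)$, i.e. $\alpha\in(\frac{p}{q},1]$ for the convergent $\frac{p}{q}=\les(1,-1),\ldots,(1,-1)\res$ of length $H_1-1$; a direct computation with the recurrence (\ref{recurrent-relations-p-q}) for the constant word $(1,-1)$ (whose denominators satisfy $q_n=2q_{n-1}-q_{n-2}$, hence $q_n=n+1$) gives $|\{h_1+1\geq H_1\}|$ comparable to $\frac{1}{H_1}$, and one checks $|Y(H_1)|\geq 1-\frac1{H_1}$.

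For the inductive step, I would condition on the first $n-1$ entries: write $J_{n-1}=J_{n-1}(\underline\sigma)$ for a cylinder determined by $(\sigma_1,\ldots,\sigma_{n-1})$ with $h_j+1<H_j$, and inside $J_{n-1}$ estimate the measure of the sub-cylinders on which $h_n+1<H_n$. By Lemma \ref{lemma lemmaconstants}, for each admissible $\sigma_n=s\cdot t^\zeta$ with $s\leq H_n-2$ we have $|J_n[s\cdot t^\zeta]|/|J_{n-1}|\geq \frac{1}{30(s+1)^2t^2}$, while the total measure $\sum_{s,t,\zeta}|J_n[s\cdot t^\zeta]|/|J_{n-1}|=1$ and the upper bound $6/((s+1)^2t^2)$ lets me bound the \emph{excluded} mass $\sum_{s\geq H_n-1}\sum_{t\geq 1}\sum_{\zeta}\frac{6}{(s+1)^2t^2}$. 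Using $\sum_{t\geq1}t^{-2}=\frac{\pi^2}{6}$, the two spin choices $\zeta=\pm1$ (with the caveat that $\zeta=-1$ forces $t\geq2$, which only helps), and $\sum_{s\geq H_n-1}(s+1)^{-2}\leq \frac{1}{H_n-1}\leq \frac{2}{H_n}$ for $H_n\geq2$, the excluded fraction is at most a constant multiple of $1/H_n$; tracking constants gives precisely $1-\frac{4\pi^2}{H_n}=\lambda_{H_n}$ as the guaranteed surviving fraction, so $|Y(H_1,\ldots,H_n)|\geq \lambda_{H_n}\,|Y(H_1,\ldots,H_{n-1})|$, and the product formula (\ref{lemmaY(H)}) follows. (If some $H_j\leq 4\pi^2$ the bound $\lambda_{H_j}\leq 0$ is vacuous and there is nothing to prove for that factor.)

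The main obstacle is bookkeeping the constants in the summation over excluded $\Sigma$-entries so that the bound comes out exactly as $1-\frac{4\pi^2}{H}$ rather than merely $1-O(1/H)$: one must be careful that the per-entry lower bound $\frac{1}{30(s+1)^2 t^2}$ and the per-entry upper bound $\frac{6}{(s+1)^2t^2}$ are applied consistently (the excluded mass should be estimated from \emph{above} using the upper bound, summed over all $t\in\N$ and both signs, and over $s\geq H_n-1$), and that the inequality $\frac{1}{H_n-1}\le\frac{2}{H_n}$ used to simplify is applied only where valid. A secondary subtlety is that the endpoints of $J_{n-1}$, and hence the value $x=q_{\nu_{n-1}-2}/q_{\nu_{n-1}-1}$ appearing in Lemma \ref{lemma lemmaconstants}, depend on $\zeta_{n-1}$; but since Lemma \ref{lemma lemmaconstants} already gives uniform bounds over all such $x$, this dependence is harmless and the induction goes through uniformly over the choice of the first $n-1$ entries.
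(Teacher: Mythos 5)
Your proposal is correct and follows essentially the same route as the paper: condition on cylinders realizing the first $n-1$ constraints, use the upper bound of Lemma~\ref{lemma lemmaconstants} to bound the mass excluded by $h_n\geq H_n-1$ (factor $12\cdot\frac{\pi^2}{6}\cdot\frac{1}{H_n-1}\leq\frac{4\pi^2}{H_n}$), and handle the $j=1$ factor by the exact computation $|\{h_1\geq H_1-1\}|=\frac{1}{H_1}$ using $q_n=n+1$ for the constant word $(1,-1)$. The paper presents this as an iterated peeling argument via the interpolating sets $W^{(\underline\sigma,\underline H)}_{j,n}$, but the decomposition, the key inequality, and the constants are identical to yours.
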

\begin{proof}
%We are interested in estimating the Lebesgue measure of $Y_n=Y_n(H_1,\ldots,H_n)=\{\alpha:\:h_1+1<H_1,\ldots,h_n+1<H_n\}$, where no constraint is put on $m_1^{\zeta_1},\ldots,m_n^{\zeta_n}$. In order do this, f
For $\underline\sigma\in\Sigma^n$ and $\underline H%,\underline M
\in \N^n$, let us define the set 
%$$W^{(\underline{\sigma},\underline{H},\underline{M})}_{j,n}=\left\{\alpha:\:h_i\cdot m_i^{\zeta_i}=\sigma_i,\,i=1,\ldots,j,\:\:h_l<H_l-1,\,m_l<M_l,\,l=j+1,\ldots,n\right\}.$$
$$W^{(\underline{\sigma},\underline{H})}_{j,n}:=\left\{\alpha:\:h_i\cdot m_i^{\zeta_i}=\sigma_i,\,i=1,\ldots,j,\:\:h_l<H_l-1,%\,m_l<M_l,\,
 l=j+1,\ldots,n\right\}.$$
Notice that %$W^{(\underline{\sigma},\underline{H},\underline{M})}_{n,n}=J_n(\sigma_1,\ldots,\sigma_n)$ 
$W^{(\underline{\sigma},\underline{H}%,\underline{M}
)}_{n,n}=J_n(\sigma_1,\ldots,\sigma_n)$ and does not depend on $\underline H$% and $\underline M$
. Moreover, $Y(H_1,\ldots H_n)=W^{(\underline{\sigma},\underline{H}%,\underline{1}
)}_{0,n}$. %, where $\underline{1}=(1,\ldots,1)\in N^n$.
Consider the following estimate obtained from the second inequality of (\ref{lemmaconstants}): for $S\in\N$
\begin{eqnarray}
\sum_{\tiny{\begin{array}{c}s \geq S-1\\t^\zeta\in\Omega^*\end{array}}%\geq T
}\left|J_{n+1}[s\cdot t^\zeta]\right|%=
%\hspace{-.1cm}\sum_{\tiny{\begin{array}{c}s\geq S-1\\t\in\Omega^*\end{array}}%\geq T
%}\left|J_{n+1}[s\cdot m^\zeta]\right|
\leq12\,|J_n|\hspace{-.1cm}\sum_{\tiny{\begin{array}{c}s\geq S-1\\ t\in\N\end{array}} %\geq T
}\frac{1}{(s+1)^2t^2}\leq\frac{4\pi^2|J_n|}{S%\,T
}.\label{estimate1}
\end{eqnarray}
%\begin{eqnarray}
%\left|\bigsqcup_{s \geq S-1,\:t\geq T}J_{n+1}[s\cdot t^\zeta]\right|=\sum_{s\geq S-1,\:t\geq T}\left|J_{n+1}[s\cdot m^\zeta]\right|\leq6 |J_n|\hspace{-.2cm}\sum_{s\geq S-1,\:t\geq T}\frac{1}{(s+1)^2t^2}\leq\frac{24|J_n|}{S\,T}.\label{estimate1}
%\end{eqnarray}
Now (\ref{estimate1}) yields %(decidere ST o HM....)
%By (\ref{lemmaconstants}), 
\begin{eqnarray}
\left|W^{(\underline{\sigma},\underline{H}%,\underline{M}
)}_{n-1,n}\right|&=&
\sum_{\tiny{\begin{array}{c}h_n<H_n-1\\m_n^{\zeta_n}\in\Omega^*%,\:m_n< M_n,\:\zeta_n=\pm1
\end{array}}}\left|J_{n}(\sigma_1,\ldots,\sigma_{n-1},h_n\cdot m_n^{\zeta_n})\right|=
|J_{n-1}|-\sum_{\tiny{\begin{array}{c}%h_n\cdot m_n^{\zeta_n}\in\Sigma,\:
h_n \geq H_n-1\\%,\:
m_n^{\zeta_n}\in\Omega^*%\geq M
\end{array}}}\left|J_{n}[h_n\cdot m_n^{\zeta_n}]\right|\geq
\nonumber\\
%=|J_{n-1}|-\sum_{\tiny{\begin{array}{c}%h_n\cdot m_n^{\zeta_n}\in\Sigma,\:
%h_n \geq H_n-1\\%,\:
%m_n^{\zeta_n}\in\Omega^*%\geq M
%\end{array}}}\left|J_{n}[h_n\cdot m_n^{\zeta_n}]\right|%\geq\nonumber
&\geq&|J_{n-1}|\left(1-\frac{4\pi^2}{H_n%\,M_n
}\right)=\lambda_{H_n%,M_n
}\left|W^{(\underline{\sigma},\underline{H}%,\underline{M}
)}_{n-1,n-1}\right|,\label{estimate2}
\end{eqnarray}

%\begin{eqnarray}
%&&\left|W^{(\underline{\sigma},\underline{H},\underline{M})}_{n-1,n}\right|=
%\left|\bigsqcup_{h_n<H_n-1,\:m_n< M_n,\:\zeta_n=\pm1}J_{n}(\sigma_1,\ldots,\sigma_{n-1},h_n\cdot m_n^{\zeta_n})\right|=\nonumber\\
%&&=|J_{n-1}|-\left|\bigsqcup_{h_n\cdot m_n^{\zeta_n}\in\Sigma,\:h_n \geq H_n-1,\:m_n\geq M}J_{n}[h_n\cdot m_n^{\zeta_n}]\right|%\geq\nonumber
%\geq|J_{n-1}|\left(1-\frac{48}{H_n\,M_n}\right)=\nonumber\\
%&&=\lambda_{H_n,M_n}\left|W^{(\underline{\sigma},\underline{H},\underline{M})}_{n-1,n-1}\right|,\label{estimate2}
%\end{eqnarray}
where $\lambda_{H_n%,M_n
}=\left(1-\frac{4\pi^2}{H_n%\,M_n
}\right)$. Considering the sum for $h_{n-1}<H_{n-1}-1,\:m_{n-1}^{\zeta_{n-1}}\in\Omega^*%< M_{n-1},\:\zeta_{n-1}=\pm1
$ in (\ref{estimate2}) we get
\begin{eqnarray}
&&\left|W^{(\underline{\sigma},\underline{H}%,\underline{M}
)}_{n-2,n}\right|\geq\lambda_{H_n%,M_n
}\left|W^{(\underline{\sigma},\underline{H}%,\underline{M}
)}_{n-2,n-1}\right|\geq\lambda_{H_n%,M_n
}\cdot\lambda_{H_{n-1}%,M_{n-1}
}\left|W^{(\underline{\sigma},\underline{H}%,\underline{M}
)}_{n-2,n-2}\right|.\label{estimate3}
\end{eqnarray}
Iterating (\ref{estimate3}) we come to
\begin{eqnarray}
&&\left|W^{(\underline{\sigma},\underline{H}%,\underline{M}
)}_{1,n}\right|\geq
%\lambda_{H_n%,M_n
%}\left|W^{(\underline{\sigma},\underline{H}%,\underline{M}
%)}_{n-2,n-1}\right|\geq
\prod_{j=2}^n\lambda_{H_j%,M_n
}%\cdot\lambda_{H_{n-1}%,M_{n-1}
%}
\left|W^{(\underline{\sigma},\underline{H}%,\underline{M}
)}_{1,1}\right|=\prod_{j=2}^n\lambda_{H_j%,M_n
}%\cdot\lambda_{H_{n-1}%,M_{n-1}
%}
\left|J_1(h_1\cdot m_1^{\zeta_1})\right|.\nonumber
\end{eqnarray}
and summing over $h_{1}<H_{1}-1,\:m_{1}^{\zeta_{1}}\in\Omega^*$ we get the desired estimate (\ref{lemmaY(H)}):
\begin{eqnarray}
|Y(\underline H)|=\left|W^{(\underline{\sigma},\underline{H}%,\underline{M}
)}_{0,n}\right|\geq
\left(1-\frac{1}{H_1}\right)\prod_{j=2}^n\lambda_{H_j%,M_n
}%\geq\frac{1}{4\pi^2}\prod_{j=1}^n\lambda_{H_j}
.\nonumber%\label{estimate4}%\nonumber
%\left|J_1(h_1\cdot m_1^{\zeta_1})\right|.\label{estimate4}
\end{eqnarray}
\end{proof}
Now we provide an estimate which will be useful later. Let us fix a sequence $\underline{\sigma}=\{\sigma_j\}_{j\in\N}\in\Sigma^\N$ and
let $J_n$ and $J_{n+1}[s\cdot t^\xi]$ be as before. Moreover, set 
\begin{eqnarray}
&&J_{n-1}'=J_{n-1}(\underline{\sigma}):=\{\alpha:\:h_j\cdot m_j^{\zeta_j}=\sigma_j,\:j=2,\ldots,n\}\hspace{.3cm}\mbox{and}\nonumber\\
&&J_{n}'[s\cdot t^\zeta]=J_{n}'(\underline{\sigma})[s\cdot t^\zeta]:=\{\alpha\in J_n':\:h_{n+1}\cdot m_{n+1}^{\zeta_{n+1}}=s\cdot t^\zeta\}\subset J_{n-1}'.\nonumber
\end{eqnarray}
\begin{lem}\label{lem: two-ratios}
\begin{equation}\nonumber
\left|\frac{|J_{n+1}[s\cdot t^\zeta]|}{|J_n|}\cdot\frac{|J_{n-1}'|}{|J_n'[s\cdot t^\zeta]|}-1\right|\leq C_8e^{-C_9 n}
\end{equation}
for some constants $C_8,C_9>0$.
\end{lem}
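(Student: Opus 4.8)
The plan is to write both ratios explicitly using the convergent formula (\ref{formula pnqnp1-pnp1qn}) exactly as in the proof of Lemma \ref{lemma lemmaconstants}, and then compare them term by term. Set $x=q_{\nu_n-2}/q_{\nu_n-1}$ as before; the ratio $|J_{n+1}[s\cdot t^\zeta]|/|J_n|$ equals $\frac{1}{(s+1)^2t^2}\cdot\frac{\mathrm A}{\mathrm B\,\mathrm C}$ where $\mathrm A,\mathrm B,\mathrm C$ are the affine-in-$x$ expressions appearing in (\ref{lemmaconstants1}), with coefficients depending on $s$, $t$, $\zeta$ and $\zeta_n$. For the primed ratio $|J_n'[s\cdot t^\zeta]|/|J_{n-1}'|$ one runs the identical computation but with the convergents $p_j'/q_j'$ attached to the shifted word $(\sigma_2,\ldots,\sigma_n)$; one gets the same rational function of $x'=q'_{\nu_n-\nu_1-1}/q'_{\nu_n-\nu_1}$ (note the index shift by $\nu_1$, but the structure is identical since the last $\Sigma$-entry $s\cdot t^\zeta$ and the preceding sign $\zeta_n$ are the same). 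Hence the quotient of the two ratios is $\dfrac{\mathrm A(x)\,\mathrm B(x')\,\mathrm C(x')}{\mathrm A(x')\,\mathrm B(x)\,\mathrm C(x)}$, and it suffices to show $|x-x'|\leq C e^{-C'n}$ together with the fact that $\mathrm A,\mathrm B,\mathrm C$ are bounded away from $0$ and $\infty$ on the relevant range of $x,x'$ (which is exactly the content of the case analysis (\ref{lemmaconstants-case-i})--(\ref{lemmaconstants-case-ii})) with uniformly bounded derivatives in $x$.

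The estimate $|x-x'|\leq Ce^{-C'n}$ is the heart of the matter and is the standard "synchronization of continued fraction tails read from the two ends" phenomenon. Both $x=q_{\nu_n-2}/q_{\nu_n-1}$ and $x'=q'_{\nu_n-\nu_1-1}/q'_{\nu_n-\nu_1}$ are, by the recurrences (\ref{recurrent-relations-p-q}), the values of the \emph{same} finite even continued fraction $\les(k_{\nu_n-1},\xi_{\nu_n-2}),(k_{\nu_n-2},\xi_{\nu_n-3}),\ldots\res$ read backwards, truncated at different depths (depth $\nu_n-1$ versus $\nu_n-1-\nu_1$). The difference between two such truncations of the same (Möbius-iterated) continued fraction is controlled by the product of the derivatives of the branch maps along the common portion, i.e. by $\prod_j (\text{something})^{-2}\leq \prod (q_{j-1}/q_j)$-type bounds; concretely one uses that $T$ (equivalently $R$) is uniformly expanding with bounded distortion — this is precisely the property of $R$ recalled just before Lemma \ref{growth of R-denominators}, and Lemma \ref{growth of R-denominators}(i), $\hat q_n\geq 3^{n/3}$, converts "depth $n$" into the exponential rate $e^{-C'n}$. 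I would phrase this cleanly as: the map sending the tail of the continued fraction to $x$ is a composition of $\Theta(n)$ branches of an expanding map, so two inputs differing only in their tail produce outputs that are $O(e^{-C'n})$ apart, uniformly.

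The main obstacle is bookkeeping rather than conceptual: one must be careful that the index offset $\nu_1$ between the unprimed and primed convergents does not spoil the comparison — the point is that only the \emph{last} $s+1$ steps (those coming from the entry $s\cdot t^\zeta$) and the ratio $x$ (resp. $x'$) of the two preceding denominators enter the formula, and both $x$ and $x'$ depend on $\underline\sigma$ only through $\sigma_2,\ldots,\sigma_n$ plus a boundedly-distorted contribution from the very first block, which is washed out at rate $e^{-C'n}$. Once $|x-x'|\leq Ce^{-C'n}$ is established, a one-line application of the mean value theorem to the smooth, non-vanishing function $(x,x')\mapsto \mathrm A(x)\mathrm B(x')\mathrm C(x')/(\mathrm A(x')\mathrm B(x)\mathrm C(x))$ on the compact parameter range yields the claimed bound $\left|\frac{|J_{n+1}[s\cdot t^\zeta]|}{|J_n|}\cdot\frac{|J_{n-1}'|}{|J_n'[s\cdot t^\zeta]|}-1\right|\leq C_8 e^{-C_9 n}$, with $C_8,C_9$ absolute (in particular independent of $s,t,\zeta$, since those cancel in the quotient).
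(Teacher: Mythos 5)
There is a genuine gap at the very first step of your argument. You assert that the primed ratio $|J_n'[s\cdot t^\zeta]|/|J_{n-1}'|$ is ``the same rational function of $x'$'' obtained by running the computation of Lemma \ref{lemma lemmaconstants} on the shifted word $(\sigma_2,\ldots,\sigma_n)$. But $J_{n-1}'$ and $J_n'[s\cdot t^\zeta]$ are \emph{not} cylinders of the expansion: by definition they leave the first $\Sigma$-entry completely free, so each is a countable disjoint union of cylinders, one inside every rank-one cylinder $J_1(\sigma)$, $\sigma\in\Sigma$. Their endpoints are therefore not convergents $p'/q'$ of the shifted word, and formula (\ref{lemmaconstants1}) does not apply to them directly; the ratio $|J_n'[s\cdot t^\zeta]|/|J_{n-1}'|$ is a weighted average of termwise cylinder ratios whose weights vary from one rank-one cylinder to the next. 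This is precisely where the paper's proof spends most of its effort: it observes $RJ_{n-1}'=J_{n-1}(\underline{\sigma}')$ and $RJ_n'[s\cdot t^\zeta]=J_n(\underline{\sigma}')[s\cdot t^\zeta]$, writes $|J_{n-1}'|=\int_{J_{n-1}(\underline{\sigma}')}\mathcal{P}(\mathbf{1})$ (and similarly for the other set) via the Perron--Frobenius operator $\mathcal{P}$ of $R$, computes the density $\mathcal{P}(\mathbf{1})$ explicitly in terms of the derivative $\psi^{(1)}$ of the digamma function, and then must show that the resulting extra factor $\mathcal{P}(\mathbf{1})(x_1)/\mathcal{P}(\mathbf{1})(x_2)$ is itself $1+O(e^{-Cn})$, using monotonicity, the bound $|\mathcal{P}(\mathbf{1})'|\leq 1$, and the fact that $x_1,x_2$ lie in a cylinder of exponentially small diameter. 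None of this appears in your proposal, and without some such argument the identification of the primed ratio with a cylinder ratio is false as an identity --- it holds only up to the factor just described.

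The remainder of your outline does match the paper: the unprimed ratio is the expression $\frac{1}{(s+1)^2t^2}\frac{\mathrm{A}(x)}{\mathrm{B}(x)\,\mathrm{C}(x)}$ of (\ref{lemmaconstants1}); $x$ and $x'$ share their $R$-expansion up to the $(n-1)$-st digit, whence $|x-x'|\leq 3^{(1-n)/3}$ by Lemma \ref{growth of R-denominators}(i); and a mean-value argument on the bounded, nonvanishing factors $\mathrm{A},\mathrm{B},\mathrm{C}$ gives a multiplicative error $1+O(e^{-C_9 n})$ uniformly in $s,t,\zeta$. So the gap is localized. It could in fact be closed without the transfer operator by a mediant inequality (the ratio of the two sums over the free first entry lies between the minimum and the maximum of the termwise ratios, each of which is exponentially close to the reference value), but an argument of this kind is indispensable and is missing from your proposal.
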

\begin{proof}
Let us observe that $R J_{n-1}'(\underline{\sigma})=J_{n-1}(\underline{\sigma}')$ and $R J_n'(\underline{\sigma})[s\cdot t^\zeta]=J_n(\underline{\sigma}')[s\cdot t^\zeta]$,
where $\underline{\sigma}'=\{\sigma'_j\}_{j\in\N}$ and $\sigma'_j=\sigma_{j+1}$. We have
% we have
\begin{equation}\nonumber
|J_{n-1}'|=\int_{J_{n-1}'}\!\mathbf{1}\,\de x=\int_{J_{n-1}(\underline{\sigma}')}\!\!\!\mathcal{P}(\mathbf{1})(x)\,\de x,\hspace{.5cm}|J_{n}'[s\cdot t^\zeta]|=\int_{J_{n}'[s\cdot t^\zeta]}\!\!\mathbf{1}\,\de x=\int_{J_n(\underline{\sigma}')[s\cdot t^\zeta]}\!\!\!\!\mathcal{P}(\mathbf{1})(x)\,\de x,
\end{equation}
where $\mathcal{P}$ is the Perron-Frobenius operator associated to $R$. The density $\mathcal{P}(\mathbf{1})(x)$ is computed as follows. The cylinders of rank one are of the form 
\begin{eqnarray}
J_1(h\cdot m^+)=\left(\frac{1+2 m h}{1+2m(h+1)},1+\frac{1-2m}{2m(h+1)-h}\right],\nonumber\\
J_1(h\cdot m^-)=\left(1+\frac{1-2m}{2m(h+1)-h},\frac{1+2h(m-1)}{2m(h+1)-2h-1}\right],\nonumber
\end{eqnarray}
and $R|_{J_1(h\cdot m^\pm)}(x)=\mp2m\pm\frac{1+h(x-1)}{h(x-1)+x}$. Therefore $\left(R|_{J_1(h\cdot m^\pm)}\right)'(y)=\mp(h-(h+1)y)^{-2}$ and $(R|_{J_1(h\cdot m^\pm)})^{-1}(x)=\frac{2hm-h+1\pm hx}{2hm+2m-h\pm(h+1)x}=:y_{h\cdot m^\pm}$. We get
\begin{eqnarray}
%R|_{J_1(h\cdot m^-)}(x)=2m-\frac{-1+h(x-1)}{h(x-1)+x}\\
&&\mathcal{P}(\mathbf{1})(x)=\sum_{y\in R^{-1}(x)}|R'(y)|^{-1}=\sum_{h\cdot m^\zeta\in\Sigma}(h-(h+1)y_{h\cdot m^\zeta})^{2}=\nonumber\\
&&=\sum_{h\geq0}\left(\sum_{m\geq1}\frac{1}{(2hm+2m-h+(h+1)x)^2}+\sum_{m\geq2}\frac{1}{(2hm+2m-h-(h+1)x)^2}\right)=\nonumber\\
&&=\sum_{h\geq0}\frac{1}{4(h+1)^2}\left(\psi^{(1)}\left(\frac{h+2+(h+1)x}{2h+2}\right)+\psi^{(1)}\left(\frac{3h+4-(h+1)x}{2h+2}\right)\right),\nonumber
\end{eqnarray}
where $\psi^{(1)}(x)=\frac{\de}{\de x}\frac{\Gamma'(x)}{\Gamma(x)}$ is the derivative of the digamma function. Notice that the function $\mathcal{P}(\mathbf{1})$ is differentiable and strictly decreasing on $[0,1]$; moreover 
\begin{equation}\label{eq: two-ratios0}
\mathcal{P}(\mathbf{1})'(0)\simeq-0.88575>-1\hspace{.5cm}\mbox{and}\hspace{.5cm}\mathcal{P}(\mathbf{1})'(1)=0.
\end{equation}
By the mean value theorem
\begin{equation}\label{eq: two-ratios1}
|J_{n-1}'|=\mathcal{P}(\mathbf{1})(x_1)\cdot|J_{n-1}(\underline{\sigma}')|\hspace{.5cm}\mbox{and}\hspace{.5cm}|J_{n}'[s\cdot t^\zeta]|=\mathcal{P}(\mathbf{1})(x_2)\cdot |J_n(\underline{\sigma}')[s\cdot t^\zeta]|,
\end{equation}
for some $x_1\in J_{n-1}(\underline{\sigma}')$ and $x_2\in J_n(\underline{\sigma}')[s\cdot t^\zeta]$. 
%Setting $x=\frac{q_{\nu_n-2}}{q_{\nu_n-1}}$ and $x'=\frac{q_{\nu_{n-1}-2}}{q_{\nu_{n-1}-1}}$ %, and $x^+=\frac{q_{\nu_{n+1}-2}}{q_{\nu_{n+1}-1}}$, 
%we have
%\begin{eqnarray}
%x=\frac{1}{2t-\frac{1}{2-\frac{1}{2-\ldots-\frac{1}{2+\zeta_{n-1}x'}}}}\hspace{.5cm}\mbox{(``$-\frac{1}{2\:\:\:}$'' appears $s$ times)}\nonumber%\hspace{1cm}x^+=\frac{1}{2m_{n+1}-\frac{1}{2-\frac{1}{2-\ldots-\frac{1}{2+\zeta_{n}x}}}},\nonumber
%\end{eqnarray}
%and for $0<x'<1$ we get $?<x<?$ %and $h_{n+1}$ respectively. In 
%particular we get
%\begin{eqnarray}
%%\zeta_{n-1}x'&=&\frac{1+h_n+(h_n-2m_n(h_n+1))x}{h_n+(h_n(1-2m_n)-1)x}.\label{eq: two-ratios2}%\\
%\zeta_{n-1}x'&=&\frac{1+s+(s-2t(s+1))x}{s+(s(1-2t)-1)x}.\label{eq: two-ratios2}%\\
%%x^+&=&\frac{h_{n+1}+1-h_{n+1}\zeta_n x}{2m_{n+1}(h_{n+1}+1)-h_{n+1}+(h_{n+1}(1-2m_{n+1})-1)\zeta_n x},\label{eq: two-ratios3}
%\end{eqnarray}

Let $\{p_j/q_j\}_{j\in\N}$ and $\{p_j'/q_j'\}_{j\in\N}$ be the sequences of $T$-convergents corresponding to $\underline{\sigma}$ and $\underline{\sigma}'$ respectively.
Set $x=\frac{q_{\nu_n-2}}{q_{\nu_n-1}}$ and $x'=\frac{q_{\nu_{n-1}-2}'}{q_{\nu_{n-1}-1}'}$. The ECF-expansions of $x$ and $x'$ coincide up to the $(n-1)$-st $R$-digit (see \cite{Cellarosi}, 
Lemma A.1) and therefore, by Lemma \ref{growth of R-denominators}(i), we have $|x-x'|\leq 3^{\frac{1-n}{3}}$.
Now, by (\ref{eq: two-ratios1}) and (\ref{lemmaconstants1}),  %and (\ref{eq: two-ratios2}) 
we get
\begin{eqnarray}
&&\hspace{-1.5cm}%\left|
\frac{|J_{n+1}[s\cdot t^\zeta]|}{|J_n|}\cdot\frac{|J_{n-1}'|}{|J_n'[s\cdot t^\zeta]|}%-1\right|
=\nonumber\\
%&&=%\left|
%\frac{q_{\nu_n-1}^2\left(q_{\nu_n-1}+\zeta\,q_{\nu_n-2}\right)^2}{q_{\nu_{n+1}-1}\left(q_{\nu_{n+1}-1}+\zeta\,q_{\nu_{n+1}-2}\right)q_{\nu_{n-1}-1}\left(q_{\nu_{n-1}-1}+\zeta_{n-1}\,q_{\nu_{n-1}-2}\right)}\frac{\mathcal{P}(\mathbf{1})(x_1)}{\mathcal{P}(\mathbf{1})(x_2)}%-1\right|
%\nonumber%\\
&&\hspace{-1.5cm}=\frac{(1+\zeta_n\,x)\left(2-\frac{s}{(s+1)t}+\zeta_{n}\,x'\frac{2st-s+1}{(s+1)t}\right)\left(2-\frac{s}{(s+1)t}+\zeta_{n}\,x'\frac{2st-s+1+\zeta s}{(s+1)t}+\frac{\zeta}{t}\right)\mathcal{P}(\mathbf{1})(x_1)}{(1+\zeta_{n}\,x')\left(2-\frac{s}{(s+1)t}+\zeta_n\,x\frac{2st-s+1}{(s+1)t}\right)\left(2-\frac{s}{(s+1)t}+\zeta_n\,x\frac{2st-s+1+\zeta s}{(s+1)t}+\frac{\zeta}{t}\right)\mathcal{P}(\mathbf{1})(x_2)}%\cdot\frac{}{}
.\label{eq: two-ratios2}
%&&=\nonumber
\end{eqnarray}
%Similarly to the proof of Lemma \ref{lemma lemmaconstants}, we have
%\begin{eqnarray}
%&&q_{\nu_{n}-2}=(s+1)q_{\nu_{n-1}-1}+s\,\zeta_{n-1}\,q_{\nu_{n-1}-2},\nonumber\\
%&&q_{\nu_{n}-1}=(2t(s+1)-s)q_{\nu_{n-1}-1}+(2ts-s+1)\zeta_{n-1}\,q_{\nu_{n-1}-2},\nonumber\\
%&&q_{\nu_{n+1}-2}=(s+1)q_{\nu_n-1}+s\,\zeta\,q_{\nu_n-2},\nonumber\\
%&&q_{\nu_{n+1}-1}=(2t(s+1)-s)q_{\nu_n-1}+(2ts-s+1)\zeta\,q_{\nu_n-2}.\nonumber
%\end{eqnarray}
%Thus
Noticing that $\zeta_n x\geq-\frac{1}{3}$ one can show that% either $\frac{1}{3}\leq\zeta_n x\leq1$ (if $m_n=1$) or $|\zeta_n x|\leq\frac{1}{3}$ (if $m_n\geq2$). Therefore
%\begin{eqnarray}
%&&\left|\frac{(1+\zeta_n\,x)}{(1+\zeta_{n}\,x')}-1\right|\leq\frac{3^\frac{1-n}{3}}{1+\zeta_n x}\leq\frac{3^{\frac{4-n}{3}}}{2}\nonumber\\
%&&\left|\frac{\left(2-\frac{s}{(s+1)t}+\zeta_{n}\,x'\frac{2st-s+1}{(s+1)t}\right)}{\left(2-\frac{s}{(s+1)t}+\zeta_{n}\,x\frac{2st-s+1}{(s+1)t}\right)}-1\right|\leq\frac{3^{\frac{1-n}{3}}(2st-s+1)}{2(s+1)t-s+(2st-s+1)\zeta_n x}\leq\frac{3^{\frac{4-n}{3}}}{2}\nonumber\\
%&&\left|\frac{\left(2-\frac{s}{(s+1)t}+\zeta_{n}\,x'\frac{2st-s+1+\zeta s}{(s+1)t}+\frac{\zeta}{t}\right)}{\left(2-\frac{s}{(s+1)t}+\zeta_n\,x\frac{2st-s+1+\zeta s}{(s+1)t}+\frac{\zeta}{t}\right)}-1\right|\leq\frac{3^{\frac{1-n}{3}}(2st-s+1+\zeta s)}{2st-s+\zeta(s+1)+(2st-s+1+\zeta s)\zeta_n x}\leq\frac{3^{\frac{4-n}{3}}}{2}\nonumber
%\end{eqnarray}
\begin{eqnarray}
&\left|\frac{(1+\zeta_n\,x)}{(1+\zeta_{n}\,x')}-1\right|,
\left|\frac{\left(2-\frac{s}{(s+1)t}+\zeta_{n}\,x'\frac{2st-s+1}{(s+1)t}\right)}{\left(2-\frac{s}{(s+1)t}+\zeta_{n}\,x\frac{2st-s+1}{(s+1)t}\right)}-1\right|,
\left|\frac{\left(2-\frac{s}{(s+1)t}+\zeta_{n}\,x'\frac{2st-s+1+\zeta s}{(s+1)t}+\frac{\zeta}{t}\right)}{\left(2-\frac{s}{(s+1)t}+\zeta_n\,x\frac{2st-s+1+\zeta s}{(s+1)t}+\frac{\zeta}{t}\right)}-1\right|
\leq\frac{3^{\frac{4-n}{3}}}{2}.\nonumber
\end{eqnarray}
Let us now consider the term $\frac{\mathcal{P}(\mathbf{1})(x_1)}{\mathcal{P}(\mathbf{1})(x_2)}$. To get estimates of it from above and below we can replace $x_1$ and $x_1$ with appropriate endpoints of  
%The endpoints of 
$J_{n-1}(\underline{\sigma}')$ and $J_n(\underline{\sigma}')[s\cdot t^\zeta]$.
Since $J_n(\underline{\sigma}')[s\cdot t^\zeta]\subset J_{n-1}(\underline{\sigma}')$, those four endpoints can be ordered in four different ways. Let us discuss only one of those cases, the others being similar. %Suppose %$$\frac{p_{\nu_{n-1}-1}'}{q_{\nu_{n-1}-1}'}\hspace{.5cm}\mbox{and}\hspace{.5cm}\frac{p_{\nu_{n-1}-1}'-\zeta_{n-1}'\, p_{\nu_{n-1}-2}'}{q_{\nu_{n-1}-1}'-\zeta_{n-1}'\, q_{\nu_{n-1}-2}'}.$$

Let the endpoints $y_1=\frac{p_{\nu_{n-1}-1}'}{q_{\nu_{n-1}-1}'}$, $y_2=\frac{p_{\nu_{n-1}-1}'-\zeta_{n}\, p_{\nu_{n-1}-2}'}{q_{\nu_{n-1}-1}'-\zeta_{n}\, q_{\nu_{n-1}-2}'}$, $z_1=\frac{p_{\nu_{n}-1}'}{q_{\nu_{n}-1}'}$, $z_2=\frac{p_{\nu_{n}-1}'-\zeta\, p_{\nu_{n}-2}'}{q_{\nu_{n}-1}'-\zeta\, q_{\nu_{n}-2}'}$ be arranged as follows:
$0<y_1<z_1< z_2<y_2<1$.
Then, since the function $\mathcal{P}(\mathbf{1})$ is decreasing, $y_1\leq x_1\leq y_2$ and $z_1\leq x_2\leq z_2$, we get $$\frac{\mathcal{P}(\mathbf{1})(x_1)}{\mathcal{P}(\mathbf{1})(x_2)}\leq%\frac{\mathcal{P}(\mathbf{1})(z_2)}{\mathcal{P}(\mathbf{1})(y_1)}=
1+\frac{\mathcal{P}(\mathbf{1})(z_2)-\mathcal{P}(\mathbf{1})(y_1)}{\mathcal{P}(\mathbf{1})(y_1)}$$
Let us use (\ref{eq: two-ratios0}), the fact that $z_2$ and $y_1$ have the same $R$-expansion up to the $(n-1)$-st digit, (\ref{lemmaconstants}) and the fact that $\mathcal{P}(\mathbf{1})(1)\simeq0.90238$:
\begin{eqnarray}
\frac{\left|\mathcal{P}(\mathbf{1})(z_2)-\mathcal{P}(\mathbf{1})(y_1)\right|}{\mathcal{P}(\mathbf{1})(y_1)}\leq\frac{|z_2-y_1|}{\mathcal{P}(\mathbf{1})(y_1)}\leq\frac{C_{10}\,3^{\frac{1-n}{3}}}{(s+1)t\,\mathcal{P}(\mathbf{1})(y_1)}\leq C_{11}\,3^{\frac{1-n}{3}}\nonumber
\end{eqnarray}
for some constants $C_{10},C_{11}>0$. %CAMBIARE IL NOME ALLE COSTANTI!%
%Thus $$\left|\frac{\mathcal{P}(\mathbf{1})(x_1)}{\mathcal{P}(\mathbf{1})(x_2)}-1\right|\leq C' 3^{\frac{1-n}{3}}$$
Thus we get the desired estimate:
\begin{eqnarray}
\left|\frac{|J_{n+1}[s\cdot t^\zeta]|}{|J_n|}\cdot\frac{|J_{n-1}'|}{|J_n'[s\cdot t^\zeta]|}-1\right|\leq % 3\frac{3^{\frac{4-n}{3}}}{2}+ C'\,3^{\frac{1-n}{3}}+3 \frac{3^{\frac{8-2n}{3}}}{4}+3 \frac{3^{\frac{4-n}{3}}}{2}C'\,3^{\frac{1-n}{3}}+C'\,3^{\frac{1-n}{3}}3\frac{3^{\frac{8-2n}{3}}}{4}+\frac{3^{\frac{12-3n}{3}}}{8}+\frac{3^{\frac{12-3n}{3}}}{8}+C'\,3^{\frac{1-n}{3}}\leq
C_{8}e^{-C_9 n},\nonumber
\end{eqnarray}
for some $C_8,C_9>0$.
\end{proof}
\section{Iterated Renormalization of $\gamma_{\alpha,N}$%$\su{\alpha}{N}$
}\label{iterated-renormalization-of-S}
In Section \ref{AERFs}  we discussed the renormalization of $\gamma_{\alpha, N}$, i.e. the procedure which \virg{erases} the geometric structure at smallest scale in the curve $\gamma_{\alpha,N}$.
Now we want to iterate the renormalization formula (\ref{erf1}). In order to do this, we consider $\alpha_0:=\alpha$, $\alpha_l:=T^l(\alpha_0)$ (as in Section \ref{ECF}), $N_0:=N$, $N_{l}:=\alpha_{l-1}\,N_{l-1}$, $\eta_0:=1$ and $\eta_{l}:=\eta(\alpha_{l-1})$, $l\in\N$. Define also $\kappa_0:=0$, $\kappa_l:=\kappa_l(\alpha):=1+\eta_1+\eta_1\eta_2+\cdots+\eta_1\eta_2\cdots\eta_{l-1}$. %and %%Using the notations introduced in Section \ref{ECF} w
%\begin{equation}\label{Delta}
%\Delta_l(t)=\Delta_l(t;\alpha,N):=%\tilde{\Lambda}(\alpha_l,t\,N_l)=
%\alpha_l^\ha\,
%\exp\left\{\kappa_l\,\frac{\pi}{4}i\right\}\cdot\Gamma^{(\eta_1\eta_2\cdots\eta_l)}(\alpha_l,t\,N_l)
%\end{equation}
%for $l\in\N_0$ and $t\in[0,1]$. Clearly, by (\ref{arf2}), $\left|\Delta_l(t)\right|\leq C_5+C_6\alpha_l^\ha\leq C_7$, where $C_7=C_5+C_6$. An explicit formula for $\Delta_l(T)$ can be obtained by \ref{erf2}:
%\begin{equation}\nonumber
%\Delta_l(t;\alpha,N)=\alpha_l^\ha\,\exp\left\{\kappa_l\,\pifi\right\}\cdot\left(\Lambda(\alpha_l,\lfloor t\,N_l\rfloor)+F(\alpha_l,t\,N_l)+e^{\pifi}\,\alpha_l^{-\ha}\,G(\alpha_l,t\,N_l)\right)^{(\eta_1\eta_2\cdots\eta_l)}.
%\end{equation}
With these notations, iterating (\ref{erf1}) $r$ times we get
\begin{eqnarray}
\su{\alpha}{t\,N}&=&(\alpha_0\cdots\alpha_{r-1})^{-\ha}\,\Big(\exp\left\{\kappa_r\,\pifi\right\}\,
\s{\alpha_r}{\eta_1\cdots\eta_r}{t\,N_r}+\nonumber\\
&&+\exp\left\{\kappa_{r-1}\,\pifi\right\}\alpha_{r-1}^\ha\Gamma^{(\eta_1\cdots\eta_{r-1})}(\alpha_{r-1},t \,N_{r-1})+\nonumber\\
&&+\exp\left\{\kappa_{r-2}\,\pifi\right\}\left(\alpha_{r-2}\alpha_{r-1}\right)^\ha\Gamma^{(\eta_1\cdots\eta_{r-2})}(\alpha_{r-2},t\,N_{r-2})+\ldots+\nonumber\\
&&+\exp\left\{\kappa_{r-j}\,\pifi\right\}\left(\alpha_{r-j}\cdots\alpha_{r-1}\right)^\ha\Gamma^{(\eta_1\cdots\eta_{r-j})}(\alpha_{r-j},t\,N_{r-j})+\ldots+\nonumber\\
&&+\exp\left\{\kappa_{0}\,\pifi\right\}\left(\alpha_{0}\cdots\alpha_{r-1}\right)^\ha\Gamma^{(1)}(\alpha_{0},t\,N_{0})\Big)=\nonumber\\
&=&(\alpha_0\cdots\alpha_{r-1})^{-\ha}\Big(\exp\left\{\kappa_r\,\pifi\right\}\s{\alpha_r}{\eta_1\cdots\eta_r}{t\,N_r}+\nonumber\\
&&+\sum_{j=1}^r\exp\left\{\kappa_{r-j}\,\pifi\right\}(\alpha_{r-j}\cdots\alpha_{r-1})^\ha\,\Gamma^{(\eta_1\cdots\eta_{r-j})}(\alpha_{r-j},t\,N_{r-j})\Big).
%%\Delta_{r-1}(t)+\alpha_{r-1}^\ha\,\Delta_{r-2}(t)+\nonumber\\
%&&%+\alpha_{r-1}^\ha\,\Delta_{r-2}(t)
%+(\alpha_{r-2}\,\alpha_{r-1})^\ha\,\Delta_{r-3}(t)+\ldots+(\alpha_{r-j+1}\cdots\alpha_{r-1})^\ha\,\Delta_{r-j}(t)+\ldots+\nonumber\\
%&&%+(\alpha_{r-j+1}\cdots\alpha_{r-1})^\ha\,\Delta_{r-j}(t)
%+(\alpha_2\cdots\alpha_{r-1})^\ha\,\Delta_1(t)+(\alpha_1\cdots\alpha_{r-1})^\ha\,\Delta_0(t)\Big).
\label{iteratederf1}
\end{eqnarray}

Our next step is to choose $r$ as a function of $N$ and $\alpha$ so that $N_r=\alpha_0\cdots\alpha_{r-1}\,N$ is $\mathcal{O}(1)$, that is $(\alpha_0\cdots\alpha_{r-1})^{-\ha}=\mathcal{O}(\sqrt{N})$. We make use of the relation (\ref{prodalpha}) and we define $r$ in terms of the $R$-denominator corresponding to the renewal time $\hat n_N$. For $\alpha=(h_1\cdot m_1^\pm,h_2\cdot m_2^\pm,\ldots)\in\Sigma^\N$, %as follows:
%following relation: for $n\in\N$
%$$(\alpha_0\cdots\alpha_{n-1})^{-1}=q_n\left(1+\xi_n\,\alpha_n\frac{q_{n-1}}{q_n}\right).$$
set $r=r(\alpha,N):=\nu_{\hat n_N}-1=h_1+\ldots+h_{\hat n_N}+\hat n_N$, where $\hat n_N=\min\{n\in\N:\:\hat q_n>N\}$ as in Theorem \ref{renewal-type-limit-theorem-for-R}. Define
$\alpha_0\cdots\alpha_{r(\alpha,N)-1}\,N=N_{r(\alpha,N)}=:\Theta_\alpha(N)$. We have the following
\begin{prop}\label{prop: limiting distribution for Theta}
$\Theta_\alpha(N)$ has a limiting probability distribution on $(0,\infty)$ w.r.t. $\mu_R$ as $N\rightarrow\infty$. In other words: there exists a probability measure $\mathrm Q^{(1)}$ on $(0,\infty)$ such that for every $0<a< b$ we have
\begin{equation}\nonumber
\lim_{N\rightarrow\infty}\mu_R\left(\left\{\alpha:\:\:a<\Theta_\alpha(N)< b\right\}\right)=\mathrm Q^{(1)}\big((a,b)\big).
\end{equation}
\end{prop}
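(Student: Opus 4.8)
The plan is to write $\Theta_\alpha(N)$ in closed form as a continuous function of a finite vector of quantities whose joint limiting $\mu_R$-distribution is supplied by Theorem~\ref{renewal-type-limit-theorem-for-R}, then let $N\to\infty$ via the continuous mapping theorem, and finally remove an auxiliary truncation by a Cauchy argument. For the closed form, put $r=r(\alpha,N)=\nu_{\hat n_N}-1$ and $y:=q_{r-1}/q_r\in[0,1)$. Formula~(\ref{prodalpha}) gives $(\alpha_0\cdots\alpha_{r-1})^{-1}=q_r(1+\xi_r\,\alpha_r\,y)$ with $\alpha_r=T^{\,r}\alpha$ and $\xi_r$ the $r$-th ECF-sign of $\alpha$; the recurrence (\ref{recurrent-relations-p-q}) at the last step of the $\hat n_N$-th $R$-block reads $\hat q_{\hat n_N}=q_{\nu_{\hat n_N}}=2m_{\hat n_N}q_r+\xi_r q_{r-1}$, so eliminating $q_r$ yields
\begin{equation}\nonumber
\Theta_\alpha(N)=\frac{N}{\hat q_{\hat n_N}}\cdot\frac{2m_{\hat n_N}+\xi_r\,y}{1+\xi_r\,\alpha_r\,y}.
\end{equation}
Here $N/\hat q_{\hat n_N}\in(0,1)$; $m_{\hat n_N}\ge1$ is read off from the $\Sigma$-entry $\sigma_{\hat n_N}$; and $\xi_r=-1$ when $h_{\hat n_N}\ge1$ while $\xi_r=\zeta_{\hat n_N-1}$ when $h_{\hat n_N}=0$, so $\xi_r$ is a function of $\sigma_{\hat n_N}$ and $\sigma_{\hat n_N-1}$. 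Since the last $\Sigma$-entry of any $R$-block is never $(1,-1)$, one has $\alpha_r\le\ha$, so the denominator stays in $[\ha,\tfrac32]$ and the right-hand side is genuinely continuous and bounded below.

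Next I would isolate the two quantities that depend on infinitely many digits. By the reversal identity for continued-fraction convergents (adapted to the signed recurrence (\ref{recurrent-relations-p-q})), the co-ratio $y=q_{r-1}/q_r$ is determined by the ECF-digits of index $\le r$, with the influence of those of index $\le r-M$ being $O(3^{-M/3})$ by Lemma~\ref{growth of R-denominators}(i); translating ECF-index into $R$-index (all but the last ECF-digit in each block is $(1,-1)$) shows $y$ is $O(3^{-N_1/3})$-close to a continuous function $y^{(N_1)}$ of $\sigma_{\hat n_N},\sigma_{\hat n_N-1},\ldots,\sigma_{\hat n_N-N_1+1}$. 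Likewise $\alpha_r=\les(m_{\hat n_N},\zeta_{\hat n_N}),(k_{\nu_{\hat n_N}+1},\xi_{\nu_{\hat n_N}+1}),\ldots\res$ is $O(3^{-N_2/3})$-close to a continuous function $\alpha_r^{(N_2)}$ of $\sigma_{\hat n_N},\sigma_{\hat n_N+1},\ldots,\sigma_{\hat n_N+N_2}$. Replacing $y,\alpha_r$ by $y^{(N_1)},\alpha_r^{(N_2)}$ in the closed form produces
\begin{equation}\nonumber
\Theta_\alpha^{(N_1,N_2)}(N):=\Phi^{(N_1,N_2)}\!\left(\tfrac{N}{\hat q_{\hat n_N}},\ (\sigma_{\hat n_N+j})_{-N_1<j\le N_2}\right)
\end{equation}
for an explicit continuous $\Phi^{(N_1,N_2)}$, with $\bigl|\Theta_\alpha(N)-\Theta_\alpha^{(N_1,N_2)}(N)\bigr|\le C\,3^{-\min(N_1,N_2)/3}\,(1+\Theta_\alpha(N))$ uniformly in $\alpha$ and $N$.

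Now apply Theorem~\ref{renewal-type-limit-theorem-for-R} with these $N_1,N_2$: the vector $\bigl(\hat q_{\hat n_N}/N,(\sigma_{\hat n_N+j})_{-N_1<j\le N_2}\bigr)$ converges in $\mu_R$-distribution, so by the continuous mapping theorem $\Theta_\alpha^{(N_1,N_2)}(N)$ converges in distribution to a probability measure $\mathrm Q^{(1)}_{N_1,N_2}$ on $(0,\infty)$. Tightness on $(0,\infty)$ is automatic: the closed form shows $\Theta_\alpha(N)$ is sandwiched between absolute constant multiples of $N/\hat q_{\hat n_N}$ and $N/\hat q_{\hat n_N-1}$, and since $\bigl(\hat q_{\hat n_N-1}/N,\hat q_{\hat n_N}/N\bigr)$ converges in distribution to a probability measure on $(0,1]\times(1,\infty)$ (this is exactly the strengthened statement of Theorem~\ref{renewal-type-limit-theorem-for-R}, and the reason the $\hat q_{\hat n_N-1}$-coordinate is needed), no mass escapes to $0$ or $\infty$. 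Combining this with the uniform bound $|\Theta_\alpha(N)-\Theta_\alpha^{(N_1,N_2)}(N)|\to0$ as $N_1,N_2\to\infty$ shows that $\{\mathrm Q^{(1)}_{N_1,N_2}\}$ is Cauchy for the L\'evy--Prokhorov metric; let $\mathrm Q^{(1)}$ be its limit. A standard three-$\varepsilon$ argument (compare $\Theta_\alpha(N)$ with $\Theta_\alpha^{(N_1,N_2)}(N)$, then with $\mathrm Q^{(1)}_{N_1,N_2}$, then with $\mathrm Q^{(1)}$) gives $\mu_R(\{a<\Theta_\alpha(N)<b\})\to\mathrm Q^{(1)}((a,b))$ at all continuity points, which is the assertion.

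The step I expect to be the main obstacle is the uniform error estimate $\bigl|\Theta_\alpha(N)-\Theta_\alpha^{(N_1,N_2)}(N)\bigr|\le C\,3^{-\min(N_1,N_2)/3}(1+\Theta_\alpha(N))$: this demands making precise the signed reversal identity for ECF-convergents, the resulting exponential insensitivity of $y$ to far-past and of $\alpha_r$ to far-future ECF-digits, and the bookkeeping that converts "far in ECF-index" into "far in $R$-index" using $\hat q_n\ge3^{n/3}$ and the block structure. Once that estimate and the observation $\alpha_r\le\ha$ (which keeps the denominator away from zero) are in hand, the remaining ingredients --- the algebraic reduction to the closed form, continuity of $\Phi^{(N_1,N_2)}$, the continuous mapping theorem, and the tightness/Cauchy wrap-up --- are routine.
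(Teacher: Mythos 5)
Your overall strategy is the right one and matches the paper's in spirit: reduce $\Theta_\alpha(N)$ to an explicit (M\"obius-type) function of $\hat q_{\hat n_N}/N$, $\hat q_{\hat n_N-1}/N$, a few $\Sigma$-entries near the renewal time, and $R^{\hat n_N}(\alpha)$, then invoke Theorem \ref{renewal-type-limit-theorem-for-R}. But you diverge on the one technical point where the paper has a cleaner device, and your substitute for it is exactly the step you leave unproven. The paper does \emph{not} approximate $y=q_{r-1}/q_r$ by a function of finitely many past entries. Instead it uses the fact that all ECF-digits of the $\hat n_N$-th block except the last are $(1,-1)$, so the recurrence (\ref{recurrent-relations-p-q}) restricted to that block is an explicit $h_{\hat n}\times h_{\hat n}$ tridiagonal linear system (\ref{linear system}) whose first two unknowns $q_{\nu_{\hat n}-1},q_{\nu_{\hat n}-2}$ are solved \emph{exactly} in terms of $\hat q_{\hat n}$, $\hat q_{\hat n-1}$ and the two entries $\sigma_{\hat n},\sigma_{\hat n+1}$ (this is why the theorem was strengthened to include $\hat q_{\hat n_N-1}$). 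Consequently no ``signed reversal identity,'' no uniform error bound in $N_1$, and no Cauchy/L\'evy--Prokhorov argument are needed: the only tail dependence left is on $R^{\hat n_N}(\alpha)$, which is a continuous (M\"obius) function handled by the finitely-many-future-entries part of Theorem \ref{renewal-type-limit-theorem-for-R}. Your reversal estimate is plausible but genuinely delicate (long $(1,-1)$-runs survive reversal, so the decay must be counted in $R$-blocks, not ECF-digits), and since you flag it as the main obstacle without proving it, your write-up has a real gap precisely where the paper's linear-system trick makes the issue vanish.

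There is also an indexing slip in your closed form. With the paper's conventions ($\nu_n=h_1+\cdots+h_n+n+1$, $\hat q_n=q_{\nu_n}$, $r=\nu_{\hat n_N}-1$), the last step of the recurrence reads $\hat q_{\hat n_N}=2k_{\nu_{\hat n_N}}q_r+\zeta_{\hat n_N}q_{r-1}$, where $k_{\nu_{\hat n_N}}$ is the \emph{first} partial quotient of block $\hat n_N+1$ (read off $\sigma_{\hat n_N+1}$, not $\sigma_{\hat n_N}$), and the sign is $\zeta_{\hat n_N}$, not your $\xi_r$ (your description of $\xi_r$ matches $\xi_{\nu_{\hat n_N}-2}$). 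Correspondingly $\alpha_r=T^{\nu_{\hat n_N}-1}(\alpha)=R^{\hat n_N}(\alpha)$ ranges over all of $(0,1]$, so your bound $\alpha_r\leq\ha$ fails; the denominator $1+\zeta_{\hat n_N}\alpha_r y$ is still bounded below, but for a different reason (when $\zeta_{\hat n_N}=-1$ one has $m_{\hat n_N}\geq2$, hence $y\leq\tfrac13$). These are fixable bookkeeping errors, and your sandwich $\hat q_{\hat n_N-1}\leq q_r\leq\hat q_{\hat n_N}$ for tightness is correct, but as written the closed form computes a quantity off by one application of $T$ from the paper's $\Theta_\alpha(N)$.
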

\begin{proof}
Our goal is to write $\Theta_\alpha(N)$ as a function of $\hat q_{\hat n_N-1}/N$, $\hat q_{\hat n_N}/N$ and a finite number of $\Sigma$-entries of $\alpha$ preceding and/or following the renewal time $\hat n_N$.
By (\ref{prodalpha}) we have
\begin{eqnarray}
\Theta_\alpha(N)=\alpha_0\cdots\alpha_{\nu_{\hat n_N}-2}\,N=\left(\frac{q_{\nu_{\hat n_N}-1}}{N}+\xi_{\nu_{\hat n_N}-1}\cdot\alpha_{\nu_{\hat n_N}-1}\cdot\frac{q_{\nu_{\hat n_N}-2}}{N}\right)^{-1}.\label{proof: limiting distribution for Theta 1}
\end{eqnarray}
In order to write $q_{\nu_{\hat n_N}-1}$ and $q_{\nu_{\hat n_N}-2}$ in terms of $\hat q_{\hat n_N}=q_{\nu_{\hat n_N}}$ and $\hat q_{\hat n_N-1}=q_{\nu_{\hat n_N-1}}$ we use the recurrent relation (\ref{recurrent-relations-p-q}) for the ECF-de\-no\-mi\-na\-tors, getting the $h_{\hat n}\times h_{\hat n}$ linear system
%(function of $\frac{q_{\nu_{\hat n_N}}}{N}=\frac{\hat q_n}{N}$, $\frac{q_{\nu_{\hat n_N-1}}}{N}=\frac{\hat q_{n-1}}{N}$ + 1 or 2 $\Sigma$-entries). Put this in the proof of existence of lim distr for $\Theta$
\begin{eqnarray}
\begin{bmatrix}
2\,k_{\nu_{\hat n}}&\xi_{\nu_{\hat n}-1}&&&&\\
-1&2\,k_{\nu_{\hat n}-1}&-1&&&\\
&-1&2&\ddots&&\\
&&-1&\ddots&-1&\\
&&&\ddots&2&-1\\
&&&&-1&2
\end{bmatrix}
\cdot
\begin{bmatrix}
q_{\nu_{\hat n}-1}\\
q_{\nu_{\hat n}-2}\vspace{-.06cm}\\
\vdots\vspace{-.08cm}\\
q_{\nu_{\hat n}-j}\vspace{-.08cm}\\
\vdots\vspace{-.1cm}\\
q_{\nu_{\hat n}-(h_{\hat n}-1)}\\
q_{\nu_{\hat n}-h_{\hat n}}
\end{bmatrix}
=\begin{bmatrix}
\hat q_{\hat n}\vspace{.06cm}\\
0\vspace{-.11cm}\\
\vdots\vspace{0cm}\\
0\vspace{-.11cm}\\
\vdots\\
0\\
\hat q_{\hat n-1}
\end{bmatrix}
\label{linear system}
\end{eqnarray}
where $\hat n=\hat n_N$. The quantities $k_{\nu_{\hat n}-1}^{\xi_{\nu_{\hat n}-1}}=m_{\hat n}^{\zeta_{\hat n}}\in\Omega^*$ and $k_{\nu_{\hat n}}\in\N$, along with the size $h_{\hat n}$ of the linear system,  depend only on the two $\Sigma$-entries $(h_{\hat n}\cdot m_{\hat n}^{\zeta_{\hat n}},h_{\hat n+1}\cdot m_{\hat n+1}^{\zeta_{\hat n+1}})\in\Sigma^2$. We are interested in the first two entries of the solution of (\ref{linear system}). One can check that
\begin{eqnarray}
&&q_{\nu_{\hat n}-1}=\frac{\left((2h_{\hat n}-2)k_{\nu_{\hat n}-1}-(h_{\hat n}-2)\right)\hat q_{\hat n}-\xi_{\nu_{\hat n}-1}\hat q_{\hat n-1}}{(4h_{\hat n}-4)k_{\nu_{\hat n}}k_{\nu_{\hat n}-1}-(2h_{\hat n}-4)k_{\nu_{\hat n}}+(n-1)\xi_{\nu_{\hat n}-1}}\hspace{.4cm}\mbox{and}\nonumber\\
&&q_{\nu_{\hat n}-2}=\frac{\left(h_{\hat n}-1\right)\hat q_{\hat n}+2k_{\nu_{\hat n}}\hat q_{\hat n-1}}{(4h_{\hat n}-4)k_{\nu_{\hat n}}k_{\nu_{\hat n}-1}-(2h_{\hat n}-4)k_{\nu_{\hat n}}+(n-1)\xi_{\nu_{\hat n}-1}}.\label{proof: limiting distribution for Theta 2}
\end{eqnarray}
Therefore (\ref{proof: limiting distribution for Theta 1}) and (\ref{proof: limiting distribution for Theta 2}) show that $\Theta_\alpha(N)$ is a function of $\hat q_{\hat n_N-1}/N\in(0,1]$, $\hat q_{\hat n_N}/N\in(1,\infty)$, $(h_{\hat n_N}\cdot m_{\hat n_N}^{\zeta_{\hat n_N}},h_{\hat n_N+1}\cdot m_{\hat n_N+1}^{\zeta_{\hat n_N+1}})\in\Sigma^2$ and $\alpha_{\nu_{\hat n_N}-1}=R^{\hat n_N}(\alpha)$. %Notice that $R^{\hat n_N}(\alpha)$ is $\mu_R$-distributed over $(0,1]$. 
Now, by Theorem \ref{renewal-type-limit-theorem-for-R}, we obtain the existence of  a limiting distribution as $N\rightarrow\infty$, w.r.t. $\mu_R$.
\end{proof}
\subsection{Approximation of $\gamma_{\alpha,N}$% by a curve of finite length
}
In this section we construct an approximation for the curve $\gamma_{\alpha,N}$. The approximating curve $\gamma_{\alpha,N}^J$ will contain only the $J$ largest geometric scales (corresponding to $J$ iterations of the jump transformation $R$).
Having specified our choice for $r$, we can also regroup the $\nu_{\hat n_N}%h_1+\ldots+h_{\hat n_N}+\hat n_N
$ %(?????) 
terms in (\ref{iteratederf1}) involving $\Gamma$'s into $\hat n_N$ 
%(?????) 
terms as follows:
\begin{eqnarray}
\Delta_l(t)=\Delta_l(t;\alpha,N):=\sum_{j=2}^{h_l+2}\exp\left\{\kappa_{\nu_l-j\,\pifi}\right\}\left((\alpha)_{\nu_l-j}^{\nu_l-2}\right)^\ha\,\Gamma^{(\eta_1\cdots\eta_{\nu_l-j})}(\alpha_{\nu_l-j},t\,N_{\nu_l-j})
%\Delta_{\nu_{l}-2}(t)+\alpha_{\nu_{l}-2}^\ha\,\Delta_{\nu_{l}-3}(t)+\big((\alpha)_{\nu_{l}-3}^{\nu_{l}-2}\big)^\ha\Delta_{\nu_{l}-4}(t)+\ldots+\nonumber\\
%&&+\big((\alpha)_{\nu_l-j+1}^{\nu_l-2}\big)^\ha\Delta_{\nu_l-j}(t)+\ldots+\big((\alpha)_{\nu_{l-1}}^{\nu_l-2}\big)^\ha\Delta_{\nu_{l-1}-1}(t)
\label{hatDelta}
\end{eqnarray}
for $l=1,\ldots,\hat n_N$, where $(\alpha)_{l_1}^{l_2}:=\alpha_{l_1}\cdots\alpha_{l_2}$ if $l_1\leq l_2$ and $(\alpha)_{l_1}^{l_2}:=1$ if $l_1>l_2$. Also recall that $\nu_{l-1}=\nu_{l}-h_l-1$. Formula (\ref{iteratederf1}) becomes now
%\begin{eqnarray}
%(\alpha)_{\nu_l}^{\nu_{\hat n_N}-1}
%\end{eqnarray}
\begin{eqnarray}
\gamma_{\alpha,N}(t)=\frac{\su{\alpha}{t\,N}}{\sqrt N}&=&\Theta_\alpha^{-\ha}(N)\,\Big(\exp\left\{\kappa_{\nu_{\hat n}-1}\,\pifi\right\}\,
\s{\alpha_{\nu_{\hat n}-1}}{\eta_1\cdots\eta_{\nu_{\hat n}-1}}{t\,\Theta_\alpha(N)}+\nonumber\\
%\Delta_{\hat n}(t)+\left((\alpha)_{\nu_{\hat n-1}-1}^{\nu_{\hat n}-2}\right)^\ha\,\Delta_{\hat n-1}(t)+\nonumber\\
&&+\sum_{j=0}^{\hat n-1}\left((\alpha)_{\nu_{\hat n-j}-1}^{\nu_{\hat n}-2}\right)^\ha\Delta_{\hat n-j}(t)\Big),
%\ldots+\left((\alpha)_{\nu_{\hat n-j}-1}^{\nu_{\hat n}-2}\right)^\ha\,\Delta_{\hat n-j}(t)+\ldots+\left((\alpha)_{\nu_{1}-1}^{\nu_{\hat n}-2}\right)^\ha\,\Delta_{1}(t)\bigg),
\label{iteratederf2}%\\
%&&\Delta_{r-1}(t)+\alpha_{r-1}^\ha\,\Delta_{r-2}(t)+\nonumber\\
%&&%+\alpha_{r-1}^\ha\,\Delta_{r-2}(t)
%+(\alpha_{r-2}\,\alpha_{r-1})^\ha\,\Delta_{r-3}(t)+\ldots+(\alpha_{r-j+1}\cdots\alpha_{r-1})^\ha\,\Delta_{r-j}(t)+\ldots+\nonumber\\
%&&%+(\alpha_{r-j+1}\cdots\alpha_{r-1})^\ha\,\Delta_{r-j}(t)
%+(\alpha_2\cdots\alpha_{r-1})^\ha\,\Delta_1(t)+(\alpha_1\cdots\alpha_{r-1})^\ha\,\Delta_0(t)\Big).\label{iteratederf2}
\end{eqnarray}
where $\hat n=\hat n_N$.  The following Lemma proves that, on a set of arbitrarily large measure, the product $\left((\alpha)_{\nu_{\hat n-j}-1}^{\nu_{\hat n}-2}\right)^\ha\Delta_{\hat n-j}(t)$ decays sufficiently fast as $j$ grows. One can assume that $\hat n$ is large enough so that $\hat n-j\geq 1$. This is the case because later we shall let $N\rightarrow\infty$ and hence $\hat n_N\rightarrow\infty$. %Hence, since we want to choose $j$
\begin{lem}\label{localizinglemma}%[nuovo lemma]
For all sufficiently large $J$ %and $\hat n$
%For every $\delta_1>0$, there exists $\overline j_1=\overline{j}_{1}(\delta_1)\in\N$ such that, for sufficiently large $\hat n$,
\begin{equation}\label{eq:localizing-lemma-1}
\mu_R\left(\left\{\alpha:\:\left|\left((\alpha)_{\nu_{\hat n-j}-1}^{\nu_{\hat n}-2}\right)^\ha\Delta_{\hat n-j}(t)\right|\leq C_{12}\, e^{-C_{13}\, j},\:j=J,\ldots,\hat n-1\right\}\right)\geq 1-\delta_1(J),
\end{equation}
where $C_{12},C_{13}>0%<\frac{\log2}{2}
$ are some constants and $\delta_1(J)\rightarrow0$ as $J\rightarrow\infty$.
%\begin{equation}\label{eq:localizing-lemma-1}
%\mu_R\left(\left\{\alpha:\:\left|\left((\alpha)_{\nu_{\hat n-j}-1}^{\nu_{\hat n}-2}\right)^\ha\Delta_{\hat n-j}(t)\right|\leq C_{12}\, e^{-C_{13}\, j},\:j=\overline j_1,\ldots,\hat n-1\right\}\right)\geq 1-\delta_1,
%\end{equation}
%for some constants $C_{12}>0$ and $0<C_{13}<\frac{\log2}{2}$.
\end{lem}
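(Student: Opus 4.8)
The plan is to reduce the bound on $\left|\left((\alpha)_{\nu_{\hat n-j}-1}^{\nu_{\hat n}-2}\right)^\ha\Delta_{\hat n-j}(t)\right|$ to an elementary estimate governed entirely by the products $\alpha_0\cdots\alpha_{\nu_l-2}$ of ECF-iterates, which are in turn controlled by the $R$-denominators $\hat q_l$ via formula (\ref{prodalpha}), and then to invoke Lemma \ref{growth of R-denominators}(i)--(ii) to convert this into exponential decay in $j$ on a set of $\mu_R$-measure at least $1-\delta_1(J)$. First I would bound the single term $\Delta_{\hat n-j}(t)$ from (\ref{hatDelta}): since $\Gamma(\alpha,L)$ (and hence each $\Gamma^{(\eta_1\cdots\eta_{\nu_l-i})}$) is continuous on $(0,1]\times\R_{\geq0}$ and satisfies the ARF-type bound (\ref{arf2}), one has $\left|\Gamma^{(\cdot)}(\alpha_{\nu_l-i},tN_{\nu_l-i})\right|\leq C_5\,\alpha_{\nu_l-i}^{-\ha}+C_6\leq C\,\alpha_{\nu_l-i}^{-\ha}$ uniformly; multiplying by $\left((\alpha)_{\nu_l-i}^{\nu_l-2}\right)^\ha$ and summing the geometric-like sum over $2\leq i\leq h_l+2$ (note $(\alpha)_{\nu_l-i}^{\nu_l-2}=\alpha_{\nu_l-i}\cdots\alpha_{\nu_l-2}$), the $\alpha_{\nu_l-i}^{-\ha}$ cancels the leading factor in $\left((\alpha)_{\nu_l-i}^{\nu_l-2}\right)^\ha$, leaving $\left|\Delta_l(t)\right|\leq C\sum_{i=2}^{h_l+2}\left((\alpha)_{\nu_l-i+1}^{\nu_l-2}\right)^\ha\leq C'$ for some absolute constant $C'$ (using that each $\alpha_m\leq 1$, plus the fact that inside one $R$-step all but the last $T$-iterate equal a value in $B(1,-1)$ so $\alpha_m$ is bounded away from $0$ there, making the sum a convergent geometric series). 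Thus $\left|\Delta_{\hat n-j}(t)\right|$ is bounded uniformly in $\alpha$, $N$, $t$, $j$.

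Next I would observe that the prefactor $\left((\alpha)_{\nu_{\hat n-j}-1}^{\nu_{\hat n}-2}\right)^\ha=\left(\alpha_{\nu_{\hat n-j}-1}\cdots\alpha_{\nu_{\hat n}-2}\right)^\ha$ telescopes against the product $\alpha_0\cdots\alpha_{\nu_{\hat n}-2}$ as the ratio $\left(\frac{\alpha_0\cdots\alpha_{\nu_{\hat n}-2}}{\alpha_0\cdots\alpha_{\nu_{\hat n-j}-2}}\right)^\ha$, and by (\ref{prodalpha}) each factor $\left(\alpha_0\cdots\alpha_{\nu_l-2}\right)^{-1}=q_{\nu_l-1}\bigl(1+\xi_{\nu_l-1}\alpha_{\nu_l-1}\frac{q_{\nu_l-2}}{q_{\nu_l-1}}\bigr)$ is comparable (up to the absolute constant $1\pm\frac13$, since $\xi\alpha\frac{q_{\nu_l-2}}{q_{\nu_l-1}}\in[-\frac13,1]$) to $q_{\nu_l-1}$, which itself is comparable (via the recurrence (\ref{recurrent-relations-p-q}) applied along the last $R$-block) to $\hat q_{l-1}=q_{\nu_{l-1}}$. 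Hence $\left((\alpha)_{\nu_{\hat n-j}-1}^{\nu_{\hat n}-2}\right)^\ha\leq C\,(\hat q_{\hat n-j-1}/\hat q_{\hat n-1})^\ha$. By Lemma \ref{growth of R-denominators}(i) the denominator satisfies $\hat q_{\hat n-1}\geq 3^{(\hat n-1)/3}$, and by (ii), on a set $E_J$ of $\mu_R$-measure $\geq 1-\delta_1(J)$ (here using that $\mu_R\ll\mathrm{Leb}$ with bounded density, and that the "sufficiently large $n$" in (ii) can be made uniform off a set shrinking with $J$) we have $\hat q_{\hat n-j-1}\leq e^{C_7(\hat n-j-1)}$; but in fact for the ratio it is cleaner to use only (i) applied to the \emph{difference}: $\hat q_{\hat n-j-1}/\hat q_{\hat n-1}\leq 3^{-(\text{number of $R$-steps between them})/3}\cdot(\text{bounded correction})$. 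Since the indices $\nu_{\hat n-j-1}$ and $\nu_{\hat n-1}$ differ by at least $j$ $R$-steps, repeated use of $\hat q_{l}\geq 3\,\hat q_{l-3}$ type bounds (or directly $\hat q_{\hat n-1}\geq 3^{j/3}\hat q_{\hat n-j-1}$, which follows by composing (i) with the super-multiplicativity of the $\hat q$'s from (\ref{recurrent-relations-p-q})) gives $\left((\alpha)_{\nu_{\hat n-j}-1}^{\nu_{\hat n}-2}\right)^\ha\leq C\,3^{-j/6}$. Combining with the uniform bound on $|\Delta_{\hat n-j}(t)|$ yields the claimed estimate with $C_{12}=CC'$ and $C_{13}=\frac{\log 3}{6}$ on the set $E_J$, and the complementary set has measure $\delta_1(J)\to 0$.

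The main obstacle, and the step requiring the most care, is making the dependence on $J$ uniform: the per-$\alpha$ bound $\hat q_{\hat n-1}\geq 3^{j/3}\hat q_{\hat n-j-1}$ is actually \emph{deterministic} (it uses only Lemma \ref{growth of R-denominators}(i)) so in fact one might hope to avoid the $\delta_1(J)$ altogether --- I would double-check whether the uniform bound on $|\Delta_l(t)|$ truly holds for every $\alpha$ or only off a small set (the subtlety is that within an $R$-block of length $h_l+1$ the intermediate $T$-iterates lie in $B(1,-1)=(\frac12,1]$ and could approach the indifferent fixed point $1$, making $\alpha_{\nu_l-i}^{-\ha}$ large; the geometric sum $\sum_i\left((\alpha)_{\nu_l-i+1}^{\nu_l-2}\right)^\ha$ still converges since consecutive $\alpha_m\leq 1$, but the constant degrades unless $h_l$ is controlled). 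This is precisely why the lemma is stated $\mu_R$-almost-everywhere with a defect $\delta_1(J)$: on the set where some $h_l$ for $l\geq \hat n-J$ is large, or where (ii) fails early, the bound can be lost, and one uses Lemma \ref{lem:lemmaY(H)} together with the Borel--Cantelli-type summability $\sum_H \frac{1}{H}<\infty$ failing but $\prod\lambda_{H_j}$ controllable to show this exceptional set has measure tending to $0$ as $J\to\infty$. I would therefore carefully isolate, as the exceptional set, $\{\alpha: \exists\, l\in[\hat n_N-J,\hat n_N],\ h_l\geq H^*(J)\}\cup\{\alpha:\text{(ii) fails at scale }\hat n_N-J\}$ for a slowly growing threshold $H^*(J)$, estimate its $\mu_R$-measure via Lemma \ref{lemma lemmaconstants} and \ref{lem:lemmaY(H)}, and on its complement run the deterministic geometric-decay argument above.
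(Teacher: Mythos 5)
Your proposal contains a genuine gap at its central step. The claimed uniform bound $|\Delta_l(t)|\leq C'$ is false. Your cancellation of $\alpha_{\nu_l-i}^{-\ha}$ from (\ref{arf2}) against the leading factor of $\left((\alpha)_{\nu_l-i}^{\nu_l-2}\right)^\ha$ is correct and is exactly what the paper does, giving each \emph{summand} a uniform bound $C_{12}=C_5+C_6$. But the residual sum $\sum_{i}\left((\alpha)_{\nu_l-i+1}^{\nu_l-2}\right)^\ha$ is \emph{not} a convergent geometric series with a uniform constant: within an $R$-block the intermediate $T$-iterates lie in $B(1,-1)=(\tfrac12,1]$ and may all be arbitrarily close to the indifferent fixed point $1$, so every one of the $h_l+1$ terms can be close to $1$. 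The only correct uniform statement is $|\Delta_l(t)|\leq C_{12}(h_l+1)$, and this linear dependence on $h_l$ is precisely what forces the probabilistic part of the lemma. You half-acknowledge this in your last paragraph, but the repair you sketch does not work: (a) your exceptional set controls $h_l$ for $l\in[\hat n_N-J,\hat n_N]$, whereas the lemma concerns $\Delta_{\hat n-j}$ for $j=J,\ldots,\hat n-1$, i.e. the entries $h_l$ with $l\leq\hat n-J$ — an \emph{unbounded} number of entries as $\hat n\to\infty$; and (b) a single threshold $H^*(J)$ cannot control unboundedly many entries, since by Lemma \ref{lem:lemmaY(H)} the measure of $\{h_l<H^*,\ l\leq n\}$ behaves like $(1-4\pi^2/H^*)^{n}\rightarrow0$ as $n\to\infty$. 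The threshold must grow with $j$ (the distance from the renewal time): the paper requires $h_{\hat n-j}\leq e^{C_{14}j}$ with $0<C_{14}<\tfrac{\log2}{2}$, so that Lemma \ref{lem:lemmaY(H)} gives a set of measure at least $\prod_{j\geq J}\left(1-4\pi^2e^{-C_{14}j}\right)\rightarrow1$, while the product $2^{-j/2}(h_{\hat n-j}+1)$ still decays exponentially.

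On the prefactor, your route through (\ref{prodalpha}) and the growth of the $\hat q$'s is more elaborate than needed and relies on a super-multiplicativity statement ($\hat q_{\hat n-1}\geq 3^{j/3}\hat q_{\hat n-j-1}$) that is not what Lemma \ref{growth of R-denominators}(i) asserts and would have to be proved separately. The paper's observation is purely deterministic and elementary: by the definition of the jump transformation, each of the $j$ complete $R$-blocks inside $(\alpha)_{\nu_{\hat n-j}-1}^{\nu_{\hat n}-2}$ contains exactly one factor lying in $(0,\tfrac12]$, whence $\left((\alpha)_{\nu_{\hat n-j}-1}^{\nu_{\hat n}-2}\right)^\ha\leq 2^{-j/2}$ for every $\alpha$. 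Combining this with $|\Delta_{\hat n-j}|\leq C_{12}(h_{\hat n-j}+1)$ and the measure estimate above yields the lemma with $C_{13}=\tfrac{\log2}{2}-C_{14}$.
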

\begin{proof}
Notice that, by (\ref{arf2}), for every $l=2,\ldots,h_{\hat n-j}+2$, $$\left|\alpha_{\nu_{\hat n-j}-l}^\ha\,\Gamma(\alpha_{\nu_{\hat n-j}-l},t\,N_{\nu_{\hat n-j}-l})\right|\leq C_5+C_6\,\alpha_{\nu_{\hat n-j}-l}^\ha\leq C_{12},$$ where $C_{12}:=C_5+C_6$. Now, by (\ref{hatDelta}), $$\left|\Delta_{\hat n-j}(t)\right|\leq\sum_{l=2}^{h_{\hat n-j}+2}\left((\alpha)_{\nu_{\hat n-j}-l}^{\nu_{\hat n-j}-2}\right)^\ha\,\Gamma(\alpha_{\nu_{\hat n-j}-l},t\,N_{\nu_{\hat n-j}-l})\leq C_{12}(h_{\hat n-j}+1).$$ %Moreover, b
By construction of the jump transformation $R$, exactly one of the factors in $(\alpha)_{\nu_{\hat n-j}-1}^{\nu_{\hat n-j+1}-2}$ is less then $\frac{1}{2}$. Therefore for every $j=1,\ldots,\hat n-1$
$$\left|\left((\alpha)_{\nu_{\hat n-j}-1}^{\nu_{\hat n}-2}\right)^\ha\Delta_{\hat n-j}(t)\right|\leq C_{12}\, 2^{-\ha j}\,(h_{\hat n-j}+1).$$
Thus it is enough to show that, for all sufficiently large $J\in N$ and $\hat n$,
\begin{equation}\label{eq:localizing-lemma-2}
\left|\left\{\alpha:\:h_{\hat n-j}\leq e^{C_{14}\,j},\:j=J,\ldots,\hat n-1\right\}\right|\geq 1-\delta_2(J),
\end{equation}
%every $\delta_2>0$, there exists $\overline j_2=\overline j_{2}(\delta_2)\in\N$ such that, for sufficiently large $\hat n$,
%\begin{equation}\label{eq:localizing-lemma-2}
%\left|\left\{\alpha:\:h_{\hat n-j}\leq e^{C_{14}\,j},\:j=\overline j_2,\ldots,\hat n-1\right\}\right|\geq 1-\delta_2,
%\end{equation}
where $0<C_{14}<\frac{\log 2}{2}\simeq0.346574$ and $\delta_2(J)\rightarrow0$ as $J\rightarrow\infty$.
By Lemma \ref{lem:lemmaY(H)}, setting $\underline H=(e^{C_{14}\,(\hat n-1)}+1,e^{C_{14}\,(\hat n-2)}+1,\ldots,e^{C_{14}\,J}+1)\in\N^{\hat n-J}$, we get
\begin{eqnarray}
&&\left|\left\{\alpha:\:h_{\hat n-j}\leq e^{C_{14}\,j},\:j=J,\ldots,\hat n-1\right\}\right|=|Y(\underline H)|\geq\nonumber\\
&&\geq\left(1-\frac{1}{e^{C_{14}\,(\hat n-1)}+1}\right)\prod_{j=J}^{\hat n-2}\left(1-\frac{4\pi^2}{e^{C_{14}\,j}+1}\right)\geq\prod_{j=J}^\infty\left(1-4\pi^2e^{-C_{14}\,j}\right)=:\delta_2(J).%\geq1-\delta_2
\nonumber
\end{eqnarray}
%for sufficiently large $\overline j_2$. 
The estimate (\ref{eq:localizing-lemma-2}) is thus proven, along with our initial statement (\ref{eq:localizing-lemma-1}) setting $C_{13}:=\frac{\log2}{2}-C_{14}$.
\end{proof}

%Define $\sut{\alpha}{L}$ as a $\mathcal C^\infty$ $\varepsilon$-approximation (in both variables $\alpha$ and $L$) of $\su{\alpha}{L}$ in an $\varepsilon$-small neighborhood of the points where $\partial \sut{\alpha}{L}/\partial \alpha$ and $\partial\sut{\alpha}{L}/\partial L$ are discontinuous. Let $\tilde\Gamma(\alpha,L)$ denote the analogous approximation of $\Gamma(\alpha,L)$ and let $\varepsilon$ be arbitrary small. Let $\tilde\Delta$ be defined as in (\ref{hatDelta}), replacing $\Gamma$ by $\tilde\Gamma$. All results proven for $\Delta$ are also valid for $\tilde\Delta$. %as and $\Gamma(\alpha,L)$ respectively,

For $J\in\N$ define the curve associated to the truncated renormalized sum as
\begin{eqnarray}
t\mapsto\gamma_{\alpha,N}^{J}(t):=\Theta_\alpha^{-\ha}(N)\left(e^{\kappa_{\nu_{\hat n}-1}\pifi}\s{\alpha_{\nu_{\hat n}-1}}{\eta_1\cdots\eta_{\nu_{\hat n}-1}}{t\,\Theta_\alpha(N)}+
\sum_{j=0}^{J-1}\left((\alpha)_{\nu_{\hat n-j}-1}^{\nu_{\hat n}-2}\right)^\ha\Delta_{\hat n-j}(t)\right).\label{eq: gammaJ}
\end{eqnarray} 
The number $J$ corresponds to the number of scales one considers in approximating the curve $\gamma_{\alpha,N}$, starting from the largest scale. The following Lemma shows that $\gamma_{\alpha,N}$ is exponentially well approximated by $\gamma_{\alpha,N}^J$ for a set of $\alpha$'s whose measure tends to 1 as $J$ increases. % with $J$.
\begin{lem}\label{localizinglemma1}
For all sufficiently large $J$ and $N$
\begin{equation}\label{eq: localizinglemma1}
\mu_R\left(\left\{\left|\gamma_{\alpha,N}(t)-\gamma_{\alpha,N}^{J}(t)\right|\leq e^{-C_{15}J}\right\}\right)\geq 1-\delta_3(J)
\end{equation}
for every $t\in[0,1]$, where $C_{15}>0$ is some constant and $\delta_3(J)\rightarrow0$ as $J\rightarrow\infty$.
\end{lem}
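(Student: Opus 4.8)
The plan is to subtract (\ref{eq: gammaJ}) from (\ref{iteratederf2}). The leading term $\Theta_\alpha^{-\ha}(N)\,e^{\kappa_{\nu_{\hat n}-1}\pifi}\s{\alpha_{\nu_{\hat n}-1}}{\eta_1\cdots\eta_{\nu_{\hat n}-1}}{t\,\Theta_\alpha(N)}$ is common to both curves, so that
$$\gamma_{\alpha,N}(t)-\gamma_{\alpha,N}^{J}(t)=\Theta_\alpha^{-\ha}(N)\sum_{j=J}^{\hat n-1}\left((\alpha)_{\nu_{\hat n-j}-1}^{\nu_{\hat n}-2}\right)^{\ha}\Delta_{\hat n-j}(t),$$
where $\hat n=\hat n_N$ (the sum is empty, hence the difference vanishes, if $\hat n\leq J$; by $N\to\infty$ we have $\hat n_N\to\infty$, so for the regime of interest $\hat n>J$). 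I would then estimate the sum using Lemma \ref{localizinglemma} and control the prefactor $\Theta_\alpha^{-\ha}(N)$ using Proposition \ref{prop: limiting distribution for Theta}.

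Fix $t\in[0,1]$ and set $C_{15}:=C_{13}/3$. By Lemma \ref{localizinglemma}, for all large $J$ there is a set $A_J$ (depending on $t$, but with $\mu_R(A_J)\geq 1-\delta_1(J)$ for a $\delta_1$ not depending on $t$) on which $\bigl|\bigl((\alpha)_{\nu_{\hat n-j}-1}^{\nu_{\hat n}-2}\bigr)^{\ha}\Delta_{\hat n-j}(t)\bigr|\leq C_{12}e^{-C_{13}j}$ for $j=J,\dots,\hat n-1$; summing the geometric tail gives, on $A_J$, $\bigl|\sum_{j=J}^{\hat n-1}(\cdots)\bigr|\leq C_{16}e^{-C_{13}J}$ with $C_{16}:=C_{12}/(1-e^{-C_{13}})$, hence $|\gamma_{\alpha,N}(t)-\gamma_{\alpha,N}^{J}(t)|\leq C_{16}\,\Theta_\alpha^{-\ha}(N)\,e^{-C_{13}J}$ on $A_J$. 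Introducing the threshold $\theta_J:=e^{-C_{13}J}$, on $A_J\cap\{\Theta_\alpha(N)>\theta_J\}$ this bound is $\leq C_{16}e^{-C_{13}J/2}\leq e^{-C_{15}J}$ once $J$ is large. Thus it remains to show that $\mu_R(\{\Theta_\alpha(N)\leq\theta_J\})$ is small once $N$ is large.

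For that last point I would use that, by Proposition \ref{prop: limiting distribution for Theta}, the law of $\Theta_\alpha(N)$ under $\mu_R$ converges to the \emph{probability} measure $\mathrm Q^{(1)}$ on $(0,\infty)$, so that mass cannot escape to $0$: for every $\varepsilon>0$ there is $a_\varepsilon>0$ with $\limsup_{N\to\infty}\mu_R(\{\Theta_\alpha(N)\leq a_\varepsilon\})<\varepsilon$ (choose $0<a<b$ with $\mathrm Q^{(1)}((a,b))>1-\varepsilon$, use the convergence on $(a,b)$, and set $a_\varepsilon=a$; any smaller value works too). Applying this with $\varepsilon=2^{-J}$, replacing $a_{2^{-J}}$ by $\min\{a_{2^{-J}},\theta_J\}$, and combining with $\mu_R(\{a_\varepsilon<\Theta_\alpha(N)<\theta_J\})\to\mathrm Q^{(1)}((a_\varepsilon,\theta_J))\leq\mathrm Q^{(1)}((0,\theta_J))$, one obtains $N_0(J)$ such that $\mu_R(\{\Theta_\alpha(N)\leq\theta_J\})\leq\mathrm Q^{(1)}((0,\theta_J))+2^{-J+1}=:\delta_4(J)$ for all $N\geq N_0(J)$, and $\delta_4(J)\to 0$ as $J\to\infty$ since $\theta_J\downarrow 0$ and $\mathrm Q^{(1)}$ is a probability on $(0,\infty)$. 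Then $\delta_3(J):=\delta_1(J)+\delta_4(J)$ does the job: on $A_J\cap\{\Theta_\alpha(N)>\theta_J\}$, whose $\mu_R$-measure is $\geq 1-\delta_3(J)$, the desired inequality holds, and all the estimates are uniform in $t\in[0,1]$, which yields (\ref{eq: localizinglemma1}). The main obstacle is precisely this last transfer: passing from the fixed-threshold convergence in Proposition \ref{prop: limiting distribution for Theta} to a statement in which the threshold $\theta_J$ shrinks with $J$, which rests essentially on $\mathrm Q^{(1)}$ being a genuine probability measure (no escape of mass to $0$), rather than merely a vague limit.
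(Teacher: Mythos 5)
Your proposal is correct and follows essentially the same route as the paper: subtract (\ref{eq: gammaJ}) from (\ref{iteratederf2}), bound the tail sum via Lemma \ref{localizinglemma} on a set of measure $\geq 1-\delta_1(J)$, and use the tightness of the limiting law of $\Theta_\alpha(N)$ from Proposition \ref{prop: limiting distribution for Theta} to control the prefactor $\Theta_\alpha^{-\ha}(N)$ on another set of measure $\geq 1-\delta_4(J)$. The only (immaterial) difference is the cutoff: the paper takes $\Theta_\alpha^{-\ha}(N)\leq J$, while you take $\Theta_\alpha(N)>e^{-C_{13}J}$; both yield the exponential bound with a suitable $C_{15}$.
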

\begin{proof}
Since by Proposition (\ref{prop: limiting distribution for Theta}) $\Theta_\alpha(N)$ has a limiting distribution on $(0,\infty)$ as $N\rightarrow\infty$, so $\Theta_\alpha^{-\ha}(N)$ does. Then, % for every $\delta_4$ we can choose a constant $C_{15}=C_{15}(\delta_4)$ so that, 
for sufficiently large $N$, we have% we can choose a constant $C_{10}$ so that 
\begin{equation}\nonumber
\mu_R\left(\left\{\alpha:\:\Theta_\alpha^{-\ha}(N)\leq J\right\}\right)\geq 1-\delta_4(J),
\end{equation}
where $\delta_4(J)\rightarrow0$ as $J\rightarrow\infty$.
%know that the $\mu_R$-measure of the set where
On the other hand, by Lemma \ref{localizinglemma}, for sufficiently large $J$ and $N$, %every $\delta_1$ there exists $\overline j_1$ so that for every $J\geq\overline j_1$ and for sufficiently large $N$, %we can choose $J$ sufficiently large so that, on a set of measure arbitrarily large measure,
\begin{eqnarray}
\left|\gamma_{\alpha,N}(t)-\gamma_{\alpha,N}^{J}(t)\right|&=&\Theta_\alpha^{-\ha}(N)\left|\sum_{j={J+1}}^{\hat n-1}\left((\alpha)_{\nu_{\hat n-j}-1}^{\nu_{\hat n}-2}\right)^\ha\Delta_{\hat n-j}(t)\right|\leq C_{12}\,\Theta_\alpha^{-\ha}(N)\sum_{j=J}^{\hat n-1}e^{-C_{13} j}=\nonumber\\
&=&C_{12}\,\Theta_\alpha^{-\ha}(N)\frac{e^{-C_{13} (J-1)}-e^{-C_{13}(\hat n-1)}}{e^{C_{13}}-1}\leq\frac{C_{12}\,e^{C_{13}}}{e^{C_{13}}-1}\,\Theta_\alpha^{-\ha}(N)\,e^{-C_{13} J}\nonumber
\end{eqnarray}
holds for every $t\in[0,1]$ on a set of $\mu_R$-measure bigger than $1-\delta_1(J)$. Therefore %, choosing $\delta_4=\delta_1=\delta_3/2$, we get
\begin{eqnarray}
\left|\gamma_{\alpha,N}(t)-\gamma_{\alpha,N}^{J}(t)\right|\leq\frac{C_{12}\,e^{C_{13}}\,J}{e^{C_{13}}-1}\,e^{-C_{13} J}\leq e^{-C_{15}J}%\varepsilon
\nonumber
\end{eqnarray}
for some constant $C_{15}>0$ %for every $J\geq\overline j_3:= \max\left\{\overline j_1,\frac{1}{C_{13}}\log\frac{C_{12}\,e^{C_{13}}\,C_{15}}{(e^{C_{13}}-1)\varepsilon}\right\}$ 
on a set of $\mu_R$-measure bigger than $1-\delta_1(J)-\delta_4(J)$. The Lemma is thus proven setting $\delta_3(J):=\delta_1(J)+\delta_4(J)$.
%Since by Proposition (\ref{prop: limiting distribution for Theta}) $\Theta_\alpha(N)$ has a limiting distribution as $N\rightarrow\infty$ 
\end{proof}

\subsection{Rewriting of $\gamma_{\alpha,N}^J$ in Terms of Renewal Variables % Manipulations
}
Now we can study %work with 
the curve $\gamma_{\alpha,N}^J(t)$. Our goal is to rewrite it in terms of $\Theta_\alpha(N)$, $\alpha_{\nu_{\hat n}-1}$ and a finite number of $\Sigma$-entries preceeding the renewal time. We will also need two additional functions, $K_\alpha^8(N)$ and $E_\alpha(N)$ to take into account phase terms and conjugations coming from the renormalization procedure.

%\section{Analysis of $\kappa_l$ and $\eta_1\ldots\eta_l$}
%The $J+2$ terms in (\ref{eq: gammaJ}) depend on $...$ and we want to isolate this dependance.
For $\alpha=(h_1\cdot m_1^{\zeta_1},h_2\cdot m_2^{\zeta_2},\ldots)\in\Sigma^\N$ we have an explicit expression for $\eta_l$, $l=1,\ldots,\nu_{\hat n}-1$:
\begin{eqnarray}
&&\eta_1=\ldots=\eta_{h_1}=1,\,\eta_{\nu_1-1}=-\zeta_1,\nonumber\\
&&\eta_{\nu_1}=\ldots=\eta_{\nu_1+h_2}=1,\,\eta_{\nu_2-1}=-\zeta_2,\nonumber\\
%&&\ldots\nonumber\\
%&&\eta_{\nu_{s-1}}=\ldots=\eta_{\nu_{s-1}+h_{s}}=1,\,h_{\nu_{s}-1}=\zeta_{s},\nonumber\\
&&\hspace{3cm}\ldots\nonumber\\
&&\eta_{\nu_{\hat n-1}}=\ldots=\eta_{\nu_{\hat n-1}+h_{\hat n}}=1,\,\eta_{\nu_{\hat n}-1}=-\zeta_{\hat n}.\nonumber
\end{eqnarray}
Thus 
\begin{eqnarray}
\eta_1\cdots\eta_{\nu_l-1}&=&\prod_{s=1}^{l}(-\zeta_s)\hspace{.5cm}\mbox{and}\label{eqprodetas}\\
\kappa_{\nu_l}&=&1+(h_1-\zeta_1)+(-\zeta_1)(h_2-\zeta_2)+(-\zeta_1)(-\zeta_2)(h_3-\zeta_3)+\ldots+\nonumber\\
&&+(-\zeta_1)\cdots(-\zeta_{l-1})(h_l+\zeta_l)=%\nonumber\\
%&=&
1+\sum_{j=1}^l(h_j-\zeta_j)\prod_{s=1}^{j-1}(-\zeta_s).\label{eqkappa}% =\\
%&=&\sum_{j=0}^l\prod_{s=1}^j(-\zeta_s)+\sum_{j=1}^lh_j\prod_{s=1}^{j-1}(-\zeta_s).\nonumber
\end{eqnarray}
%and $$$$
The following Lemma gives an explicit formula for the partial products along the $T$-orbit of $\alpha$ which appear in (\ref{eq: gammaJ}).
\begin{lem}\label{lemma: partial products}
Let $\alpha=(h_1\cdot m_1^{\zeta_1},h_2\cdot m_2^{\zeta_2},\ldots)\in\Sigma^\N$. Set $\beta_j:=\alpha_{\nu_{\hat n-j}-2}$. Then
\begin{eqnarray}
&&B_{s,j}=B_{s,j}(\alpha):=(\alpha)_{\nu_{\hat n-j}-s}^{\nu_{\hat n-j}-2}=\frac{\beta_j}{(s-1)-(s-2)\beta_j}\label{lemmaprodalpha1},\\
&&D_j=D_j(\alpha):=(\alpha)_{\nu_{\hat n-j}-1}^{\nu_{\hat n}-2}=\prod_{u=0}^{j-1}\frac{\beta_u}{1+h_{\hat n-u}(1-\beta_u)}\label{lemmaprodalpha2}.
\end{eqnarray}
\end{lem}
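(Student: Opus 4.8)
The plan is to reduce both formulas to elementary identities about products of consecutive $T$-iterates $\alpha_n=T^n(\alpha)$, using the dictionary between $\Sigma$-entries and $\Omega$-entries. First I would record the structural fact underlying everything: the $\Sigma$-entry $\sigma_l=h_l\cdot m_l^{\zeta_l}$ corresponds, in the $\Omega$-coding $\alpha=\les(k_1,\xi_1),(k_2,\xi_2),\ldots\res$, to the block of $\Omega$-entries occupying positions $\nu_{l-1},\nu_{l-1}+1,\ldots,\nu_l-1$, the first $h_l$ of which equal $(k,\xi)=(1,-1)$ and the last of which equals $(m_l,\zeta_l)$ (here one uses $\nu_{l-1}=\nu_l-h_l-1$). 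Since $\alpha\in B(k,\xi)$ is equivalent to $\alpha=\tfrac1{2k+\xi\,T(\alpha)}$, this yields the two local relations: $\alpha_{n-1}=\tfrac1{2-\alpha_n}$ whenever $\nu_{l-1}\le n\le\nu_l-2$ (a ``$(1,-1)$-position''), and $\alpha_{\nu_l-2}=\tfrac1{2m_l+\zeta_l\,\alpha_{\nu_l-1}}$.

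For \eqref{lemmaprodalpha1} I would fix $l=\hat n-j$ and abbreviate $P_s:=B_{s,j}=(\alpha)_{\nu_l-s}^{\nu_l-2}$, so that $P_1=1$ (empty product) and $P_2=\alpha_{\nu_l-2}=\beta_j$. For $3\le s\le h_l+2$ the index $\nu_l-s+1$ is a $(1,-1)$-position, hence $\alpha_{\nu_l-s}=\tfrac1{2-\alpha_{\nu_l-s+1}}$; writing $\alpha_{\nu_l-s+1}=P_{s-1}/P_{s-2}$ converts this into $P_s=\tfrac{P_{s-1}P_{s-2}}{2P_{s-2}-P_{s-1}}$, i.e. $\tfrac1{P_s}=\tfrac2{P_{s-1}}-\tfrac1{P_{s-2}}$. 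Thus the sequence $s\mapsto 1/P_s$ is an arithmetic progression; its initial term is $1$ and its common difference is $\tfrac1{P_2}-\tfrac1{P_1}=\tfrac{1-\beta_j}{\beta_j}$, so $\tfrac1{P_s}=\tfrac{(s-1)-(s-2)\beta_j}{\beta_j}$, which is exactly \eqref{lemmaprodalpha1}. (One could instead solve the M\"obius recursion $\gamma_i=\tfrac1{2-\gamma_{i-1}}$ for $\gamma_i=\alpha_{\nu_l-i}$ explicitly and telescope $\prod_{i=2}^s\gamma_i$; I prefer the arithmetic-progression route since it needs no ansatz.)

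For \eqref{lemmaprodalpha2} I would split the product $D_j=(\alpha)_{\nu_{\hat n-j}-1}^{\nu_{\hat n}-2}$ at the indices $\nu_{\hat n-j+1}-1,\nu_{\hat n-j+2}-1,\ldots,\nu_{\hat n-1}-1$ into $j$ consecutive factors, the $u$-th of which ($u=0,\ldots,j-1$) is $(\alpha)_{\nu_{\hat n-u-1}-1}^{\nu_{\hat n-u}-2}$. Using $\nu_{\hat n-u-1}-1=\nu_{\hat n-u}-(h_{\hat n-u}+2)$, this factor is exactly $B_{s,\cdot}$ at index $\hat n-u$ with $s=h_{\hat n-u}+2$ (the largest admissible value of $s$), so by \eqref{lemmaprodalpha1} it equals $\tfrac{\beta_u}{(h_{\hat n-u}+1)-h_{\hat n-u}\beta_u}=\tfrac{\beta_u}{1+h_{\hat n-u}(1-\beta_u)}$. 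Multiplying the $j$ factors gives \eqref{lemmaprodalpha2}.

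No genuinely difficult step is involved; the only thing requiring care is index bookkeeping — maintaining the $\Sigma$/$\Omega$ correspondence and checking that the local relation $\alpha_{\nu_l-s}=\tfrac1{2-\alpha_{\nu_l-s+1}}$ remains valid up to and including $s=h_l+2$ (it does, since that relation holds precisely for $3\le s\le h_l+2$, the value $s=h_l+2$ corresponding to the $(1,-1)$-position $n=\nu_{l-1}$). The degenerate cases $h_l=0$ and $s\in\{1,2\}$ are immediate from the definitions and are consistent with both formulas.
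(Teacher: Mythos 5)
Your proof is correct and follows essentially the same route as the paper: both rest on the local relation $\alpha_{n-1}=\tfrac{1}{2-\alpha_n}$ at the $(1,-1)$-positions $\nu_{l-1}\leq n+1\leq\nu_l-2$, and your arithmetic-progression observation for $1/P_s$ is just a reorganization of the paper's telescoping of the closed form $\alpha_{\nu_{\hat n-j}-s}=\frac{(s-2)-(s-3)\beta_j}{(s-1)-(s-2)\beta_j}$ (indeed, the alternative you mention in parentheses is exactly what the paper does). The block decomposition of $D_j$ and the evaluation of each block via \eqref{lemmaprodalpha1} at $s=h_{\hat n-u}+2$ likewise match the paper's argument.
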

\begin{proof}
Both identities follow, after telescopic cancellations, from
\begin{equation}
\alpha_{\nu_{\hat n-j}-s}=\frac{(s-2)-(s-3)\beta_j}{(s-1)-(s-2)\beta_j}\label{eqprooflemmaalpha}.
\end{equation}
\end{proof}
Notice that $\beta_j$ is a function of $R^{\hat n_N}(\alpha)$ and $j$ ($j\leq J$) $\Sigma$-entries preceding the renewal time $\hat n_N$.
With the above notation (\ref{eq: gammaJ}) becomes
\begin{eqnarray}
&&\gamma_{\alpha,N}^J(t)%=\Theta_\alpha(N)^{-\ha}\Big(\exp\left\{\kappa_{\nu_{\hat n-1}}\,\pifi\right\}\s{\alpha_{\nu_{\hat n}-1}}{\eta_1\cdots\eta_{\nu_{\hat n}-1}}{t\,\Theta_\alpha(N)}+\nonumber\\
%&&+\sum_{j=0}^{J-1}\left((\alpha)_{\nu_{\hat n-j}-1}^{\nu_{\hat n}-2}\right)^\ha\sum_{s=2}^{h_{\nu_{\hat n}-2}}\exp\left\{\kappa_{\nu_{\hat n-j}-s}\,\pifi\right\}\left((\alpha)_{\nu_{\hat n-j}-s}^{\nu_{\hat n-j}-2}\right)^\ha\Gamma^{(\eta_1\cdots\eta_{\nu_{\hat n-j}-s})}(\alpha_{\nu_{\hat n-j}-s},t\,N_{\nu_{\hat n-j}-s})\Big)=\nonumber\\
%&&
=\Theta_\alpha(N)^{-\ha}\Big(\exp\left\{\kappa_{\nu_{\hat n}-1}\,\pifi\right\}\s{\alpha_{\nu_{\hat n}-1}}{\eta_1\cdots\eta_{\nu_{\hat n}-1}}{t\,\Theta_\alpha(N)}+\nonumber\\
&&+\sum_{j=0}^{J-1}D_j^\ha\sum_{s=2}^{h_{\hat n-j}+2}\exp\left\{\kappa_{\nu_{\hat n-j}-s}\,\pifi\right\}B_{s,j}^\ha\,\Gamma^{(\eta_1\cdots\eta_{\nu_{\hat n-j}-s})}(\alpha_{\nu_{\hat n-j}-s},t\,N_{\nu_{\hat n-j}-s})\Big).\label{gammaJmodified1}
\end{eqnarray}
We want to collect a phase term of the form $\exp\{\kappa_{\nu_{\hat n-J}-1}\pifi\}$ and the corresponding \virg{conjugation} index $(\eta_1\cdots\eta_{\nu_{\hat n-J}-1})$. To do this, using (\ref{eqprodetas}) and (\ref{eqkappa}), we introduce the quantities $\Psi_J$, $\Upsilon_J$, $\mathcal{E}_J$ and $\mathcal{E}_J^j$, depending only on a finite number of $\Sigma$-entries of $\alpha$ preceding the renewal time $\hat n_N$:
\begin{eqnarray}
&&(\kappa_{\nu_{\hat n}-1}-\kappa_{\nu_{\hat n-J}-1})(\eta_1\cdots\eta_{\nu_{\hat n-J}-1})=\nonumber\\
&&=\left(\kappa_{\nu_{\hat n}}-\kappa_{\nu_{\hat n-J}}-\eta_1\cdots\eta_{\nu_{\hat n}-1}+\eta_1\cdots\eta_{\nu_{\hat n-J}-1}\right)(\eta_1\cdots\eta_{\nu_{\hat n-J}-1})=\nonumber\\
%&&=\left(\sum_{u=\hat n-J+1}^{\hat n}(h_u-\zeta_u)\prod_{v=\hat n-J+1}^{u-1}(-\zeta_v)-\prod_{v=1}^{\hat n}(-\zeta_v)+\prod_{v=1}^{\hat n-J}(-\zeta_v)\right)\prod_{v=1}^{\hat n-J}(-\zeta_v)=\nonumber\\
&&=\sum_{u=\hat n-J+1}^{\hat n}(h_u-\zeta_u)\prod_{v=\hat n-J+1}^{u-1}(-\zeta_v)-\prod_{v=\hat n-J+1}^{\hat n}(-\zeta_v)+1=:\nonumber\\
&&=:\Psi_J=\Psi_J\left(h_l\cdot m_l^{\zeta_l},\:l=\hat n-J+1,\ldots,\hat n\right),\nonumber%\label{functionPsi}
\end{eqnarray}
\begin{eqnarray}
&&(\kappa_{\nu_{\hat n-j}-s}-\kappa_{\nu_{\hat n-J}-1})(\eta_1\cdots\eta_{\nu_{\hat n-J}-1})=\nonumber\\
&&=\left(\kappa_{\nu_{\hat n-j-1}}+(h_{\hat n-j}-s+1)(\eta_1\cdots\eta_{\nu_{\hat n-j-1}-1})-\kappa_{\nu_{\hat n-J}}+(\eta_1\cdots\eta_{\nu_{\hat n-J}-1})\right)(\eta_1\cdots\eta_{\nu_{\hat n-J}-1})=\nonumber\\
%&&=\left(\sum_{u=\hat n-J+1}^{\hat n}(h_u-\zeta_u)\prod_{v=\hat n-J+1}^{u-1}(-\zeta_v)-\prod_{v=1}^{\hat n}(-\zeta_v)+\prod_{v=1}^{\hat n-J}(-\zeta_v)\right)\prod_{v=1}^{\hat n-J}(-\zeta_v)=\nonumber\\
&&=\sum_{u=\hat n-J+1}^{\hat n-j-1}(h_u-\zeta_u)\prod_{v=\hat n-J+1}^{u-1}(-\zeta_v)+(h_{\hat n-j}-s+1)\prod_{v=\hat n-J+1}^{\hat n-j-1}(-\zeta_v)+1=:\nonumber\\
&&=:\Upsilon_{s,J}=\Upsilon_{s,J}\left(h_l\cdot m_l^{\zeta_l},\:l=\hat n-J+1,\ldots,\hat n-j\right),\nonumber%\label{functionUpsilion}
\end{eqnarray}
\begin{eqnarray}
&&\mathcal{E}_J:=\eta_{\nu_{\hat n-J}}\cdots\eta_{\nu_{\hat n}-1}=\prod_{v=\hat n-J+1}^{\hat n}(-\zeta_v),\hspace{1cm}\mathcal{E}_{J}^j:=\eta_{\nu_{\hat n-J}}\cdots\eta_{\nu_{\hat n-j}-s}=\prod_{v=\hat n-J+1}^{\hat n-j-1}(-\zeta_v).\nonumber
\end{eqnarray}
Now (\ref{gammaJmodified1}) becomes
\begin{eqnarray}
%=\nonumber\\
&&\gamma_{\alpha,N}^J(t)=\exp\left\{\kappa_{\nu_{\hat n-J}-1}\pifi\right\}\Theta_\alpha(N)^{-\ha}\Big(\exp\left\{\Psi_J\pifi\right\}\s{R^{\hat n}(\alpha)}{\mathcal{E}_J}{t\,\Theta_\alpha(N)}+\nonumber\\
&&+\sum_{j=0}^{J-1}D_j^\ha\sum_{s=2}^{h_{\hat n-j}+2}\exp\left\{\Upsilon_{s,J}\pifi\right\}B_{s,j}^\ha\,\Gamma^{(\mathcal{E}_J^j)}(\alpha_{\nu_{\hat n-j}-s},t\,N_{\nu_{\hat n-j}-s})\Big)^{(\eta_1\cdots\eta_{\nu_{\hat n-J}-1})}.\label{gammaJmodified2}
\end{eqnarray}
On the other hand, we also introduce the functions $E_\alpha(N)$ and $K_\alpha(N)$, depending on the entire trajectory of $\alpha$ under the jump transformation $R$ until the renewal time $\hat n_N$  (exactly as $\Theta_{\alpha}(N)$ does):
\begin{eqnarray}
&&E_\alpha(N):=\eta_1\cdots\eta_{\nu_{\hat n}-1}=\prod_{v=1}^{\hat n}(-\zeta_v),\hspace{1cm}K_\alpha(N):=\kappa_{\nu_{\hat n}}=\sum_{u=1}^{\hat n}(h_u-\zeta_u)\prod_{v=1}^{u-1}(-\zeta_v).\nonumber
\end{eqnarray}
Using (\ref{eqprodetas}) and (\ref{lemmaprodalpha1}$\div$\ref{eqprooflemmaalpha}), let us recall that $\alpha_{\nu_{\hat n-j}-s}$ is a function of $\beta_j$ and s; moreover notice that
\begin{eqnarray}
\eta_1\cdots\eta_{\nu_{\hat n-J}-1}&=&\mathcal{E}_J\cdot E_\alpha(N)%\hspace{.5cm}\mbox{and}\hspace{.5cm}
\hspace{.4cm}\mbox{and}\nonumber\\
 N_{\nu_{\hat n-j}-s}&=&\alpha_0\cdots\alpha_{\nu_{\hat n-j}-s-1}\cdot N=%\frac{\Theta_{\alpha}(N)}{\alpha_{\nu_{\hat n-j}-s}\cdots\alpha_{\nu_{\hat n}-2}}=
 \frac{\Theta_\alpha(N)}{(\alpha)_{\nu_{\hat n-j}-s}^{\nu_{\hat n-j}-2}\cdot(\alpha)_{\nu_{\hat n-j}-1}^{\nu_{\hat n}-2}}=\nonumber %\\
 %&=&
\frac{\Theta_\alpha(N)}{B_{s,j}\cdot D_j}
\end{eqnarray}
are functions of $\Theta_\alpha(N)$, $E_\alpha(N)$, $R^{\hat n_N}(\alpha)$ and a finite number of $\Sigma$-entries of $\alpha$ preceding the renewal time $\hat n_N$.
Furthermore, by (\ref{proof: limiting distribution for Theta 1}) and (\ref{proof: limiting distribution for Theta 2}), $\Theta_\alpha(N)$ is a function of $\hat q_{\hat n-1}/N$, $\hat q_{\hat n}/N$, $R^{\hat n_N(\alpha)}$ and the two $\Sigma$-entries $(h_{\hat n_N}\cdot m_{\hat n_N}^{\zeta_{\hat n_N}},h_{\hat n_N+1}\cdot m_{\hat n_N+1}^{\zeta_{\hat n_N+1}})$.
%\begin{eqnarray}
%\Theta_\alpha(N)=\alpha_0\cdots\alpha_{\nu_{\hat n_N}-2}\,N=q_{\nu_{\hat n_N}-1}+\xi_{\nu_{\hat n_N}-1}\cdot\alpha_{\nu_{\hat n_N}-1}\cdot q_{\nu_{\hat n_N}-2}=...\nonumber
%\end{eqnarray}
%(function of $\frac{q_{\nu_{\hat n_N}}}{N}=\frac{\hat q_n}{N}$, $\frac{q_{\nu_{\hat n_N-1}}}{N}=\frac{\hat q_{n-1}}{N}$ + 1 or 2 $\Sigma$-entries). Put this in the proof of existence of lim distr for $\Theta$

In addition to this, since  $\kappa_{\nu_{\hat n-J}-1}$ appears in the phase term of (\ref{gammaJmodified2}) as multiplier of $\pifi$ it is also natural to consider its values modulo 8. Defining
$K_\alpha^{8}(N):=K_\alpha(N)\:\:(\mathrm{mod}\:8)$, we have
% Moreover $\kappa_{\nu_{\hat n-J}-1}$ can be replaced by its residue modulo 8 and
$$\kappa_{\nu_{\hat n-J}-1}\equiv K_\alpha^8(N)-E_\alpha(N)\sum_{u=\hat n-J+1}^{\hat n}(h_u-\zeta_u)\,\mathcal{E}_{\hat n-u+1}\:\:\,(\mathrm{mod}\:8)\nonumber.$$
Therefore, we can rewrite (\ref{gammaJmodified2}) as 
\begin{eqnarray}
\gamma_{\alpha,N}^J(t)=\mathrm F_1\!\left(t,R^{\hat n_N}(\alpha),\frac{\hat q_{\hat n_N-1}}{N},\frac{\hat q_{\hat n_N}}{N},K_\alpha^8(N),E_\alpha(N),\left\{h_l\cdot m_l^{\zeta_l},\:\hat n_N-J\leq l\leq \hat n_N\right\}\right),\label{gammaJmodified3}
\end{eqnarray}
where $\mathrm F_1$ is a complex-valued, %measurable 
%piecewise continuous 
measurable
function of its arguments. Notice that the formul\ae\: (\ref{ERF-Fedotov-Klopp}) and (\ref{erf2}) %(\ref{Gamma=Lambda+err}) 
enter into the definition of $\mathrm F_1$, but we shall not use them directly.

Let us recall that Theorem \ref{renewal-type-limit-theorem-for-R} (which is a special case of Theorem \ref{theorem:joint-limit-distr} and generalizes Theorem 1.6 in \cite{Cellarosi}) %we have 
already establishes %d 
the existence of a limiting probability distribution for $\hat q_{\hat n_N-1}/N$ and $\hat q_{\hat n_N}/N$, jointly with any finite number of $\Sigma$-entries preceding (and/or following) the renewal time as $N\rightarrow\infty$, w.r.t. the measure $\mu_R$. % (Theorem \ref{renewal-type-limit-theorem-for-R}, generalizing Theorem 1.6 in \cite{Cellarosi}). It is also important to keep in mind that $R^{\hat n_N}(\alpha)$ is $\mu_R$-distributed over $(0,1]$ for every $N$.

In the next section we study the quantities $K_\alpha^8(N)\in\{0,1,\ldots,7\}$ and $E_\alpha(N)\in\{\pm1\}$ in (\ref{gammaJmodified3}) and our Main Renewal-Type Limit Theorem \ref{theorem:joint-limit-distr} will allow us to include them in the statement about the existence of a joint liming probability distribution. This fact is non trivial since $K_\alpha^8(N)$ and $E_\alpha(N)$ depend on the entire trajectory of $\alpha$ under $R$ until the renewal time $\hat n_N$. % when the joint limiting distribution and we prove that they also have a limiting distribution as $N\rightarrow\infty$, jointly with $\hat q_{\hat n_N-1}/N$, $\hat q_{\hat n_N}/N$ and any finite number of entries preceding the renewal time. This is t and . %are asymptotically distributedwe need  %generalization of Theorem \ref{renewal-type-limit-theorem-for-R}, provided in the next section.

\subsection{Limiting Distribution for Phase and Conjugation terms %$K_\alpha^8(N)$ and $E_\alpha(N)$
}\label{section:lim-distr-K8andE}

Let $x_n:=\eta_1\cdots\eta_{\nu_n-1}=\prod_{s=1}^n(-\zeta_s)$ and $y_n:=\kappa_{\nu_n}-1=\sum_{s=1}^n(h_s-\zeta_s)\prod_{u=1}^{s-1}(-\zeta_u)\:\: (\mathrm{mod}\: 8)$. We want to prove that $(x_n,y_n)\in\{\pm1\}\times\{0,1,\ldots,7\}=:\Xi$ have a joint limiting distribution as $n\rightarrow\infty$. We will follow the strategy used by Sinai \cite{Sinai-Topics}, \S 12, to see how the dynamics creates conditional probability distributions and these distributions define uniquely a limiting probability measure. 

Let us consider the natural extension $\hat R:\Sigma^\Z\rightarrow\Sigma^\Z$ of $R$. For $\sigma\in\Sigma^\Z$, denote by $\sigma^-=(\ldots,\sigma_{-2},\sigma_{-1},\sigma_0)$ and $\sigma^+=(\sigma_1,\sigma_2,\ldots)$ and identify the pair $(\sigma^+,\sigma^-)$ with a point in the rectangle $(0,1]\times(-1/3,1]\smallsetminus\Q^2$ as discussed in \cite{Cellarosi}.  % Let $\sigma^-=(\ldots,\sigma_{-2},\sigma_{-1},\sigma_{0})$ be an arbitrary semi-infinite sequence of elements of $\Sigma$.
One should notice that the ``past'' is identified with the $y$-axis and the ``future'' with the $x$-axis.
Let us consider cylinders in $\Sigma^\Z$ of the form $J^{(m+1)}_{\sigma_{-n-m},\ldots,\sigma_{-n-1},\sigma_{-n}}$, $n\geq0$, i.e. depending only on the past. Such cylinders $J$ are identified with rectangles $(0,1]\times I$, where $I$ is an interval in the $y$-direction, and by $|J|$ we mean the 1-dimensional Lebesgue measure of $I$.
\begin{lem}\label{lem: creation conditional probabilities for Sigma}
For every $\sigma^-\in\Sigma^\N$, the limit
$$\mu(\sigma_0|\sigma_{-1},\sigma_{-2},\ldots):=\lim_{n\rightarrow\infty}\frac{\left|J^{(n+1)}_{\sigma_{-n},\ldots,\sigma_{-1},\sigma_{0}}\right|}{\left|J^{(n)}_{\sigma_{-n},\ldots,\sigma_{-1}}\right|}$$
exists and satisfies the following conditions:
\begin{eqnarray}
&&\mu(\sigma_0|\sigma_{-1},\ldots)\geq C_{16},%\hspace{.5cm}\mbox{for some constant $C_{16}$}
\nonumber\\
&&\sum_{\sigma_0\in\Sigma}\mu(\sigma_0|\sigma_{-1},\ldots)=1,\nonumber\\
&&\left|\frac{\mu(\sigma_0|\sigma_{-1},\ldots,\sigma_{-s},\sigma_{-s-1}',\sigma_{-s-2}',\ldots)}{\mu(\sigma_0|\sigma_{-1},\ldots,\sigma_{-s},\sigma_{-s-1},\sigma_{-s-2},\ldots)}-1\right|\leq C_{17}\,e^{- C_{18}\,s},\label{condition-exp-lemma-creation-cond-prob}
\end{eqnarray}
for some constants $C_{16},C_{17},C_{18}>0$.
\end{lem}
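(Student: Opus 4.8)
The plan is to establish the three claimed properties by a careful analysis of the ratios $|J^{(n+1)}_{\sigma_{-n},\ldots,\sigma_0}|/|J^{(n)}_{\sigma_{-n},\ldots,\sigma_{-1}}|$ using the distortion and geometric estimates already at our disposal. First I would observe that, after relabeling indices (the cylinders here are cylinders for $\hat R$ depending only on the past, but by the identification of $(\sigma^+,\sigma^-)$ with a rectangle these are, up to reversing the order of the entries, exactly the one-sided cylinders $J_{n+1}$ and $J_n$ for $R$ studied in Lemmas~\ref{lemma lemmaconstants}, \ref{lem:lemmaY(H)} and \ref{lem: two-ratios}). Thus the quantity whose limit we must control is precisely $|J_{n+1}[s\cdot t^\zeta]|/|J_n|$ with $(s\cdot t^\zeta)=\sigma_0$ playing the role of the newly prescribed digit and $(\sigma_{-1},\sigma_{-2},\ldots)$ playing the role of $(\sigma_1,\sigma_2,\ldots)$ in the earlier notation.

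Existence of the limit is the heart of the matter and is where I expect the main work to lie. The idea is that $|J_{n+1}[\sigma_0]|/|J_n|$, as computed explicitly in (\ref{lemmaconstants1}), is a smooth function of the single ratio $x = q_{\nu_n-2}/q_{\nu_n-1}$ (with the discrete data $s,t,\zeta,\zeta_n$ fixed by $\sigma_0$ and $\sigma_{-1}$), and this ratio is itself the value of an ECF-type continued fraction built from $\sigma_{-1},\sigma_{-2},\ldots$. As more past digits are revealed, $x$ converges: two choices of $q_{\nu_n-2}/q_{\nu_n-1}$ agreeing in the first $n{-}1$ $R$-digits differ by at most $3^{(1-n)/3}$ by Lemma~\ref{growth of R-denominators}(i), exactly as exploited in the proof of Lemma~\ref{lem: two-ratios}. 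Hence $x = x(\sigma_{-1},\sigma_{-2},\ldots)$ is well-defined and the sequence of ratios is Cauchy, with geometric rate $3^{-n/3}$; passing to the limit in (\ref{lemmaconstants1}) gives the existence of $\mu(\sigma_0\mid\sigma_{-1},\sigma_{-2},\ldots)$. The main obstacle is bookkeeping: one must check that the auxiliary quantities $q_{\nu_{n+1}-1}, q_{\nu_{n+1}-2}$ appearing in (\ref{lemmaconstants1}) depend on the extra past digits only through $x$ (plus the fixed discrete data), so that the whole expression genuinely stabilizes; this is a routine but slightly delicate unwinding of the recurrences (\ref{recurrent-relations-p-q}).

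The two remaining conditions then follow quickly. The uniform lower bound $\mu(\sigma_0\mid\sigma_{-1},\ldots)\geq C_{16}$ is obtained by passing to the limit in the lower bound of (\ref{lemmaconstants}), noting that for each admissible $\sigma_0 = s\cdot t^\zeta$ we have a bound of the form $\tfrac{1}{30(s+1)^2 t^2}$, which is bounded below by a positive constant once we fix attention on a given digit $\sigma_0$ — and in fact the limiting value is a fixed smooth function of $x\in[-\tfrac13,1]$, hence bounded below away from zero on that compact range. Normalization $\sum_{\sigma_0}\mu(\sigma_0\mid\sigma_{-1},\ldots)=1$ is automatic because, for every finite $n$, $\sum_{\sigma_0}|J_{n+1}[\sigma_0]| = |J_n|$ by the partition property of the cylinders, and the tail bound (\ref{estimate1}) furnishes the uniform integrability needed to interchange the sum with the limit $n\to\infty$. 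Finally, the exponential mixing estimate (\ref{condition-exp-lemma-creation-cond-prob}) is precisely the assertion that changing the past beyond position $-s$ perturbs $x$ by at most $O(3^{-s/3})$ (again Lemma~\ref{growth of R-denominators}(i)), and since $\mu(\sigma_0\mid\cdot)$ is a smooth function of $x$ with derivative bounded on $[-\tfrac13,1]$ and is bounded below, the ratio in (\ref{condition-exp-lemma-creation-cond-prob}) differs from $1$ by $O(3^{-s/3})$; setting $C_{18}=\tfrac13\log 3$ and choosing $C_{17}$ appropriately completes the proof. This last step is essentially a re-run of the computation in Lemma~\ref{lem: two-ratios}, now with the Perron--Frobenius factor $\mathcal{P}(\mathbf 1)$ absent.
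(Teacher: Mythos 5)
Your overall architecture --- prove that the ratios $l_n=|J^{(n+1)}_{\sigma_{-n},\ldots,\sigma_0}|/|J^{(n)}_{\sigma_{-n},\ldots,\sigma_{-1}}|$ converge at an exponential rate, then deduce the lower bound, the normalization (partition property plus the tail bound (\ref{estimate1})), and the exponential dependence on the remote past --- is the same as the paper's, and those three deductions are sound once convergence with the stated rate is in hand. The gap is in the identification of the two-sided cylinders with one-sided ones, and it is not mere bookkeeping: you have it backwards. Under the identification of $\sigma^-$ with the $y$-coordinate, the most recent past digit $\sigma_0$ corresponds to the \emph{first} (outermost) digit of the one-sided word, and revealing one more past digit $\sigma_{-n-1}$ appends a digit at the \emph{deepest} position. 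Thus $J^{(n+1)}_{\sigma_{-n},\ldots,\sigma_0}$ is the cylinder with word $(\sigma_0,\sigma_{-1},\ldots,\sigma_{-n})$, whereas $J^{(n)}_{\sigma_{-n},\ldots,\sigma_{-1}}$ does \emph{not} fix $\sigma_0$: it is a ``primed'' cylinder in the sense of Lemma \ref{lem: two-ratios}, with the first digit free. Two consequences. First, $l_n$ is not of the form $|J_{n+1}[s\cdot t^\zeta]|/|J_n|$ with $(s\cdot t^\zeta)=\sigma_0$, so (\ref{lemmaconstants1}) does not express $l_n$ as a fixed smooth function of a stabilizing $x$; with your identification (word $(\sigma_{-1},\ldots,\sigma_{-n})$, $\sigma_0$ appended at depth $n+1$) the quantity $x=q_{\nu_n-2}/q_{\nu_n-1}$ is essentially the ECF value of the \emph{reversed} word, whose leading digit is $\sigma_{-n}$ and changes at every step, and the sign $\zeta_n$ of the innermost word digit changes as well --- so neither stabilizes and the Cauchy argument collapses. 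Second, precisely because the denominator cylinder has its first digit free, the Perron--Frobenius density cannot be dropped: one has $l_n=\int_{J_n}\bigl|\bigl((R|_{J_1(\sigma_0)})^{-1}\bigr)'\bigr|\,\big/\int_{J_n}\mathcal{P}(\mathbf{1})$ with $J_n=J_n(\sigma_{-1},\ldots,\sigma_{-n})$ shrinking to a point $\beta$, and the limit is $\bigl|\bigl((R|_{J_1(\sigma_0)})^{-1}\bigr)'(\beta)\bigr|\big/\mathcal{P}(\mathbf{1})(\beta)$; your closing claim that the computation is ``Lemma \ref{lem: two-ratios} with the factor $\mathcal{P}(\mathbf{1})$ absent'' is therefore incorrect.

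The paper avoids recomputing anything: it observes that the quotient $l_{n+1}/l_n$ is \emph{exactly} the double ratio $\frac{|J_{m+1}[s\cdot t^\zeta]|}{|J_m|}\cdot\frac{|J_{m-1}'|}{|J_m'[s\cdot t^\zeta]|}$ of Lemma \ref{lem: two-ratios}, with word $(\sigma_0,\ldots,\sigma_{-n})$ and appended digit $s\cdot t^\zeta=\sigma_{-n-1}$, where the Perron--Frobenius correction has already been absorbed; hence $|l_{n+1}/l_n-1|\le C_8e^{-C_9 n}$, the product converges (using the two-sided bounds of Lemma \ref{lemma lemmaconstants} to keep $l_n$ away from $0$ and $\infty$), and the three properties follow by telescoping. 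A ``direct'' argument in your style is salvageable, but only with the correct identification and with the continuous, bounded-above-and-below factor $\mathcal{P}(\mathbf{1})$ carried through the limit.
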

\begin{proof}
Let $l_n=|J^{(n+1)}_{\sigma_{-n},\ldots,\sigma_{-1},\sigma_{0}}|/|J^{(n)}_{\sigma_{-n},\ldots,\sigma_{-1}}|$. 
By Lemma \ref{lem: two-ratios} we have
\begin{eqnarray}
\left|\frac{l_{n+1}}{l_n}-1\right|=\left|\frac{|J^{(n+2)}_{\sigma_{-n-1},\ldots,\sigma_{-1},\sigma_{0}}|}{|J^{(n+1)}_{\sigma_{-n-1},\ldots,\sigma_{-1}}|}\cdot\frac{|J^{(n)}_{\sigma_{-n},\ldots,\sigma_{-1}}|}{|J^{(n+1)}_{\sigma_{-n},\ldots,\sigma_{-1},\sigma_0}|}-1\right|\leq C_{8}\,e^{-C_9\,n}.\nonumber
\end{eqnarray}
This implies the existence of the limit $\lim_{n\rightarrow\infty}l_n$ and also the desired properties.
\end{proof}
Since we are working with the natural extension of $R$, setting $z_n:=h_n-\zeta_n\:\mathrm{(mod\,8)}$, the quantities $(\zeta_n,z_n)\in\Xi$ are defined for every $n\in\Z$. 
Now we want to define conditional probability distributions $\mu_0\left((\zeta_{0},z_{0})\big{|}(\zeta_{-1},z_{-1}),(\zeta_{-2},z_{-2}),\ldots\right)$ over $\Xi^\Z$.
Let us fix a sequence $\underline{\sigma}^{(0)}=\{\sigma_j^{(0)}\}\in\Sigma^\Z$ and for every $n\in\N$ %let us 
consider %the ratio
\begin{eqnarray}
%\mu_0\left((\zeta_{0},z_{0})\big{|}(\zeta_{-1},z_{-1}),(\zeta_{-2},z_{-2}),\ldots\right)=\lim_{n\rightarrow\infty}
&&\mu_0^{(0)}\!\left((\zeta_{0},z_{0})\big{|}(\zeta_{-1},z_{-1}),(\zeta_{-2},z_{-2}),\ldots,(\zeta_{-n},z_{-n})\right)=\nonumber\\
&&=\frac{\mu_0^{(0)}\!\left((\zeta_{-n},z_{-n}),\ldots,(\zeta_{-1},z_{-1}),(\zeta_0,z_0)\right)}{\mu_0^{(0)}\!\left((\zeta_{-n},z_{-n}),\ldots,(\zeta_{-1},z_{-1})\right)}:=\nonumber\\
&&:=\frac{\sum_{\sigma_{0},\sigma_{-1},\ldots,\sigma_{-n}}\mu(\sigma_{-n},\ldots,\sigma_{-1},\sigma_0)}{\sum_{\sigma_{-1},\ldots,\sigma_{-n}}\mu(\sigma_{-n},\ldots,\sigma_{-1})}=\nonumber\\
&&=\frac{\sum_{\sigma_{0},\sigma_{-1},\ldots,\sigma_{-n}}\prod_{s=0}^n\mu(\sigma_{-s}|\sigma_{-s-1},\ldots,\sigma_{-n},\sigma_{-n-1}^{(0)},\sigma_{-n-2}^{(0)},\ldots)}{\sum_{\sigma_{-1},\ldots,\sigma_{-n}}\prod_{s=1}^n\mu(\sigma_{-s}|\sigma_{-s-1},\ldots,\sigma_{-n},\sigma_{-n-1}^{(0)},\sigma_{-n-2}^{(0)},\ldots)}\label{proof-markov-process1},
\end{eqnarray}
where the sums are taken over all possible $\sigma_0,\sigma_{-1},\ldots,\sigma_{-n}\in\Sigma$ which are compatible with the values of $(\zeta_{-n},z_n),\ldots(\zeta_{-1},z_{-1}),(\zeta_0,z_0)$.
\begin{lem}\label{lem: existence mu0}
The limit $$\mu_0\!\left((\zeta_0,z_0)|(\zeta_{-1},z_{-1}),(\zeta_{-2},z_{-2}),\ldots\right):=\lim_{n\rightarrow\infty}\mu_0^{(0)}\!\left((\zeta_0,z_0)|(\zeta_{-1},z_{-1}),(\zeta_{-2},z_{-2}),\ldots,(\zeta_{-n},z_n)\right)$$
exists and does not depend on $\underline{\sigma}^{(0)}$.
\end{lem}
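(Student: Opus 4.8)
The plan is to show that the sequence $n\mapsto\mu_0^{(0),(n)}$ of approximants in (\ref{proof-markov-process1}) is Cauchy and that its limit does not depend on the auxiliary sequence $\underline{\sigma}^{(0)}$; this common limit is then the asserted $\mu_0(\cdot\,|\,\cdot)$. I would begin with two elementary remarks. Each pair $(\zeta_n,z_n)\in\Xi$ is a function of the single $\Sigma$-entry $\sigma_n$ (namely $h\cdot m^{\zeta}\mapsto(\zeta,\ h-\zeta\ \mathrm{mod}\ 8)$), with infinite fibres, so the numerator and the denominator in (\ref{proof-markov-process1}) are sums, over the prescribed fibres, of the $\Sigma$-conditional probabilities $\mu(\sigma_0\,|\,\sigma_{-1},\ldots)$ furnished by Lemma \ref{lem: creation conditional probabilities for Sigma}. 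Also, I will write $C$ for the ``tail'' $(\sigma^{(0)}_{-n-1},\sigma^{(0)}_{-n-2},\ldots)$ entering the $n$-th approximant, and more generally for any prescription of the coordinates of depth $>n$.

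Next I would reduce both assertions to one estimate. By (\ref{proof-markov-process1}), for $m>n$ the approximant $\mu_0^{(0),(m)}$ is the conditional probability of $(\zeta_0,z_0)$ given exactly the level-$n$ data $(\zeta_{-1},z_{-1}),\ldots,(\zeta_{-n},z_{-n})$ together with some further conditioning of the coordinates of depth $>n$ (the prescribed $(\zeta_{-j},z_{-j})$ for $n<j\le m$ and the $\Sigma$-tail beyond $-m$); the same holds for $\mu_0^{(\tilde 0),(n)}$ with a different $\underline{\sigma}^{(0)}$. Hence both assertions follow from the inherited regularity
\[
\theta_n:=\sup\Big|\mu_0\big((\zeta_0,z_0)\,|\,(\zeta_{-1},z_{-1}),\ldots,(\zeta_{-n},z_{-n}),C\big)-\mu_0\big((\zeta_0,z_0)\,|\,(\zeta_{-1},z_{-1}),\ldots,(\zeta_{-n},z_{-n}),C'\big)\Big|\longrightarrow 0 ,
\]
the supremum being over all admissible level-$n$ data and all prescriptions $C,C'$ of the deeper coordinates, with the limit as $n\to\infty$: indeed one then gets $|\mu_0^{(0),(m)}-\mu_0^{(0),(n)}|\le\theta_n$ for all $m\ge n$, hence convergence, and $|\mu_0^{(0),(n)}-\mu_0^{(\tilde 0),(n)}|\le\theta_n$, hence independence of $\underline{\sigma}^{(0)}$.

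To prove $\theta_n\to0$ I would push the argument of Lemma \ref{lem: creation conditional probabilities for Sigma} through the fibres, writing the conditional probability in question as an average $\mathbb{E}_{p^{C}}\big[\phi(\cdot,C)\big]$, where $\phi(\underline{\sigma},C):=\sum_{\sigma_0\text{ compatible with }(\zeta_0,z_0)}\mu(\sigma_0\,|\,\sigma_{-1},\ldots,\sigma_{-n},C)$ and $p^{C}$ is the probability vector on $(\sigma_{-1},\ldots,\sigma_{-n})$ (ranging over the prescribed fibres) with weights proportional to $\prod_{j=1}^{n}\mu(\sigma_{-j}\,|\,\sigma_{-j-1},\ldots,\sigma_{-n},C)$. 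By (\ref{condition-exp-lemma-creation-cond-prob}), $\phi(\cdot,C)$ changes by at most $C_{17}e^{-C_{18}n}$ when $C$ is replaced by $C'$, and $\phi(\cdot,C)$ depends on $\sigma_{-j}$ only up to $\mathcal{O}(e^{-C_{18}(j-1)})$, so it is uniformly $\mathcal{O}(e^{-C_{18}m})$-close to a function of $(\sigma_{-1},\ldots,\sigma_{-m})$ alone. It then remains to see that the marginal of $p^{C}$ on $(\sigma_{-1},\ldots,\sigma_{-m})$ differs from that of $p^{C'}$ by an amount tending to $0$ as $n-m\to\infty$; taking, say, $m=\lfloor n/2\rfloor$ and combining the three bounds yields $\theta_n\to0$ (in fact exponentially fast, which will be needed to construct the limiting measure in the sequel).

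The hard part will be this last point, the near-$C$-independence of the marginal of $p^{C}$. Its weights are a product of $n$ factors, and replacing $C$ by $C'$ distorts the $j$-th factor by $1+\mathcal{O}(e^{-C_{18}(n-j)})$; the product of these distortions is only $\mathcal{O}(1)$, not small, so the change cannot be absorbed factor by factor. The resolution --- this is where the uniform lower bound $\mu(\cdot\,|\,\cdot)\ge C_{16}$ of Lemma \ref{lem: creation conditional probabilities for Sigma} is used, following Sinai \cite{Sinai-Topics}, \S12 --- is that this $\mathcal{O}(1)$ distortion is, up to exponentially small corrections, a single scalar common to the numerator and the denominator of the marginal: the distortion carried by a deep configuration is essentially insensitive to the shallow coordinates against which it is multiplied, so it factors out and cancels, leaving only the genuinely small contribution of the coordinates near the top of the block. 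Making this coherent cancellation precise is the main step; granting it, the two reductions above finish the proof (and one checks along the way that $\mu_0(\cdot\,|\,\cdot)$ inherits positivity, the normalization $\sum_{(\zeta_0,z_0)}\mu_0=1$, and the exponential regularity needed later).
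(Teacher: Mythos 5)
Your overall strategy is legitimate and is genuinely different from the paper's. You reduce everything to a single loss-of-memory estimate $\theta_n\to0$ (insensitivity of the level-$n$ conditional probability to the prescription of the coordinates of depth $>n$), which, if proven, does deliver both the Cauchy property and the independence of $\underline\sigma^{(0)}$; your observation that conditioning on mixed deep data is a convex combination of pure prescriptions, so the bound transfers, is correct. The paper instead proceeds by a two-stage approximation: first a truncation to a finite alphabet ($h,m\leq L$), justified by the Doeblin-type bound (\ref{lemmaconstants}), and then a regrouping of the $n$ conditioning steps into $\mathrm{sq}(n)$ blocks of length $\lfloor\sqrt n\rfloor$, so that each correction factor $\delta_j$ is $1+\mathcal O(\sqrt n\,e^{-C\sqrt n})$ by (\ref{condition-exp-lemma-creation-cond-prob}) and the product of all of them is still $1+o(1)$. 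After these reductions the quantity becomes an expectation against an ergodic finite-state Markov chain, and existence of the limit follows from the ergodic theorem for Markov chains. The blocking is precisely the device that avoids the difficulty you run into: it converts the non-summable, factor-by-factor $1+\mathcal O(e^{-C(n-j)})$ distortions into a controllable global error.

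The gap in your proposal is that its one nontrivial step is asserted rather than proved. You correctly identify that the product of the per-factor distortions is only $\mathcal O(1)$, and you state that the resolution is a "coherent cancellation" between numerator and denominator of the marginal of $p^{C}$, but you then write "granting it, the two reductions above finish the proof." Everything else in your argument (the regularity of $\phi$ in $C$, its approximate locality, the triangle inequality assembling the three bounds) is routine; the entire content of the lemma is concentrated in exactly the step you defer. To close it you would need to carry out the standard Doeblin contraction argument for inhomogeneous chains: using the uniform lower bound $\mu(\cdot\,|\,\cdot)\geq C_{16}$ from Lemma \ref{lem: creation conditional probabilities for Sigma} to show that the conditional law of $(\sigma_{-1},\ldots,\sigma_{-m})$ given $(\sigma_{-n},\ldots)$ forgets the deep coordinates at a geometric rate in $n-m$ (e.g.\ via a coupling or a Birkhoff-contraction estimate on the transfer operators), and also to verify that the infinite sums defining $p^{C}$ and its marginals converge and that the exchange of limits is justified. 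As written, the proposal is an outline of a viable alternative proof, not a proof.
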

\begin{proof}
The Markov process $\{\ldots,\sigma_{-n},\ldots,\sigma_{-1},\sigma_0\}$ has a countable state-space but, by (\ref{lemmaconstants}), it satisfies a Doeblin condition.
Therefore it can be exponentially well approximated by a process with finite (but sufficiently large) state-space. To this end, let us introduce also $\mu^{(0)}_{0,L}$ as in (\ref{proof-markov-process1}), with the additional constraint that $\sigma_{-j}=h_{-j}\cdot m_{-j}^{\zeta_{-j}}$, satisfy the inequalities $h,m\leq L$ for $0\leq j\leq n$. The sums in the corresponding numerator and denominator are thereby finite and contain at most $(2L^2-L-1)^{n+1}$ and $(2L^2-L-1)^n$ terms respectively.
In order to prove that $\mu^{(0)}_{0,L}\left((\zeta_{0},z_{0})\big{|}(\zeta_{-1},z_{-1}),(\zeta_{-2},z_{-2}),\ldots,(\zeta_{-n},z_{-n})\right)$ has a limit as $n\rightarrow\infty$ we shall perform a second approximation of the process $\{\sigma_j\}$ by a finite Markov chain with memory of order $\sqrt{n}$.

We partition the integers $1,\ldots,n$ into fragments with $\lfloor\sqrt n\rfloor$ elements. Notice that $0\leq n-\lfloor\sqrt n\rfloor^2\leq 2\lfloor\sqrt n\rfloor$ and define 
$$\mathrm{sq}(n)=\begin{cases}
	\lfloor\sqrt n\rfloor-1&\text{if $0\leq n-\lfloor\sqrt n\rfloor^2<\lfloor\sqrt n\rfloor$},\\
      \lfloor\sqrt n\rfloor& \text{if $\lfloor\sqrt n\rfloor\leq n-\lfloor\sqrt n\rfloor^2<2\lfloor\sqrt n\rfloor$}, \\
      \lfloor\sqrt n\rfloor+1 & \text{if $n-\lfloor\sqrt n\rfloor^2= 2\lfloor\sqrt n\rfloor$}.
\end{cases}$$
The product in the denominator of $\mu_{0,L}^{(0)}$ becomes
\begin{eqnarray}
&&\prod_{s=1}^n\mu\left(\sigma_{-s}\Big{|}\sigma_{-s-1},\ldots,\sigma_{-n},\sigma_{-n-1}^{(0)},\sigma_{-n-2}^{(0)},\ldots\right)=\nonumber\\
&&=\prod_{j=1}^{\mathrm{sq}(n)}\mu\left(\sigma_{-(j-1)\lfloor\sqrt n\rfloor-1},\ldots,\sigma_{-j\lfloor\sqrt n\rfloor}\Big{|}\sigma_{-j\lfloor\sqrt n\rfloor-1},\ldots,\sigma_{-(j+1)\lfloor\sqrt n\rfloor},%\sigma_{-(j-1)\lfloor\sqrt n\rfloor-1},
\ldots,\sigma_{-n},\sigma_{-n-1}^{(0)},%\sigma_{-n-2}^{(0)},
\ldots\right)\cdot\nonumber\\
&&\hspace{.3cm}\cdot\,\mu\left(\sigma_{-\mathrm{sq}(n)\lfloor\sqrt n\rfloor-1},\ldots,\sigma_{-(\mathrm{sq}(n)+1)\lfloor\sqrt n\rfloor}\Big|
\sigma_{-(\mathrm{sq}(n)+1)\lfloor\sqrt n\rfloor-1},\ldots,\sigma_{-n},\sigma_{-n-1}^{(0)},\ldots\right)\cdot\label{factor1}\\
&&\hspace{.3cm}\cdot\,\mu\left(\sigma_{-(\mathrm{sq}(n)+1)\lfloor\sqrt n\rfloor-1},\ldots,\sigma_{-n}\Big|\sigma_{-n-1}^{(0)},\sigma_{-n-2}^{(0)},\ldots\right)=\label{factor0}\\
&&=\left(\prod_{j=1}^{\mathrm{sq}(n)}\mu\left(\hat\sigma_{-j}\big{|}\hat\sigma_{-j-1}\right)\delta_j\right)\cdot\tilde\mu^{(1)}\cdot\tilde\mu^{(0)},\nonumber
\end{eqnarray}
where
\begin{eqnarray}
&&\hat\sigma_{-j}=(\sigma_{-(j-1)\lfloor\sqrt n\rfloor-1},\ldots,\sigma_{-j\lfloor\sqrt n\rfloor})\in\Sigma^{\lfloor\sqrt n\rfloor},\nonumber\\
&&%\left|\delta_j\right|\leq C e^{-C\sqrt n}\nonumber
\delta_j=\frac{\mu\left(\hat\sigma_{-j}|\hat\sigma_{-j-1},\sigma_{-(j+1)\lfloor\sqrt n\rfloor-1},\ldots\right)}{\mu\left(\hat\sigma_{-j}|\hat\sigma_{-j-1}\right)},\label{correction-term}
\end{eqnarray}
and $\tilde\mu^{(1)}$, $\tilde\mu^{(0)}$ correspond the factors in (\ref{factor1}) and (\ref{factor0}) respectively. Notice that for $n-\lfloor\sqrt n\rfloor^2=k\lfloor\sqrt n\rfloor$, $k=0,1,2$, the factor $\tilde\mu^{(0)}$ disappears and $\tilde\mu^{(1)}=\mu(\sigma_{-\mathrm{sq}(n)\lfloor\sqrt n\rfloor-1},\ldots,\sigma_{-n}\big|\sigma_{-n-1}^{(0)},\ldots)$.
%\begin{lem}
We claim that
\begin{equation}\label{lemma-markov-process-estimate}
\left|\delta_j-1\right|\leq C_{19}\sqrt n\,e^{-C_{20}\,\sqrt n}
\end{equation}
%\end{lem}
%\begin{proof}
In fact, the correction factor $\delta_j$ can be written as
\begin{eqnarray}
\delta_j=\prod_{s=(j-1)\lfloor\sqrt n\rfloor+1}^{j\lfloor\sqrt n\rfloor}\frac{\mu\left(\sigma_{-s}|\sigma_{-s-1},\ldots,\sigma_{-j\lfloor\sqrt n\rfloor},\hat\sigma_{-j-1},\sigma_{-(j+1)\lfloor\sqrt n\rfloor-1},\ldots\right)}{%\prod_{s=(j-1)\lfloor\sqrt n\rfloor+1}^{j\lfloor\sqrt n\rfloor}
\mu\left(\sigma_{-s}|\sigma_{-s-1},\ldots,\sigma_{-j\lfloor\sqrt n\rfloor},\hat\sigma_{-j-1}\right)}\label{proof-lemma-markov-process-estimate0}
\end{eqnarray}
and, by (\ref{condition-exp-lemma-creation-cond-prob}), each factor in (\ref{proof-lemma-markov-process-estimate0}), is $(C_{17}\,e^{-C_{18}\,\sqrt n})$-close to 1. Therefore, for some constants $C_{21},C_{22}>0$, %the logarithm of factor in (\ref{proof-lemma-markov-process-estimate0}) is $(C_{??}e^{-C_{??}\sqrt n})$-close to 0 and 
%\begin{eqnarray}
%\left|\log\delta_j\right|\leq C_{??}\lfloor\sqrt n\rfloor\cdot e^{-C_{??}\sqrt n}
%\end{eqnarray}
$\left|\log\delta_j\right|\leq C_{21}\sqrt n\cdot e^{-C_{22}\,\sqrt n}$ and we get (\ref{lemma-markov-process-estimate}) for some $C_{19},C_{20}>0$. % $|\delta_j|\leq C_{???}\exp\left\{C_{???}\sqrt n\, e^{-C_{??}\sqrt n}\right\}$
%\end{proof}
%\begin{remark}
The factors $\tilde\mu^{(0)}$ and $\tilde\mu^{(1)}$ can be approximated in the same way, by truncating the length of the condition after $\lfloor\sqrt n\rfloor$ digits. Denoting by $\delta^{(l)}=\frac{\tilde\mu^{(l)}}{\hat\mu^{(l)}}$, $l=0,1$, the correction terms as in (\ref{correction-term}), one gets $|\delta^{(l)}-1|\leq C_{22}\sqrt n\, e^{-C_{23}\,\sqrt n}$ for $l=0,1$ and for some $C_{22},C_{23}>0$.%, and getting the same kind of exponentially small error.
%\end{remark}

Therefore $\mu^{(0)}_{0,L}\left((\zeta_{0},z_{0})\big{|}(\zeta_{-1},z_{-1}),(\zeta_{-2},z_{-2}),\ldots,(\zeta_{-n},z_{-n})\right)$ is exponentially well approximated by 
$$\frac{\sum_{\sigma_0,\sigma_{-1},\ldots,\sigma_{-n}}\mu(\sigma_0|\sigma_{-1})\prod_{j=1}^{\mathrm{sq}(n)}\mu(\hat\sigma_{-j}|\hat\sigma_{-j-1})\cdot\hat\mu^{(1)}\hat\mu^{(0)}}{\sum_{\sigma_{-1},\ldots,\sigma_{-n}}\prod_{j=1}^{\mathrm{sq}(n)}\mu(\hat\sigma_{-j}|\hat\sigma_{-j-1})\cdot\hat\mu^{(1)}\hat\mu^{(0)}},$$
which can be understood as the expectation of $\mu(\sigma_0|\sigma_{-1})$ with respect to the measure for the finite Markov chain $\{\ldots,\hat\sigma_{-n},\ldots,\hat\sigma_{-1}\}$. Recall that the phase-space of such Markov chain is $\{h\cdot m^\zeta\in\Sigma :\:h,m\leq L\}^{\lfloor\sqrt n\rfloor}$, which has $(2L^2-L-1)^{\lfloor\sqrt n\rfloor}$ elements. This Markov chain is ergodic because, by the symbolic coding of the map $R$, every sequence of elements of $\Sigma$ is allowed.
By the ergodic theorem for Markov chains and the Doeblin condition we get the existence of the limit 
\begin{eqnarray}
&&\mu^{(0)}_{0}\!\left((\zeta_{0},z_{0})\big{|}(\zeta_{-1},z_{-1}),(\zeta_{-2},z_{-2}),\ldots\right)=\nonumber\\
&&=\lim_{n\rightarrow\infty} \lim_{L\rightarrow\infty} \mu^{(0)}_{0,L}\!\left((\zeta_{0},z_{0})\big{|}(\zeta_{-1},z_{-1}),(\zeta_{-2},z_{-2}),\ldots,(\zeta_{-n},z_{-n})\right).\nonumber
\end{eqnarray}
Moreover, by (\ref{condition-exp-lemma-creation-cond-prob}), the conditional probability distributions $\mu_{0}^{(0)}\!\left((\zeta_{0},z_{0})\big{|}(\zeta_{-1},z_{-1}),%(\zeta_{-2},z_{-2}),
\ldots\right)$ do not depend on the sequence $\underline\sigma^{(0)}$ and will be denoted simply by $\mu_{0}\!\left((\zeta_{0},z_{0})\big{|}(\zeta_{-1},z_{-1}),%(\zeta_{-2},z_{-2}),
\ldots\right)$.
\end{proof}

Now, let us fix an arbitrary sequence $\big\{(\zeta_j^{(0)},z_j^{(0)})\big\}_{j\in\Z}\in\Xi^\Z$. For each $s\in\Z$ consider the measure $\lambda_s^{(0)}$ defined on $\Xi^\Z$ using Lemma \ref{lem: existence mu0} as follows:
\begin{eqnarray}
&&\lambda_s^{(0)}\big\{(\zeta_{s-n}^{(0)},z_{s-n}^{(0)}),\ldots,(\zeta_{s-1}^{(0)},z_{s-1}^{(0)})\big\}:=1\hspace{.5cm}\mbox{for every $n\in\N$};\nonumber\\
&&\lambda_s^{(0)}\big\{(\zeta_{s},z_{s}),(\zeta_{s+1},z_{s+1}),\ldots,(\zeta_{s+t},z_{s+t})\big\}:=\nonumber\\
&&\hspace{1cm}:=\prod_{l=s}^{s+t}\mu_0\!\left((\zeta_{l},z_{l})\Big{|}(\zeta_{l-1},z_{l-1}),\ldots,(\zeta_{s},z_{s}),(\zeta_{s-1}^{(0)},z_{s-1}^{(0)}),(\zeta_{s-2}^{(0)},z_{s-2}^{(0)}),\ldots\right)\nonumber
\end{eqnarray}
for every $t\geq0$. Since $\Xi^\Z$ is compact, the space of all probability measures on it is weakly compact and therefore there exists a subsequence $\{-s_j\}_{j\in\N}$ such that $\lim_{j\rightarrow\infty}s_j=\infty$ and $\lambda_{-s_j}^{(0)}\Longrightarrow\lambda^{(0)}$ as $j\rightarrow\infty$. One can show (see \cite{Sinai-Topics}, \S12, Theorem 2 and Lemma 2) that
$$\lim_{n\rightarrow\infty}\lambda^{(0)}\left((\zeta_s,z_s)\big{|}(\zeta_{s-1},z_{s-1}),\ldots,(\zeta_{s-n},z_{s-n})\right)=\mu_0\left((\zeta_{s},z_{s})\big{|}(\zeta_{s-1},z_{s-1}),(\zeta_{s-2},z_{s-2}),\ldots\right)$$
and such $\lambda^{(0)}$ is shift-invariant and unique.

Let us now prove the existence of the limiting probability distribution for the sequence $\{(x_n,y_n)\}_{n\in\N}$. Observe that 
\begin{eqnarray}
&x_1=-\zeta_1,\:&x_n=x_{n-1}\cdot(-\zeta_n);\nonumber\\
&y_1=z_1,\:&y_n=y_{n-1}+z_n\cdot x_{n-1}.\nonumber
\end{eqnarray}
\begin{lem}
For every $(X,Y)\in\Xi$ the limit %we want to prove that 
$$\lim_{n\rightarrow\infty}\scriptsize{\lambda^{(0)}\!\left(\!\!\begin{array}{c}x_n=X\\y_n=Y\end{array}\!\!\right)}$$
exists. %has a limit as $n\rightarrow\infty$.
\end{lem}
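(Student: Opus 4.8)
The plan is to recognize $\{(x_n,y_n)\}$ as a non-commutative random walk on a finite group driven by the stationary, exponentially weakly-dependent process $\{(\zeta_j,z_j)\}$, and then to imitate the finite-memory Markov approximation used in the proof of Lemma~\ref{lem: existence mu0}. First I would equip the set $\Xi$ with the group law $(a,b)\cdot(a',b'):=(aa',\,b+ab')$ (sums taken in $\mathbb Z/8\mathbb Z$), which turns $\Xi$ into the dihedral group $G$ of order $16$, whose abelianization is $(\mathbb Z/2\mathbb Z)^2$ generated by the images of $(-1,0)$ and $(1,1)$. The recursion $x_n=x_{n-1}(-\zeta_n)$, $y_n=y_{n-1}+z_nx_{n-1}$ is exactly the statement that, in $G$,
\[
(x_n,y_n)=g_1g_2\cdots g_n,\qquad g_j:=(-\zeta_j,z_j),
\]
with $(x_0,y_0)$ the identity, and $(\zeta_j,z_j)\mapsto g_j$ is a bijection of $\Xi$ onto $G$. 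Hence $\lambda^{(0)}(x_n=X,\,y_n=Y)$ is the $n$-step distribution at $(X,Y)\in G$ of this $G$-valued walk, whose increment process is stationary under $\lambda^{(0)}$ and inherits, from (\ref{condition-exp-lemma-creation-cond-prob}) exactly as in the proof of Lemma~\ref{lem: existence mu0}, an exponential bound on the dependence on the remote past.

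Next I would reduce to a finite Markov chain, mirroring the proof of Lemma~\ref{lem: existence mu0}. Partition $\{1,\dots,n\}$ into consecutive blocks of length $\lfloor\sqrt n\rfloor$ (plus a short remainder), let $\hat\sigma_{-i}$ be the $\Sigma$-word carried by the $i$-th block and $G_i:=\prod_{j\in\text{block }i}g_j\in G$ its increment product, so that $(x_n,y_n)=G_1\cdots G_{\mathrm{sq}(n)}\cdot(\text{remainder})$. By (\ref{condition-exp-lemma-creation-cond-prob}) the law of the block sequence $\{\hat\sigma_{-i}\}$ differs from that of a genuine finite stationary Markov chain only through the multiplicative correction factors $\delta_i=1+O(\sqrt n\,e^{-c\sqrt n})$ of (\ref{correction-term})–(\ref{lemma-markov-process-estimate}), with total contribution $1+O(n\,e^{-c\sqrt n})$; and the pair $\bigl(\hat\sigma_{-i},\,G_1\cdots G_i\bigr)$ is a Markov chain on the finite set (block-state)$\times G$. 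Thus $\lambda^{(0)}(x_n=X,\,y_n=Y)$ equals, up to an error $O(n\,e^{-c\sqrt n})\to0$, the $\mathrm{sq}(n)$-step transition probability of this finite chain into the fibre over $(X,Y)$.

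Finally I would check that this finite chain is irreducible and aperiodic, so that its $k$-step distribution converges as $k\to\infty$; taking $k=\mathrm{sq}(n)\to\infty$ and inserting the error bound above then yields the existence of $\lim_n\lambda^{(0)}(x_n=X,\,y_n=Y)$. Irreducibility of the base (block-state) chain is the Doeblin property used throughout — every $\Sigma$-word is admissible — and the increments $g_j$ exhaust $G$, so the $G$-component can be steered to any target while the base reaches any state; aperiodicity holds because the identity of $G$, namely the increment coming from $\zeta_j=-1$, $h_j\equiv7\ (\mathrm{mod}\ 8)$, occurs with $\mu_0$-conditional probability $\ge C_{16}>0$ uniformly over the past, so the chain can return to any state in all sufficiently large numbers of steps (equivalently: for each of the three surjections $\chi\colon G\to\mathbb Z/2\mathbb Z$ the increment law charges $\chi^{-1}(0)$, hence is not supported on a nontrivial coset of a proper normal subgroup). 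The hard part will be the second step: transplanting the block decomposition and the $\delta_i$-estimates of Lemma~\ref{lem: existence mu0} from scalar conditional probabilities to the $G$-valued product — in particular handling the remainder block and keeping careful track of non-commutativity when the $g_j$ are regrouped into the $G_i$ — and pinning down the precise finite Markov chain to which the classical convergence theorem applies. Note that the uniqueness of $\lambda^{(0)}$ established above guarantees the resulting limit is independent of the subsequence $\{s_j\}$ used to construct $\lambda^{(0)}$.
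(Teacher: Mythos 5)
Your proposal is correct, and it arrives at the same destination as the paper --- convergence of a finite-state Markov chain --- but by a noticeably different and in places more careful route. The paper changes variables from the path $(X_j,Y_j)$ to the increments $Z_j$ (its version of your $g_j$), writes the resulting sum (\ref{sum-product-XY}) as a product of one-step conditionals $p_{Z,W}$ of $\{(\zeta_j,z_j)\}$ under $\lambda^{(0)}$, and concludes from positivity of the $16\times16$ matrix $\Pi$. You instead make the group structure explicit (the dihedral group of order $16$), rerun the $\lfloor\sqrt n\rfloor$-block approximation of Lemma~\ref{lem: existence mu0} to replace the non-Markovian increment process by a genuine finite Markov chain up to an error $O(n\,e^{-c\sqrt n})$, and then verify irreducibility and aperiodicity of the joint chain on $(\mbox{block-state})\times G$. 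This buys two things the paper's one-line conclusion glosses over: first, $\lambda^{(0)}$ is a chain with complete connections rather than a one-step Markov measure, so the product formula in (\ref{sum-product-XY}) is itself only an approximation justified by (\ref{condition-exp-lemma-creation-cond-prob}), and your block scheme supplies that justification; second, positivity of $\Pi$ controls the increment chain, but the quantity in (\ref{P0(xn,yn) as matrix power}) is the distribution of the whole product $g_1\cdots g_n$ on $G$ (the constraint is on the product, not on the final increment), and it is your aperiodicity observation --- the identity increment $\zeta_j=-1$, $h_j\equiv 7\ (\mathrm{mod}\ 8)$ carries conditional probability at least $C_{16}>0$ --- that actually forces convergence of that distribution. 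The step you flag as hard, regrouping the $g_j$ into the blocks $G_i$, is in fact harmless: only associativity is needed, and the pair $\bigl(\hat\sigma_{-i},\,G_1\cdots G_i\bigr)$ is Markov by construction, so your plan goes through as written.
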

\begin{proof}
Using the above relations we get
\begin{eqnarray}
%&&\scriptsize{\mathrm{P}^{(0)}\!\left(\!\!\begin{array}{c}x_n=X\\y_n=Y\end{array}\!\!\right)=}\nonumber\\
&&\scriptsize{\lambda^{(0)}\!\left(\!\!\begin{array}{c}x_n=X\\y_n=Y\end{array}\!\!\right)=\sum_{\tiny{\begin{array}{c}X_{n-1},\ldots,X_1\\Y_{n-1},\ldots,Y_1\end{array}}}
%\!\!\!\!\mathrm{P}^{(0)}\!\left(\!\!\begin{array}{c}x_n=X\\y_n=Y\end{array}\!\!\Big|\!\!\begin{array}{c}x_{n-1}=X_{n-1}\\y_{n-1}=Y_{n-1}\end{array}\!\!\right)\cdot
\prod_{j=1}^{n-1}\lambda^{(0)}\!\left(\!\!\begin{array}{c}x_{j+1}=X_{j+1}\\y_{j+1}=Y_{j+1}\end{array}\!\!\Big|\!\!\begin{array}{c}x_{j}=X_{j}\\y_{j}=Y_{j}\end{array}\!\!\right)\cdot\lambda^{(0)}\!\left(\!\!\begin{array}{c}x_{1}=X_{1}\\y_{1}=Y_{1}\end{array}\!\!\right)=}\nonumber\\%\label{conditional-XY-1}}\\
&&\scriptsize{=\sum_{\tiny{\begin{array}{c}X_{n-1},\ldots,X_1\\Y_{n-1},\ldots,Y_1\end{array}}}
%\!\!\!\!\mathrm{P}^{(0)}\!\left(\!\!\begin{array}{c}\zeta_n=X\\z_n=Y\end{array}\!\!\Big|\!\!\begin{array}{c}\zeta_{n-1}=X_{n-1}\\z_{n-1}=Y_{n-1}\end{array}\!\!\right)\cdot
\prod_{j=1}^{n-1}\lambda^{(0)}\!\left((\zeta_{j+1},z_{j+1})=Z_{j+1}\big|(\zeta_{j},z_{j})=Z_j\right)\cdot\lambda^{(0)}\!\left((\zeta_{1},z_{1})=Z_{1}\right)
,\label{sum-product-XY}}%\\
%&&\scriptsize{\prod_{j=1}^{n-1}\mathrm{P}^{(0)}\!\left(\!\!\begin{array}{l}\zeta_{j+1}=-X_jX_{j+1}\\z_{j+1}=X_j(Y_{j+1}-Y_j)\end{array}\!\!\Big|\!\!\begin{array}{l}\zeta_{j}=-X_{j-1}X_{j}\\z_{j}=X_{j-1}(Y_{j}-Y_{j-1})\end{array}\!\!\right)\cdot\mathrm{P}^{(0)}\!\left(\!\!\begin{array}{l}\zeta_{1}=-X_{1}\\z_{1}=Y_{1}\end{array}\!\!\right)},\nonumber%\label{conditional-XY-2}}
\end{eqnarray}
where $(X_n,Y_n):=(X,Y)$, %and %the indices of summation 
$(X_{n-1},Y_{n-1}),\ldots,(X_1,Y_1)\in\Xi$ %are compatible with the values $(X,Y)$ 
and $Z_j\in\Xi$ are defined as
\begin{equation}\label{eq: trans-XY-Z}
Z_1:=(-X_1,Y_1),\hspace{.6cm}Z_j:=\big(-X_{j-1}X_j,X_{j-1}(Y_j-Y_{j-1})\:(\mathrm{mod}\:8)\big),\:\:j\geq 2.
\end{equation}
Notice that, by (\ref{eq: trans-XY-Z}), the sum over all $X_1,\ldots,X_{n-1},Y_1,\ldots,Y_{n-1}$ in (\ref{sum-product-XY}) can be replaced by the sum over all possible $Z_1,\ldots,Z_{n-1}\in\Xi$. 

Let us denote by $p_{Z,W}:=\lambda^{(0)}\!\left((\zeta_{j+1},z_{j+1})=W\big|(\zeta_{j},z_{j})=Z\right)$, the transition probabilities for  $Z,W\in\Xi$, by $\Pi:=(p_{Z,W})_{Z,W\in\Xi}$ the corresponding $2^4\times 2^4$ stochastic matrix and by $\underline\pi:=\left(\lambda^{(0)}\!\left((\zeta_{1},z_{1})=Z\right)
\right)_{Z\in\Xi}$ the initial probability distribution. Thus, we can write (\ref{sum-product-XY}) as
\begin{eqnarray}\label{P0(xn,yn) as matrix power}
\scriptsize{\lambda^{(0)}\!\left(\!\!\begin{array}{c}x_n=X\\y_n=Y\end{array}\!\!\right)}=\left(\Pi^n\underline\pi\right)_Z,
\end{eqnarray}
where $Z=\big(-X_{j-1}X_j,X_{j-1}(Y_j-Y_{j-1})\:(\mathrm{mod}\:8)\big)$.
% has exactly $2^4$ preimages under the map $\Xi^2\rightarrow\Xi$, $\big((X_{j-1},Y_{j-1}),(X_j,Y_j)\big)\mapsto Z_j$
The stochastic matrix $\Pi$ has positive entries and therefore $\scriptsize{\lambda^{(0)}\!\left(\!\!\begin{array}{c}x_n=X\\y_n=Y\end{array}\!\!\right)}$ has a limit for every $(X,Y)\in\Xi$ as $n\rightarrow\infty$.
\end{proof}

Let $J$ be as in the previous section. It represents a finite number of $\Sigma$-entries preceding the renewal time $\hat n_N$ defining the approximating curve $t\mapsto \gamma_{\alpha,N}^J(t)$.
We can rewrite $E_\alpha(N)$ and $K_\alpha^8(N)$ as follows:
\begin{eqnarray}
E_\alpha(N)&=&x_{\hat n_N-J}\cdot\mathcal{E}_J%=F_1\left(x_{\hat n_N-J},\{h_l\cdot m_l^{\zeta_l},\hat n_N-J <l\leq \hat n_N\}\right)
,\nonumber\\%\prod_{v=\hat n_N-J}^{\hat n_N}(-\zeta_v)\nonumber\\
K_\alpha^8(N)&=&\left[1+y_{\hat n_N-J}+x_{\hat n_N-J}\cdot\!\!\!\sum_{u=\hat n_N-J+1}^{\hat n_N}\!\!\!(h_u-\zeta_u)\,\mathcal{E}_J^{\hat n_N-u}\right]_8,%=
\nonumber\\% &=&F_2\left(x_{\hat n_N-J},y_{\hat n_N-J},\{h_l\cdot m_l^{\zeta_l},\hat n_N-J <l\leq \hat n_N\}\right),\nonumber 
\left(E_\alpha(N),K_\alpha^8(N)\right)&=&\mathrm F_2\!\left((x_{\hat n_N-J},y_{\hat n_N-J}),\{h_l\cdot m_l^{\zeta_l},\hat n_N-J <l\leq \hat n_N\}\right),\label{EK8=F2}
\end{eqnarray}
where $\mathrm F_2:\Xi\times\Sigma^J\rightarrow\Xi$.

\section{Existence of Limiting Finite-Dimensional Distributions% for $\gamma_{\alpha,N}$
}\label{section-limiting-fin-dim-distr}
In this section we prove the existence of limiting finite-dimensional distribution for $\gamma_{\alpha,N}^J$ as $N\rightarrow\infty$, w.r.t. $\mu_R$. Thereafter, we extend the result to $\gamma_{\alpha,N}$. We also discuss the notion of \emph{nice} set and we give a sufficient condition for a set $A\subset \C^k$ to be nice.

For every $t\in[0,1]$, by (\ref{gammaJmodified3}) and (\ref{EK8=F2}), we can write
\begin{equation}\nonumber
\gamma_{\alpha,N}^J(t)=\mathrm F\!\left(t;R^{\hat n_N}(\alpha),\frac{\hat q_{\hat n_N-1}}{N},\frac{\hat q_{\hat n_N}}{N},(x_{\hat n_N-J},y_{\hat n_N-J}),\left\{\sigma_l\right\}_{l=\hat n_N-J}^{\hat n_N}\right),
\end{equation}
where $\mathrm F=\mathrm F^{(1)}:[0,1]\times(0,1]\times(0,1]\times(1,\infty)\times\Xi\times\Sigma^J\rightarrow\C$ is a %piecewise continuous 
measurable 
function of its arguments.
Similarly, for every $0\leq t_1<t_2<\cdots<t_k\leq1$, setting $\underline{\gamma}_{\alpha,N}^J(t_1,\ldots,t_k):=(\gamma_{\alpha,N}^J(t_1),\ldots,\gamma_{\alpha,N}^J(t_k))$, we have
$$\underline\gamma_{\alpha,N}^J(t_1,\ldots,t_k)=\mathrm F^{(k)}\!\left(\!(t_1,\ldots,t_k);R^{\hat n_N}(\alpha),\frac{\hat q_{\hat n_N-1}}{N},\frac{\hat q_{\hat n_N}}{N},(x_{\hat n_N-J},y_{\hat n_N-J}),\left\{\sigma_l\right\}_{l=\hat n_N-J}^{\hat n_N}\right),$$
where $\mathrm F^{(k)}:[0,1]^k\times(0,1]\times(0,1]\times(1,\infty)\times\Xi\times\Sigma^J\rightarrow\C^k$.

The following Renewal-Type Limit Theorem is the core of the proof of the existence of finite-dimensional distributions for $\gamma_{\alpha,N}^J$ as $N\rightarrow\infty$. It is a generalization of Theorem 1.6 in \cite{Cellarosi} and its proof will be sketched in Appendix \ref{AppendixA}. Let us just mention that it relies on the mixing property of the special flow built over the natural extension of $R$, under the a suitably chosen roof function.
\begin{theorem}[Main Renewal-Type Limit Theorem]\label{theorem:joint-limit-distr}
Fix $N_1,N_2\in\N$. The quantities $\frac{\hat q_{\hat n_N-1}}{N}$, $\frac{\hat q_{\hat n_N}}{N}$, $\{\sigma_{\hat n_N+l}\}_{l=-N_1+1}^{N_2}$, $(x_{\hat n_N-N_1}$, $y_{\hat n_N-N_1})$ have a joint limiting probability distribution w.r.t. the measure $\mu_R$ as $N\rightarrow\infty$.

In other words: there exists a probability measure $\mathrm{Q}=\mathrm{Q}_{N_1,N_2}$ on the space $(0,1]\times(1,\infty)\times\Sigma^{N_1+N_2}\times\Xi$ such that for every $a_1,b_1,a_2,b_2\in\R$, $0<a_1<b_1<1<a_2<b_2$, for every $\underline{c}=(c_l)_{l=-N_1+1}^{N_2}\in\Sigma^{N_1+N_2}$ and for every $(x,y)\in\Xi$, we have
\begin{eqnarray}
&&\hspace{-1.2cm}\mu_R\left(\left\{\alpha:\:a_1<\frac{\hat q_{\hat n_N-1}}{N}<b_1,\:a_2<\frac{\hat q_{\hat n_N}}{N}<b_2,\:(\sigma_{\hat n_N+l})_{l=-N_1+1}^{N_2}=\underline{c},\:\scriptsize{\left(\!\!\begin{array}{l}x_{\hat n_N-N_1}\\y_{\hat n_N-N_1}\end{array}\!\!\right)\!=\!\left(\!\!\begin{array}{l}x\\y\end{array}\!\!\right)}\!\right\}\!\right)\label{eq:thm-joint-lim}\\
&&\hspace{-1.2cm}\longrightarrow\nonumber\mathrm{Q}\!\left((a_1,b_1)\times(a_2,b_2)\times\{\underline{c}\}\times\{(x,y)\}\right)\hspace{.3cm}\mbox{as $N\rightarrow\infty$}.
\end{eqnarray}
\end{theorem}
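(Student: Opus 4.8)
\emph{Strategy.} The plan is to realize the entire collection of quantities in (\ref{eq:thm-joint-lim}) as the value of a single bounded observable evaluated along a mixing flow, and to obtain the joint convergence from the equidistribution, under that flow, of the transversal measure induced by $\mu_R$. This is the mechanism of Sinai and Ulcigrai \cite{Sinai-Ulcigrai08} for Euclidean continued fractions, already employed in \cite{Cellarosi} for the version of the statement that omits the pair $(x_{\hat n_N-N_1},y_{\hat n_N-N_1})$; the genuinely new ingredient is to bring that pair --- which depends on the \emph{entire} past of the $R$-orbit --- into the same framework.

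\emph{The special flow and the local data.} Let $\hat R$ be the natural extension of $R$ on $\Sigma^\Z$, with invariant probability measure $\hat\mu_R$ projecting to $\mu_R$, equipped, as in \cite{Cellarosi}, with the roof function $\tau$ whose Birkhoff sums along $\hat R$ recover the $R$-denominators, $\sum_{j=0}^{n-1}\tau\circ\hat R^{\,j}=\log\hat q_n$; by Lemma \ref{growth of R-denominators}, $\tau$ is positive and $\hat\mu_R$-integrable. Let $(\Sigma^\Z_\tau,\phi_t)$ be the associated special flow, with invariant probability measure $\hat\mu_R^\tau$. For $\hat\mu_R$-a.e.\ $\omega$ and $T=\log N$ one has $\phi_{T}(\omega,0)=(\hat R^{\hat n_N-1}\omega,\,u_N)$ with $u_N=\log N-\log\hat q_{\hat n_N-1}\in[0,\tau(\hat R^{\hat n_N-1}\omega))$, because $\hat q_{\hat n_N-1}\leq N<\hat q_{\hat n_N}$. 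Hence $\hat q_{\hat n_N-1}/N=e^{-u_N}$ and $\hat q_{\hat n_N}/N=e^{\tau(\hat R^{\hat n_N-1}\omega)-u_N}$ are functions of $u_N$ and of finitely many coordinates of $\hat R^{\hat n_N-1}\omega$, while each entry $\sigma_{\hat n_N+l}$, $-N_1<l\leq N_2$, is a fixed (past or future) coordinate of that same base point. Therefore the first three groups of quantities in (\ref{eq:thm-joint-lim}) equal $G_0\circ\phi_{\log N}(\omega,0)$ for a fixed bounded measurable $G_0$ on $\Sigma^\Z_\tau$ that is continuous off a $\hat\mu_R^\tau$-null set (the boundaries of the relevant cylinders and the endpoints $a_i,b_i$). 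By the standard deduction from mixing of $\phi_t$ --- using that $\mu_R$ is absolutely continuous with respect to the base measure, cf.\ \cite{Sinai-Ulcigrai08} --- the transversal measure $(\phi_T)_*\nu$, with $\nu$ the lift of $\mu_R$ to $\{(\omega,0)\}$, converges weakly to $\hat\mu_R^\tau$, so $\int G_0\circ\phi_{\log N}\,d\nu$ converges; this is Theorem \ref{renewal-type-limit-theorem-for-R}.

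\emph{Bringing in the phase and conjugation pair.} The pair $(x_{\hat n_N-N_1},y_{\hat n_N-N_1})\in\Xi$ is a function of $\sigma_1,\dots,\sigma_{\hat n_N-N_1}$ together with the initialization $x_0=1,\,y_0=0$; read from $\hat R^{\hat n_N-1}\omega$ it depends on a window of coordinates whose length, and whose distance to the pinned origin, both grow with $N$, and since $x_n=\prod_{s\leq n}(-\zeta_s)$ this dependence cannot be truncated. I would handle it through the construction of Section \ref{section:lim-distr-K8andE}: by Lemma \ref{lem: creation conditional probabilities for Sigma} and the exponential estimate (\ref{condition-exp-lemma-creation-cond-prob}) the $\Xi$-valued cocycle $(x_n,y_n)$ has limiting conditional laws $\mu_0(\cdot\,|\,\cdot)$, and the Doeblin property (strict positivity of the transition matrix $\Pi$) produces a \emph{unique} shift-invariant measure $\bar\mu_R$ on the two-sided process $((\sigma_n,x_n,y_n))_{n\in\Z}$ projecting to $\hat\mu_R$ --- the natural extension of the enlarged process --- in which $(x_n,y_n)$ at a fixed $n$ is a genuine coordinate. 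Two facts then combine. First, because $\Pi$ has strictly positive entries the pinning at the origin is forgotten at an exponential rate, so under $\mu_R$ the joint limiting distribution of the local data together with $(x_{\hat n_N-N_1},y_{\hat n_N-N_1})$ coincides with the one carried by $\bar\mu_R$ at the corresponding offset $1-N_1$. Second, since $\tau$ does not involve the $\Xi$-coordinate it lifts unchanged, so the enlarged object is again a special flow over an expanding base, and adjoining a finite cocycle with $\Pi>0$ creates no new arithmetic obstruction; hence the enlarged flow is still mixing, with invariant probability measure $\bar\mu_R^\tau$. Consequently \emph{all} the quantities in (\ref{eq:thm-joint-lim}) equal $G\circ\phi_{\log N}(\omega,0)$ for a single bounded measurable $G$, continuous off a null set, on the enlarged flow space, and the same equidistribution argument gives $\int G\circ\phi_{\log N}\,d\nu\to\int G\,d\bar\mu_R^\tau$; this is (\ref{eq:thm-joint-lim}) with $\mathrm{Q}:=G_*\bar\mu_R^\tau$, automatically a probability measure.

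\emph{Main obstacle.} I expect the crux to be the two mixing statements: that the special flow over $\hat R$ with roof $\tau$ is mixing --- obtained by the standard route (uniform expansion and bounded distortion of $R$, non-arithmeticity of $\tau$), as in \cite{Cellarosi,Sinai-Ulcigrai08} --- and that its enlargement by the $\Xi$-cocycle remains mixing, for which the positivity of $\Pi$ (equivalently the Doeblin condition of Lemma \ref{lem: creation conditional probabilities for Sigma}) must be converted into the statement that $\tau$ is not, over the enlarged base, cohomologous to a function with values in a discrete subgroup of $\R$. A secondary, more bookkeeping-heavy difficulty is to make rigorous the replacement of the \virg{pinned} pair $(x_{\hat n_N-N_1},y_{\hat n_N-N_1})$ by the stationary $\Xi$-coordinate of the enlarged natural extension with vanishing error, which is exactly where the exponential memory loss (\ref{condition-exp-lemma-creation-cond-prob}) and the uniqueness statement of Section \ref{section:lim-distr-K8andE} are used.
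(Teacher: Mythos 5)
Your first step --- realizing $\hat q_{\hat n_N-1}/N$, $\hat q_{\hat n_N}/N$ and the $\Sigma$-entries near the renewal time as a fixed observable evaluated at $\Phi_{\log N}(\omega,0)$ on a special flow over $\hat R$, and deducing convergence from mixing --- is the same strategy the paper uses (it is exactly how Theorem \ref{renewal-type-limit-theorem-for-R} is absorbed into Theorem \ref{theorem:joint-limit-distr}). One caveat: your identity $\sum_{j<n}\tau\circ\hat R^j=\log\hat q_n$ is not exact. By (\ref{logqn-Birkhoff-g}) the Birkhoff sums recover $\log\hat q_n$ only up to a transfer term $g(\hat\sigma)$ and an exponentially small error $\varepsilon_n$, which is why the paper must work cylinder by cylinder with the corrected time $\mathrm T=\log N-g_{\mathcal C}$ and an exceptional set $U(\mathcal C)$, and why the limiting region $\bar D_\Phi$ is only approximated from inside and outside by $\varepsilon$-fattened versions. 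This is standard and repairable, but the identification $\phi_{\log N}(\omega,0)=(\hat R^{\hat n_N-1}\omega,u_N)$ as you state it is false without that correction.

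The genuinely different part is your treatment of $(x_{\hat n_N-N_1},y_{\hat n_N-N_1})$, and this is where the gap lies. The paper does \emph{not} enlarge the flow: it keeps the special flow over $\Sigma^\Z$, isolates the pinned condition in the set $B_N(x,y)$, and proves the asymptotic factorization of the triple intersection by R\'enyi's lemma (\ref{Renyi}); after conditioning on $\hat n_k=\overline n$ and the past $\underline\theta$ (the decomposition into the sets $E_k^{(\overline n,\underline\theta)}$), the factor carrying $B_N$ becomes the conditional probability (\ref{eq: before-mixing3}), whose limit $p_{x,y,\underline c}$ exists by the elementary convergence of powers of the strictly positive finite stochastic matrix $\Pi$ from (\ref{P0(xn,yn) as matrix power}), while mixing of the \emph{original} flow handles the other factor. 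Your route instead requires mixing of the special flow over the \emph{enlarged} base (the skew product by the $\Xi$-cocycle). You correctly identify this as the crux, but you do not prove it, and it does not follow from positivity of $\Pi$ together with mixing of the original flow: the enlarged flow factors onto the original one, and a factor of a non-mixing system can perfectly well be mixing, so non-arithmeticity of the roof must be re-established over the enlarged base, where a strictly larger class of transfer functions is available. That unproved mixing statement is precisely the one step that goes beyond \cite{Cellarosi}, and the paper's combination of R\'enyi's lemma with the Doeblin/Markov-chain limit of Lemma \ref{lem: creation conditional probabilities for Sigma} is designed to avoid ever needing it. A secondary gap is your replacement of the pinned pair by the stationary $\Xi$-coordinate ``at the corresponding offset'': since $\hat n_N$ is random, this replacement only makes sense after the conditioning on $(\overline n,\underline\theta)$ described above, and one must verify that the resulting limit is independent of that conditioning --- which is exactly the content of the paper's claim (\ref{limit pxyc}).
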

\begin{remark}\label{rk: shrinking sets}
Let us also mention that the proof of Theorem \ref{theorem:joint-limit-distr} provides an explicit formula for $\mathrm{Q}\left((a_1,b_1)\times(a_2,b_2)\times\{\underline{c}\}\times\{(x,y)\}\right)$, based on a geometrical construction.
Moreover, if we fix $\underline c\in\Sigma^{N_1+N_2}$ and $(x,y)\in\Xi$, then the measure on $(0,1]\times(1,\infty)$ defined as $\mathrm Q_{N_1,N_2;\underline c,(x,y)}(E):=\mathrm Q_{N_1,N_2}(E\times\{\underline c\}\times\{(x,y)\})$ is equivalent to the Lebesgue measure on $(0,1]\times(1,\infty)$.
%Moreover, if we consider a decreasing sequence of sets $B_j=(a_1^{(j)},b_1^{(j)})\times(a_2^{(j)},b_2^{(j)})\times\{\underline c^{(j)}\}\times\{x^{(j)},y^{(j)}\}$ such that $(b_1^{(j)}-a_1^{(j)})(b_2^{(j)},a_2^{(j)})\rightarrow 0$ as $j\rightarrow\infty$, then $\lim_{j\rightarrow\infty}\mathrm{Q}(B_j)=0$. See Appendix \ref{AppendixA} for details. 
\end{remark}
Notice that the limiting probability distribution of $R^{\hat n_N}(\alpha)=(\sigma_{\hat n_N+1},\sigma_{\hat n_N+2},\ldots)\in\Sigma^\N$ can be obtained by providing a limiting probability distribution for any fixed number of $\Sigma$-entries after the renewal time $\hat n_N$, i.e. $\sigma_{\hat n_N+1},\ldots,\sigma_{\hat n_N+N_2}$, $N_2\in\N$. We immediately get the following
\begin{cor}\label{cor: QJ}
Fix $J\in\N$. The quantities $R^{\hat n_N}$, $\frac{\hat q_{\hat n_N-1}}{N}$, $\frac{\hat q_{\hat n_N}}{N}$, $(x_{\hat n_N-J}$, $y_{\hat n_N-J})$, $\{\sigma_l\}_{l=\hat n_N-J}^{\hat n_N}$ have a joint limiting probability distribution on $(0,1]\times(0,1]\times(1,\infty)\times\Xi\times\Sigma^{J+1}$ as $N\rightarrow\infty$, with respect to the measure $\mu_R$ on $(0,1]$.
\end{cor}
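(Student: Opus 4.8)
The plan is to derive the Corollary directly from Theorem \ref{theorem:joint-limit-distr}, by choosing its parameters appropriately and observing that every quantity in the Corollary is a measurable function of the ones controlled there. First I would apply Theorem \ref{theorem:joint-limit-distr} with $N_1=J+1$ and an arbitrary $N_2\in\N$; this produces the joint limiting $\mu_R$-distribution, as $N\to\infty$, of $\frac{\hat q_{\hat n_N-1}}{N}$, $\frac{\hat q_{\hat n_N}}{N}$, $(\sigma_{\hat n_N+l})_{l=-J}^{N_2}$ and $(x_{\hat n_N-J-1},y_{\hat n_N-J-1})$.

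Next I would recover the finitely many discrete coordinates of the Corollary. The block $\{\sigma_l\}_{l=\hat n_N-J}^{\hat n_N}=(\sigma_{\hat n_N+l})_{l=-J}^{0}$ is a sub-family of $(\sigma_{\hat n_N+l})_{l=-J}^{N_2}$, hence converges jointly with the rest. For $(x_{\hat n_N-J},y_{\hat n_N-J})\in\Xi$ I would use the recursions $x_n=x_{n-1}(-\zeta_n)$ and $y_n=y_{n-1}+z_n x_{n-1}$ with $z_n=h_n-\zeta_n\ (\mathrm{mod}\ 8)$, which express $(x_{\hat n_N-J},y_{\hat n_N-J})$ as a deterministic function of $(x_{\hat n_N-J-1},y_{\hat n_N-J-1})$ and of the pair $(\zeta_{\hat n_N-J},h_{\hat n_N-J})$, the latter being read off from the entry $\sigma_{\hat n_N-J}$, which is already in the list. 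Since $\Xi$ and $\Sigma^{J+1}$ are discrete this map is continuous, and the continuous mapping theorem then yields convergence of the joint law of $\frac{\hat q_{\hat n_N-1}}{N}$, $\frac{\hat q_{\hat n_N}}{N}$, $(x_{\hat n_N-J},y_{\hat n_N-J})$ and $\{\sigma_l\}_{l=\hat n_N-J}^{\hat n_N}$ as $N\to\infty$.

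It then remains to adjoin the coordinate $R^{\hat n_N}(\alpha)\in(0,1]$. Its ECF-expansion is $(\sigma_{\hat n_N+1},\sigma_{\hat n_N+2},\ldots)$, so prescribing its first $N_2$ entries is the same as confining $R^{\hat n_N}(\alpha)$ to an ECF-cylinder of $R$-rank $N_2$. For each fixed $N_2$, Theorem \ref{theorem:joint-limit-distr} gives convergence of the $\mu_R$-measure of such a cylinder event intersected with the events fixing the remaining coordinates; the limits are consistent in $N_2$ (each a marginal of the next, by uniqueness of the limit) and, for fixed $N_2$, sum to $1$ over the cylinders of that rank. Since $R$ is uniformly expanding with bounded distortion (cf.\ \cite{Cellarosi}; alternatively $\hat q_n\geq 3^{n/3}$ by Lemma \ref{growth of R-denominators}(i)), these cylinders shrink to points exponentially fast in $N_2$, so the class of cylinder events is convergence-determining; letting $N_2\to\infty$ I would obtain weak convergence of the full joint law to a probability measure on $(0,1]\times(0,1]\times(1,\infty)\times\Xi\times\Sigma^{J+1}$.

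I expect the only genuinely non-formal step to be this last passage, from the limiting distributions of finitely many ECF-digits of $R^{\hat n_N}(\alpha)$ to the limiting distribution of the point $R^{\hat n_N}(\alpha)$ itself; it is routine given the exponential shrinking of ECF-cylinders and was in effect already announced in the remark preceding the statement, so the main substance of the Corollary is simply the recursion bookkeeping for $(x_n,y_n)$ together with an invocation of the continuous mapping theorem applied to Theorem \ref{theorem:joint-limit-distr}.
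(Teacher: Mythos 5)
Your argument is correct and is essentially the proof the paper intends: the Corollary is stated there as an immediate consequence of Theorem \ref{theorem:joint-limit-distr}, preceded only by the remark that the law of $R^{\hat n_N}(\alpha)=(\sigma_{\hat n_N+1},\sigma_{\hat n_N+2},\ldots)$ is recovered from the laws of finitely many $\Sigma$-entries after the renewal time, which is exactly your shrinking-cylinder step. Your additional bookkeeping --- taking $N_1=J+1$ and pushing $(x_{\hat n_N-J-1},y_{\hat n_N-J-1})$ forward one step via the recursion $x_n=x_{n-1}(-\zeta_n)$, $y_n=y_{n-1}+z_nx_{n-1}$ using $\sigma_{\hat n_N-J}$ --- correctly fills in the details the paper leaves implicit.
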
 
Let us denote the limiting probability measure by $\mathrm Q^{(J)}$. For every $(x,y)\in\Xi$ and $\underline\sigma\in\Sigma^{J+1}$ the measure on $(0,1]^2\times(1,\infty)$ defined as $\mathrm Q^{(J)}_{(x,y),\underline\sigma}(E):=\mathrm Q^{(J)}\big(E\times\{(x,y)\}\times\{\underline\sigma\}\big)$ %This measure 
%(denoted by $\mathrm Q^{(J)}_{(x,y),\underline\sigma}$) 
is equivalent to the Lebesgue measure on $(0,1]^2\times(1,\infty)$. This fact is a consequence of Remark \ref{rk: shrinking sets}.

\begin{remark}
Fix $(t_1,\ldots,t_k)\in[0,1]^k$, $J\in\N$, $(x,y)\in\Xi$ and $\underline\sigma\in\Sigma^{J+1}$. Denoting $(u,v,w)=\left(R^{\hat n_N}(\alpha),\frac{\hat q_{\hat n_N-1}}{N},\frac{\hat q_{\hat n_N}}{N}\right)$, we can rewrite the functions in Lemma \ref{lemma: partial products} as
\begin{eqnarray}\beta_j=\beta_j(u)=\frac{a_j^{(1)}+b_j^{(1)} u}{c_j^{(1)}+d_j^{(1)}u},\:\:B_{s,j}=B_{s,j}(u)=\frac{a_{s,j}^{(2)}+b_{s,j}^{(2)} u}{c_{s,j}^{(2)}+d_{s,j}^{(2)}u},\:\:
D_j=D_j(u)=\prod_{l=0}^{j-1}\frac{a_l^{(3)}+b_l^{(3)} u}{c_l^{(3)}+d_l^{(3)}u},\nonumber\end{eqnarray}
for some constants $a_j^{(1)},b_j^{(1)},c_j^{(1)},d_j^{(1)},a_{s,j}^{(2)},b_{s,j}^{(2)},c_{s,j}^{(2)},d_{s,j}^{(2)},a_l^{(3)},b_l^{(3)},c_l^{(3)},d_l^{(3)}$ (determined by $\underline\sigma$). Notice that the functions $\beta_j$, $B_{s,j}$ and $D_j$ take values in $(0,1]$ and, despite their rational structure, they are $\mathcal C^\infty$ functions of $u\in(0,1]$.
Moreover $\alpha_{\nu_{\hat n_N-1}}=\alpha_{\nu_{\hat n_N-1}}(u)=\frac{a^{(4)}+b^{(4)} u}{c^{(4)}+d^{(4)}u}\in(0,1]$, by (\ref{proof: limiting distribution for Theta 1}) and (\ref{proof: limiting distribution for Theta 2}), 
%$$B_{s,j}=B_{s,j}(u)=\frac{a_{s,j}^{(2)}+b_{s,j}^{(2)} u}{c_{s,j}^{(2)}+d_{s,j}^{(2)}u}\in(0,1]$$
%$$D_j=D_j(u)=\prod_{l=0}^{j-1}\frac{a_l^{(3)}+b_l^{(3)} u}{c_l^{(3)}+d_l^{(3)}u}\in(0,1]$$
\begin{eqnarray}
\Theta_\alpha(N)&=:&\theta(u,v,w)=\left(a^{(5)}v+b^{(5)} w+c^{(5)}\alpha_{\nu_{\hat n_N-1}}\left(d^{(5)}v+e^{(5)}w\right)\right)^{-1}=\nonumber\\
&=&\frac{c^{(4)}+d^{(4)}u}{\left(a^{(5)}v+b^{(5)}w\right)\left(c^{(4)}+d^{(4)}u\right)+c^{(5)}\left(a^{(4)}+b^{(4)} u\right)\left(d^{(5)}v+e^{(5)}w\right)}\in(0,\infty)\nonumber
\end{eqnarray}
is also a $\mathcal C^\infty$ function of $(u,v,w)$, where $a^{(4)},b^{(4)},c^{(4)},d^{(4)},a^{(5)},b^{(5)},c^{(5)},d^{(5)},e^{(5)}$ are some constants (determined by $\underline \sigma$).
%and notice that, by () and (), $\theta(u,v,w)$
For $\underline t=(t_1,\ldots,t_k)$, set $$f_{\underline t}^{(J)}:=\mathrm F^{(k)}\big((t_1,\ldots,t_k),\cdot\big):(0,1]^2\times(1,\infty)\times\Xi\times\Sigma^{J+1}\rightarrow\C^k.$$ 
Finally, $\alpha_{\nu_{\hat n_N-j}}=:A_j(u)=\frac{a_j^{(6)}+b_j^{(6)} u}{c_j^{(6)}+d_j^{(6)}u}\in(0,1]$ and
$$%f^{(J)}_{\underline t}=
f^{(J)}_{\underline t;(x,y),\underline\sigma}:=\mathrm F^{(k)}\!\left((t_1,\ldots,t_k);\cdot,(x,y),\underline\sigma\right)=f^{(J)}_{\underline t}\!\left(\cdot,(x,y),\underline\sigma\right):(0,1]^2\times(1,\infty)\rightarrow\C^k$$
reads as
\begin{eqnarray}f^{(J)}_{\underline t;(x,y),\underline\sigma}(u,v,w)&=& \Bigg(C^{(1)}\theta(u,v,w)^{-\ha}\bigg[ C^{(2)} \mathcal{S}_{u}^{(C^{(3)})}(t_l\,\theta(u,v,w))+\nonumber\\
&&+\sum_{j=0}^{J-1}D_j(u)^\ha\!\sum_{s=2}^{C^{(4)}_{j}+2}C^{(5)}_s B_{s,j}(u)^\ha\,{\Gamma}\Big(A_j(u),t_l\frac{\theta(u,v,w)}{B_{s,j}(u)D_j(u)}\Big)\bigg]^{(C^{(6)})}\Bigg)_{l=1}^{k},\nonumber 
\end{eqnarray}
where %$\underline t=(t_1,\ldots,t_k)$ 
and $C^{(1)},C^{(2)},C^{(5)}_s\in\C$, $C^{(3)},C^{(6)}\in\{\pm1\}$ and $C^{(4)}_j\in\N$ are constants determined by $(x,y)\in\Xi$ and $\underline\sigma\in\Sigma^{J+1}$.
Notice that %, %we can choose  
%our approximations $\tilde{\mathcal S}$ and $\tilde\Gamma$ make 
$f_{\underline t;(x,y),\underline\sigma}^{(J)}:(0,1]^2\times(1,\infty)\rightarrow\C^k$ a continuous function (with piecewise $\mathcal C^\infty$ partial derivatives) of $(u,v,w)$. %, where $n$ is arbitrary large. 
%By Sard's Lemma the set of critical values for $f$ has zero Lebesgue measure in $\C$. 
\end{remark}

\subsection{Nice sets}
We say that $A\in\mathcal B^k$ is \emph{$(t_1,\ldots,t_k)$-nice} (or simply \emph{nice}) if for every $J\in\N$, for every $(x,y)\in\Xi$ and every $\underline\sigma\in\Sigma^{J+1}$, $\partial\big((f^{(J)}_{\underline t;(x,y),\underline\sigma})^{-1}(A)\big)$ has zero Lebesgue measure in $(0,1]^2\times(0,\infty)$. %Notice that if $\partial A$ has zero Lebesgue measure and contains no critical values of $f^{(J)}_{\underline t}$ for any $J$, then $A$ is $\underline t$-nice. For given $\underline t\in[0,1]^k$, the set $$\mathrm{CV}=\bigcup_{J\in\N,\,(x,y)\in\Xi,\,\underline\sigma\in\Sigma^J}\{\mbox{critical values of $f^{(J)}_{\underline t}$}\}$$ has zero Lebesgue measure in $\C$ by Sard's lemma. In order for $A$ to be nice it is sufficient that $\mathrm{Leb}(\partial A)=0$ and $\partial A\cap \mathrm{CV}=\emptyset$. 

Notice that if $A=A_1\times\ldots\times A_k$, where $A_l\in\mathcal{B}^1$ and $A_l$ is $t_l$-nice for $l=1,\ldots,k$, than $A$ is $(t_1,\ldots,t_k)$-nice. The following Lemma gives a sufficient condition for $A\in\mathcal B^1$ to be $t$-nice%, not involving the set $\mathrm{CV}$
, analogous to Lemma 5.1 in \cite{Marklof1999b}.
\begin{lem}
Let $A\in\mathcal B^1$ be an open convex set, $0\in A$, with smooth boundary. Let $A(w,\rho):=\{\rho z+w:\:z\in A\}$. Fix $t\in[0,1]$ and $w\in\C$. Then, except for countably many $\rho$, $A(w,\rho)$ is $t$-nice.
\end{lem}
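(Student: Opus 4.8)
\emph{Plan.} Fix $t\in[0,1]$ and $w\in\C$; niceness of $A(w,\rho)$ must be checked against the maps $f^{(J)}_{\underline t;(x,y),\underline\sigma}$ for all $J\in\N$, $(x,y)\in\Xi$, $\underline\sigma\in\Sigma^{J+1}$ (here $\underline t=(t)$), and there are only countably many such triples since $\Xi$ is finite and $\Sigma$ is countable. The idea is that, as $\rho$ ranges over $(0,\infty)$, the dilated boundaries $\partial A(w,\rho)$ form a pairwise disjoint family in $\C$, so their preimages under a continuous map form a pairwise disjoint family of measurable sets in a fixed $\sigma$-finite space, and in such a family only countably many members can have positive measure. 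Since $A$ is open and convex with $0\in A$, its Minkowski gauge $g_A(z):=\inf\{s>0:\ z/s\in A\}$ is finite, positively homogeneous ($g_A(\lambda z)=\lambda\,g_A(z)$ for $\lambda>0$), and Lipschitz, with $A=\{g_A<1\}$ and $\partial A=\{g_A=1\}$. Because $z\mapsto w+\rho z$ is a homeomorphism of $\C$, for $\rho>0$ we have $A(w,\rho)=w+\rho A$, so $A(w,\rho)$ is open and $\partial A(w,\rho)=w+\rho\,\partial A=\{z\neq w:\ g_A(z-w)=\rho\}$. Consequently the sets $\partial A(w,\rho)$, $\rho>0$, are pairwise disjoint: $z\in\partial A(w,\rho)\cap\partial A(w,\rho')$ would give $g_A(z-w)=\rho$ and $g_A(z-w)=\rho'$, hence $\rho=\rho'$.

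Now fix $(J,(x,y),\underline\sigma)$ and abbreviate $f:=f^{(J)}_{\underline t;(x,y),\underline\sigma}$, a continuous map on $\mathcal W:=(0,1]^2\times(1,\infty)$. Since $A(w,\rho)$ is open, $f^{-1}(A(w,\rho))$ is open in $\mathcal W$, and continuity of $f$ gives $\overline{f^{-1}(A(w,\rho))}\subseteq f^{-1}(\overline{A(w,\rho)})$, whence
\[
\partial\big(f^{-1}(A(w,\rho))\big)\ \subseteq\ f^{-1}\big(\partial A(w,\rho)\big)\ \cup\ \mathcal N,
\]
where $\mathcal N$ is a fixed set of Lebesgue measure zero coming from the topological boundary of the domain $\mathcal W$. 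By the first paragraph the closed sets $f^{-1}(\partial A(w,\rho))$, $\rho>0$, are pairwise disjoint in $\mathcal W$. Writing $\mathcal W=\bigcup_{m\in\N}\mathcal W_m$ with $\mathcal W_m:=(0,1]^2\times(1,m]$ of finite Lebesgue measure, for each $m$ we have $\sum_{\rho>0}\mathrm{Leb}\big(f^{-1}(\partial A(w,\rho))\cap\mathcal W_m\big)\leq\mathrm{Leb}(\mathcal W_m)<\infty$, so this sum has only countably many nonzero terms; letting $m\to\infty$, the set $E_{J,(x,y),\underline\sigma}:=\{\rho>0:\ \mathrm{Leb}(f^{-1}(\partial A(w,\rho)))>0\}$ is countable. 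Therefore $E:=\bigcup_{J,(x,y),\underline\sigma}E_{J,(x,y),\underline\sigma}$ is a countable union of countable sets, hence countable, and for every $\rho\notin E$ each set $\partial\big((f^{(J)}_{\underline t;(x,y),\underline\sigma})^{-1}(A(w,\rho))\big)$ is Lebesgue-null, i.e.\ $A(w,\rho)$ is $t$-nice. (If one also wants $\rho<0$, apply the above to the open convex set $-A$, using $A(w,\rho)=(-A)(w,-\rho)$.)

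\emph{Main obstacle.} The proof is soft; it uses only continuity of the $f^{(J)}_{\underline t;(x,y),\underline\sigma}$, not their $\mathcal C^\infty$ structure. The two points requiring care are (i) the disjointness of the dilated boundaries $w+\rho\,\partial A$, which rests on $0\in A$ (via the positively homogeneous gauge) rather than on smoothness of $\partial A$, and (ii) the bookkeeping: the domain $(0,1]^2\times(1,\infty)$ has infinite Lebesgue measure, handled by the exhaustion $\mathcal W_m$, and niceness must hold simultaneously for the countably many parameters $(J,(x,y),\underline\sigma)$, handled by a countable union.
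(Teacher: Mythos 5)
Your proof is correct and follows essentially the same route as the paper's: both rest on the pairwise disjointness of the dilated boundaries $\partial A(w,\rho)$ (which you justify more explicitly via the Minkowski gauge) together with the fact that a disjoint family in a $\sigma$-finite space has only countably many members of positive measure. The only cosmetic difference is that the paper runs the counting argument against the finite measure $\mathrm Q^{(J)}_{(x,y),\underline\sigma}$ (equivalent to Lebesgue) on $(0,1]^2\times(1,\infty)$, whereas you use an exhaustion by finite-Lebesgue-measure slabs; both work equally well.
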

\begin{proof}
Let $t\in[0,1]$ be fixed. For every $J\in\N$, every $(x,y)\in\Xi$ and every $\underline \sigma\in\Sigma^{J+1}$ the set $(0,1]^2\times(1,\infty)%\times\{(x,y)\}\times\{\underline\sigma\}\subseteq(0,1]^2\times(1,\infty)\times\Xi\times\Sigma^{J+1}
$ has finite $\mathrm Q^{(J)}_{(x,y),\underline\sigma}$-measure, say $%q^{(J)}=
q^{(J)}_{(x,y),\underline\sigma}>0$. Since $f^{(J)}_{t;(x,y),\underline\sigma}$ is measurable, the measure of the set $\mathcal{X}(\rho)=\{(u,v,w)\in(0,1]^2\times(1,\infty):\:f^{(J)}_{t;(x,y),\underline\sigma}(u,v,w)\in A(w,\rho)\}$ tends to $q^{(J)}_{(x,y),\underline\sigma}$ as $\rho\rightarrow\infty$. Since $A(w,\rho)$ is convex for every $\rho$, the sets $\mathcal{I}(\rho)=\{(u,v,w)\in(0,1]^2\times(1,\infty):\:f^{(J)}_{t}\in\partial A(w,\rho)\}$ are  disjoint for different values of $\rho$. Therefore, there can be only countably many $\rho$ for which $\mathcal I(\rho)$ has positive $\mathrm Q^{(J)}_{(x,y),\underline\sigma}$ (and thus Lebesgue) measure. Since $f_{t;(x,y),\underline\sigma}^{(J)}$ is continuous%$\mathcal C^\infty$
, the boundary of $\mathcal X(\rho)$ is contained in $\mathcal I(\rho)$, concluding thus the proof.
\end{proof}
\subsection{Limiting Finite-Dimensional Distributions for $\gamma_{\alpha,N}^J$ and $\gamma_{\alpha,N}$}
The main consequence of our Main Renewal-Type Limit Theorem \ref{theorem:joint-limit-distr} is the following
\begin{prop}[Limiting finite dimensional distributions for $\gamma_{\alpha,N}^J$]\label{cor: limiting finite dim distr gammaJ}
For every $k\in\N$ and every $0\leq t_1<t_2<\cdots<t_k\leq1$ there exists a probability measure $\mathrm P_{t_1,\ldots,t_k}^{(J,k)}$ on $\C^k$ such that for every %$(t_1,\ldots,t_k)$-
open, $(t_1,\ldots,t_k)$-nice set $A\in\mathcal{B}^{k}$, %with $\mathrm{Leb}(A)>0$ and $\mathrm{Leb}(\partial A)=0$,
 we have
\begin{eqnarray}
\lim_{N\rightarrow\infty}\mu_R\left(\left\{\alpha\in(0,1]:\:\underline \gamma_{\alpha,N}^J(t_1,\ldots,t_k)\in A\right\}\right)=\mathrm P_{t_1,\ldots,t_k}^{(J,k)}(A).\label{limiting finite dim distr gammaJ}
\end{eqnarray}
Moreover, if $\{A^{(j)}\}_{j\in\N}$, $A^{(j)}\in\mathcal B^k$, %$\mathrm{Leb}(A_j)>0$, $\mathrm{Leb}(\partial A_j)=0$,
 is a decreasing sequence of open, $(t_1,\ldots,t_k)$-nice sets such that $\mathrm{Leb}(A_j)\rightarrow0$, then $\lim_{j\rightarrow\infty} \mathrm P^{(J,k)}_{t_1,\ldots,t_k}(A^{(j)})=0$.
\end{prop}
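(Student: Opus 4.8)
Starting from the representation established above,
$$
\underline\gamma^{J}_{\alpha,N}(t_1,\ldots,t_k)=\mathrm F^{(k)}\!\left((t_1,\ldots,t_k);R^{\hat n_N}(\alpha),\tfrac{\hat q_{\hat n_N-1}}{N},\tfrac{\hat q_{\hat n_N}}{N},(x_{\hat n_N-J},y_{\hat n_N-J}),\{\sigma_l\}_{l=\hat n_N-J}^{\hat n_N}\right),
$$
I would set $\Phi_N(\alpha):=\bigl(R^{\hat n_N}(\alpha),\hat q_{\hat n_N-1}/N,\hat q_{\hat n_N}/N,(x_{\hat n_N-J},y_{\hat n_N-J}),\{\sigma_l\}_{l=\hat n_N-J}^{\hat n_N}\bigr)$, a measurable map into $\mathcal{Y}:=(0,1]\times(0,1]\times(1,\infty)\times\Xi\times\Sigma^{J+1}$, and let $\mathrm G^{(k)}:\mathcal{Y}\to\C^k$ be the map of the Remark above, so that $\underline\gamma^J_{\alpha,N}=\mathrm G^{(k)}\circ\Phi_N$ and, on each slice $\{(x,y)\}\times\{\underline\sigma\}$, the map $\mathrm G^{(k)}$ restricts to $f^{(J)}_{\underline t;(x,y),\underline\sigma}$, which is continuous with piecewise $\mathcal C^\infty$ partial derivatives. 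By Corollary \ref{cor: QJ}, $(\Phi_N)_*\mu_R\Longrightarrow\mathrm Q^{(J)}$ as $N\to\infty$ (the convergence stated there on products of intervals and singletons being convergence-determining, since $\Xi$ is finite and $\Sigma^{J+1}$ countable, both with the discrete topology). I would then simply \emph{define} $\mathrm P^{(J,k)}_{t_1,\ldots,t_k}:=\mathrm G^{(k)}_*\mathrm Q^{(J)}$, a probability measure on $\C^k$.

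\textbf{First assertion.} For open, $(t_1,\ldots,t_k)$-nice $A$ one has $(\mathrm G^{(k)})^{-1}(A)=\bigsqcup_{(x,y),\underline\sigma}(f^{(J)}_{\underline t;(x,y),\underline\sigma})^{-1}(A)\times\{(x,y)\}\times\{\underline\sigma\}$; each slice is open (continuity of $f^{(J)}_{\underline t;(x,y),\underline\sigma}$ and openness of $A$) and, by niceness, has Lebesgue-null boundary, hence $\mathrm Q^{(J)}_{(x,y),\underline\sigma}$-null boundary because $\mathrm Q^{(J)}_{(x,y),\underline\sigma}$ is equivalent to Lebesgue on $(0,1]^2\times(1,\infty)$ (Corollary \ref{cor: QJ} and Remark \ref{rk: shrinking sets}). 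Summing over the countably many slices shows $(\mathrm G^{(k)})^{-1}(A)$ is a $\mathrm Q^{(J)}$-continuity set. To avoid invoking weak convergence as a black box I would argue directly: taking first $A=\C^k$ (trivially nice), Corollary \ref{cor: QJ} gives that the masses $\mu_R(\Phi_N^{-1}(\{(x,y)\}\times\{\underline\sigma\}\times\cdot))$ converge to $q^{(J)}_{(x,y),\underline\sigma}:=\mathrm Q^{(J)}_{(x,y),\underline\sigma}((0,1]^2\times(1,\infty))$, with $\sum_{(x,y),\underline\sigma}q^{(J)}_{(x,y),\underline\sigma}=1$; by the discrete Scheff\'e lemma these masses are uniformly tight over the index set. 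Then for a general nice open $A$, each null-boundary slice-preimage is sandwiched between finite unions of open boxes, on which Corollary \ref{cor: QJ} applies directly; this gives term-by-term convergence, and the uniform tightness lets me interchange $\lim_N$ with the countable sum. This yields $\lim_N\mu_R(\{\alpha:\underline\gamma^J_{\alpha,N}(\underline t)\in A\})=\mathrm Q^{(J)}((\mathrm G^{(k)})^{-1}(A))=\mathrm P^{(J,k)}_{\underline t}(A)$.

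\textbf{Second assertion.} Given a decreasing sequence of open nice sets $A^{(j)}$ with $\mathrm{Leb}(A^{(j)})\to0$, continuity from above for the finite measure $\mathrm Q^{(J)}$ gives
$$
\lim_{j\to\infty}\mathrm P^{(J,k)}_{\underline t}(A^{(j)})=\mathrm Q^{(J)}\!\left((\mathrm G^{(k)})^{-1}(Z)\right)=\sum_{(x,y),\underline\sigma}\mathrm Q^{(J)}_{(x,y),\underline\sigma}\!\left((f^{(J)}_{\underline t;(x,y),\underline\sigma})^{-1}(Z)\right),\qquad Z:=\bigcap_j A^{(j)},
$$
with $\mathrm{Leb}_{\C^k}(Z)=0$. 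Since $\mathrm Q^{(J)}_{(x,y),\underline\sigma}$ is equivalent to Lebesgue and the summands are dominated by the summable $q^{(J)}_{(x,y),\underline\sigma}$, it suffices to show each $(f^{(J)}_{\underline t;(x,y),\underline\sigma})^{-1}(Z)$ is Lebesgue-null in $(0,1]^2\times(1,\infty)$. For this I would use the explicit structure recorded in the Remark above: $f^{(J)}_{\underline t;(x,y),\underline\sigma}$ depends on $(v,w)$ only through the single function $\theta=\theta(u,v,w)$, which for fixed $u$ is the reciprocal of a non-constant affine function of $(v,w)$ and hence has a.e. non-vanishing gradient; writing $f^{(J)}_{\underline t;(x,y),\underline\sigma}(u,v,w)=\tilde f(u,\theta(u,v,w))$ with $\tilde f$ piecewise $\mathcal C^\infty$, one reduces the null-preimage claim to a change-of-variables/co-area estimate for $\tilde f$ precomposed with $(u,v,w)\mapsto(u,\theta(u,v,w))$.

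\textbf{Main obstacle.} I expect the crux to be exactly this last step of the second assertion --- that the maps $f^{(J)}_{\underline t;(x,y),\underline\sigma}$ pull Lebesgue-null sets back to Lebesgue-null sets. No soft absolute-continuity statement about $\mathrm P^{(J,k)}_{\underline t}$ is available (for $k\geq2$ the image of $f^{(J)}_{\underline t;(x,y),\underline\sigma}$ is at most three-dimensional in $\C^k=\R^{2k}$), so the argument must genuinely exploit the non-degeneracy of these maps in the $(v,w)$-directions, through $\theta$, together with their piecewise-$\mathcal C^\infty$ regularity. By contrast, the $\mathrm Q^{(J)}$-continuity of $(\mathrm G^{(k)})^{-1}(A)$ in the first assertion and the box/Scheff\'e bookkeeping are routine once niceness and the equivalence $\mathrm Q^{(J)}_{(x,y),\underline\sigma}\sim\mathrm{Leb}$ are in hand.
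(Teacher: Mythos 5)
Your proof of the first assertion is correct and takes essentially the paper's route: slice $(f^{(J)}_{\underline t})^{-1}(A)$ over $(x,y)\in\Xi$ and $\underline\sigma\in\Sigma^{J+1}$, note that each slice is an open subset of $(0,1]^2\times(1,\infty)$ with Lebesgue-null (hence, by Remark \ref{rk: shrinking sets}, $\mathrm Q^{(J)}_{(x,y),\underline\sigma}$-null) boundary, approximate by boxes, and apply Corollary \ref{cor: QJ} box by box. Defining $\mathrm P^{(J,k)}_{t_1,\ldots,t_k}$ as the pushforward $\mathrm G^{(k)}_*\mathrm Q^{(J)}$ is in fact cleaner than the paper's definition as a sum of $\mathrm Q^{(J)}$-measures of parallelepipeds (it is then manifestly a countably additive probability measure), and your Scheff\'e/tightness argument for interchanging $\lim_{N}$ with the countable sum over slices and boxes supplies a step the paper leaves implicit. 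Up to this point the two arguments coincide.

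The genuine gap is in the second assertion. You reduce it to showing that $(f^{(J)}_{\underline t;(x,y),\underline\sigma})^{-1}(Z)$ is Lebesgue-null whenever $\mathrm{Leb}_{\C^k}(Z)=0$. That would indeed suffice, but the reduced claim is not provable for $k\geq2$, and the coarea argument you sketch cannot deliver it: $f^{(J)}_{\underline t;(x,y),\underline\sigma}$ maps a three-dimensional domain into $\R^{2k}$, so for $k\geq2$ its \emph{entire image} is already Lebesgue-null in $\C^k$. A set $Z$ with $\mathrm{Leb}_{\C^k}(Z)=0$ may therefore contain a portion of the image carrying positive $\mathrm Q^{(J)}_{(x,y),\underline\sigma}$-mass, in which case $(f^{(J)}_{\underline t;(x,y),\underline\sigma})^{-1}(Z)$ has positive measure; shrinking neighborhoods of a compact piece of the image realize this with decreasing open sets of vanishing Lebesgue measure, and the niceness hypothesis constrains only $\partial\big((f^{(J)}_{\underline t;(x,y),\underline\sigma})^{-1}(A^{(j)})\big)$, which does not by itself exclude such sets (recall $\partial(f^{-1}(A))\subseteq f^{-1}(\partial A)$ with possibly strict inclusion). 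Your factorization through $\theta$ correctly collapses the problem to the two-dimensional map $\tilde f(u,\theta)$, but the same dimension count applies to $\tilde f$, and even for $k=1$ one would still need a non-degeneracy statement (Jacobian of $\tilde f$ nonvanishing off a null set) that you do not establish. What the downstream application (Lemma \ref{lemma limPJ}) actually needs is that $\mathrm Q^{(J)}$ gives vanishing mass to the preimages of the shrinking collars $A'\smallsetminus A$ of a fixed nice set, i.e.\ control of $f^{-1}(\partial A)$ --- information about \emph{where} the $A^{(j)}$ accumulate, not merely that their Lebesgue measure tends to zero. To be fair, the paper's own proof of Proposition \ref{cor: limiting finite dim distr gammaJ} stops after the first assertion and offers no argument for the second either; but as written, your route for it would fail.
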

\begin{proof}
Since %Let 
$A\in\mathcal{B}^k$ is open and $(t_1,\ldots,t_k)$-nice,
the set $\big\{\alpha\in(0,1]:\:\underline\gamma_{\alpha,N}^J(t_1,\ldots,t_k)\in A\big\}$ can be written as 
\begin{equation}\label{proof limiting finite dim distr gammaJ 1}\left\{\alpha:\:\left(R^{\hat n_N},\frac{\hat q_{\hat n_N-1}}{N},\frac{\hat q_{\hat n_N}}{N},(x_{\hat n_N-J},y_{\hat n_N-J}),\{\sigma_l\}_{l=\hat n_N-J}^{\hat n_N}\right)\in (f^{(J)}_{\underline t})^{-1}(A)\right\}
\end{equation}
and 
\begin{eqnarray}
(f_{\underline t}^{(J)})^{-1}(A)=%&=&
\bigsqcup_{\scriptsize{\begin{array}{l} 
(x,y)\in\Xi\\ \underline\sigma\in\Sigma^{J+1}\end{array}}}\!\!\!\!B_{(x,y),\underline\sigma}\times\{(x,y)\}\times\{\underline\sigma\}=\nonumber
%&=&
\!\!\bigsqcup_{\scriptsize{\begin{array}{c} (x,y)\in\Xi\\ \underline\sigma\in\Sigma^{J+1},\,l\in\N\\ B_{(x,y),\underline\sigma}\neq\emptyset\end{array}}}
\!\!\!\!R_{(x,y),\underline\sigma}^{(l)}\times\{(x,y)\}\times\{\underline\sigma\},\nonumber%\subseteq(0,1]^2\times(1,\infty)\times\Xi\times\Sigma^{J+1}\nonumber
\end{eqnarray}
where $B_{(x,y),\underline\sigma}=B_{(x,y),\underline\sigma}(A):=%\left\{(u,v,w)\in (0,1]^2\times(1,\infty)\in
\left(f_{\underline t;(x,y),\underline\sigma}^{(J)}\right)^{-1}\!(A)$ are open (possibly empty) subsets of $(0,1]^2\times(1,\infty)$ with boundaries of measure zero and
%$R_{(x,y),\underline\sigma}(l)=\big(a^{(l)}_{0;(x,y),\underline\sigma},b^{(l)}_{0;(x,y),\underline\sigma}\big)\times\big(a^{(l)}_{1;(x,y),\underline\sigma},b^{(l)}_{1;(x,y),\underline\sigma}\big)\times\big(a^{(l)}_{2;(x,y),\underline\sigma},b^{(l)}_{2;(x,y),\underline\sigma}\big)\in(0,1]^2\times(1,\infty)$ are open parallelepipeds.
$R_{(x,y),\underline\sigma}^{(l)}=R_{(x,y),\underline\sigma}^{(l)}(A)\subseteq(0,1]^2\times(1,\infty)$ are parallelepipeds of the form $\big(a_{0},b_{0}\big)\times\big(a_{1},b_{1}\big)\times\big(a_{2},b_{2}\big)$ (the endpoints in each coordinate can be either included or not for different values of $(x,y)$ and $\underline \sigma$) and $a_0,b_0$, $a_1,b_1$, $a_2,b_2$, depend on $(x,y)$, $\underline\sigma$ and $l$.
Thus the set in (\ref{proof limiting finite dim distr gammaJ 1}) is a disjoint union of sets of the form\footnote{Strict inequalities are replaced by \virg{$\leq$} when the endpoints are included.}
$$\left\{\alpha:\:a_0< R^{\hat n_N}<b_0,a_1<\frac{\hat q_{\hat n_N-1}}{N}<b_1,a_2<\frac{\hat q_{\hat n_N}}{N}<b_2,(x_{\hat n_N-J},y_{\hat n_N-J})=(x,y),\{\sigma_l\}_{l=\hat n_N-J}^{\hat n_N}=\underline\sigma%\in (f^{(J)}_{\underline t})^{-1}(A)
\right\}$$
whose $\mu_R$-measures converge to $\mathrm Q^{(J)}\big(R_{(x,y),\underline\sigma}^{(l)}%(a_0,b_0)\times(a_1,b_1)\times(a_2,b_2)
\times\{(x,y)\}\times\{\underline\sigma\}\big)$ as $N\rightarrow\infty$ by Corollary \ref{cor: QJ}. This concludes the proof of Proposition \ref{cor: limiting finite dim distr gammaJ} setting $$\mathrm P^{(J,k)}_{t_1,\ldots,t_k}(A):=\!\!\!\!\sum_{\scriptsize{\begin{array}{c} (x,y)\in\Xi\\ \underline\sigma\in\Sigma^{J+1},\,l\in\N\\ B_{(x,y),\underline\sigma}\neq\emptyset\end{array}}}\!\!\!\!\!\!\mathrm Q^{(J)}\big(R_{(x,y),\underline\sigma}^{(l)}(A)\big).$$
\end{proof}

Now, for fixed $k$ and $t_1,\ldots,t_k$ we want to consider the limit of $\mathrm P_{t_1,\ldots,t_k}^{(J,k)}(A)$ as $J\rightarrow\infty$. We have the following
\begin{lem}\label{lemma limPJ}
For every $k\in\N$, every $0\leq t_1<t_2<\ldots<t_k\leq 1$ and every open, $(t_1,\ldots,t_k)$-nice set $A\in\mathcal B^k$, the limit $\lim_{J\rightarrow\infty}\mathrm P_{t_1,\ldots,t_k}^{(J,k)}(A)$
exists. It will be denoted by $\mathrm P_{t_1,\ldots,t_k}^{(k)}(A)$.
\end{lem}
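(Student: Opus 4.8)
The plan is to prove that $\{\mathrm P^{(J,k)}_{t_1,\ldots,t_k}(A)\}_{J\in\N}$ is a Cauchy sequence of reals; being bounded it then converges, and its limit is declared to be $\mathrm P^{(k)}_{t_1,\ldots,t_k}(A)$. The engine is Lemma \ref{localizinglemma1}, used as a coupling on the same $\alpha$. Fix $J<J'$ and write $\underline t=(t_1,\ldots,t_k)$. Applying Lemma \ref{localizinglemma1} at each $t_j$ and at levels $J$ and $J'$ (and using that $\delta_3$ is non-increasing), for all sufficiently large $N$ there is a set $G=G_{J,J',N}$ with $\mu_R(G)\ge 1-3\delta_3(J)$ on which simultaneously $\bigl\|\underline\gamma^{J}_{\alpha,N}(\underline t)-\underline\gamma^{J'}_{\alpha,N}(\underline t)\bigr\|_\infty\le e^{-C_{15}J}$ and $\bigl\|\underline\gamma^{J}_{\alpha,N}(\underline t)-\underline\gamma_{\alpha,N}(\underline t)\bigr\|_\infty\le e^{-C_{15}J}$.

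Next I would run the usual symmetric-difference estimate. Since $A$ is open, any $\alpha\in G$ lying in the symmetric difference of $\{\underline\gamma^{J}_{\alpha,N}(\underline t)\in A\}$ and $\{\underline\gamma^{J'}_{\alpha,N}(\underline t)\in A\}$ has one of $\underline\gamma^{J}_{\alpha,N}(\underline t),\underline\gamma^{J'}_{\alpha,N}(\underline t)$ within $e^{-C_{15}J}$ of $\partial A$, hence, by the triangle inequality with the second bound, $\underline\gamma_{\alpha,N}(\underline t)\in N_{\varrho_J}(\partial A)$, the open $\varrho_J$-neighbourhood of $\partial A$ with $\varrho_J:=4e^{-C_{15}J}$. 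Therefore
\[
\bigl|\mu_R\bigl(\{\underline\gamma^{J}_{\alpha,N}(\underline t)\in A\}\bigr)-\mu_R\bigl(\{\underline\gamma^{J'}_{\alpha,N}(\underline t)\in A\}\bigr)\bigr|\le 3\delta_3(J)+\mu_R\bigl(\{\underline\gamma_{\alpha,N}(\underline t)\in N_{\varrho_J}(\partial A)\}\bigr);
\]
letting $N\to\infty$ (both distributional limits exist since $A$ is open and nice, by Proposition \ref{cor: limiting finite dim distr gammaJ}) gives
\[
\sup_{J'\ge J}\bigl|\mathrm P^{(J,k)}_{t_1,\ldots,t_k}(A)-\mathrm P^{(J',k)}_{t_1,\ldots,t_k}(A)\bigr|\le 3\delta_3(J)+r(\varrho_J),\qquad r(\varrho):=\limsup_{N\to\infty}\mu_R\bigl(\{\underline\gamma_{\alpha,N}(\underline t)\in N_{\varrho}(\partial A)\}\bigr).
\]
As $\delta_3(J)\to0$ and $\varrho_J\to0$, the whole lemma reduces to the non-concentration statement $r(\varrho)\to0$ as $\varrho\to0$.

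This reduction leaves the genuine work: showing that the laws of $\underline\gamma_{\alpha,N}(\underline t)$ under $\mu_R$ do not asymptotically charge shrinking neighbourhoods of the fixed set $\partial A$. For the sets relevant to the Main Theorem — finite products of balls — $\partial A$ is Lebesgue-null and $N_\varrho(\partial A)$ is covered by $k$ thin annular slabs $\{x\in\C^k:\ r_j-2\varrho<|x_j-z_j|<r_j+2\varrho\}$, so it suffices to bound, uniformly in $N$ and in the radius, the $\mu_R$-measure of the set where $|\gamma_{\alpha,N}(t)-z|$ falls in an interval of length $O(\varrho)$. I would obtain this from the renormalization machinery of Sections \ref{AERFs}--\ref{iterated-renormalization-of-S}: couple $\gamma_{\alpha,N}$ with $\gamma^{J}_{\alpha,N}$ as above, use the representation (\ref{gammaJmodified3}), apply Proposition \ref{cor: limiting finite dim distr gammaJ}(2) to the sets $N_\varrho(\partial A)$ — which are $(t_1,\ldots,t_k)$-nice for all but countably many $\varrho$, by the disjoint-boundary argument of the preceding lemma on the sets $A(w,\rho)$ (the boundaries $(f^{(J)}_{\underline t;(x,y),\underline\sigma})^{-1}(\{x:\ \mathrm{dist}(x,\partial A)=\varrho\})$ are pairwise disjoint for distinct $\varrho$) — and then upgrade the fixed-$J$ conclusion to a bound uniform in $J$ using the Lévy--Prokhorov control $d_P(\mathrm P^{(J,k)},\mathrm P^{(J',k)})\le e^{-C_{15}\min(J,J')}+3\delta_3(\min(J,J'))$ that the same coupling yields. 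Throughout, the a priori bound $\int_0^1|\gamma_{\alpha,N}(t)|^2\,\de\mu_R(\alpha)\le C$ — a consequence of $\int_{-1}^1|\su{a}{N}|^2\,\de a=N$ together with $\varphi_R\in L^\infty((0,1])$ — provides tightness of all the families of measures involved, legitimising these weak-convergence manipulations.

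I expect the main obstacle to be precisely this last step: establishing the uniform (in $N$, equivalently in $J$) non-concentration of $\underline\gamma_{\alpha,N}(\underline t)$ near $\partial A$. The decomposition into annular slabs isolates it as a one-dimensional estimate on the distribution of $|\gamma_{\alpha,N}(t)|$, but proving that estimate without a circular appeal to the very limit theorem being proved is the delicate part. Granting it, the displayed inequality makes $\{\mathrm P^{(J,k)}_{t_1,\ldots,t_k}(A)\}_J$ Cauchy, which is all that is claimed.
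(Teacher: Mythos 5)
Your overall strategy is the paper's: couple $\underline\gamma_{\alpha,N}$ with $\underline\gamma^J_{\alpha,N}$ on a set of measure $\geq 1-\delta_3(J)$ via Lemma \ref{localizinglemma1}, bound the discrepancy between the two laws by the mass of a thin shell around $\partial A$, and conclude that $\{\mathrm P^{(J,k)}_{t_1,\ldots,t_k}(A)\}_J$ is Cauchy (the paper phrases this as a proof by contradiction, which is cosmetic). The problem is that you do not close the one step that carries the content, and you set it up in a way that creates the very circularity you then worry about. You define $r(\varrho)$ as $\limsup_N \mu_R(\{\underline\gamma_{\alpha,N}(\underline t)\in N_\varrho(\partial A)\})$, i.e.\ in terms of the \emph{untruncated} curve, whose limiting law is exactly what the Main Theorem (and hence this lemma) is meant to produce; you then declare the estimate $r(\varrho)\to0$ "granted", restrict to products of balls, and gesture at tightness and L\'evy--Prokhorov bounds. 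As written, the lemma is not proved.

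The escape is already in your hands and requires no new input: run the symmetric-difference estimate entirely through the truncated curve. If $X_N^{J'}\in A$, $X_N^{J''}\notin A$ and $|X_N^{J'}-X_N^{J''}|\leq k(e^{-C_{15}J'}+e^{-C_{15}J''})$, then $X_N^{J''}\in A'\smallsetminus A$, where $A'$ is the corresponding enlargement of $A$. For \emph{fixed} $J''$ the limit $\lim_N\mu_R\{X_N^{J''}\in A'\smallsetminus A\}=\mathrm P^{(J'',k)}_{t_1,\ldots,t_k}(A'\smallsetminus A)$ exists by Proposition \ref{cor: limiting finite dim distr gammaJ}, and as $J',J''\to\infty$ these shells decrease to a null set, so the last clause of Proposition \ref{cor: limiting finite dim distr gammaJ} — which is not circular: it rests on the equivalence of $\mathrm Q^{(J)}_{(x,y),\underline\sigma}$ with Lebesgue measure (Remark \ref{rk: shrinking sets} and Corollary \ref{cor: QJ}), not on any limit law for $\gamma_{\alpha,N}$ — forces $\mathrm P^{(J'',k)}_{t_1,\ldots,t_k}(A'\smallsetminus A)$ to be small. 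This is exactly how the paper concludes, for arbitrary open nice $A$, without annular decompositions, Prokhorov distances, or $L^2$ tightness. (A uniformity-in-$J''$ point remains implicit there too, but your L\'evy--Prokhorov remark, applied to a single fixed reference level $J_0$, is the right way to dispose of it — it is a supplement to the paper's argument, not a replacement for the missing step.)
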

\begin{proof} For simplicity, write $X_N^J(\alpha)=\underline\gamma_{\alpha,N}^J(t_1,\ldots,t_k)$, $X_N(\alpha)=\underline \gamma_{\alpha,N}(t_1,\ldots,t_k)$, $\mu=\mu_R$ and $\mathrm P^{J}=\mathrm P^{(J,k)}_{t_1,\ldots,t_k}$. Moreover, for $z=(z_1,\ldots,z_k)\in\C^k$ set $|z|:=|z_1|+\ldots+|z_k|$. Assume, by contradiction, that the sequence $\{\mathrm P%_{t_1,\ldots,t_k}
^{J%,k
}\}_{J\in\N}$ does not have a limit as $J\rightarrow\infty$. In this case there exist $\varepsilon>0$ and a subsequence $\mathcal J=\{J_l\}_{l\in\N}$ such that $|\mathrm P^{J'}(A)-\mathrm P^{J''}(A)|>\varepsilon$ for every $J',J''\in\mathcal J$. By definition of $\mathrm P^{J'}(A)$ and $\mathrm P^{J''}(A)$ we have that for every $\delta_5>0$ and for sufficiently large $N$, 
\begin{equation}\label{eq: lemma limPJ-1}
\left|\mu\left\{X_N^{J'}\in A\right\}-\mu\left\{X_N^{J''}\in A\right\}\right|\geq 1-\delta_5.
\end{equation}
%Choose $\delta_1=\varepsilon/2$. 
On the other hand, by Lemma \ref{localizinglemma1}, we know that 
\begin{equation}\label{eq: lemma limPJ-2}
\mu\left\{\left|X_N-X_N^J\right|\leq k e^{-C_{15} J}\right\}\geq1-\delta_3(J)
\end{equation}
and $\delta_3(J)\rightarrow0$ as $J\rightarrow\infty$.
%\begin{eqnarray}
%\left|\mathrm P^{J'}(A)-\mathrm P^{J''}(A)\right|&\leq&\left|\mu\left\{X_N^{J'}\in A\right\}-\mu\left\{X_N^{J''}\in A\right\}\right|+\nonumber\\&&+\left|\mathrm P^{J'}(A)-\mu\{X_N^{J'}\in A\}\right|+|\mathrm P^{J''}(A)-\mu\{X_N^{J''}\in A\}|
%\end{eqnarray}
Now (\ref{eq: lemma limPJ-2}) implies that
\begin{equation}\nonumber%\label{eq: lemma limPJ-3}
\mu\left\{\left|X_N^{J'}-X_N^{J''}\right|\leq k(e^{-C_{15}J'}+e^{-C_{15} J''})\right\}\geq 1-\delta_3(J')-\delta_3(J'')
\end{equation}
and thus
\begin{eqnarray}
&&\hspace{-.6cm}\left|\mu\left\{X_N^{J'}\in A\right\}-\mu\left\{X_N^{J''}\in A\right\}\right|\leq\nonumber\\
&&\hspace{-.6cm}\leq\left|\mu\left\{X_N^{J'}\in A,\,\left|X_N^{J'}-X_N^{J''}\right|\leq k(e^{-C_{15} J'}+e^{-C_{15} J''})\right\}-\mu\left\{X_N^{J''}\in A\right\}\right|+\delta_3(J')+\delta_3(J'')\leq\nonumber\\
&&\hspace{-.6cm}\leq \left|\mu\left\{X_N^{J''}\in A'\right\}-\mu\left\{X_N^{J''}\in A\right\}\right|+\delta_3(J')+\delta_3(J'')\label{eq: lemma limPJ-4}
\end{eqnarray}
where $A'=\{z\in \C^k:\:|z-w|\leq k(e^{-C_{15} J'}+e^{-C_{15} J''}), w\in A\}$. Now, by taking sufficiently large $J',J''\in\mathcal J$ and using the last part of Proposition \ref{cor: limiting finite dim distr gammaJ}, (\ref{eq: lemma limPJ-4}) gives
\begin{eqnarray}
\left|\mu\left\{X_N^{J'}\in A\right\}-\mu\left\{X_N^{J''}\in A\right\}\right|\leq \mu\left\{X_N^{J''}\in A'\smallsetminus A\right\}+\delta_3(J')+\delta_3(J'')\leq \varepsilon/3\nonumber,
\end{eqnarray}
contradicting thus (\ref{eq: lemma limPJ-1}) if we choose $\delta_5=\varepsilon/2$.
\end{proof}
Now we can prove our Main Theorem.
\begin{proof}[Proof of Theorem \ref{main-theorem}]
So far, by Lemma \ref{lemma limPJ}, we know that $$\lim_{J\rightarrow\infty}\lim_{N\rightarrow\infty}\mu_R\left\{\alpha:\:\underline \gamma_{\alpha,N}^J(t_1,\ldots,t_k)\in A\right\}=\mathrm P^{(k)}_{t_1,\ldots, t_k}(A).$$
Roughly speaking, we want to interchange the order of the two limits. Let us use the same notations of the proof of Lemma  \ref{lemma limPJ} and, in addition, set $Y_N^J(\alpha):=X_N(\alpha)-X_N^J(\alpha)$ and $\mathrm P:=\mathrm P^{(k)}_{t_1,\ldots,t_k}$. %In order to prove (\ref{eq: main-thm-A}) it is enough to ... %show that $\lim_{N\rightarrow\infty}\mu\left\{X_N\in A\right\}$ exists.
By (\ref{eq: lemma limPJ-2}) we have
\begin{eqnarray}
\mu\left\{X_N\in A\right\}\leq \mu\left\{X_N^J+Y_N^J\in A,\:|Y_N^J|\leq k e^{-C_{15} J}\right\}+\delta_3(J)\leq\mu\left\{X_N^J\in A'\right\}+\delta_3(J),\label{eq: pf main theorem 1}
\end{eqnarray}
where $A'=\{z\in\C^k:\:|z-w|\leq k e^{-C_{15} J},\:w\in A\}$ and $\delta_3(J)\rightarrow0$ as $J\rightarrow\infty$. Now, by Proposition \ref{cor: limiting finite dim distr gammaJ} and Lemma \ref{lemma limPJ}, (\ref{eq: pf main theorem 1}) becomes
\begin{eqnarray}
\mu\left\{X_N\in A\right\}\leq P^J(A)+\delta_6(N)+\delta_3(J)=P(A)+\delta_7(J)+\delta_6(N)+\delta_3(J),\label{eq: pf main theorem 2}
\end{eqnarray}
where $\delta_6(N)\rightarrow0$ as $N\rightarrow\infty$ and $\delta_7(J)\rightarrow0$ as $J\rightarrow\infty$.
On the other hand, in a similar way we get
\begin{eqnarray}
\mu\left\{X_N\in A\right\}&\geq& \mu\left\{X_N^J+Y_N^J\in A,\:|Y_N^J|\leq k e^{C_{15} J}\right\}\geq \mu\left\{X_N^J\in A''\right\}\geq P^J(A'')+\delta_8(N)=\nonumber\\
&=&P(A)+\delta_9(J)+\delta_8(N),\label{eq: pf main theorem 3}
\end{eqnarray}
where $A''=\{z\in A:\:|z-w|\leq k e^{-C_{15} J}, w\in A^c\}$, $\delta_8(N)\rightarrow0$ as $N\rightarrow\infty$ and $\delta_9(J)\rightarrow0$ as $J\rightarrow\infty$.
Now, taking $\lim_{N\rightarrow\infty}\lim_{J\rightarrow\infty}$, in (\ref{eq: pf main theorem 1}) and (\ref{eq: pf main theorem 3}), we obtain $\lim_{N\rightarrow\infty}\mu\left\{X_N\in A\right\}=P(A)$, i.e. (\ref{eq: main-thm-A}) as desired.
\end{proof}

%For  $0\leq t_1<t_2<\cdots<t_k\leq1$, let $\pi_{t_1,\ldots,t_k}:\mathcal{C}([0,1],\C)\rightarrow\C^k$ be the natural projection defined as $\pi_{t_1\,\ldots,t_k}(\gamma)=(\gamma(t_1),\ldots,\gamma(t_k))$.
\begin{remark}
Considering, as in Remark \ref{first remark}, our reference probability space $((0,1],\mathcal{B},\mu_R)$, $$\gamma_{\cdot,N}^{\bianco},\gamma_{\cdot,N}^J: \left((0,1],\mathcal{B},\mu_R\right)\rightarrow \left(\mathcal{C}([0,1],\C),\mathcal{B}_{\mathcal{C}}\right) $$
are two random function. Let $\mathrm P_N$ and $\mathrm P_N^J$ the corresponding induced probability measures on $\mathcal{C}([0,1],\C)$.
%, $\mathrm P_N(A)=\mu_R\left(\gamma_{\cdot,N}^{-1}(A)\right)$ and $\mathrm P_N^J(A)=\mu_R\left((\gamma_{\cdot,N}^J)^{-1}(A)\right)$, where $A\in\mathcal{B}_{\mathcal C}$.
%\begin{cor}[Limiting finite dimensional distributions for $\gamma_{\alpha,N}^J$]
%For every $k\in\N$ and for every $0\leq t_1<\ldots<t_k\leq1$, the function $\underline\gamma_{\alpha,N}^J(t_1,\ldots,t_k)$ has a limit probability distribution w.r.t. to $\mu_R$.
%In other words: t
Now Proposition \ref{cor: limiting finite dim distr gammaJ}, Lemma \ref{lemma limPJ} and Theorem \ref{main-theorem} read as follows: 
%The finite-dimensional distributions of $\mathrm P_N^J$ converge weakly, i.e. 
for every $k\in\N$ and for every $0\leq t_1<\ldots<t_k\leq1$,

%\begin{equation}\nonumber%\label{eq: weak-cvg-finite-dim-PNJ}
%\mathrm P_N^J\pi_{t_1,\ldots,t_k}^{-1}\stackrel{N\rightarrow\infty}{\Longrightarrow} \mathrm P^{(J,k)}_{t_1,\ldots,t_k}\stackrel{J\rightarrow\infty}{\Longrightarrow} \mathrm P^{(k)}_{t_1,\ldots,t_k}%,
%\hspace{.3cm}
%\stackrel{N\rightarrow\infty}{\Longleftarrow}\mathrm P_N\pi^{-1}_{t_1,\ldots,t_k}.%\stackrel{N\rightarrow\infty}{\Rightarrow}P^{(k)}_{t_1,\ldots,t_k}, 
%\end{equation}
%\begin{center}
\vspace{.3cm}
\hspace{1.6cm}\xymatrix{
\mathrm P_N^J\pi_{t_1,\ldots,t_k}^{-1}\ar@2{->}[rr]^{N\rightarrow\infty}_{\tiny{\mbox{Prop. \ref{cor: limiting finite dim distr gammaJ}}}}&& \mathrm P^{(J,k)}_{t_1,\ldots,t_k}\ar@2{->}[rr]^{J\rightarrow\infty}_{\tiny{\mbox{Lem. \ref{lemma limPJ}}}}&& \mathrm P^{(k)}_{t_1,\ldots,t_k}&& \mathrm P_N\pi^{-1}_{t_1,\ldots,t_k}\ar@2{->}[ll]_{N\rightarrow\infty}^{\tiny{\mbox{Thm. \ref{main-theorem}}}}.
}
%\end{center}
%where \virg{$\Rightarrow$} denotes weak convergence of probability measures. 
%In particular, we have the weak convergence of finite-dimensional distributions of $\mathrm P_N^J$ and $\mathrm P_N$.
\end{remark}

\appendix
\section{Appendix: Proof of Theorem \ref{theorem:joint-limit-distr}}\label{AppendixA}
This appendix is devoted to the explanation of the proof of theorem \ref{theorem:joint-limit-distr}. This theorem is a generalization of theorem 1.6 in \cite{Cellarosi} and therefore we shall indicate how to modify its proof. Let us first recall some notation from \cite{Cellarosi}. 

Let $\hat R:\Sigma^\Z\rightarrow\Sigma^\Z$ the natural extension of $R$ as in Section \ref{section:lim-distr-K8andE} and set $D(\hat R):=\Sigma^\Z$. For $\psi\in L^1(D(\hat R))$ set $D_\Phi=\{(\hat\sigma,z):\:\hat\sigma\in D(\hat R),\,0\leq z\leq \psi(\hat \sigma)\}$, let $\{\Phi_t\}_{t\in\R}$ be the special flow on $D_\Phi$ and let $\mu_\Phi=\mu_R\times\lambda$, where $\lambda$ is the Lebesgue measure in the $z$-direction. This flow is mixing\footnote{The flow $\{\Phi_t\}_t$ is actually proven to be a K-flow.}, i.e. $\lim_{t\rightarrow\infty}\mu_\Phi(A\cap\Phi_{-t}(B))=\mu(A)\mu(B)$ for every Borel subsets $A,B\subset D_\Phi$ (see Proposition 3.4 in \cite{Cellarosi}).
We shall use the following relation between the special flow $\Phi_t$ and the (non-normalized) Birkhoff sum of $\psi$ under $\hat R$. Setting $\mathrm{S}_r^{\hat R}(\psi)(\hat \sigma):=\sum_{j=0}^{r-1}\psi(\hat R^j(\hat\sigma))$ and $r(\hat\sigma,t):=\min\{r\in\N:\:\mathrm{S}_r^{\hat R}(\psi)(\hat \sigma)>t\}$ we get for $t\in\R^+$
\begin{equation}\nonumber
\Phi_t(\hat\sigma,0)=\left(\hat R^{r(\hat \sigma,t)-1}(\hat\sigma),t-\mathrm{S}_{r(\hat\sigma,t)-1}^{\hat R}(\psi)(\hat\sigma)\right).
\end{equation}
Fix a cylinder $\mathcal{C}$ and set  $g_{\mathcal{C}}:=\sup_{\hat\sigma\in\mathcal{C}}g(\hat\sigma)$, where $g:D(\hat R)\rightarrow \R^+$ is a function defined so that
\begin{equation}\label{logqn-Birkhoff-g}\log\hat q_n(\hat\sigma)=\mathrm{S}_n^{\hat R}(\psi)(\hat \sigma)+g(\hat\sigma)+\varepsilon_n(\hat\sigma),\hspace{.5cm}\sup_{\hat\sigma\in D(\hat R)}|\varepsilon_n(\hat\sigma)|\leq C_{23}3^{-n/3}
\end{equation} for some constant $C_{23}>0$. If $|g(\hat\sigma)-g_{\mathcal{C}}|\leq \varepsilon/2$ on $\mathcal{C}$ (this is always possible, by considering a sufficiently small cylinder $\mathcal{C}$), then one can choose a time $\mathrm{T}=\mathrm{T}(N,\mathcal C)=\log N-g_{\mathcal{C}}$ so that $\hat n_N(\hat\sigma)=r(\hat\sigma,\mathrm{T})$ holds on $\mathcal{C}\smallsetminus U$, where $U=U(\mathcal{C})\subset\mathcal{C}$, $\mu_{\hat R}(U)\leq 7\varepsilon\mu_{\hat R}(C)$. Given two %four 
functions $F_1,F_2%,F_3,F_4
:D(\hat R)\rightarrow\R$ we define 
\begin{eqnarray}
%&&D^{(0)}_\Phi(F_1,F_2%,F_3,F_4
%):=\{(\hat\sigma,y)\in D_\Phi:\:\psi(\hat\sigma)-F_2(\hat\sigma)<y<\psi(\hat\sigma)-F_1(\hat\sigma)\},%\cap
%\nonumber\\
&&D_\Phi(F_1,F_2%,F_3,F_4
):%\hspace{1.5cm}\cap
=\{(\hat\sigma,z)\in D_\Phi:\:\psi(\hat\sigma)-F_2(\hat\sigma)<z<\psi(\hat\sigma)-F_1(\hat\sigma)\}.\nonumber
\end{eqnarray} Notice that for some values of $F_1(\hat\sigma),F_2(\hat\sigma)%,F_3(\hat\sigma),F_4(\hat\sigma)
$ (e.g. when they are negative) the corresponding sets of $z$'s can be empty.

\begin{proof}[Sketch of proof of Theorem \ref{theorem:joint-limit-distr}]
The condition $(\sigma_{\hat n_{N}+l})_{l=-N_1+1}^{N_2}=\underline c$ in (\ref{eq:thm-joint-lim}) can be rewritten as $\hat R^{\hat n_N(\hat\sigma)-1}(\hat \sigma)\in\mathcal{C}_{N_1,N_2}^{(\underline{c})}$, where $\mathcal{C}_{N_1,N_2}^{(\underline{c})}$ is a cylinder determined by $N_1,N_2$ and $\underline{c}$. 
We claim that 
\begin{eqnarray}
&&\hspace{-.68cm}\lim_{N\rightarrow\infty}\!\mu_R\!\left(\!\left\{\!\alpha\!\in\!(0,1]\!:a_1\!<\!\frac{\hat q_{\hat n_N-1}}{N}\!<\!b_1,\,a_2\!<\!\frac{\hat q_{\hat n_N}}{N}\!<\!b_2,\,\hat R^{\hat n_N(\hat\sigma)-1}(\hat \sigma)\!\in\!\mathcal{C}_{N_1,N_2}^{(\underline{c})},\scriptsize{\left(\!\!\begin{array}{l}x_{\hat n_N-N_1}\\y_{\hat n_N-N_1}\end{array}\!\!\!\right)\!=\!\left(\!\!\begin{array}{l}x\\y\end{array}\!\!\right)}\!\right\}\!\right)\!=\nonumber\\
&&\hspace{-.68cm}=p_{x,y,\underline c}\cdot\mu_{\Phi}\left(\bar D_{\Phi}(a_1,b_1,a_2,b_2,\underline c)\right),\label{proof claim}
\end{eqnarray}
where $p_{x,y,\underline c}$ is a real number between 0 and 1 (we shall define it later in this proof), $\bar D_\Phi(a_1,b_1,a_2,b_2):=D_\Phi(\log a_1+\psi\circ\hat R^{-1},\log b_1+\psi\circ\hat R^{-1})\cap\,D_\Phi(\log a_2,\log b_2)\cap\,p^{-1}\mathcal{C}_{N_1,N_2}^{(\underline c)}$ (see Figure \ref{fig: regionD}) and $p:D_\Phi\rightarrow D(\hat R)$ is the vertical projection onto the base. 
\begin{figure}
\begin{center}
\vspace{-6cm}
\includegraphics[width=15.5cm, angle=0]{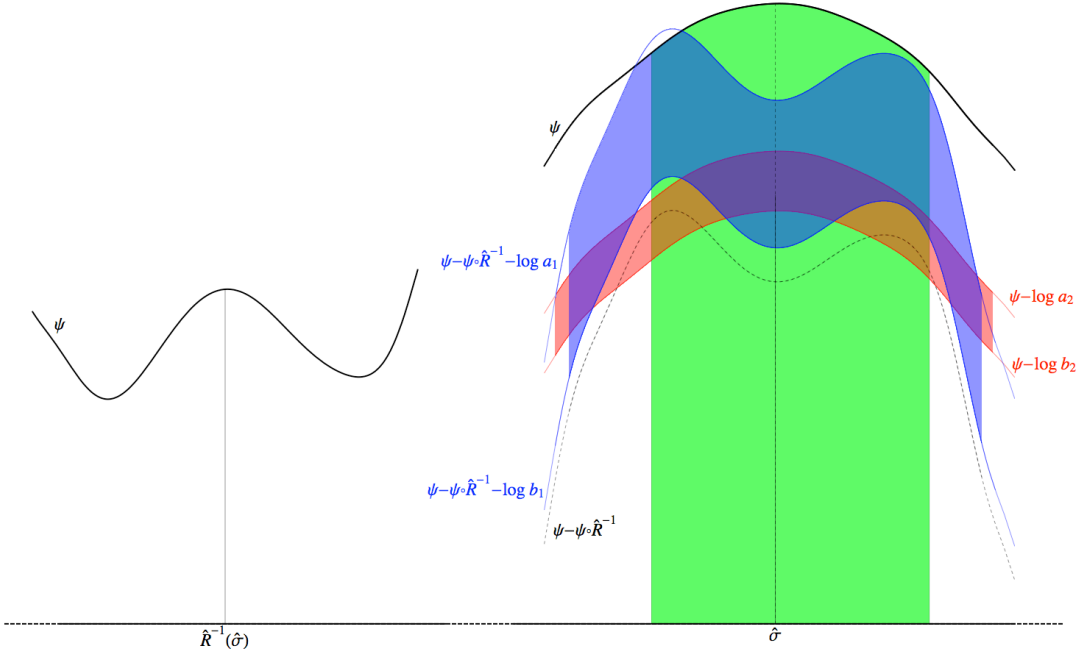}
\vspace{-6.0cm}
\caption{\small{The region $\bar D_\Phi(a_1,b_1,a_2,b_2,\underline c)$ described in the proof of Theorem \ref{theorem:joint-limit-distr} is the intersection of the three shaded regions: $D_\Phi(\log a_1+\psi\circ\hat R^{-1},\log b_1+\psi\circ\hat R^{-1})$ (blue), $D_\Phi(\log a_2,\log b_2)$ (red) and $p^{-1}\mathcal{C}_{N_1,N_2}^{(\underline c)}$ (green).}}\label{fig: regionD}
\end{center}
\end{figure}
Set $$A_{\mathcal{C}}:=\left\{\hat\sigma\in\mathcal{C}:a_1\!<\!\frac{\hat q_{\hat n_N-1}}{N}\!<\!b_1,\,a_2\!<\!\frac{\hat q_{\hat n_N}}{N}\!<\!b_2,\hat R^{\hat n_N(\hat\sigma)-1}(\hat \sigma)\in\mathcal{C}_{N_1,N_2}^{(\underline c)},\:\scriptsize{\left(\!\!\begin{array}{l}x_{\hat n_N-N_1}\\y_{\hat n_N-N_1}\end{array}\!\!\right)\!=\!\left(\!\!\begin{array}{l}x\\y\end{array}\!\!\right)}\right\}.$$ 
Consider $\varepsilon>0$. One can find a finite collection of cylinders $\frak{C}_\varepsilon$ for which  (\ref{eq:thm-joint-lim}) can be $10\varepsilon$-approximated by$\sum_{\mathcal{C}\in\frak{C}_\varepsilon}\mu_{\hat R}(A_{\mathcal{C}\smallsetminus U})$,
%\begin{eqnarray}
%
%\end{eqnarray}
where $U=U(\mathcal{C})$ is as above. 
In order to show (\ref{proof claim}), noticing that $A_{\mathcal{C}}$ depends on $N$, it is enough to prove that, for sufficiently large $N$, 
\begin{eqnarray}
\left|\frac{\mu_{\hat R}\left(A_{\mathcal C\smallsetminus U}\right)}{\mu_{\hat R}\left(\mathcal C\smallsetminus U\right)}-p_{x,y,\underline c}\cdot\mu_{\Phi}\left(\bar D_{\Phi}(a_1,b_1,a_2,b_2,\underline c)\right)
\right|\leq C_{24}\varepsilon\label{proof fraction}
\end{eqnarray}
for some $C_{24}>0$. If $N$ is sufficiently large we get
%For sufficiently large $N$ we have
\begin{eqnarray}
&&\left\{\hat\sigma\in\mathcal{C}\smallsetminus U:\:a_1<\frac{\hat q_{\hat n_N-1}}{N}<b_1,\:a_2<\frac{\hat q_{\hat n_N}}{N}<b_2\right\}=\nonumber\\
&&=\left\{\hat\sigma\in\mathcal{C}\smallsetminus U:\:\log a_1<\mathrm{S}_{r(\hat\sigma,\mathrm{T})-1}^{\hat R}(\psi)(\hat\sigma)-\mathrm{T}+\varepsilon_{N,\mathcal{C}}(\hat\sigma)<\log b_1,\right\}\cap\nonumber\\
&&\hspace{.5cm}\cap\left\{\hat\sigma\in\mathcal{C}\smallsetminus U:\:\log a_2<\mathrm{S}_{r(\hat\sigma,\mathrm{T})}^{\hat R}(\psi)(\hat\sigma)-\mathrm{T}+\varepsilon'_{N,\mathcal{C}}(\hat\sigma)<\log b_2,\right\},\nonumber
\end{eqnarray}
where $\varepsilon_{N,\mathcal{C}}(\hat\sigma):=\varepsilon_{\hat n_N(\hat\sigma)-1}(\hat\sigma)-g_{\mathcal{C}}+g(\hat\omega)$, $\varepsilon'_{N,\mathcal{C}}(\hat\sigma):=\varepsilon_{\hat n_N(\hat\sigma)}(\hat\sigma)-g_{\mathcal{C}}+g(\hat\omega)$ and $\varepsilon_{\hat n_N(\hat\sigma)-1}$, $\varepsilon_{\hat n_N(\hat\sigma)}$ are defined in (\ref{logqn-Birkhoff-g}). One can show that $\sup_{\hat\sigma\in\mathcal{C}\smallsetminus U}|\varepsilon_{N,\mathcal{C}}(\hat\sigma)|+\sup_{\hat\sigma\in\mathcal{C}\smallsetminus U}|\varepsilon'_{N,\mathcal{C}}(\hat\sigma)|\leq C_{25}\varepsilon$ for some $C_{25}>0$. Notice that $v:=\mathrm{S}_{r(\hat\sigma,\mathrm{T})}^{\hat R}(\psi)(\hat\sigma)-\mathrm{T}$ is the vertical distance from $\Phi_T(\hat\sigma,0)$ and the roof function $\psi(\hat R^{\hat n_N(\hat\sigma)-1}(\hat\sigma))$ and therefore $\mathrm{S}_{r(\hat\sigma,\mathrm{T})-1}^{\hat R}(\psi)(\hat\sigma)-\mathrm{T}=v-\psi(\hat R^{\hat n_N(\hat\sigma)-2}(\hat\sigma))$. Using the vertical projection $p:D_\Phi\rightarrow D(\hat R)$ we write the condition $\hat R^{\hat n_N(\hat\sigma)-1}(\hat \sigma)\in\mathcal{C}_{N_1,N_2}^{(\underline c)}$ as $p(\Phi_T(\hat\sigma,0))\in\mathcal{C}_{N_1,N_2}^{(\underline c)}$ and setting $B_N(x,y)%=B_{N;N_1}(x,y)
:=\{\hat\sigma\in D(\hat R):\:x_{\hat n_N(\hat\sigma)-N_1}(\hat\sigma)=x,\,y_{\hat n_N(\hat\sigma)-N_1}(\hat\sigma)=y\}$
 we get
\begin{eqnarray}
A_{\mathcal{C}\smallsetminus U}\times\{0\}&\subseteq&\left((\mathcal{C}\smallsetminus U)\times\{0\}\right)\cap\left(B_N(x,y)\times\{0\}\right)\cap\nonumber\\
&&%\hspace{.3cm}\hspace{-.5cm}
\cap\,\Phi_{-T}\big(D_\Phi(\log a_1+\psi\circ\hat R^{-1}-C_{25}\varepsilon,\log b_1+\psi\circ\hat R^{-1}+C_{25}\varepsilon)\cap \nonumber\\
&&\hspace{1.5cm}\cap\,D_\Phi(\log a_2-C_{25}\varepsilon,\log b_2+C_{25})\cap p^{-1}\mathcal{C}_{N_1,N_2}^{(\underline c)}\big)\nonumber
%&&\hspace{.3cm}\cap p^{-1}\mathcal{C}_{\tiny{\mathrm{target}}}\big)\nonumber
%D_\Phi(-\log b_1-C_{???}\varepsilon,-\log a_1+C_{???}\varepsilon,\log a_2-C_{???}\varepsilon,\log b_2+C_{???}\varepsilon)\cap p^{-1}\mathcal{C}_{\tiny{\mathrm{target}}}\right)\nonumber
%&&\hspace{.3cm}\cap\,\Phi_{-T}\left(D_\Phi(-\log b_1-C_{???}\varepsilon,-\log a_1+C_{???}\varepsilon,\log a_2-C_{???}\varepsilon,\log b_2+C_{???}\varepsilon)\cap p^{-1}\mathcal{C}_{\tiny{\mathrm{target}}}\right)\nonumber
\end{eqnarray}
and
\begin{eqnarray}
A_{\mathcal{C}\smallsetminus U}\times\{0\}&\supseteq&\left((\mathcal{C}\smallsetminus U)\times\{0\}\right)\cap\left(B_N(x,y)\times\{0\}\right)\cap\nonumber\\
&&%\hspace{.3cm}\hspace{-.5cm}
\cap\,\Phi_{-T}\big(D_\Phi(\log a_1+\psi\circ\hat R^{-1}+C_{25}\varepsilon,\log b_1+\psi\circ\hat R^{-1}-C_{25}\varepsilon)\cap \nonumber\\
&&\hspace{1.5cm}\cap\,D_\Phi(\log a_2+C_{25}\varepsilon,\log b_2-C_{25})\cap p^{-1}\mathcal{C}_{N_1,N_2}^{(\underline c)}\big).\nonumber%&&\hspace{-.5cm}A_{\mathcal{C}\smallsetminus U}\times\{0\} \supseteq\left((\mathcal{C}\smallsetminus U)\times\{0\}\right)\cap\left(B_N(x,y)\times\{0\}\right)\cap\nonumber\\
%&&\hspace{.3cm}\cap\,\Phi_{-T}\left(D_\Phi(-\log b_1+C_{???}\varepsilon,-\log a_1-C_{???}\varepsilon,\log a_2+C_{???}\varepsilon,\log b_2-C_{???}\varepsilon)\cap p^{-1}\mathcal{C}_{\tiny{\mathrm{target}}}\right).\nonumber
\end{eqnarray}
For sufficiently small $\delta$, $0<\delta<\varepsilon$, one can show that 
\begin{eqnarray}
A_{\mathcal{C}\smallsetminus U}\times[0,\delta)&\subseteq&\Phi_{-T}\big((D_{\Phi}(\log a_1+\psi\circ\hat R^{-1}-C_{25}\varepsilon-\delta,\log b_1+\psi\circ\hat R^{-1}+C_{25}\varepsilon)\cap\nonumber\\ 
&&\hspace{1.2cm}\cap\, D_\Phi(\log a_2-C_{25}\varepsilon-\delta,\log b_2+C_{25}\varepsilon)\cap p^{-1}\mathcal{C}_{N_1,N_2}^{(\underline c)})\cup D_\Phi^\delta\big),\nonumber%\label{proof-renewal1}
%&&\Phi_{-T}\big((D_{\Phi}(-\log b_2-C_{???}\varepsilon-\delta,-\log a_1+C_{???}\varepsilon,\log a_2-C_{???}\varepsilon-\delta,\log b_2+C_{???}\varepsilon)\cap p^{-1}\mathcal{C}_{\tiny{\mathrm{target}}})\cup D_\Phi^\delta\big),\nonumber
\end{eqnarray}
where $D_\Phi^\delta:=D(\hat R)\times[0,\delta)$. Thus, recalling that $\mathrm T=\mathrm T(N)=\log N-g_{\mathcal{C}}$ %depends on $N$ 
and setting $W_N^+(\varepsilon,\delta):=\Phi_{-T}\big(\bar D^{\varepsilon,+}_\Phi(a_1,b_1,a_2,b_2,\underline{c})\cup D_\Phi^\delta\big)$, where $\bar D^{\varepsilon,+}_\Phi(a_1,b_1,a_2,b_2,\underline{c}):=(D_{\Phi}(\log a_1+\psi\circ\hat R^{-1}-C_{26}\varepsilon,\log b_1+\psi\circ\hat R^{-1}+C_{25}\varepsilon)\cap D_\Phi(\log a_2-C_{26}\varepsilon,\log b_2+C_{25}\varepsilon)\cap p^{-1}\mathcal{C}_{N_1,N_2}^{{(\underline c)}})$ and $C_{26}=C_{25}+1$, we obtain
\begin{eqnarray}
&&\delta\cdot\mu_{\hat R}(A_{\mathcal{C}\smallsetminus U})\leq\mu_\Phi\big(\left((\mathcal{C}\smallsetminus U)\times[0,\delta)\right)\cap(B_N(x,y)\times[0,\delta))\cap W_N^+(\varepsilon,\delta)\big)\label{eq: before-mixing1}.%\\
%&&\hspace{.5cm}\cap\,\big)\nonumber
\end{eqnarray}
Our goal is to show that, for sufficiently large $N$, one can $C_{27}\varepsilon$-approximate (for some constant $C_{27}>0$) the left hand side of (\ref{eq: before-mixing1}) with the product of the $\mu_\Phi$-measures of the three sets $(\mathcal{C}\smallsetminus U)\times[0,\delta)$, $B_N(x,y)\times[0,\delta)$ and $W_N^+(\varepsilon,\delta)$. 
First, we can replace $B_N(x,y)\times[0,\delta)$ in  (\ref{eq: before-mixing1}) by $B_N'(x,y):=B_N(x,y)\times\{(\hat\sigma,z)\in D_\Phi:0\leq z\leq\psi(\hat\sigma)\}$ and write $D_N=D_N(x,y,a_1,b_1,a_2,b_2,\underline{c},\varepsilon,\delta):=B_N'\cap W^+_N(\varepsilon,\delta)=\Phi_{-\mathrm T(N)}(E_N)$, where $E_N:=\Phi_{\mathrm T(N)}(B_N')\cap \bar D^{\varepsilon,+}_\Phi(a_1,b_1,a_2,b_2,\underline{c})$.

Let us recall the following classical result by R\'enyi \cite{Renyi1958}: let $(\Omega,\frak B, P)$ be a probability space and let $G,H_N\in\frak{B}$, $N\in\N$, then
\begin{equation}\label{Renyi}
\lim_{N\rightarrow\infty}P(G\cap H_N)\rightarrow P(A)\cdot\beta\hspace{.5cm}\mbox{iff}\hspace{.5cm}\lim_{N\rightarrow\infty}P(H_k\cap H_N)=P(H_k)\cdot\beta\hspace{.5cm}\mbox{for each $k\in\N_0$},
\end{equation} 
where $H_0=\Omega$. In our case $\Omega=D_\Phi$, $P=\mu_\Phi$, $A=(\mathcal{C}\smallsetminus U)\times[0,\delta)$ and $H_N=D_N$. We can compute $P(H_k\cap H_N)$ for fixed $k$ as follows.
\begin{eqnarray}
\mu_\Phi(D_k\cap D_N)=\mu_\Phi\big(\Phi_{-\mathrm{T}(k)}\left(E_k\cap\Phi_{-(\mathrm T(N)-\mathrm{T}(k))}(E_N)\right)\big)=\mu_\Phi\big(E_k\cap\Phi_{-(\mathrm T(N)-\mathrm{T}(k))}(E_N)\big)\label{eq: before-mixing2}
\end{eqnarray}
For every $k\in\N$ we can write $E_k$ as a disjoint union of 
\begin{eqnarray}
E_k^{(\overline n,\underline\theta)}&\!\!:=\!\!&\{(\hat\sigma,y)\in D_\Phi: \: \hat\sigma=\hat R^{\hat n_k(\hat\sigma')-N_1}(\hat\sigma'),\:\hat n_k(\hat\sigma')=\overline n,\:(\hat\sigma'_j)_{j=1}^{\overline n-N_1}=\underline\theta\}\cap\nonumber\\
&&\cap\, \bar D^{\varepsilon,+}_\Phi(a_1,b_1,a_2,b_2,\underline{c}),\nonumber
\end{eqnarray}
where $\overline n\in\N$ and $\underline\theta\in \Sigma^{\overline n-N_1}$ is such that $x_{\overline n-N_1}(\underline\theta)=x$ and $y_{\overline n-N_1}(\underline\theta)=y$ and we can write (\ref{eq: before-mixing2}) as
\begin{eqnarray}
&&\mu_\Phi\big(E_k\cap\Phi_{-(\mathrm T(N)-\mathrm{T}(k))}(E_N)\big)=\sum_{\overline n,\underline\theta}\mu_\Phi\!\left(E_k^{(\overline n,\underline\theta)}\cap\Phi_{-(\mathrm T(N)-\mathrm{T}(k))}(E_N)\right).\label{eq: before-mixing2b}%=\sum_{\overline n,\underline\theta}\mu_\Phi\!\left(\Phi_{-(\mathrm T(N)-\mathrm{T}(k))}(E_N)\big|E_k^{(\overline n,\underline\theta)}\right)\mu_\Phi\!\left(E_k^{(\overline n,\underline\theta)}\right)=\nonumber
\end{eqnarray}
Each term in the series above is now written as a product
\begin{eqnarray}\mu_\Phi\!\left(\Phi_{\mathrm{T}(k)}(B_N')\big|E_k^{(\overline n,\underline\theta)}\cap \Phi_{-(\mathrm T(N)-\mathrm{T}(k))}(\bar D^{\varepsilon,+}_\Phi(a_1,b_1,a_2,b_2,\underline{c}))\right)\cdot\label{eq: before-mixing3}\\
\cdot\mu_\Phi\!\left(E_k^{(\overline n,\underline\theta)}\cap \Phi_{-(\mathrm T(N)-\mathrm{T}(k))}(\bar D^{\varepsilon,+}_\Phi(a_1,b_1,a_2,b_2,\underline{c}))\right).\label{eq: before-mixing4}
\end{eqnarray}
We apply the the mixing property of the special flow $\{\Phi_t\}$ to the factor (\ref{eq: before-mixing4}), getting
\begin{eqnarray}
\mu_\Phi\!\left(E_k^{(\overline n,\underline\theta)}\cap \Phi_{-(\mathrm T(N)-\mathrm{T}(k))}(\bar D^{\varepsilon,+}_\Phi(a_1,b_1,a_2,b_2,\underline{c}))\right)\rightarrow\mu_\Phi\!\left(E_k^{(\overline n,\underline\theta)}\right)\mu_\Phi\!\left(\bar D^{\varepsilon,+}_\Phi(a_1,b_1,a_2,b_2,\underline{c})\right).\nonumber
\end{eqnarray}
as $N\rightarrow\infty$.
%$E_k^{\mathrm{I}}=\{(\hat\sigma,y)\in D_\Phi:\:(\sigma^+_j)_{j=1}^{N_1+N_2}=\underline{c}\}$
We claim that the factor (\ref{eq: before-mixing3}) also has a limit:
\begin{eqnarray}
\lim_{N\rightarrow\infty}\mu_\Phi\!\left(\Phi_{\mathrm{T}(k)}(B_N'(x,y))\big|E_k^{(\overline n,\underline\theta)}\cap \Phi_{-(\mathrm T(N)-\mathrm{T}(k))}(\bar D^{\varepsilon,+}_\Phi(a_1,b_1,a_2,b_2,\underline{c}))\right)=:p_{x,y,\underline{c}}.\label{limit pxyc}
\end{eqnarray}
In order to see this one can analyze geometrically the action of the special flow as follows.
The set $E_k^{(\overline n,\underline\theta)}$ is fixed and involves a finite number of entries of $\hat\sigma^-$ in the base $D(\hat R)$ and some region in the $z$-direction. In the $D(\hat R)$ component, the set $ \Phi_{-(\mathrm T(N)-\mathrm{T}(k))}(D^{\varepsilon,+}_\Phi(a_1,b_1,a_2,b_2,\underline{c}))$ corresponds to setting to $\underline{c}$ the coordinates at from $(\hat\sigma_j)_{j=\hat n_N-\overline n-N_1+1}^{\hat n_N-\overline n+N_2}$, i.e. in a neighborhood (of fixed size) of the renewal time $\hat n_N$. In the $z$-direction it gives a region which, by mixing, spreads according to the invariant measure $\mu_\Phi$ as $N\rightarrow\infty$. Since the set $\Phi_{\mathrm T(k)}(B_N'(x,y))$ gives no restrictions in the $z$-direction, it is enough to establish the existence of the limit (\ref{limit pxyc}) for the projection of the sets onto the base $D(\hat R)$. In the base, however, the limit follows from the Markov-like property of the process $\{(x_n,y_n)\}_{n\in\N}\in\Xi^{\N}$ (namely extending (\ref{P0(xn,yn) as matrix power}) to conditional probability distributions
). Now taking the limit in 
(\ref{eq: before-mixing2b}) 
we get
\begin{eqnarray}
\lim_{N\rightarrow\infty}\mu_\Phi\left(E_k\cap\,\Phi_{-(\mathrm T(N)-\mathrm T(k))}(E_N)\right)&=&p_{x,y,\underline{c}}\cdot\mu_\Phi\!\left(\bar D^{\varepsilon,+}_\Phi(a_1,b_1,a_2,b_2,\underline{c})\right)\sum_{\overline n,\underline\theta}\mu_\Phi\!\left(E_k^{(\overline n,\underline\theta)}\right)=\nonumber\\
&=&p_{x,y,\underline{c}}\cdot\mu_\Phi\!\left(\bar D^{\varepsilon,+}_\Phi(a_1,b_1,a_2,b_2,\underline{c})\right)\cdot\mu_\Phi(E_k),\nonumber
\end{eqnarray}
i.e. the rightmost part of (\ref{Renyi}) with $\beta=p_{x,y,\underline{c}}\cdot\mu_\Phi\!\left(\bar D^{\varepsilon,+}_\Phi(a_1,b_1,a_2,b_2,\underline{c})\right)$. Thus we proved that
\begin{eqnarray}
&&\lim_{N\rightarrow\infty}\mu_\Phi\big(\left((\mathcal{C}\smallsetminus U)\times[0,\delta)\right)\cap(B_N(x,y)\times[0,\delta))\cap W_N^+(\varepsilon,\delta)\big)=\nonumber\\
&&=\mu_\Phi\big((\mathcal C\smallsetminus U)\times[0,\delta)\big)\cdot p_{x,y,\underline{c}}\cdot\mu_\Phi\!\left(\bar D^{\varepsilon,+}_\Phi(a_1,b_1,a_2,b_2,\underline{c})\right)=\nonumber\\
&&=\delta\cdot\mu_{\hat R}(\mathcal C\smallsetminus U)\cdot p_{x,y,\underline{c}}\cdot\mu_\Phi\!\left(\bar D^{\varepsilon,+}_\Phi(a_1,b_1,a_2,b_2,\underline{c})\right).\label{eq: after-mixing1}
\end{eqnarray}
Now (\ref{eq: before-mixing1}) and (\ref{eq: after-mixing1}) imply that, for sufficiently large $N$,
\begin{eqnarray}
\delta\cdot\mu_{\hat R}(A_{\mathcal C\smallsetminus U})\leq\delta\cdot\mu_{\hat R}\left(\mathcal C\smallsetminus U\right)\cdot\left(p_{x,y,\underline{c}}\cdot\mu_\Phi\!\left(\bar D^{\varepsilon,+}_\Phi(a_1,b_1,a_2,b_2,\underline{c})\right)+C_{27}\varepsilon\right),\label{proof inequality1}
\end{eqnarray}
for some $C_{27}>0$. Proceeding as in \cite{Cellarosi} (Lemma 3.8 therein) one can show that, for sufficiently small $\delta$,
\begin{eqnarray}
%&&\left(\mathcal C\smallsetminus U)\times[0,\delta)\right)\cap\nonumber\\
&&\left(\mathcal C\smallsetminus U)\times[0,\delta)\right)\cap
\,\Phi_{-\mathrm T}
\big(
\bar D_{\Phi}^{\varepsilon,-}(a_1,b_1,a_2,b_2,\underline c)\smallsetminus D_\Phi^{\delta}
\big)\subseteq%\nonumber\\
%&&\subseteq 
A_{\mathcal C\smallsetminus U}\times[0,\delta),\nonumber
\end{eqnarray}
where $\bar D_{\Phi}^{\varepsilon,-}=D_\Phi(\log a_1+\psi\circ\hat R^{-1}+C_{28}\varepsilon,\log b_1+\psi\circ\hat R^{-1}-C_{29}\varepsilon)
\cap D_\Phi(\log a_2+C_{28}\varepsilon,\log b_2-C_{29}\varepsilon)\cap p^{-1}\mathcal C_{N_1,N_2}^{(\underline c)}$, for some $C_{28},C_{29}>0$. Using the mixing property of the flow $\{\Phi_t\}_t$ as above we get, for sufficiently large $N$,
\begin{eqnarray}
\delta\cdot\mu_{\hat R}(A_{\mathcal C\smallsetminus U})\geq\delta\cdot\mu_{\hat R}(\mathcal C\smallsetminus U)\cdot\left(p_{x,y,\underline{c}}\cdot\mu_\Phi\left(\bar D_{\Phi}^{\varepsilon,-}(a_1,b_1,a_2,b_2,\underline c)\right)-C_{30}\varepsilon\right)\label{proof inequality2}
\end{eqnarray}
for some $C_{30}>0$. Moreover, by Fubini's Theorem, for some $C_{31}>0$,
\begin{eqnarray}
\left|\mu_\Phi\!\left(\bar D_{\Phi}^{\varepsilon,\pm}(a_1,b_1,a_2,b_2,\underline c)\right)-p_{x,y,\underline c}\cdot\mu_\Phi\!\left(\bar D_\Phi(a_1,b_1,a_2,b_2,\underline c)\right)\right|\leq C_{31}\varepsilon.\label{Fubini}
\end{eqnarray}
Finally, by (\ref{proof inequality1},\ref{proof inequality2},\ref{Fubini}) we get (\ref{proof fraction}) for some $C_{24}>0$ and this completes the proof of Theorem \ref{theorem:joint-limit-distr}.
\end{proof}

%\begin{comment}
\section*{Acknowledgments}
Part of this work was completed while at the Erwin Schr\"{o}dinger Institute for Mathematical Physics, Vienna, and at the Abdus Salam International Center for Theoretical Physics, Trieste.

I am very grateful to Fr\'{e}d\'{e}ric Klopp, who first presented me the topic of theta sums and curlicues.
Moreover, I want to thank Alexander Bufetov, Giovanni Forni, Stefano Galatolo, Marco Lenci and %, 
Ilya Vinogradov %and Roland Zweim\"uller 
for many useful discussions.
Lastly, I wish to express my sincere gratitude to my advisor Yakov G. Sinai for his invaluable support and guidance.
%\end{comment}

%\nocite{*}
\addcontentsline{toc}{chapter}{Bibliography}
\bibliographystyle{plain}
\bibliography{bibliography}
\end{document}